\documentclass[10pt]{article}
\usepackage[utf8]{inputenc}

\usepackage[margin=1in]{geometry} 
\geometry{a4paper}
\usepackage{graphicx}

\usepackage{booktabs} 
\usepackage{array} 
\usepackage{paralist} 
\usepackage{verbatim}
\usepackage{mathrsfs}
\usepackage{amssymb}
\usepackage{amsthm}
\usepackage{amsmath,amsfonts,amssymb}
\usepackage{esint}
\usepackage{graphics}
\usepackage{enumerate}
\usepackage{mathtools}
\usepackage{xfrac}
\usepackage{bbm}
\usepackage{caption}
\usepackage{subcaption}
\usepackage{tikz-cd}
\usepackage{adjustbox}

\usepackage[colorlinks=true, pdfstartview=FitV, linkcolor=blue, citecolor=blue, urlcolor=blue]{hyperref}

\numberwithin{equation}{section}
\numberwithin{figure}{section}

\newtheorem{theorem}{Theorem}[section]

\newtheorem{corollary}[theorem]{Corollary}
\newtheorem{proposition}[theorem]{Proposition}
\newtheorem{lemma}[theorem]{Lemma}
\theoremstyle{definition}
\newtheorem{definition}[theorem]{Definition}

\newtheorem{remark}[theorem]{Remark}

\newcommand*{\N}{\ensuremath{\mathbb{N}}}

\newcommand*{\Z}{\ensuremath{\mathbb{Z}}}

\newcommand*{\R}{\ensuremath{\mathbb{R}}}
\newcommand*{\Zd}{\ensuremath{\mathbb{Z}^d}}
\newcommand*{\Rd}{\ensuremath{\mathbb{R}^d}}

\newcommand{\eps}{\varepsilon}

\renewcommand*{\tilde}{\widetilde}
\renewcommand*{\hat}{\widehat}

\newcommand{\ep}{\eps}

\newcommand{\E}{\mathbb{E}}

\DeclareSymbolFont{boldoperators}{OT1}{cmr}{bx}{n}
\SetSymbolFont{boldoperators}{bold}{OT1}{cmr}{bx}{n}
\edef\bar{\unexpanded{\protect\mathaccentV{bar}}\number\symboldoperators16}
\renewcommand{\a}{\mathbf{A}}

\definecolor{labelkey}{rgb}{0,0,1}

\newcommand{\indc}{{\boldsymbol{1}}}

\usepackage{titlesec}

\newcommand{\addperiod}[1]{#1.}
\titleformat{\section}
   {\centering\normalfont\Large}{\thesection.}{0.5em}{}
\titleformat*{\subsection}{\bfseries}
\titleformat{\subsubsection}[runin]
  {\normalfont\bfseries}
  {\thesubsubsection.}
  {0.5em}
  {\addperiod}
\titleformat*{\subsubsection}{\bfseries}
\titleformat*{\paragraph}{\bfseries}
\titleformat*{\subparagraph}{\large\bfseries}

\title{Hydrodynamic limit for a class of degenerate convex $\nabla \varphi$-interface models}

\author{
Paul Dario
\thanks{CNRS and LAMA, Universit\'e Paris-Est Cr\'eteil, Cr\'eteil, France.
{\footnotesize paul.dario@u-pec.fr.}
}
}
\date{ }

\usepackage[nottoc,notlot,notlof]{tocbibind}

\begin{document}

\maketitle

\begin{abstract}
    We study the Langevin dynamics corresponding to the $\nabla \varphi$-interface model with a degenerate convex interaction potential satisfying a polynomial growth assumption. Following the work of the author and Armstrong~\cite{armstrong2022quantitative}, we interpret these Langevin dynamics as a nonlinear parabolic equation forced by white noise and apply homogenization methods to derive a quantitative hydrodynamic limit. This result quantifies and extends to a class of degenerate convex potentials the seminal result of Funaki and Spohn~\cite{FS}. In order to handle the degeneracy of the potential, we make use of the notion of \emph{moderated environment} originally introduced by Mourrat and Otto~\cite{MO16} and further developed by Biskup and Rodriguez~\cite{biskup2018limit} to study the properties of solutions of parabolic equations with degenerate coefficients (and of the corresponding random walks).
\end{abstract}

\setcounter{tocdepth}{1}
\tableofcontents

\section{Introduction}

In this article, we study the $\nabla \varphi$ interface model defined as follows. Given a dimension $d \geq 2$ and a finite set $\Lambda \subseteq \Zd$, we consider a scalar field $\varphi : \Lambda \to \R$ which is interpreted as a discretized interface embedded in $\R^{d+1}$ (where $\varphi(x)$ is the height of the interface at the vertex $x \in \Lambda$, see Figure~\ref{fig:GFFSample4}). The set of discrete interfaces $\Omega_\Lambda := \left\{ \varphi : \Lambda \to \R \right\} \simeq \R^\Lambda$ is then equipped with a probability distribution given by the formula
\begin{equation} \label{def.gradphifinitevol}
    \mu_{\Lambda}(d \varphi) := \frac{1}{Z_\Lambda} \exp \left( - \sum_{x \in \Lambda} V(\nabla \varphi(x)) \right) \prod_{x \in \Lambda} d \varphi(x),
\end{equation}
where $Z_\Lambda$ is the constant chosen so that $\mu_{\Lambda}$ is a probability distribution and where we used the following conventions and notation:
\begin{itemize}
    \item Given a function $\varphi \in \Omega_\Lambda,$ we implicitly extend it by $0$ outside the set $\Lambda$ and define its discrete gradient according to the formula, for any $x \in \Lambda$,
    \begin{equation*}
        \nabla \varphi(x) := ( \varphi(x + e_1) - \varphi(x), \ldots,
        \varphi(x + e_d) - \varphi(x)) \in \R^d.
    \end{equation*}
    \item The map $V \in C^2(\Rd)$ is a convex interaction potential whose second derivative satisfies the following growth assumption: there exist an exponent $r > 2$ and three constants $c_-, c_+ \in (0 , \infty)$ and $R_0 \geq 1$ such that
    \begin{equation} \label{AssPot}
        \forall \, x \in \R^d \mbox{ with } |x| \geq R_0, ~ c_- |x|^{r-2} I_d \leq D_p^2V(x) \leq c_+ |x|^{r-2} I_d, \tag{A}
    \end{equation}
    where $D_p^2V(x)$ denotes the Hessian of $V$ at the point $x \in \Rd$, the identity matrix is denoted by $I_d$, the inequalities are understood as inequalities for symmetric matrices and $|x|$ is the Euclidean norm of $x \in \Rd$.
\end{itemize}

\begin{figure}
        \centering
        \includegraphics[scale=0.8]{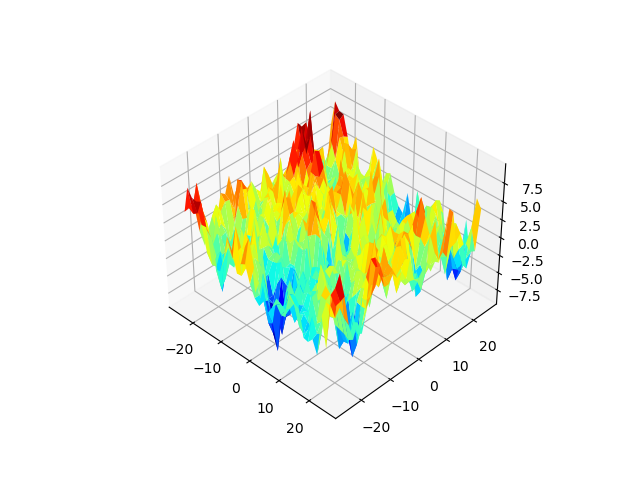}
    \caption{A random interface sampled accoding to the Gibbs measure~\eqref{def.gradphifinitevol}.} \label{fig:GFFSample4}
\end{figure}

\begin{figure}
\begin{minipage}{0.5\textwidth}
        \centering
        \includegraphics[width=0.8\linewidth, height=0.2\textheight]{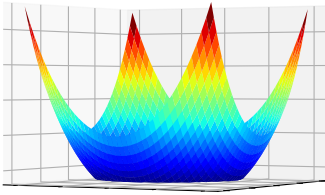}
\end{minipage}
\begin{minipage}{0.5\textwidth}
        \centering
        \includegraphics[width=0.8\linewidth, height=0.3\textheight]{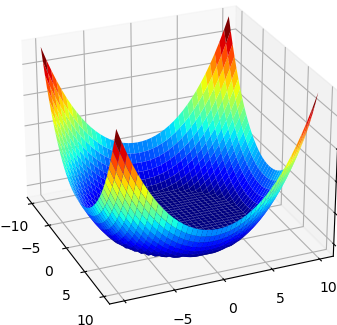}
    \end{minipage}
    \caption{An example of potential satisfying the Assumption~\eqref{AssPot}: it is convex and exhibits a flat part where its Hessian is degenerate.}
\end{figure}

The static properties of the model have been extensively studied (see~\cite{F05, V06} and Section~\ref{section.bib} for a more detailed account of the literature). In this article, we are interested in the dynamic properties of the model, starting from the observation that the Gibbs measure~\eqref{def.gradphifinitevol} is naturally associated with the following Langevin dynamics
\begin{equation} \label{eq:introSDE}
   \left\{ \begin{aligned}
    d \varphi(t , x) & = \nabla \cdot D_p V(\nabla \varphi)(t , x) dt + \sqrt{2} dB_t(x) & \mbox{for}~ (t , x) \in (0 , \infty) \times \Lambda, \\
    \varphi(t , x) & = 0 & \mbox{for}~ (t , x) \in (0 , \infty) \times \partial^+ \Lambda.
    \end{aligned}
    \right.
\end{equation}
where $\left\{ B_t(x) \, : \, t \geq 0, \, x \in \Lambda \right\}$ is a collection of independent Brownian motions, in the second line, we denoted by $\partial^+ \Lambda := \left\{ y \notin \Lambda \, : \, \exists y \in \Lambda, \, x \sim y \right\}$ the discrete external boundary of the set $\Lambda$ and we made use of the notation for the discrete divergence introduced in Section~\ref{sec.defellipticequation}. Specifically, the system of stochastic differential equations~\eqref{eq:introSDE} has $\mu_\Lambda$ as unique invariant measure. 

A typical question in statistical mechanics is then to describe the macroscopic behaviour of the Gibbs measure~\eqref{def.gradphifinitevol} and the Langevin dynamics~\eqref{eq:introSDE}. In this direction, an important theorem known as the hydrodynamic limit and originally established, in the case of uniformly convex potentials, by Funaki and Spohn~\cite{FS} (and extended by Nishikawa~\cite{nishikawa2003hydrodynamic} from the periodic to the Dirichlet boundary conditions) asserts that under a suitable large-scale limit, the Langevin dynamics converge to a deterministic profile $h$ which evolves according to the nonlinear parabolic equation
\begin{equation} \label{eq:hydroeqh}
    \partial_t h - \nabla \cdot D_p \bar \sigma (\nabla h) = 0,
\end{equation}
where $\bar \sigma : \Rd \to \R$ is a deterministic strictly convex function called the surface tension of the model (see~\cite[Proposition 1.1]{FS} or~\eqref{def.surfacetensionintro} below).

The purpose of this article is to extend the result of~\cite{FS} to the class of potentials satisfying the Assumption~\eqref{AssPot}. Before stating our main result, we need to introduce some additional results and notation:
\begin{itemize}
    \item For mostly technical convenience, we will not work with Dirichlet boundary conditions but with \emph{periodic} boundary conditions. To this end, we denote by $\mathbb{T} := \R^d /\Z^d$ the $d$-dimensional (unit and continuous) torus. For any fixed parameter $\ep \in (0,1)$, we let $\mathbb{T}^\ep$ be a discretization of mesh size $\ep$, see Figure~\ref{fig:torus} (for simplicity, we will always assume that $\ep^{-1}$ is an integer and that the discrete torus $\mathbb{T}^\ep$ contains exactly $\ep^{-d}$ vertices).
    \item For any initial condition $f \in C^\infty(\mathbb{T})$ and any $\ep \in (0,1)$, we define the (suitably rescaled) Langevin dynamics started from $f$ on the discretized torus $\mathbb{T}^\ep$
    \begin{equation} \label{eq:introSDErescaled}
   \left\{ \begin{aligned}
    d u^\ep(t , x) & = \nabla^\ep \cdot D_p V(\nabla^\ep u^\ep)(t , x) dt + \sqrt{2} dB^\ep_t(x) & \mbox{for}~ (t , x) \in (0 , \infty) \times \mathbb{T}^\ep, \\
    u^\ep(0 , x) & = f(x) & \mbox{for}~ x \in \mathbb{T}^\ep,
    \end{aligned}
    \right.
\end{equation}
where we used the notation $\nabla^\ep := \ep^{-1} \nabla$ and $\nabla^\ep \cdot :=  \ep^{-1} \nabla \cdot$ for the rescaled gradient and divergence, and where we wrote $B^\ep_t(x) := \ep B_{\ep^{-2 }t} (\ep^{-1}x)$ (N.B. the scaling properties of the Brownian motion ensure that $B^\ep(x)$ is a Brownian motion, this convention is set so that the same Brownian motions can be used in~\eqref{eq:introSDE} and~\eqref{eq:introSDErescaled}).
\end{itemize}

\begin{figure}
        \centering
        \includegraphics[scale=0.7]{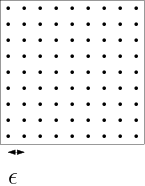}
    \caption{The discretized torus $\mathbb{T}^\ep$ with $\ep = 1/9$.\label{fig:torus}}
\end{figure}

\begin{theorem}[Quantitative hydrodynamic limit] \label{main.thm}
    Let us fix a dimension $d \geq 2$, a convex potential $V \in C^2(\Rd)$ satisfying the Assumption~\eqref{AssPot} and a smooth initial condition $f \in C^\infty(\mathbb{T})$.
   There exist two constants $\lambda_- , \lambda_+ \in (0,\infty)$ and a convex function $\bar \sigma \in C^{1,1}(\Rd)$ satisfying
    \begin{equation} \label{eq:conv.barsigma}
        \lambda_- (|x|^{r-1} + 1) I_d \leq D_p^2 \bar \sigma (x) \leq  \lambda_+ (|x|^{r-1} + 1) I_d ~~\mbox{for almost every}~ x \in \Rd,
    \end{equation}
    such that, if we let $\bar u$ be the solution of the deterministic nonlinear parabolic equation
    \begin{equation} \label{def.barumainthm}
   \left\{ \begin{aligned}
    \partial_t \bar u - \nabla \cdot D_p \bar \sigma (\nabla \bar u ) & = 0  & \mbox{for}~ (t , x) \in (0 , \infty) \times \mathbb{T}, \\
     \bar u(0 , \cdot) & = f & \mbox{for}~ x \in \mathbb{T},
    \end{aligned}
    \right.
    \end{equation}
    then there exist two constants $c := c(d, V , f) >0$ and $C := C(d , V , f) < \infty$ and an exponent $\theta := \theta(d, r) > 0$ such that, for any $\ep \in (0,1),$
    \begin{equation} \label{eq:quantihydro}
        \int_0^1 \ep^d \sum_{x \in \mathbb{T}^\ep} \left| u^\ep(t , x) - \bar u (t , x) \right|^2 \, dt \leq  \mathcal{O}_{\Psi , c} \left( C \ep^{\theta} \right).
    \end{equation}
\end{theorem}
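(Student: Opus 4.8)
\emph{Overall strategy.} The plan is to adopt the homogenization point of view of~\cite{armstrong2022quantitative}: view~\eqref{eq:introSDErescaled} as a quasilinear parabolic equation driven by white noise, and compare its solution $u^\ep$ with a two-scale expansion built from the solution $\bar u$ of the homogenized equation~\eqref{def.barumainthm} and from the correctors of the associated linearized, space--time stationary (and \emph{degenerate}) environment. The surface tension $\bar\sigma$ is the homogenized Lagrangian of the static variational problem $\min_\varphi \sum V(\nabla\varphi)$ with prescribed macroscopic slope, equivalently the large-volume limit of the normalized log-partition function of~\eqref{def.gradphifinitevol}; its $C^{1,1}$ regularity and the ellipticity bounds~\eqref{eq:conv.barsigma} would be obtained from the quantitative homogenization of the corresponding degenerate elliptic $\nabla\varphi$-model, together with a priori convexity estimates for $\bar\sigma$ that persist in the degenerate regime. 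As a preliminary step I would establish well-posedness of~\eqref{def.barumainthm} (using~\eqref{eq:conv.barsigma}) together with uniform-in-$\ep$ a priori bounds on $u^\ep$ --- in particular stretched-exponential moment bounds on $\|\nabla^\ep u^\ep\|$, of the type encoded by $\OO_{\Psi,c}$ --- relying on the invariance of $\mu_\Lambda$ for the unrescaled dynamics~\eqref{eq:introSDE}, the smoothness of $f$, and energy/comparison estimates for the SDE itself.

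\emph{The two-scale expansion and the energy estimate.} Writing, for any auxiliary profile $v$,
\[ D_p V(\nabla^\ep u^\ep)(t,x) - D_p V(\nabla^\ep v)(t,x) = \a^\ep(t,x)\,\nabla^\ep(u^\ep - v)(t,x), \qquad \a^\ep(t,x) := \int_0^1 D_p^2 V\big(s\nabla^\ep u^\ep + (1-s)\nabla^\ep v\big)(t,x)\,ds, \]
the field $\a^\ep$ is a nonnegative symmetric matrix field which degenerates wherever the interface is locally flat. Taking $v = \tilde u^\ep := \bar u + \ep\,\corr\!\big(t/\ep^2, \cdot/\ep; \nabla\bar u(t,\cdot)\big)$, where $\corr(\cdot;p)$ is the first-order space--time corrector of the environment obtained by linearizing the dynamics around the constant slope $p$, the difference $w^\ep := u^\ep - \tilde u^\ep$ solves a linear parabolic equation of the form $\partial_t w^\ep - \nabla^\ep\cdot \a^\ep\nabla^\ep w^\ep = \nabla^\ep\cdot\g^\ep + \sqrt2\,\dot B^\ep + (\text{l.o.t.})$, where $\g^\ep$ gathers the flux-corrector error, the homogenization defect $D_p V(\nabla^\ep u^\ep) - D_p\bar\sigma(\nabla\bar u)$ up to corrector corrections, and commutator terms involving the slow variation of $\nabla\bar u$. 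Testing against $w^\ep$ and integrating in space--time produces $\tfrac12\|w^\ep(1,\cdot)\|^2 + \int_0^1\!\!\int \nabla^\ep w^\ep\cdot\a^\ep\nabla^\ep w^\ep = \int_0^1\!\!\int -\g^\ep\cdot\nabla^\ep w^\ep + \sqrt2\int_0^1\!\!\int w^\ep\,dB^\ep + (\text{l.o.t.})$; the stochastic term is controlled by its martingale structure, the $\ep$-prefactor, and the independence of the $\ep^{-d}$ Brownian motions, and the right-hand flux term by the quantitative sublinearity of the correctors and flux correctors --- the main homogenization input --- yielding $\int_0^1\ep^d\sum_x|w^\ep|^2\,dt \leq \OO_{\Psi,c}(C\ep^{\theta})$. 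Finally $\|\tilde u^\ep - \bar u\|_{L^2_{t,x}} \leq \ep\,\|\nabla\bar u\|_\infty\,\|\corr\|_{L^2} = \OO_{\Psi,c}(C\ep^{\theta'})$ by corrector sublinearity and the regularity of $\bar u$, and summing the two contributions gives~\eqref{eq:quantihydro}.

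\emph{Main obstacle.} The essential difficulty --- and the reason the moderated-environment technology of~\cite{MO16,biskup2018limit} is needed --- is that $\a^\ep$ degenerates on the flat part of $V$, so the bilinear form $\int \nabla^\ep w\cdot\a^\ep\nabla^\ep w$ does not control $\|\nabla^\ep w\|_{L^2}^2$ and none of the standard tools (Caccioppoli inequality, De Giorgi--Nash--Moser regularity, Nash-type heat-kernel bounds, sublinearity of correctors) is available in its uniformly elliptic form. I would construct a space--time stationary moderating weight $\brho \geq 1$, a.s.\ finite with stretched-exponential moments and sublinear growth along the flow, relative to which $\a^\ep$ is non-degenerate on mesoscopic boxes of every scale, and then re-derive each of these estimates in $\brho$-weighted form, tracking that the weights cost only a factor $\ep^{o(1)}$ in the end. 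Carrying out this construction in the present setting is where most of the work lies: unlike for i.i.d.\ conductances, the environment here is the gradient Gibbs measure evolving under the Langevin dynamics, so the Biskup--Rodriguez construction must be combined with the algebraic decay of correlations for $\mu_\Lambda$ (Brascamp--Lieb and functional inequalities), its degenerate-convex counterpart, and the finite speed of propagation of the drift part of~\eqref{eq:introSDErescaled}. Propagating the resulting degenerate-weighted estimates through the quasilinear two-scale expansion, and matching everything at the initial time (a quantitative local-equilibration statement, since the dynamics are run up to the long microscopic time $\ep^{-2}$), are then technical adaptations of the uniformly convex case treated in~\cite{armstrong2022quantitative} and~\cite{FS}.
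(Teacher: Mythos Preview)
Your overall architecture --- two-scale expansion plus moderated environment --- matches the paper, and you are right that verifying the moment assumption~\eqref{eq:assumptionmoderatedintro} for the moderated environment is where most of the work lies. But the mechanism you propose for this step would not work. You suggest combining the Biskup--Rodriguez construction with ``algebraic decay of correlations for $\mu_\Lambda$ (Brascamp--Lieb and functional inequalities)''; however, Brascamp--Lieb and the usual log-Sobolev/spectral-gap machinery require a \emph{uniform} lower bound on $D_p^2 V$, which is exactly what is missing under Assumption~\eqref{AssPot}. The paper's route is entirely different and does not pass through correlation decay at all: one proves directly (Proposition~\ref{prop3.4}) that the gradient of the Langevin dynamic cannot remain in the degenerate region $\{|p+\nabla\varphi|\le R_1\}$ for a long time, with super-polynomial tail $\exp(-c(\ln T)^{r/(r-2)})$. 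The key idea, sketched in Section~\ref{subsection1.2.3}, is to decompose each Brownian motion into independent increments and bridges, and to differentiate the dynamic with respect to a single increment $X_n(x)$; the derivative is an explicit integral of the heat kernel $P_\a$, and a short-time lower bound on $P_\a$ shows that $\nabla\varphi(n+1,x)$ is uniformly sensitive to $X_n(x)$. Since the increments are Gaussian and independent, this forces the gradient to leave any bounded set with positive probability at each step, and iteration gives the super-polynomial bound. Upstream of this, one also needs a sharp stochastic-integrability estimate $\mu_{L,p}[\,|\nabla\varphi(x)|\ge K\,]\le C\exp(-cK^r)$ uniform in the slope (Proposition~\ref{prop.prop2.3}), which is obtained not via Brascamp--Lieb but via log-concavity (Pr\'ekopa--Leindler) and Efron's monotonicity theorem applied after a carefully chosen perturbation of $V$.

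A secondary point: in your energy estimate you treat the noise $\sqrt{2}\,dB^\ep$ as a separate martingale term to be controlled. In the paper the two-scale expansion $w^\ep$ is built from the \emph{stationary Langevin dynamics} $\varphi_z$ driven by the same Brownian motions as $u^\ep$ (Definition~\ref{Prop:Langevin}, not from correctors to a linearized problem with frozen noise), so that $\partial_t(u^\ep-w^\ep)$ contains no stochastic term at all --- the Brownian motions cancel exactly (see~\eqref{eq:16012107} and~\eqref{eq:v2scexp}). This is important: without that cancellation the noise would contribute at order $\ep^{1-d/2}$ pointwise, and one would need additional averaging arguments to recover a small error.
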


\begin{figure}
\begin{subfigure}{.5\linewidth}
\centering
\includegraphics[scale=0.6]{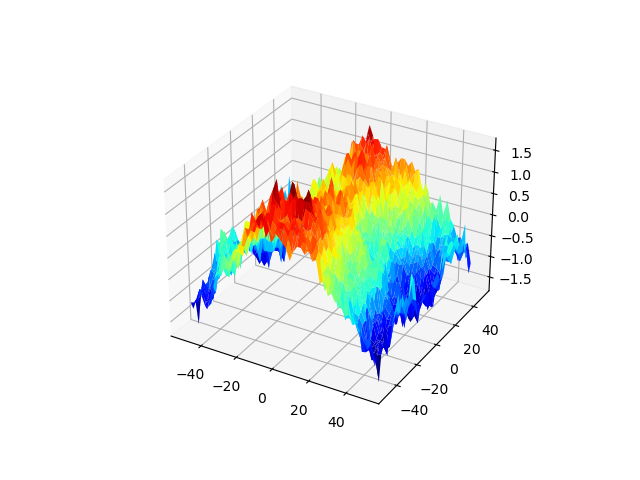}
\end{subfigure}
\begin{subfigure}{.5\linewidth}
\centering
\includegraphics[scale=0.6]{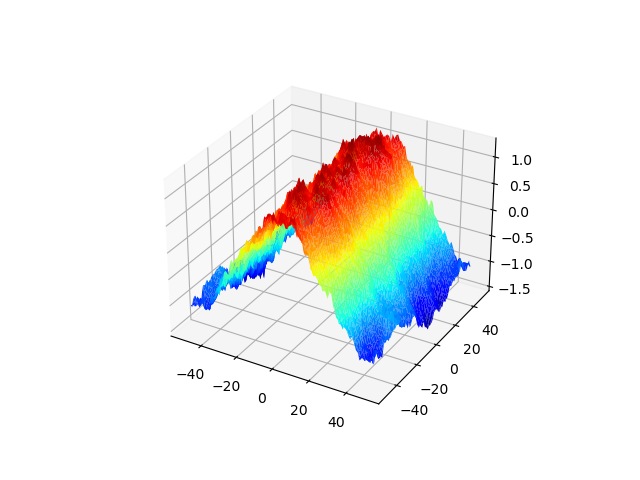}
\end{subfigure}\\[1ex]
\begin{subfigure}{\linewidth}
\centering
\includegraphics[scale=0.65]{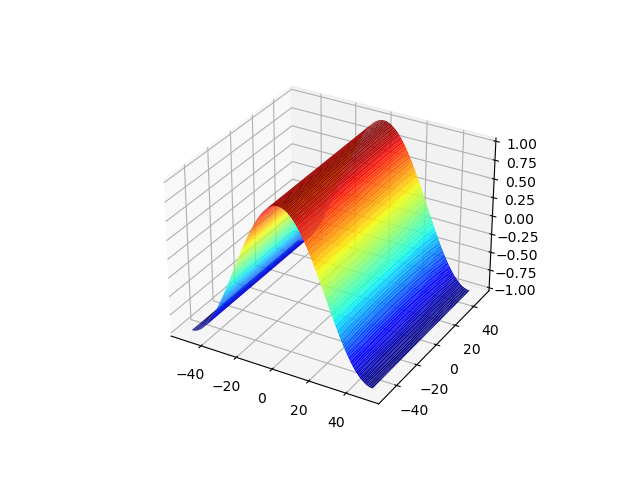}
\end{subfigure}
\caption{An illustration of Theorem~\ref{main.thm}: the two pictures on the first line are a realization of the Langevin dynamics started from a smooth initial data at time $t = 1$ with $\ep = 1/50$ (left) and $\ep = 1/100$ (right). As $\ep \to 0$, the dynamics concentrate around a smooth deterministic profile drawn on the second line.} \label{fig:figure1.3}
\end{figure}

\begin{remark}
    Let us make a few remarks about the previous theorem:
    \begin{itemize}
    \item On the right-hand side of~\eqref{eq:quantihydro} we use the notation ``$X \leq \mathcal{O}_{\Psi , c}(A)$" as shorthand for the statement
    \begin{equation} \label{eq:StochOrlicz}
        \mathbb{P} \left[ X \geq t A \right] \leq \exp \left( - c \left| \ln t \right|^{\frac{r}{r-2}} \right) ~~ \forall t \in [1 , \infty).
    \end{equation}
    Since the exponent $r/(r-2)$ is strictly larger than $1$, the right-hand side of~\eqref{eq:StochOrlicz} decays faster than any polynomial. This implies that all the moments of the random variable on the left-hand side of~\eqref{eq:quantihydro} are finite.
    \item We assumed that the exponent $r$ in Assumption~\eqref{AssPot} is strictly larger than $2$, but the same result (with a simpler proof and a stronger stochastic integrability estimate) should hold in the case $r = 2$.
    \item The argument gives an explicit value of the order of $\theta \simeq 1/(100 d r)$ for the rate of convergence. While we believe that the argument could be optimised so as to improve the exponent, obtaining the optimal rate of convergence (which should correspond to the value $\theta = 1$ with a logarithmic correction in two dimensions) seems (at least) much more technical to obtain (see the discussion in Section 1.5.2 of~\cite{armstrong2022quantitative}).
    \item In a similar way to Funaki and Spohn~\cite[Proposition 1.1]{FS}, the surface tension $\bar \sigma$ can be defined as the following limit, for any $p \in \Rd$,
    \begin{equation} \label{def.surfacetensionintro}
        \bar \sigma(p) := \lim_{L \to \infty} - \frac{1}{(2L+1)^d} \ln \frac{\int_{\Omega^\circ_L} \exp \left( - \sum_{x \in \mathbb{T}_L} V(p + \nabla \varphi(x)) \right) \, d\varphi}{\int_{\Omega^\circ_L} \exp \left( - \sum_{x \in \mathbb{T}_L} V(\nabla \varphi(x)) \right) \, d\varphi},
    \end{equation}
    where $\mathbb{T}_L := (\Z / (2L+1)\mathbb{Z})^d$ denotes the discrete torus, $\Omega^\circ_L := \left\{ \varphi : \mathbb{T}_L \to \R \, : \, \sum_{x \in \mathbb{T}_L} \varphi(x) =0 \right\}$ and the integral is computed with respect to the Lebesgue measure on $\Omega^\circ_L$. The inequality~\eqref{eq:conv.barsigma} asserts that the surface tension is strictly convex for this class of potentials.
    \item A fundamental feature of~\eqref{eq:conv.barsigma} is that the inequality holds for almost every $x \in \Rd$, while the potential $V$ is only assumed to satisfy the lower bound outside a compact set. This is an instance of a \emph{convexification} phenomenon which has already been observed and exploited in the literature (see e.g., ~\cite{grunewald2009two, CDM09, CD12, AKM16, dizdar2018quantitative}). This gain of convexity has the following consequence in terms of elliptic regularity: the solutions of the parabolic equation~\eqref{eq:hydroeqh} are known to possess good regularity properties (see, e.g., Proposition~\ref{prop.reghomogenizedsolution}) which is not the case of the solutions of the equation $\partial_t h - \nabla \cdot D_p V(\nabla h) = 0$. This observation is an important ingredient for the proof of Theorem~\ref{main.thm}.
    \item We expect that the constants $\lambda_-$ and $\lambda_+$ are different from the constants $c_-$ and $c_+$. In particular the constant $\lambda_-$ can be much smaller than the constant $c_-$; this is due to the existence of a ``flat" region $|x| \leq R_0$ where no assumption is made on the convexity of $V$.
    \item While we only prove that the surface tension $\bar \sigma$ is $C^{1,1}(\Rd)$ (which only implies that its Hessian is defined almost everywhere), it is reasonable to believe that the surface tension $\bar \sigma$ is in fact twice-continuously differentiable and that its second derivative satisfies the upper and lower bounds~\eqref{eq:conv.barsigma} everywhere. A possible way to prove it would be to adapt the techniques of~\cite{AW, armstrong2022quantitative} which establish the $C^2$ regularity of the surface tension in the case of uniformly convex potentials.
    \item In most of the literature, the $\nabla \varphi$-interface model is introduced through a slightly different formalism: the potential is defined to be a (convex) function $U : \R \to \R$ and the Gibbs measure is given by the identity
\begin{equation} \label{def.gradphifinitevoloriginal}
    \mu_{U, \Lambda}(d \varphi) := \frac{1}{Z_{U,\Lambda}} \exp \left( - \sum_{ x  \in \Lambda } \left( \sum_{i = 1}^d U( \nabla_i \varphi(x)) \right) \right) \prod_{x \in \Lambda} d \varphi(x).
\end{equation}
While the two models~\eqref{def.gradphifinitevol} and~\eqref{def.gradphifinitevoloriginal} have similar definitions, an important distinction has to be made: while the potential associated with the model~\eqref{def.gradphifinitevol} satisfies the \emph{isotropic} growth condition $V(\nabla \varphi) \geq c |\nabla \varphi|^{r} - C$, the potential associated with~\eqref{def.gradphifinitevoloriginal} can only satisfy an \emph{anisotropic} growth condition of the form $\sum_{i = 1}^d U (\nabla_i \varphi) \geq \sum_{i = 1}^d \left| \nabla_i \varphi  \right|^r - C$ (depending on the assumptions on $U$). 

This anisotropy should affect the properties of the surface tension of the model~\eqref{def.gradphifinitevoloriginal} (in particular, it should not satisfy the inequality~\eqref{eq:conv.barsigma}), and eventually affect the regularity properties of the solution of the equation~\eqref{def.barumainthm}. We decided to focus in this article on the model~\eqref{def.gradphifinitevol}.
\end{itemize}
\end{remark}

\subsection{Related works} \label{section.bib}

We mention in this section some of the important results about the $\nabla \varphi$ interface model, but the list is certainly not exhaustive and we refer the interested reader to the review articles~\cite{F05, V06} on the topic. The study of the $\nabla \varphi$-interface model was initiated by Brascamp, Lieb and Lebowitz~\cite{BLL75} who studied the typical height of the interface (depending on the dimension and the potential). This article is devoted to the hydrodynamic limit and we mention that, beside the article of Funaki and Spohn~\cite{FS}, important approaches have been proposed by Guo, Papanicolaou, Varadhan~\cite{guo1988nonlinear} and Yau~\cite{yau1991relative}, and more recently in the contributions~\cite{grunewald2009two, F2012, dizdar2018quantitative} (in these three references, the proofs make important use of logarithmic Sobolev inequalities). We additionally refer to the recent works~\cite{giunti2022quantitative, gu2024quantitative, funaki2024quantitative} where quantitative homogenization methods are used to study interacting particles systems (and in particular obtain quantitative hydrodynamic limits).

Other properties of the model have been successfully investigated.  In the uniformly convex setting, the scaling limit of the model was identified by Brydges and Yau~\cite{BY} in a perturbative setting, and by Naddaf, Spencer~\cite{NS} and Giacomin, Olla, Spohn~\cite{GOS} for general uniformly convex potentials. Large deviation estimates and concentration inequalities were established by Deuschel, Giacomin and Ioffe~\cite{DGI00}, and sharp decorrelation estimates for the discrete gradient of the field were obtained by Delmotte and Deuschel~\cite{DD05}. The scaling limit of the field in finite-volume was established by Miller~\cite{Mi}. More recently, the $C^2$ regularity of the surface tension and the fluctuation-dissipation relation were proved by Armstrong and Wu~\cite{AW} (see also the recent subsequent work of Wu~\cite{wu2022local}) and by Adams and Koller~\cite{adams2023hessian}, and Deuschel and Rodriguez~\cite{deuschel2022isomorphism} identified the scaling limit of the square of the gradient field. We conclude this paragraph by mentioning that the maximum of the interface has been investigated in the recent articles~\cite{BWu2020, wu2019subsequential, schweiger2024tightness}.

The case of non-uniformly convex potentials was studied in the high temperature regime by Cotar, Deuschel and M\"{u}ller~\cite{CDM09}, who established the strict convexity of the surface tension, and by Cotar and Deuschel~\cite{CD12} who proved the uniqueness of ergodic Gibbs measures, obtained sharp estimates on the decay of covariances and identified the scaling limit of the model (see also~\cite{DNV19} for the hydrodynamic limit).
The strict convexity of the surface tension in the low temperature regime was established by Adams, Koteck{\'y} and M\"{u}ller~\cite{AKM16} through a renormalization group argument. This renormalization group approach was further developed in~\cite{adams2019cauchy} to obtain a (form of) verification of the Cauchy-Born rule for these models (we additionally refer to the works of Hilger~\cite{hilger2016scaling, hilger2020scaling, hilger2020decay} for additional results in this line of research). In~\cite{BK07}, Biskup and Koteck{\'y} showed the possible non-uniqueness of infinite-volume, shift-ergodic gradient Gibbs measures for some nonconvex interaction potentials, and Biskup and Spohn~\cite{BS11} proved that, for an important class of nonconvex potentials, the scaling limit of the model is a Gaussian free field (see Armstrong and Wu~\cite{AW23} for the scaling limit of the SOS-model using a similar strategy). We finally mention the recent works of Magazinov and Peled~\cite{magazinov2020concentration}, who established sharp localization and delocalization estimates for a class of convex degenerate potentials $V$, the one of Andres and Taylor~\cite{AT21} who identified the scaling limit of the model for a class of convex potentials satisfying the assumption $\inf V'' \geq \lambda > 0,$ and the recent work of Sellke~\cite{Sel24} who obtained sharp upper bounds for the localization/delocalization of the interface for a broad class of potentials. Another important related model is the so-called integer-valued Gaussian free field (where the interface $\varphi$ is assumed to take values in $\Z$ instead of $\R$), for which the scaling limit was recently identified (in two dimensions and at high temperature) in a series of breakthrough articles by Bauerschmidt, Park and Rodriguez~\cite{bauerschmidt2022discreteI, bauerschmidt2022discreteII}.

We finally refer to the thesis of Sheffield~\cite{Sh} for many additional results and techniques on this class of models (including large deviations principles for the random interface, proof of the strict convexity of the surface tension, the introduction of the cluster swapping, etc.).

\subsection{Sketch of proof}

The proof of Theorem~\ref{main.thm} relies on a combination of ideas and techniques developed in four different articles~\cite{armstrong2022quantitative, MO16, biskup2018limit, D23U}. Each of them is described in a subsection below.

\subsubsection{The Langevin dynamics as a stochastic homogenization problem} \label{section:articlewithScott}

The starting point of this article is an analogy between hydrodynamic limit for Langevin dynamics and stochastic homogenization of nonlinear equation which was first exploited in~\cite{armstrong2022quantitative}. 

To be more specific, in the standard problem of stochastic homogenization of nonlinear elliptic equations (see e.g.~\cite{AS, AFK1, AFK2, FN19, CG21}), one considers a random Lagrangian $L : (x , p) \mapsto L(x , p)$ with $x , p \in \Rd$ and assume that $L$ is uniformly convex in the $p$ variable. One is then interested in studying the large-scale behaviour of the solutions of the nonlinear elliptic equation
\begin{equation} \label{eq:20370303}
    \nabla \cdot D_p L(x , \nabla u) = 0 ~\mbox{in}~ \Rd.
\end{equation}
 The standard homogenization theorem~\cite{DM1,DM2} asserts that, under some suitable assumptions on the law of the Lagrangian, there exists a deterministic \emph{effective Lagrangian} $p \mapsto \bar L(p)$ such that any solution of~\eqref{eq:20370303} is well-approximated over large scales by a solution $\bar u$ of the equation
\begin{equation*}
    \nabla \cdot D_p \bar L(\nabla \bar u) = 0 ~\mbox{in}~ \Rd.
\end{equation*}
The starting point of our analysis is the observation that the Langevin dynamic~\eqref{eq:introSDE} can be viewed as a (discrete) nonlinear parabolic equation with noise, where the randomness is not encoded in the Lagrangian but externally through the Brownian motions.

In comparison to the homogenization theorem mentioned above, the hydrodynamic limit for the $\nabla \phi$ model~\cite{FS} states that the solutions of the Langevin dynamics~\eqref{eq:introSDE} are well-approximated over large-scales by the solution of the deterministic equation
\begin{align*}
\partial_t \bar u - \nabla \cdot D_p \bar \sigma (\nabla \bar u) = 0.
\end{align*}
The hydrodynamic limit can thus be viewed as a homogenization theorem, where the surface tension $\bar \sigma$ plays the role of the effective Lagrangian. The main objective of~\cite{armstrong2022quantitative} was to make this analogy rigorous and to prove (under the assumption that the potential is uniformly convex) a quantitative version of the hydrodynamic limit using the classical tool used in stochastic homogenization, namely the \emph{two-scale expansion}.

An important ingredient in the implementation of a two-scale expansion is the first-order corrector. In the case of the Langevin dynamic~\eqref{eq:introSDE}, we will make use of a finite-volume version of this quantity introduced in Definition~\ref{Prop:Langevin} below (we may refer to this function as either the (Langevin) dynamic ot the (first-order) corrector). Two properties are important on the first-order corrector in order to implement a two-scale expansion: the sublinearity of the first-order corrector and of the weak-norm of its flux. Sections~\ref{sec:section5sublinearity} and~\ref{sec:section5sublinearityflux} are devoted to the proofs of these properties.

Compared to the article~\cite{armstrong2022quantitative}, the main additional difficulty is the non-uniform convexity of the potential allowed by the Assumption~\eqref{AssPot}. Uniform convexity is both useful to study the Gibbs measure~\eqref{def.gradphifinitevol}, via for instance the Brascamp-Lieb inequality, and the Langevin dynamic, via elliptic regularity estimates such as the Caccioppoli inequality or the Nash-Aronson estimate.

In order to compensate for the lack of uniform ellipticity, we rely on a technique introduced by  Mourrat and Otto~\cite{MO16} and further developed by Biskup and Rodriguez~\cite{biskup2018limit} which is discussed in more details below.

\subsubsection{Parabolic equations with degenerate coefficients and the moderated environment} \label{sec:intromoderatedenvt}

The articles of Mourrat and Otto~\cite{MO16} and Biskup and Rodriguez~\cite{biskup2018limit} are devoted to the following problem (N.B. the formalism has been slightly tweaked to match the one used in this article): they consider solutions of the discrete parabolic elliptic equations
\begin{equation} \label{eq:parabolicintro}
    \partial_t u - \nabla \cdot \a \nabla u = 0,
\end{equation}
where $\a : (0 , \infty) \times \Zd \to \mathcal{S}^+(\R)$ is an environment (see Section~\ref{sec.defellipticequation}). Under the assumption that the matrix $\a$ is uniformly elliptic (i.e., its eigenvalues lie between two strictly positive constants), many properties can be established regarding the regularity of the solutions of~\eqref{eq:parabolicintro}. An interesting and fruitful line of research consists in extending these results of regularity to environments which are degenerate but random and whose law satisfies some suitable assumptions (see~\cite{biskupsurvey} for a review). 

The articles~\cite{MO16, biskup2018limit} belong to this line of research and work under the following assumption on the law of the environment: if we let $\Lambda_-(t , x)$ be the smallest eigenvalue of $\a(t, x)$ and define
\begin{equation*}
    \mathbf{m}(t , x) := \int_0^\infty \frac{\Lambda_-(t + s , x)}{(1 + s)^4} \, ds,
\end{equation*}
then this random variable is almost surely strictly positive and satisfies
\begin{equation} \label{eq:assumptionmoderatedintro}
    \E \left[ \mathbf{m}(t , x)^{-q} \right] < \infty ~~ \mbox{for some explicit exponent } q > 1.
\end{equation}
The random variable $\mathbf{m}$ is called \emph{the moderated environment}. In words the assumption~\eqref{eq:assumptionmoderatedintro} states that, if the matrix $\a$ is degenerate at some point $(t , x) \in (0 , \infty) \times \Zd$, then, with high probability, it will not remain degenerate for a very long time.

Under the assumption~\eqref{eq:assumptionmoderatedintro}, Mourrat and Otto~\cite{MO16} obtained on diagonal heat kernel estimates and Biskup and Rodriguez~\cite{biskup2018limit} obtained an $L^\infty$-regularity estimate which is then used to derive a quenched invariance principle for the random walk evolving in the random environment $\a$. 

The general strategy of the present article is to combine the ideas and techniques of~\cite{MO16, biskup2018limit} with the strategy presented in Section~\ref{section:articlewithScott} to establish the hydrodynamic limit for the Langevin dynamic in the setting of a degenerate potential satisfying the Assumption~\eqref{AssPot}. 

An important step in the implementation of this strategy is to verify the moment assumption~\eqref{eq:assumptionmoderatedintro} in the case when the environment $\a$ is given by the formula 
\begin{equation} \label{eq:evtHSintro}
    \a(t , x) := D_p^2 V(\nabla \varphi(t , x)),
\end{equation}
where $(t , x) \mapsto \varphi(t , x)$ is the Langevin dynamic introduced in~\eqref{eq:introSDE} (or more specifically, the one with periodic boundary condition introduced in Definition~\eqref{Prop:Langevin}). This part of the argument relies on the technique developed in~\cite{D23U} to which the next section is dedicated.

\subsubsection{Forcing the fluctuations of the Langevin dynamics} \label{subsection1.2.3}

The core of the proof of the moment assumption~\eqref{eq:assumptionmoderatedintro} when $\a$ is given by~\eqref{eq:evtHSintro} is the following statement on the Langevin dynamic: for any $R > 0$, there exists a constant $c_R > 0$ depending only on $d$ and $R$ such that, for any time $T \geq 1$ and any $x \in \Lambda$,
\begin{equation} \label{eq:22440901}
    \mathbb{P} \left[  \, \forall t \in [0 , T], \, \left| \nabla \varphi(t , x) \right| \leq R \,  \right] \leq 2 \exp \left( - c_R \left| \ln T \right|^{\frac{r}{r-2}} \right).
\end{equation}
The inequality~\eqref{eq:22440901} states that the probability that the gradient of the Langevin dynamic remains in any fixed compact set for a time $T \gg 1$ decays super-polynomially fast in $T$ (as the exponent $r / (r - 2)$ is strictly larger than $1$). Combining this result with the Assumption~\eqref{AssPot} (and some technical work) shows that the probability for the environment $\a(\cdot , x)$ to remain degenerate for a long time $T \gg 1$ decays super-polynomially fast in $T$. This turns out to be sufficiently strong to deduce the moment bound~\eqref{eq:assumptionmoderatedintro} (in fact, it implies that all the moments of the moderated environment are finite).

\begin{figure}
        \centering
        \includegraphics{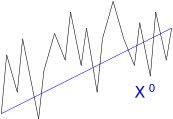}
    \caption{A realization of a Brownian motion (in black) and an increment (in blue).}
\end{figure}

In the rest of this section, we give a brief sketch of the proof of the inequality~\eqref{eq:22440901}, trying to highlight the main ideas without insisting on the technical details. We will in particular present the argument in the simpler setting where $r = 2$ (i.e., the Hessian of $V$ remains bounded). The proof relies on three observations:
\begin{enumerate}
    \item[(i)] The Langevin dynamic $\varphi$ can be seen as a \emph{deterministic function} of the Brownian motions.
    \item[(ii)]  For any $x \in \Zd$, the Brownian motion $B_t(x)$ can be decomposed into a sum of independent increments and Brownian bridges by defining, for any $n \in \N$ and any $t \in [n , n+1]$,
    \begin{equation} \label{def.XnandWnsketch}
            X_n(x) := B_{n+1}(x) -  B_{n}(x) ~~\mbox{and}~~ W_n(t , x) := B_t(x) - B_{n}(x) - (t-n) X_n(x).
    \end{equation}
    \item[(iii)] Using~\eqref{def.XnandWnsketch}, we may rewrite the first line of~\eqref{eq:introSDE} as follows, for any $n \in \N$,
 \begin{equation*}
  \begin{aligned}
    d \varphi(t , x) & = \nabla \cdot D_p V(\nabla \varphi(t , x)) dt  + \sqrt{2} X_n(x) dt + \sqrt{2} dW_n(t , x) &~~\mbox{for}~~(t , x) \in (n , n+1) \times \Lambda.
    \end{aligned}
\end{equation*}
\end{enumerate}
The strategy is then to \emph{differentiate} the Langevin dynamic $\varphi(t , y)$ (for some fixed $(t , y) \in (0 , \infty) \times \Lambda$) with respect to an increment $X_n(x)$. An explicit computation (see Proposition~\ref{prop:propsLangevin}) yields the identity
\begin{equation*}
    \frac{\partial \varphi(t , x)}{\partial X_n(x)}  = \sqrt{2} \int_{n}^{n+1} P_\a \left( t , y ; s , x \right)  \, ds,
\end{equation*}
where $P_\a \left( \cdot , \cdot ; s , x \right)$ is the heat kernel started at time $s$ from the vertex $x$ in the environment $\a$ (see Section~\ref{sec:sectionheatkernel} for a formal definition in the periodic setting).

The strategy is then to use the properties of the heat-kernel to prove that there exists an explicit constant $c > 0$ such that
\begin{equation*}
    \frac{\partial \varphi(n+1 , x)}{\partial X_n(x)} \geq c > 0.
\end{equation*}
This inequality means that the partial derivative of the value $\varphi(n+1 , x )$ with respect to the increment $X_n(x)$ is lower bounded by $c > 0$, and thus that $\varphi(n+1 , x )$ is a random variable which is \emph{sensitive} to the increment $X_n(x)$. In fact a similar statement can be established on the \emph{gradient} of the dynamic $\nabla \varphi(n+1 , x )$. Combining this result with the observation that the distribution of the increment $X_n(x)$ is unbounded (as it is Gaussian), we obtain that for any $R > 1$, there exists a constant $c'_R >0$ such that
\begin{equation*}
    \mathbb{P} \left[ \left| \nabla \varphi(n+1 , x) \right| \leq R \right] \leq 1 - c'_R.
\end{equation*}
Using that the increments of the Brownian motion are independent, the previous inequality can be iterated to obtain a result of the form: for any $N \in \N$,
\begin{equation} \label{eq:introexprate}
    \mathbb{P} \left[ \forall n \in \{ 1 , \ldots , N \}, ~ \left| \nabla \varphi (n , x) \right| \leq R \right] \leq ( 1 - c'_R)^N,
\end{equation}
which implies, under the additional assumption $r = 2$, that the left-hand side of~\eqref{eq:22440901} decays exponentially fast in $T$ (this is much stronger than the super-polynomial decay on the right-hand side of~\eqref{eq:22440901}). 

This argument is essentially a proof and could be implemented (in the case $r = 2$) to obtain an exponential decay for the probability of the gradient of the dynamic to remain in a bounded set for a long time. In the case $r > 2$ considered in this article, the unboundedness of the Hessian of $V$ causes some additional technical difficulties, which result in a deterioration of the stochastic integrability estimate from the exponential rate on the right-hand side of~\eqref{eq:introexprate} to the super-polynomial rate on the right-hand side of~\eqref{eq:22440901}.

\subsection{Convention for constants} Throughout this article, the symbols $C$ and $c$ denote strictly positive constants with $C$ larger than $1$ and $c$ smaller than $1$. We allow these constants to vary from line to line, with $C$ increasing and $c$ decreasing. These constants may depend only on the dimension $d$, the potential $V$ and, in Section~\ref{sec:section6}, on the initial condition $f$. We specify the dependency of the constants and exponents by writing, for instance, $C := C(d , V)$ to mean that the constant $C$ depends on the parameters $d$ and $V$.

\section{Preliminary results and notation}

\subsection{Notation}

We unfortunately must introduce quite a bit of notation, particularly since we are making use of techniques and results from different settings (discrete parabolic equations, stochastic homogenization, statistical mechanics). The reader is encouraged to skim and consult as a reference.

\subsubsection{General notation}

We consider the hypercubic lattice $\Zd$, the real vector space $\Rd$ in dimension $d \geq 2$ and denote by $(e_1 , \ldots , e_d)$ the canonical basis of $\Rd$. For $x , y \in \Rd$, we use the notation $x \cdot y$ to refer to the Euclidean scalar product on the spaces $\Rd$ (or $\R^{2d}$). We denote by $\left|\cdot \right|$ the Euclidean norm on $\Rd$ and write $\left|\cdot \right|_+ := \left|\cdot \right|+1$. Given two vertices $x , y \in \mathbb{Z}^d$, we write $x \sim y$ if $|x - y| = 1$. We denote by $\mathcal{S}^+(\Rd)$ the set of $(d \times d)$ symmetric matrices with positive eigenvalues.

Given two real numbers $a , b$, we denote by $a \wedge b := \min(a , b)$ and by $a \vee b := \max(a , b)$, and by $\lfloor a \rfloor$ and $\lceil a \rceil$ the floor and ceiling of $a$ respectively. We denote by $\indc_A$ the indicator function of a set $A$.

Given an integer $L \in \N$, we introduce the box and parabolic cylinder
\begin{equation*}
    \Lambda_L := \{-L, \ldots, L\}^d
    \subseteq \Z^d
    ~~\mbox{and}~~ Q_L := (-L^2 , 0) \times \Lambda_L.
\end{equation*}
We denote by $|\Lambda_L | := (2L+1)^d$ the cardinality (or volume) of the box $\Lambda_L$ and by $Q_L := L^2 (2L+1)^d$ the volume of the parabolic cylinder. More generally, given a finite set $U \subseteq \Zd$ and a bounded interval $I \subseteq \R$, we denote by $|U|$ the cardinality of $U$ and by $|I \times U| = |I| \times |U|$ (where $|I|$ is the Lebesgue measure of $I$) the volume of the parabolic cylinder $I \times U$.

We denote by $\partial^+ \Lambda_L$ the outer boundary of $\Lambda_L$, i.e., $\partial^+ \Lambda_L := \left\{ y \in \Zd \setminus \Lambda \, : \, \exists x \in \Lambda_L, \, y \sim x \right\}$.

As mentioned above, we let $\mathbb{T}_L := (\Z/(L+1) \Z)^d$ be the $d$-dimensional discrete torus and denote by $|\mathbb{T}_L| := (2L+1)^d$. We note that we may identify the vertices of $\mathbb{T}_L$ with the ones of $\Lambda_L $ (N.B. the torus is only used in this article to emphasize that we work with periodic boundary conditions)

\subsubsection{Brownian motions} \label{sectionBM}

Throughout this paper, we consider a collection of independent Brownian motions $\left\{ B_t(x) \, : \, t \geq 0, \, x \in \Zd \right\}$. For a technical reason (in Section~\ref{sec:introdLangevindyn}), we will need to have a definition for a Brownian motion defined for any time $t \in \R$ (and not only for the positive times). We will thus extend the previous definition as follows: we consider a second collection of independent Brownian motions $\left\{ B_t^1(x) \, : \, t \geq 0, \, x \in \Zd \right\}$ (which are independent of $(B_t(x))_{x \in \Zd, t \geq 0}$) and set, for any $t \in (- \infty , 0)$,
\begin{equation*}
    B_t (x) = B_{-t}^1(x).
\end{equation*}
This gives a reasonable definition of a Brownian motion defined on $\R$, since the trajectories are continuous, for any $t , s \in \R$, $B_t - B_s$ is a Gaussian random variable of variance $|t - s|$ and for any $t , s , t_1 , s_1 \in \R$ with $s < t < s_1 < t_1$ the random variables $B_t - B_s$ and $B_{t_1} - B_{s_1}$ are independent. Let us additionally note that the following property holds: for any $T \in \R$ and any $x \in \Zd$, the process $\left\{ B_{T + t}(x) - B_T(x) \, : \, t \geq 0 \right\}$ is a Brownian motion.

We denote by $\mathbb{P}$ the law of these Brownian motions and by $\E$ the corresponding expectation. We will denote by
\begin{equation*} 
    \tilde{B}_t(x) := B_t(x) - \frac{1}{\left| \Lambda_L \right|} \sum_{x \in \Lambda_L} B_t(x) ~~\mbox{for}~ t \in \R.
\end{equation*}
The collection $\{ \tilde B_t(x) \, : \, t \in  \R, \, x \in \mathbb{T}_L \}$ (identifying the vertices of the box $\Lambda_L$ with the ones of the torus $\mathbb{T}_L$) is a Brownian motion on the Euclidean vector space $\Omega^\circ_L := \left\{ \varphi : \mathbb{T}_L \to \R \, : \, \sum_{x \in \mathbb{T}_L} \varphi(x) =0 \right\}$ (equipped with the $L^2$ scalar product).

\medskip

\subsubsection{Discrete differential and elliptic operators} \label{sec.defellipticequation}

Given a function $\varphi : \Zd \to \mathbb{R}$ and a vertex $x \in \Zd$, we define the discrete gradient $\nabla \varphi : \Zd \to \mathbb{R}^d$ according to the formula,
\begin{align} \label{def:discretegradient}
    \nabla \varphi (x) &:= \left(  \nabla_1 \varphi(x) , \ldots, \nabla_d \varphi(x)\right) \\
                    & = \left(  \varphi(x + e_1) - \varphi(x ), \ldots,  \varphi(x + e_d) -  \varphi(x ) \right) \in \mathbb{R}^d, \notag
\end{align}
A vector field is a map $\vec{F} := (\vec{F}_1 , \ldots, \vec{F}_d) : \Zd \to \Rd$. The divergence of a vector field is the map $\nabla \cdot \vec{F} : \Zd \to \R$ defined according to the identity, for any $x \in \Zd$,
\begin{equation} \label{def:discretediv}
    \nabla \cdot \vec{F}(x) := \sum_{i = 1}^d (\vec{F}_i(x) - \vec{F}_i(x - e_i)).
\end{equation}
The discrete divergence is defined so as to satisfy the following integration by parts property: for any finitely supported function $v : \Zd \to \R$,
\begin{equation*} 
    \sum_{x \in \Zd} (\nabla \cdot \vec{F}(x)) v(x) = - \sum_{x \in \Zd} \vec{F}(x) \cdot \nabla v(x).
\end{equation*}
An environment is a function $\a : \Zd \to \mathcal{S}^+(\R^d)$.
We introduce the discrete elliptic operator, for any function $u : \Zd \to \R$,
\begin{equation} \label{def:divagrad}
    \nabla \cdot \a \nabla u (x) := \sum_{i = 1}^d e_i \cdot \left( \a(x) \nabla u(x) - \a(x - e_i) \nabla u(x- e_i) \right).
\end{equation}
The notation is consistent with the definition of the discrete divergence~\eqref{def:discretediv}, and the formula~\eqref{def:divagrad} can be obtained equivalently by applying the discrete divergence to the vector field $x \mapsto \a(x) \nabla u(x)$. As a consequence, the following property holds: for any functions $u , v : \Zd \to \R$ with $u$ or $v$ finitely supported
\begin{equation} \label{eq:discreteIPP}
    \sum_{x \in \Zd} \left( \nabla \cdot \a \nabla u (x) \right) v(x) = - \sum_{x \in \Zd} \nabla v(x) \cdot \a(x) \nabla u (x).
\end{equation}
All the previous definitions are implicitly extended from $\Zd$ to the discrete torus $\mathbb{T}_L$ and to functions depending on time.

\subsubsection{The potential $V$}

In this article, we let $V \in C^2(\Rd)$ be a fixed convex potential satisfying the Assumption~\eqref{AssPot}. We denote by $D_p V : \Rd \to \Rd$ its gradient and $D_p^2 V : \Rd \to \mathcal{S}(\Rd)$ its Hessian, i.e.,
\begin{equation*}
    \forall p \in \Rd, ~ D_p V(p) := \left( \frac{\partial V}{\partial p_1}(p) , \ldots, \frac{\partial V}{\partial p_d}(p) \right) ~~\mbox{and}~~ D_p^2 V(p) := \left( \frac{\partial^2 V}{\partial p_i \partial p_j}(p) \right)_{i , j \in \{ 1 , \ldots, d\}}.
\end{equation*}
We use the notation $D_p V$ instead of the more standard $\nabla V$ in order to reserve the notation $\nabla$ for the discrete gradient~\eqref{def:discretegradient} (for functions defined on $\Zd$).

For $p \in \Rd$, we denote by $\Lambda_+(p)$ to be the largest eigenvalue of the symmetric positive matrix $D^2_p V(p)$, i.e.,
\begin{equation*}
    \Lambda_+(p) := 1 \vee \sup_{\substack{\xi \in \Rd \\ | \xi | \leq 1}} \xi \cdot D^2_p V(p) \xi.
\end{equation*}
 The maximum with $1$ is added so that $\Lambda_+$ is always larger than $1$ (this is to simplify the notation in the proofs below). For the smallest eigenvalue, we adopt a more general definition
\begin{equation} \label{eq:defLambda-}
    \Lambda_-(p) := \inf_{q \in \Rd} \inf_{\substack{\xi \in \Rd \\ | \xi | \leq 1}} \int_0^1  \xi \cdot D^2_p V((1-t)p + t q) \xi  \, dt.
\end{equation}
Note that $\Lambda_-(p)$ is smaller than the smallest eigenvalue of the $D^2_p V(p)$ (this is obtained by taking $q = p$ in the previous definition).

The following lemma provides provides upper and lower bounds on the values $\Lambda_+(p)$ and $\Lambda_-(p)$. They are simple consequences of the Assumption~\eqref{AssPot}.

\begin{lemma} \label{lemma.upperandlowerboundLambda}
There exist three constants $R_1 := R_1(d , V) \in (2 , \infty)$, $C := C(d , V) < \infty$ and $c := c(d , V) > 0$ such that the following hold:
\begin{itemize}
\item For any slope $p \in \Rd$,
\begin{equation*}
    \Lambda_+(p) \leq C |p|^{r-2}_+.
\end{equation*}
\item For any slope $p \in \Rd$ satisfying $|p| \geq R_1/2$,
\begin{equation*}
     \Lambda_-(p) \geq c |p|^{r-2}_+ + 1.
\end{equation*}
\item For any slope $p \in \Rd$, any $q \in \Rd$ with $|q| \geq R_1$ and any $\xi \in \Rd$,
\begin{equation*}
    \int_0^1  \xi \cdot D^2_p V((1-t)p + t q) \xi  \, dt \geq |\xi|^2.
\end{equation*}
\end{itemize}
\end{lemma}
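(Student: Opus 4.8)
The plan is to unwind the three definitions ($\Lambda_+$, $\Lambda_-$, and the weighted integral) against Assumption~\eqref{AssPot}, choosing $R_1$ large enough (in terms of $R_0$, $c_-$, $c_+$, $d$) that the ``good'' region dominates. For the first bullet, I would start from the $\leq$ half of~\eqref{AssPot}, which gives $D_p^2 V(x) \leq c_+ |x|^{r-2} I_d$ for $|x| \geq R_0$; since $V \in C^2$, the Hessian is bounded on the compact ball $\{|x| \leq R_0\}$ by some constant $M := M(d,V)$. Hence for all $p$, $\sup_{|\xi|\le 1}\xi\cdot D_p^2V(p)\xi \leq \max(M, c_+ |p|^{r-2}) \leq C|p|_+^{r-2}$ for a suitable $C$ (using $r>2$ and $|p|_+ = |p|+1$), and taking the max with $1$ only affects the constant. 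This bullet is essentially immediate.

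For the third bullet, the key observation is that the segment $t\mapsto (1-t)p + tq$ ends at $q$ with $|q|\ge R_1$, so for $t$ close to $1$ the segment lies in the region $\{|x|\ge R_0\}$ where the lower bound $D_p^2 V \geq c_- |x|^{r-2} I_d \geq c_- I_d$ holds (using $R_0\ge 1$). More precisely, the fraction of the segment lying outside the ball of radius $R_0$ can be bounded below by a quantity depending only on $R_1/R_0$ — this requires a small geometric estimate: the chord from $p$ to $q$ with $|q|\ge R_1$ spends at least a definite proportion of its length (say, the last portion near $q$) at distance $\geq R_0$ from the origin, uniformly in $p$, provided $R_1$ is large. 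On the degenerate part of the segment (inside $\{|x|\le R_0\}$) we only use $D_p^2 V \geq 0$ by convexity, so that part contributes nothing negative. Combining, $\int_0^1 \xi\cdot D_p^2 V((1-t)p+tq)\xi\,dt \geq c_- \cdot (\text{length fraction in good region}) \cdot |\xi|^2 \geq |\xi|^2$ once $R_1$ is chosen so that $c_-$ times that fraction exceeds $1$; if $c_-$ is small this forces $R_1$ large, which is fine since $R_1$ is allowed to depend on $V$.

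For the second bullet, I would take $q$ in~\eqref{eq:defLambda-} to be a vector with $|q| \geq R_1$ chosen adversarially, but note that the infimum over $q$ in the definition of $\Lambda_-(p)$ also ranges over $q$ with $|q| < R_1$; however when $|p|\ge R_1/2$ one can split into cases. When the segment from $p$ to $q$ stays in the good region $\{|x|\ge R_0\}$ (which happens, e.g., if $|q|$ is comparable to or larger than $|p|$, or more carefully whenever the segment does not pass near the origin), the integral is $\geq c_- \int_0^1 |(1-t)p+tq|^{r-2}\,dt$, and a convexity/Jensen-type argument on $t\mapsto |(1-t)p+tq|^{r-2}$ together with $|p|\ge R_1/2$ large yields a bound $\geq c|p|_+^{r-2}$; the worst case is when $q$ is chosen to make the segment dip toward the origin, and one checks that even then, because $|p|$ is large, a definite portion of the segment near the endpoint $p$ has $|(1-t)p+tq| \geq |p|/2 \geq R_1/4 \geq R_0$, contributing $\geq c_- (|p|/2)^{r-2}\cdot(\text{fraction})$. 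Adding the $+1$ is harmless since $\Lambda_-(p)$ is easily seen to be $\geq 0$ and one can absorb a $+1$ into the constant by noting the good-region Hessian is also $\geq c_- R_0^{r-2} \cdot (\text{fraction}) > 0$; alternatively one rescales so the stated bound $c|p|_+^{r-2}+1$ holds after shrinking $c$.

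The main obstacle is the second bullet: unlike $\Lambda_+$, the quantity $\Lambda_-(p)$ involves an infimum over \emph{all} $q\in\Rd$, including $q$ that steer the segment $(1-t)p+tq$ through the degenerate ball around the origin, where no ellipticity is available. The crux is therefore the elementary but slightly delicate geometric claim that, once $|p|$ is large (at least $R_1/2$ with $R_1$ chosen appropriately), the segment from $p$ to any $q$ necessarily contains a sub-interval of definite length on which $|(1-t)p+tq|$ is bounded below by a constant multiple of $|p|$ — essentially because the segment cannot leave a large ball instantly. Making this quantitative and uniform in both $p$ (large) and $q$ (arbitrary), then integrating, is the only nontrivial computation; everything else is bookkeeping with the constants from~\eqref{AssPot} and the $C^2$ bound of $V$ on a fixed compact set.
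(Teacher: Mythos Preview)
The paper does not prove this lemma, remarking only that the assertions are ``simple consequences of the Assumption~\eqref{AssPot}.'' Your plan is essentially correct; in particular the geometric claim you isolate for the second bullet --- that for $|p|\ge R_1/2$ the segment from $p$ to any $q$ contains a sub-interval of definite length on which $|(1-t)p+tq|\ge |p|/2$ --- is the heart of the matter and is true: the set $\{t\in[0,1]:|(1-t)p+tq|<|p|/2\}$ is a single interval of $t$-length at most $\min(1,|p|/|q-p|)$, and a two-case split on whether $|q-p|\ge 2|p|$ shows its complement always has measure at least $1/4$.

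There is one slip in your treatment of the third bullet. You lower-bound the integrand by $c_-|\xi|^2$ on the good region $\{|\gamma(t)|\ge R_0\}$ and then try to achieve $c_-\cdot(\text{good fraction})\ge 1$ by enlarging $R_1$; but the good fraction is at most $1$, so if $c_-<1$ this can never succeed, no matter how large $R_1$ is. The fix is to reuse the second-bullet mechanism with the roles of $p$ and $q$ exchanged: since $|q|\ge R_1$, there is a sub-interval of measure at least $1/4$ on which $|\gamma(t)|\ge |q|/2\ge R_1/2$, hence $D_p^2V(\gamma(t))\ge c_-(R_1/2)^{r-2}I_d$ there, and choosing $R_1$ large enough that $\tfrac14\,c_-(R_1/2)^{r-2}\ge 1$ gives the claim. (Your aside about a ``convexity/Jensen-type argument'' on $t\mapsto|(1-t)p+tq|^{r-2}$ is likewise a red herring --- for $2<r<3$ this map need not be convex, and Jensen would only control the midpoint value, which vanishes when $q=-p$; the sub-interval argument is what actually does the work.)
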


\subsubsection{Norms and Sobolev spaces} \label{subsecfunctions}

In what follows, we let $q \in (1 , \infty)$ be an exponent and denote by $q' = q/(q-1)$ the conjugate exponent of $q$. Given an integer $L \in \N$ and a function $u : \Lambda_L \to \R$, we define the following scaled norms:
\begin{itemize}
    \item \emph{$L^2$-norm:}
    $
    \left\| u \right\|_{\underline{L}^2 \left( \Lambda_L \right)}^2 :=  \frac{1}{\left| \Lambda_L \right|} \sum_{x \in \Lambda_L} \left| u(x)\right|^2,
    $ 
    \item \emph{$L^q$-norm:}
    $
    \left\| u \right\|_{\underline{L}^q \left( \Lambda_L \right)}^q :=  \frac{1}{\left| \Lambda_L \right|} \sum_{x \in \Lambda_L} \left| u(x)\right|^q,
    $
    \item \emph{$W^{1,q}$-norm:}
    $
        \left\| u \right\|_{\underline{W}^{1,q}(\Lambda_L)} :=  \frac{1}{L} \left\| u \right\|_{\underline{L}^q (\Lambda_L)} + \left\| \nabla u \right\|_{\underline{L}^q (\Lambda_L)},
    $
    \item \emph{$W^{-1,q}$-norm:}
        $
        \left\| u \right\|_{W^{-1,q}(\Lambda_L)} := \sup \left\{ \frac{1}{\left| \Lambda_L \right|} \sum_{x \in \Lambda_L} u(x) v(x) \, : \,   \left\| v \right\|_{\underline{W}^{1,q'}(\Lambda_L)} \leq 1 \, \mbox{and} \, v = 0 ~ \mbox{on} ~ \partial^+ \Lambda_L \right\}.
        $
\end{itemize}

In the parabolic setting, given an integer $L \in \N$, a function $u : Q_L \to \R$ and a vector field $\vec{F}  : Q_L \to \R$, we define their average value over $Q_L$ according to the formulae
\begin{equation*}
    \left( u \right)_Q := \frac{1}{|Q_L|} \int_{(-L^2 , 0)} \sum_{x \in \Lambda_L} u(t , x) \, dt ~~ \mbox{and}~~ ( \vec{F} )_{Q_L} := \frac{1}{|Q_L|} \int_{(-L^2 , 0)} \sum_{x \in \Lambda} \vec{F}(t , x) \, dt.
\end{equation*}

We then define the following norms and Sobolev spaces, for any function $u : Q_L \to \R$,
\begin{itemize}
    \item \emph{$L^2$-norm:}
    $
    \left\| u \right\|_{\underline{L}^2 \left( Q_L \right)}^2 :=  \frac{1}{L^2} \int_{(-L^2 , 0)}\left\| u(t , \cdot)\right\|^2_{\underline{L}^2(\Lambda_L)} \, dt,
    $
    \item \emph{$L^q$-norm:}
    $
    \left\| u \right\|_{\underline{L}^q \left( Q_L \right)}^q :=  \frac{1}{L^2} \int_{(-L^2 , 0)}\left\| u(t , \cdot)\right\|^q_{\underline{L}^q(\Lambda_L)} \, dt,
    $
    \item \emph{$L^qW^{1,q}$-norm:}
    $
        \left\| u \right\|_{\underline{L}^q((-L^2 , 0) , \underline{W}^{1,q}(\Lambda_L))}^q :=  \frac{1}{L^2} \int_{(-L^2 , 0)}\left\| u(t , \cdot)\right\|^q_{\underline{W}^{1,q}(\Lambda_L)} \, dt,
    $
    \item \emph{$L^{q}W^{-1,q}$-norm:}
         $
        \left\| u \right\|_{\underline{L}^{q}((-L^2 , 0) , \underline{W}^{-1,q}(\Lambda_L))}^{q} := \frac{1}{L^2} \int_{(-L^2 , 0)}\left\| u(t , \cdot)\right\|^{q}_{\underline{W}^{-1,q}(\Lambda_L)} \, dt,
        $
    \item  \emph{$W^{1,q}_{\mathrm{par}}$-norm:}
    $
    \left\| u \right\|_{\underline{W}^{1,q}_{\mathrm{par}}(Q_L)} := \frac{1}{L} \left\| u \right\|_{\underline{L}^q \left( Q_L \right)} + \left\| \nabla u \right\|_{\underline{L}^q \left( Q_L \right)} + \left\| \partial_t u \right\|_{\underline{L}^{q}((-L^2 , 0) , \underline{W}^{-1,q}(\Lambda_L))},
    $
    \item  \emph{$\hat{W}^{-1,q}_{\mathrm{par}}$-norm:}
    $
    \left\| u \right\|_{\hat{W}^{-1,q}_{\mathrm{par}}(Q_L)} := \sup \left\{ \frac{1}{\left| Q_L \right|} \int_{(-L^2,0)}\sum_{x \in \Lambda_L} u(t , x) v(t , x) \, dt : \,  \left\| v \right\|_{\underline{W}^{1,q'}_{\mathrm{par}}(Q_L)} \leq 1 \right\}.
    $
\end{itemize}

\subsection{Parabolic equations}

This section introduces some notation pertaining to discrete parabolic equations. We will fix an integer $L \in \mathbb{N}$ and a time dependent environment $\a :\R \times \mathbb{T}_L \to \mathcal{S}^+(\R^d)$. We denote below the discrete Dirac $\delta_x : \mathbb{T}_L \to \R$ to be the function given by the identity $\delta_x(x) = 1$ and $\delta_x(y) = 0$ for $y \in \mathbb{T}_L \setminus \{ 0 \}$.

\subsubsection{Heat kernel} \label{sec:sectionheatkernel}

\begin{definition}[Heat kernel on the torus]
For a time $s \in \mathbb{R}$ and a vertex $y \in \mathbb{T}_L$, we define the discrete heat kernel $P_\a(\cdot , \cdot ; s , y) : (s , \infty) \times \mathbb{T}_L \to \R$ started from $y$ at time $s$  to be the solution of the discrete parabolic equation
\begin{equation} \label{eq:defheatkernelPa}
    \left\{ \begin{aligned}
        \partial_t P_\a - \nabla \cdot \a \nabla P_\a & = 0 &~~\mbox{in}~~ &(s , \infty) \times \mathbb{T}_L, \\
        P_\a (s , \cdot ; s , y ) & = \delta_{x} - \frac{1}{\left| \mathbb{T}_L \right|} &~~\mbox{in}~~ &\mathbb{T}_L.
    \end{aligned} \right.
\end{equation}
\end{definition}

\begin{remark}
    Let us make a few remarks about the previous definition:
    \begin{itemize}
        \item We choose here a definition of the heat kernel in the torus (i.e., with periodic boundary conditions) as it will be useful to study the periodic Gibbs measure introduced in Definition~\ref{def:periodicGibbsmeasure} (or the periodic Langevin dynamic defined below).
        \item As it will be useful in the proofs below, we extend the definition of the heat kernel $P_\a(\cdot , \cdot ; s , x)$ to all the times $t \in \R$ by setting 
        \begin{equation} \label{eq:conventionheatnegativetime}
            P_\a(t , x; s , y) := 0 ~~\mbox{if}~~ t < s.
        \end{equation}
        \item The reason we added the term $1/\left| \mathbb{T}_L \right|$ in the initial condition is due to the periodic boundary condition. A property that is convenient for the heat kernel to satisfy is that it converges to $0$ as the time tends to infinity (under reasonable assumptions on the environment $\a$), but in the periodic setting, we have the following (preservation of mass) property (obtained by summing the first inequality of~\eqref{eq:defheatkernelPa} over all the vertices of $\mathbb{T}_L$ and applying a discrete integration by parts)
        \begin{equation*}
            \partial_t \sum_{x \in \mathbb{T}_L} P_\a (t , x ; s , y ) = 0 ~~ \implies ~~ t \mapsto \sum_{x \in \mathbb{T}_L} P_\a (t , x ; s , y ) \mbox{ is constant} .
        \end{equation*}
        In particular, the heat kernel can only converge to $0$ if the initial condition has an average value equal to $0$. Subtracting the term $1/\left|\mathbb{T}_L\right|$ to the Dirac $\delta_y$ ensures that this property holds.
    \end{itemize}
   
\end{remark}

The following proposition collects two elementary properties of the heat kernel (the proof of~\eqref{eq:energyestimateheatkernel} is obtained by multiplying the first line of~\eqref{eq:defheatkernelPa} by $P_\a$, summing over the vertices of the torus and performing the discrete integration by parts~\eqref{eq:discreteIPP}).

\begin{proposition}[Properties of the heat kernel] \label{prop:propheatkernel}
For any $(s , y) \in \R \times \mathbb{T}_L$ and any time $t \geq s$, one has the identity
\begin{equation} \label{eq:energyestimateheatkernel}
     \partial_t \left\| P_\a (t , \cdot ; s , y) \right\|_{L^2 (\mathbb{T}_L)}^2 = -  \sum_{x \in \mathbb{T}_L} \nabla P_\a (t, x ; s , y) \cdot \a(t , x) \nabla P_\a(t , x; s , y ).
\end{equation}
By the assumption $\a(t , x) \in \mathcal{S}^+(\Rd)$, the term on the right-hand side is negative, and thus the following properties hold:
\begin{itemize}
    \item \textit{Decay of the $L^2$ norm:} the map $t \mapsto \left\| P_\a (t , \cdot ; s , y) \right\|_{L^2 (\mathbb{T}_L)}$ is decreasing.
    \item \textit{Pointwise and $L^2$ bounds on the heat kernel:} for any time $t \geq s$ and any vertex $x \in \mathbb{T}_L$,
    \begin{equation*}
        \left| P_\a (t , x ; s , y) \right| \leq \left\| P_\a (t , \cdot ; s , y) \right\|_{L^2 (\mathbb{T}_L)} \leq \left\| P_\a (s , \cdot ; s , y ) \right\|_{L^2 (\mathbb{T}_L)} = \left( 1 - \frac{1}{\left| \mathbb{T}_L \right|} \right)^{\sfrac 12} \leq 1.
    \end{equation*}
\end{itemize}
\end{proposition}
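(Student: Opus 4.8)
The plan is to derive all three assertions from a single discrete energy identity. Since $\mathbb{T}_L$ is a finite set, equation~\eqref{eq:defheatkernelPa} is, for fixed $(s,y)$, a linear system of ordinary differential equations for the vector $t \mapsto (P_\a(t,x;s,y))_{x \in \mathbb{T}_L}$ with coefficients continuous in $t$; it therefore has a unique solution on $[s,\infty)$ which is continuously differentiable in $t$, so one may freely apply the chain rule to $t \mapsto \left\| P_\a(t,\cdot;s,y)\right\|_{L^2(\mathbb{T}_L)}^2$.

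To obtain~\eqref{eq:energyestimateheatkernel}, I would multiply the first line of~\eqref{eq:defheatkernelPa} by $P_\a(t,x;s,y)$, sum over $x \in \mathbb{T}_L$, and treat the two resulting terms in turn. On the time-derivative side, $\sum_{x \in \mathbb{T}_L} P_\a\,\partial_t P_\a = \tfrac12 \partial_t \sum_{x \in \mathbb{T}_L} P_\a^2 = \tfrac12 \partial_t \left\| P_\a(t,\cdot;s,y)\right\|_{L^2(\mathbb{T}_L)}^2$. On the elliptic side, the discrete integration by parts~\eqref{eq:discreteIPP} applies with no boundary contribution (the torus has no boundary, and every function on the finite set $\mathbb{T}_L$ is trivially finitely supported), giving $\sum_{x \in \mathbb{T}_L} P_\a\,(\nabla \cdot \a \nabla P_\a) = -\sum_{x \in \mathbb{T}_L} \nabla P_\a(t,x;s,y) \cdot \a(t,x) \nabla P_\a(t,x;s,y)$. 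Equating the two (up to the chain-rule factor) is exactly~\eqref{eq:energyestimateheatkernel}.

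The monotonicity then follows at once: because $\a(t,x) \in \mathcal{S}^+(\Rd)$, the quadratic form $\xi \mapsto \xi \cdot \a(t,x)\xi$ is nonnegative, so every summand on the right-hand side of~\eqref{eq:energyestimateheatkernel} is $\geq 0$, whence $\partial_t \left\| P_\a(t,\cdot;s,y)\right\|_{L^2(\mathbb{T}_L)}^2 \leq 0$ and $t \mapsto \left\| P_\a(t,\cdot;s,y)\right\|_{L^2(\mathbb{T}_L)}$ is nonincreasing on $[s,\infty)$. For the pointwise and $L^2$ bounds I would combine three elementary facts: first, $|P_\a(t,x;s,y)|^2 \leq \sum_{z \in \mathbb{T}_L} |P_\a(t,z;s,y)|^2 = \left\| P_\a(t,\cdot;s,y)\right\|_{L^2(\mathbb{T}_L)}^2$ for every $x$; second, the monotonicity just proved, which bounds this by the value at time $s$; and third, the direct computation $\left\| P_\a(s,\cdot;s,y)\right\|_{L^2(\mathbb{T}_L)}^2 = \sum_{z \in \mathbb{T}_L}\bigl(\delta_y(z) - |\mathbb{T}_L|^{-1}\bigr)^2 = \bigl(1 - |\mathbb{T}_L|^{-1}\bigr)^2 + \bigl(|\mathbb{T}_L| - 1\bigr)|\mathbb{T}_L|^{-2} = 1 - |\mathbb{T}_L|^{-1} \leq 1$. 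Chaining these yields the displayed string of inequalities.

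I do not expect a genuine obstacle here; the proof is essentially one integration by parts. The only points needing a little care are checking that~\eqref{eq:discreteIPP} produces no boundary term on $\mathbb{T}_L$, justifying the chain rule through the $C^1$-in-time regularity of the (finite-dimensional) linear flow, and the small arithmetic identity $\bigl(1 - N^{-1}\bigr)^2 + (N-1)N^{-2} = 1 - N^{-1}$ with $N = |\mathbb{T}_L|$.
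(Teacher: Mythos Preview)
Your proposal is correct and follows exactly the approach the paper sketches: multiply the first line of~\eqref{eq:defheatkernelPa} by $P_\a$, sum over the torus, and apply the discrete integration by parts~\eqref{eq:discreteIPP}. Your write-up is in fact more detailed than the paper's one-line hint, including the explicit computation of the initial $L^2$ norm and the justification that no boundary term arises on $\mathbb{T}_L$.
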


We complete this section with a caveat: while this class of discrete parabolic equations shares many properties with its continuous counterpart, the maximum principle does not seem to hold for these equations.

\subsubsection{Parabolic Caccioppoli inequality}

In this section, we state the discrete and parabolic version of the Poincar\'e inequality. Since we did not make any specific assumptions on the environment $\a$ (and on its eigenvalues), we keep a dependency in the environment on both the left and right-hand sides of~\eqref{eq:discreteparabCaccioppoli}.

\begin{proposition}[Parabolic Caccioppoli inequality] \label{prop:paraboliccacciop}
There exists a constant $C := C(d)< \infty$ such that, for any solution of the parabolic equation
\begin{equation*}
    \partial_t u - \nabla \cdot \a \nabla u = 0 ~~\mbox{in}~~ Q_{2L},
\end{equation*}
one has the estimate
\begin{equation} \label{eq:discreteparabCaccioppoli}
    \int_{-L^2}^0 \sum_{x \in \Lambda_L} \nabla u(t , x) \cdot \a(t  , x) \nabla u(t , x) \leq \frac{C}{L^2} \int_{-4L^2}^0 \sum_{x \in \Lambda_{2L}} \Lambda_+(t , x) |u(t , x)|^2 \, dt.
\end{equation}
\end{proposition}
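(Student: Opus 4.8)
The plan is the classical parabolic energy (Caccioppoli) argument, adapted to the discrete lattice and to the fact that no lower ellipticity bound on $\a$ is assumed — which is why the factor $\Lambda_+$ must be carried through and why, as the remark at the end of the subsection points out, a comparison/maximum-principle argument is not available. So the energy method is the natural route.

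First I would fix a space-time cut-off $\eta(t,x)=\zeta(t)\chi(x)$ with $\zeta\equiv 1$ on $[-L^2,0]$, $\zeta\equiv 0$ on $(-\infty,-4L^2]$ and $|\zeta'|\le CL^{-2}$, and $\chi\equiv 1$ on $\Lambda_L$, $\chi\equiv 0$ outside $\Lambda_{2L}$ and $|\nabla\chi|\le CL^{-1}$, so that $\eta\equiv 1$ on $Q_L$, $\eta$ is supported in $Q_{2L}$, $|\partial_t\eta|\le CL^{-2}$ and $|\nabla\eta|\le CL^{-1}$. Then I would test the equation $\partial_t u-\nabla\cdot\a\nabla u=0$ against $v:=\eta^2 u$, summing over $\Lambda_{2L}$ and integrating in $t$ over $(-4L^2,0)$, using the discrete integration-by-parts identity~\eqref{eq:discreteIPP} in space and ordinary calculus in time. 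For the time term I would write $\sum_x(\partial_t u)\eta^2 u=\tfrac12\partial_t\big(\sum_x\eta^2 u^2\big)-\tfrac12\sum_x u^2\,\partial_t(\eta^2)$: after integrating in $t$, the exact-derivative piece produces a boundary term at $t=0$ of the favourable sign (which is discarded) and vanishes at $t=-4L^2$ since $\zeta(-4L^2)=0$, while the remaining piece is $\le CL^{-2}\int\sum u^2\le CL^{-2}\int_{Q_{2L}}\Lambda_+ u^2$ (using $\Lambda_+\ge 1$). For the elliptic term I would use the asymmetric discrete product rule $\nabla_i(\eta^2 u)(x)=\eta^2(x)\nabla_i u(x)+\nabla_i(\eta^2)(x)\,u(x+e_i)$, which splits $-\sum_x\nabla(\eta^2u)\cdot\a\nabla u$ into the good term $-\sum_x\eta^2\,\nabla u\cdot\a\nabla u\le 0$ (this form of the product rule is important: the term is manifestly $\le 0$ because $\eta^2(x)$ is a scalar and $\a(t,x)\in\mathcal S^+(\Rd)$) plus a cross term $\sum_x\sum_i\nabla_i(\eta^2)(x)\,u(x+e_i)\,(\a(x)\nabla u(x))_i$. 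The cross term I would estimate by Cauchy–Schwarz for the quadratic form $\a(t,x)$ followed by Young's inequality, using $|\nabla_i(\eta^2)|\le CL^{-1}(\eta(x)+\eta(x+e_i))$ and $\xi\cdot\a\xi\le\Lambda_+|\xi|^2$: most of it is absorbed into $\tfrac12\sum_x\eta^2\,\nabla u\cdot\a\nabla u$, and the rest is $\le CL^{-2}\int\sum_x\Lambda_+ u^2$ up to a harmless index shift that at worst replaces $\Lambda_+(t,x)$ by its maximum over the neighbours of $x$ (which does not affect the stated estimate).

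The main obstacle is precisely this cross-term absorption. In the discrete setting the cross term does not vanish on the layer of vertices $x$ where $\eta(x)=0$ but $\eta(x+e_i)\neq 0$ for some $i$, and at those vertices there is no $\eta^2(x)$-weight available to absorb the residual $\delta\,\nabla u(x)\cdot\a(x)\nabla u(x)$ back into the good term. To close this cleanly I would run the whole computation not with a single cut-off but with one cut-off for each pair of nested parabolic cylinders $Q_s\subseteq Q_t$, $L\le s<t\le 2L$, with $|\nabla\eta|\le C(t-s)^{-1}$ and $|\partial_t\eta|\le C(t-s)^{-2}$, obtaining
\[
\int_{Q_s}\nabla u\cdot\a\nabla u\ \le\ \theta\int_{Q_t}\nabla u\cdot\a\nabla u\ +\ \frac{C}{(t-s)^2}\int_{Q_{2L}}\Lambda_+ u^2
\]
for a fixed $\theta\in(0,1)$ — the edge layer for the $(s,t)$-cut-off lies inside $\Lambda_t$, so its contribution is swallowed by the first term on the right. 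Applying the standard hole-filling/iteration lemma for functions of the radius then removes the first term and yields $\int_{Q_L}\nabla u\cdot\a\nabla u\le CL^{-2}\int_{Q_{2L}}\Lambda_+ u^2$, which is~\eqref{eq:discreteparabCaccioppoli} since $\eta\equiv 1$ on $Q_L$. (If one is willing to accept the slightly weaker statement with $\Lambda_+$ replaced by a neighbourhood maximum one could also try to avoid the iteration by choosing $\eta$ of the form $\psi^2$ so that $|\nabla\eta|^2\lesssim L^{-2}\eta$, but the iteration route is the more robust one.)
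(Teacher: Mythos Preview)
Your proposal is correct and follows exactly the route the paper indicates: the paper does not give a detailed proof but simply states that one introduces a compactly supported cut-off $\eta$ and tests the equation with $\eta^2 u$, referring to \cite[Lemma~B.3]{ABM} and calling the discrete adaptation ``mostly notational.'' Your write-up actually goes further than the paper by isolating the discrete boundary-layer issue in the cross-term absorption and proposing the standard hole-filling iteration to close it cleanly; this is a legitimate way to handle that technicality, though the paper evidently regards it as a minor point not worth spelling out.
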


\noindent 
The proof of this inequality follows the standard strategy (introducing a compactly supported cutoff function $\eta$ and testing the parabolic equation with the test function $\eta^2 u$). We thus omit the proof here (a proof can be found in the continuum setting in~\cite[Lemma B.3]{ABM}, among other possible references, the adaptation to the discrete setting and degenerate environment is mostly notational).

\subsubsection{Parabolic multiscale Poincar\'e inequality}

The following inequality provides a convenient control over the $\hat{W}^{-1,q}_{\mathrm{par}}$ of a function in terms of its average values over parabolic cylinders of different sizes. It is a discrete and $L^r$ version of~\cite[Proposition 3.6]{ABM} which is itself a parabolic version of an inequality which first appeared in~\cite[Proposition 6.1]{AKM1}. A proof of this inequality, which is an adaptation of the one of~\cite[Proposition 3.6]{ABM}, can be found in Appendix~\ref{App.multiscale}. Before stating the result, we introduce the following notation: for each pair of integers $m , n \in \mathbb{N}$ with $m \leq n$,
\begin{equation*}
    \mathcal{Z}_{m,n} := (3^{2m} \Z \times 3^m \Zd) \cap Q_{3^n}.
\end{equation*}
The collection $\left( z + Q_{3^m} \right)_{z \in \mathcal{Z}_{m   ,n}}$ is a partition of the parabolic cylinder $Q_{3^n}$.

\begin{proposition}[Parabolic multiscale Poincar\'e inequality] \label{prop.multiscalePoinc}
There exists a constant $C := C(d , r) < \infty$ such that, for any integer $n \in \N$ and any function $f : Q_{3^n} \to \R$, 
\begin{equation*}
    \left\| f \right\|_{\underline{\hat{W}}^{-1,r}_{\mathrm{par}} (Q_{3^n})} \leq C  \left\| f \right\|_{\underline{L}^{r} (Q_{3^n})} + C \sum_{m = 0}^n 3^m \left( \left| \mathcal{Z}_{m,n} \right|^{-1}  \sum_{z \in \mathcal{Z}_{m,n}} \left| \left( f \right)_{z + Q_{3^m}}  \right|^r \right)^{\sfrac 1r}.
\end{equation*}
\end{proposition}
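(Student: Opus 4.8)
The plan is to follow the standard telescoping-sum strategy for multiscale Poincaré inequalities, adapted to the parabolic scaling and the $L^r$ setting. First I would reduce the statement to a bound on the dual pairing: fix a test function $v$ with $\|v\|_{\underline{W}^{1,r'}_{\mathrm{par}}(Q_{3^n})} \leq 1$, and estimate $|Q_{3^n}|^{-1}\int_{(-3^{2n},0)}\sum_{x\in\Lambda_{3^n}} f(t,x) v(t,x)\,dt$. The key device is to compare $v$ at scale $3^m$ with its local averages: for each $m$, let $[v]_m$ denote the function obtained by replacing $v$ on each cylinder $z+Q_{3^m}$ (with $z\in\mathcal{Z}_{m,n}$) by its average $(v)_{z+Q_{3^m}}$. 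Then write the telescoping identity $v = [v]_0 + \sum_{m=0}^{n-1}([v]_{m+1}-[v]_m)$, where $[v]_0 = v$ (scale $3^0=1$ resolves $v$ pointwise on the unit cells, up to the discrete structure) and $[v]_{n+1}$ is essentially the global average, which pairs trivially with $f$ after the relevant normalization. Pairing $f$ against each term of the telescoping sum and using that on each cylinder $z+Q_{3^m}$ the average of $(f)_{z+Q_{3^m}}$ against the oscillation $v - (v)_{z+Q_{3^m}}$ can be controlled via a local (discrete parabolic) Poincaré inequality on $z+Q_{3^m}$ gives, for the $m$-th term, a bound of the form $C\,3^m\,(|\mathcal{Z}_{m,n}|^{-1}\sum_{z}|(f)_{z+Q_{3^m}}|^r)^{1/r}$ times a factor controlled by $\|\nabla v\|_{\underline{L}^{r'}}$ plus the parabolic time-regularity $\|\partial_t v\|$ in the negative Sobolev norm — here the factor $3^m$ comes from the scaling of the Poincaré constant on a cylinder of spatial size $3^m$ and temporal size $3^{2m}$, and Hölder in the $z$-sum against $\sum_z \|\nabla v\|^{r'}_{\underline{L}^{r'}(z+Q_{3^m})} \leq \|\nabla v\|^{r'}_{\underline{L}^{r'}(Q_{3^n})}^{r'}$ produces the $\ell^r$ norm over $z$. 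Summing over $m$ and taking the supremum over $v$ yields the claimed inequality, with the leftover $C\|f\|_{\underline{L}^r(Q_{3^n})}$ term absorbing the scale-$0$ contribution (where the ``Poincaré'' step is replaced by a trivial Hölder estimate since $v$ is not averaged).

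The main technical points to get right are: (i) the parabolic Poincaré inequality on a single cylinder, i.e. $\|v - (v)_{z+Q_{3^m}}\|_{\underline{L}^{r'}(z+Q_{3^m})} \lesssim 3^m\big(\|\nabla v\|_{\underline{L}^{r'}(z+Q_{3^m})} + \text{(time part)}\big)$, which requires the negative-norm control of $\partial_t v$ that is precisely what the $\underline{W}^{1,r'}_{\mathrm{par}}$ norm encodes — this is where the parabolic (as opposed to elliptic) nature enters and is somewhat delicate because one must handle the time averaging carefully, comparing the spatial average at a fixed time to the full spacetime average; (ii) the bookkeeping of the discrete versus continuous structure (sums over $\Lambda_{3^m}$, integrals over $(-3^{2m},0)$, the compatibility of the partitions $\mathcal{Z}_{m,n}$ across scales, which holds because $3^{2(m+1)}$ is a multiple of $3^{2m}$ and $3^{m+1}$ of $3^m$); and (iii) the Hölder step in the $z$-summation, splitting the exponent $1$ as $1/r + 1/r'$ and using that the cylinders $z+Q_{3^m}$ tile $Q_{3^n}$ so the $r'$-powers of the gradient norms sum to at most the global gradient norm.

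The hard part will be item (i): establishing the correct parabolic Poincaré/Caccioppoli-type estimate on a single cylinder in the discrete, degenerate setting with the right scaling in $3^m$, since unlike the elliptic case one cannot simply freeze time, and one must argue that the discrepancy between $(v)_{z+Q_{3^{m+1}}}$ and $(v)_{z'+Q_{3^m}}$ for adjacent subcylinders is controlled by a combination of the spatial gradient of $v$ and the time-derivative of $v$ measured in the dual norm. Fortunately this is exactly the content of the argument in~\cite[Proposition~3.6]{ABM} referenced in the statement, so the proof in Appendix~\ref{App.multiscale} should be a (mostly notational) adaptation: replace the continuum balls/cylinders by the triadic discrete cylinders $z+Q_{3^m}$, replace $L^2$ by $L^r$ throughout (which only affects constants, via Hölder), and verify that the discrete integration-by-parts identity~\eqref{eq:discreteIPP} and the discrete Poincaré inequality hold with dimension-dependent constants. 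I would not expect any genuinely new obstacle beyond careful tracking of the discrete combinatorics and the $r$-dependence of the constants.
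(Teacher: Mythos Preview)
Your plan is correct and matches the paper's proof in Appendix~\ref{App.multiscale} almost exactly: telescoping the test function $v$ through the scales, pairing $(f)_{z+Q_{3^m}}$ against the local oscillations of $v$, and controlling those oscillations by a single-cylinder parabolic Poincar\'e inequality (your item~(i), the paper's Step~1), then H\"older in the $z$-sum (your item~(iii), the paper's Step~3).

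One point you are slightly underestimating: the genuinely delicate step is not the single-cylinder Poincar\'e but the summation over~$z$ of the \emph{time-derivative} contributions (the paper's Step~2). Your plan handles $\sum_{z}\|\nabla v\|^{r'}_{\underline{L}^{r'}(z+Q_{3^m})}$ by additivity of the $L^{r'}$ norm, but the analogous sum $\sum_{z}\|\partial_t v\|^{r'}_{\underline{L}^{r'}(s+I_{3^m},\underline{W}^{-1,r'}(y+\Lambda_{3^m}))}$ does \emph{not} reduce to the global $\|\partial_t v\|_{\underline{L}^{r'}((-3^{2n},0),\underline{W}^{-1,r'}(\Lambda_{3^n}))}$ by a triangle inequality, because $W^{-1,r'}$ is a dual norm and does not localize additively. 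The paper handles this by selecting, for each~$z$, a near-optimal test function $h_z$ supported in $z+Q_{3^m}$, combining them into a single globally defined function $\sum_z a_z^{r'-1} h_z$ (disjoint supports make its $W^{1,r}$ norm computable), and testing $\partial_t v$ against it. This is a standard duality trick, but it is where the ``mostly notational'' adaptation of~\cite{ABM} requires actual care.
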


\begin{remark}
    In the proofs below, we will use this inequality when $f$ is a vector field (i.e., valued in $\mathbb{R}^d$, in which case one can measure its $\hat{W}^{-1,q}_{\mathrm{par}}$-norm by considering the maximum of the sum of the $\hat{W}^{-1,q}_{\mathrm{par}}$-norms of each of its components).
\end{remark}

\subsection{Gibbs measure and Langevin dynamics}

In this section, we introduce the Gibbs measure and Langevin dynamics on the torus $\mathbb{T}_L$ with prescribed slope. The analysis of the properties of these dynamics (typical size, weak norm of the flux, fluctuations in time, etc.) occupies a large part of this article and is an essential ingredient in the proof of Theorem~\ref{main.thm}.

\subsubsection{Gibbs measure and surface tension} \label{subsec2.1.6}

An important role is played in this article by the Gibbs measure~\eqref{def.gradphifinitevol}. There are various possibilities to define it (or specifically, various boundary conditions which can be imposed), and we will make extensive use of a tilted version of the Gibbs measure to which periodic boundary conditions are imposed (N.B. the same measure has been introduced and used by Funaki and Spohn~\cite{FS} to derive the hydrodynamic limit in the uniformly convex setting).

Before stating the definition, we recall the notation $\Omega^\circ_L := \left\{ \varphi : \mathbb{T}_L \to \R \, : \, \sum_{x \in \mathbb{T}_L} \varphi(x) =0 \right\}$ for periodic functions whose average value is equal to $0$ on the torus $\mathbb{T}_L$. We denote by $d\varphi$ the Lebesgue measure on this space (N.B. the space $\Omega^\circ_L$ is finite dimensional, it can thus be equipped with a Lebesgue measure once a scalar product has been specified, and we consider here the $L^2$-scalar product).

\begin{definition}[Periodic Gibbs measure $\mu_{L , p}$] \label{def:periodicGibbsmeasure}
Given a sidelength $L \in \N$ and a slope $p \in \Rd$, we define the periodic Gibbs measure on the torus $\mathbb{T}_L$ with slope $p \in \Rd$ according to the formula
\begin{equation*}
    \mu_{L , p} (d \varphi) := \frac{1}{Z_{L,p}} \exp \left( - \sum_{y \in \mathbb{T}_L} V(p + \nabla \varphi(y)) \right) d \varphi ~~\mbox{with}~~ Z_{L,p} := \int_{\Omega^\circ_L} \exp \left( - \sum_{x \in \mathbb{T}_L} V(p + \nabla \varphi(x)) \right) \, d\varphi.
\end{equation*}
We denote by $\E_{L,p}$ and $\mathrm{Var}_{L , p}$ the expectation and variance with respect to the measure $\mu_{L , p}$.
\end{definition}

\begin{remark}
A convenient property satisfied by the measure $\mu_{L , p}$ is that it is invariant under translation.
\end{remark}

An important role is then played by the finite-volume surface tension (as it approximates the surface tension $\bar \sigma$). Its definition is stated below.

\begin{definition}[Finite volume surface tension] \label{def.surfacetension}
Given a sidelength $L \in \N$ and a slope $p \in \Rd$, we define the finite-volume surface tension $\bar \sigma_L(p)$ according to the formula
\begin{equation*}
    \bar \sigma_L(p) := - \frac{1}{\left| \mathbb{T}_L \right|} \ln \frac{Z_{L,p}}{Z_{L,0}}.
\end{equation*}
\end{definition}

We state below an identity for the gradient of $\bar \sigma_L(p)$ which can be obtained by explicitly differentiating the previous definition with respect to the slope $p \in \Rd$ and using the translation invariance of the measure $\mu_{L,p}$ (N.B. the right-hand side below does not depend on $x \in \mathbb{T}_L$ due to this translation invariance)
\begin{equation*}
    \forall x \in \mathbb{T}_L, ~ D_p \bar \sigma_L(p) := \E_{L , p} \left[ D_p V(\nabla \varphi(x) )  \right].
\end{equation*}

\subsubsection{Langevin dynamic in a torus} \label{sec:introdLangevindyn}

\begin{definition}[Stationary Langevin dynamic on the torus] \label{Prop:Langevin}
For almost every realization of the Brownian motions $\{ \tilde B_t(x) \, : \, t \in \R, \, x \in  \mathbb{T}_L \}$ and every slope $p \in \Rd$, there exists a unique function $\varphi_{L } (\cdot , \cdot ; p) : \R \times \mathbb{T}_L \to \R$ satisfying the properties:
\begin{itemize}
\item[(i)] Average value: For any time $t \in \R$, the average value of the function $\varphi_{L }(t , \cdot ; p)$ is equal to $0$, i.e., $$\sum_{x \in \mathbb{T}_L} \varphi_{L}(t , x ;p) = 0.$$
\item[(ii)] Growth at $t \to -\infty$: for any $x \in \mathbb{T}_L$, the function $t \mapsto \frac{1}{t} \varphi_{L}(t , x ;p)$ converges to $0$ as $t \to -\infty$.
\item[(iii)] Stochastic differential equations: the function $\varphi_{L} (\cdot , \cdot ; p) $ is a solution to the system of stochastic differential equations
\begin{equation} \label{eq:def.Langevintorus}
     d \varphi_{L}(t , x ;p) = \nabla \cdot D_p V(p + \nabla \varphi_{L}(\cdot , \cdot ; p)) (t , x) \, dt  + \sqrt{2} d \tilde{B}_t(x) \hspace{5mm} \mbox{for} \hspace{5mm} (t , x) \in \R \times \mathbb{T}_L.
\end{equation}
We call the function $\varphi_{L} (\cdot , \cdot ; p) $ the Langevin dynamic on the torus $\mathbb{T}_L$ with slope $p \in \Rd$.
\end{itemize}
\end{definition}

\begin{remark}
    Let us make a few remarks about the previous definition:
    \begin{itemize}
    \item The item (iii) is understood in the following sense: for any pair of times $t_0 , t_1 \in \R$ with $t_0 \leq t_1$ and any $x \in \mathbb{T}_L,$
    \begin{equation} \label{eq:reqldefSDE}
       \varphi_{L } (t_1 , x;p) - \varphi_{L } (t_0 , x;p) = \int_{t_0}^{t_1} \nabla \cdot D_p V(p + \nabla \varphi_{L}(\cdot , \cdot ; p)) (t , x) \, dt  + \sqrt{2} \tilde{B}_{t_1}(x) - \sqrt{2} \tilde{B}_{t_0}(x).
    \end{equation}
    \item Defining a Langevin dynamic on the torus $\mathbb{T}_L$ is equivalent to solving the system of stochastic differential equations~\eqref{eq:def.Langevintorus} on the box $\Lambda_L$ with periodic boundary conditions (see Remark~\ref{rem:remark6.3} of Section~\ref{sec:section5sublinearityflux} and Section~\ref{sec:section6} below).
    \item The item (iii) implies that the function $ t \mapsto \sum_{x \in \mathbb{T}_L} \varphi_{L}(t , x ;p)$ is constant (by summing both sides of~\eqref{eq:def.Langevintorus} over the vertices of $\mathbb{T}_L$).
    \item The growth condition (ii) could be weakened if needed.
    \item An almost (and more common in the literature) equivalent way to define these dynamics is to solve the stochastic differential equations~\eqref{eq:def.Langevintorus} on the positive times and start from a random initial profile distributed according to the Gibbs measure $\mu_{L , p}$.
    \item We will frequently consider the dynamics as functions of the Brownian motions (see Proposition~\ref{prop:propsLangevin} below). To emphasize this dependency, we will use the notation $\varphi_{L}(t , x ;p)(\{ B_t(x) \, : \, t \in \R, \, x \in \Zd \})$.
     \item For a slope $p \in \Rd$, we introduce the random time-dependent environment 
    \begin{equation*}
        \a(t , x ; p) := D_p^2 V \left(p + \nabla \varphi_{L}  (t , x;p) \right) \in \mathcal{S}^+(\Rd) \hspace{5mm} \mbox{for} \hspace{3mm} (t , x) \in \R \times \mathbb{T}_L.
    \end{equation*}
    \end{itemize}
\end{remark}

We collect in the following proposition some properties satisfied by the Langevin dynamics. Before stating the result, we mention that one of the main approach developed in this article is to differentiate the dynamics with respect to the two parameters on which they depend: the slope $p \in \Rd$ and the Brownian motions $\{ \tilde B_t(x) \, : \, t \geq 0, \, x \in \Zd \}$. Differentiating with respect to the slope is performed as follows: for any $(t , x) \in \R \times \mathbb{T}_L$, any $p, \lambda \in \Rd$,
\begin{equation*}
    w_{L, p , \lambda} (t , x) = \lim_{\ep \to 0} \frac{\varphi_{L}(t , x ; p + \ep \lambda) - \varphi_{L}(t , x ; p)}{\ep}.
\end{equation*}
Differentiating with respect to the Brownian motions is performed the following way: for any $x \in \mathbb{T}_L$ and any pair of times $s , t \in \R$ with $s < t$, we define the increment $X_{s , t}(y) := \tilde B_t(y) - \tilde B_s(y)$. We then introduce the piecewise affine function: for $T \in \R$,
\begin{equation*}
    g_{s,t} (T) := 
    \left\{ \begin{aligned}
        0 &~~~\mbox{if}~ T \leq s, \\
        \frac{T - s}{t - s} & ~~~\mbox{if}~ s \leq T \leq t, \\
        1 & ~~~\mbox{if}~ T \geq t,
    \end{aligned} \right.
\end{equation*}
and define the derivative of a real-valued random variable $Z$ depending on the Brownian motions with respect to the increment $X_{s , t}(x)$ as follows
\begin{multline} \label{eq:diffwithrespecttoBM}
    \frac{\partial Z}{\partial X_{s , t}(y)} (\{ \tilde B_t(x)  \, : \, t \in \R, \, x \in \mathbb{T}_L \}) \\
    := \lim_{\ep \to 0} \frac{Z (\{ \tilde B_t(x) + \ep \delta_y(x) g_{s,t}(t) \, : \, t \in \R, \, x \in \mathbb{T}_L \}) - Z (\{ \tilde B_t(x) \, : \, t \in \R, \, x \in \mathbb{T}_L \})}{\ep}.
\end{multline}

\begin{proposition}[Stationarity, ergodicity, reversibility and differentiability] \label{prop:propsLangevin}
For any sidelength $L \in \N$ and any slope $p \in \Rd$, the Langevin dynamic $\varphi_{L}$ satisfies the following properties:
\begin{itemize}
    \item Distribution: for any time $t \in \R$, the random map $\varphi_{L }(t , \cdot ; p) : \mathbb{T}_L \to \R$ is distributed according to the Gibbs measure $\mu_{L , p}$.
    \item Stationarity: the law of the Langevin dynamic is stationary with respect to space and time translations.
    \item Ergodicity: the law of the dynamic is ergodic with respect to time translations.
    \item Differentiability with respect to the slope: for any pair $(t , x) \in \R \times \mathbb{T}_L$ and any $p, \lambda \in \Rd$, the function $w_{L , p , \lambda}$ is the unique stationary solution (with finite second moments) to the parabolic equation
    \begin{equation} \label{eqstatpardifferentiated}
        \partial_t  w_{L , p , \lambda} - \nabla \cdot \a(\cdot ; p )( \lambda +  \nabla w_{L , p , \lambda} ) = 0   \hspace{5mm} \mbox{in} \hspace{3mm}  \R \times \mathbb{T}_L.
    \end{equation}
    \item Differentiability with respect to the Brownian motions: for any pair of times $(t , s) \in \R$ with $t < s$ and any vertex $x \in \mathbb{T}_L$, one has the identity (using~\eqref{eq:conventionheatnegativetime} if applicable)
    \begin{equation*}
            \frac{\partial \varphi_{L } (\cdot , \cdot ; p)}{\partial X_{t , s} (x)} = \int_t^s P_{\a(\cdot , p)} \left(\cdot , \cdot ; s' , x \right) \, ds'.
    \end{equation*}
\end{itemize}
\end{proposition}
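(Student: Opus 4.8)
The plan is to split the five assertions into two groups. The first three --- the one-time law, stationarity and ergodicity --- are the ergodic properties of the overdamped Langevin diffusion attached to a convex confining Hamiltonian, while the last two are rigorous forms of the formal linearizations of~\eqref{eq:def.Langevintorus}. The observation underlying the whole argument is that, writing $H_p(\varphi) := \sum_{y \in \mathbb{T}_L} V(p + \nabla \varphi(y))$, the integration by parts~\eqref{eq:discreteIPP} identifies $\nabla \cdot D_p V(p + \nabla \varphi)$ with the negative $L^2$-gradient of $H_p$ on the Euclidean space $\Omega^\circ_L$, so that~\eqref{eq:def.Langevintorus} is the overdamped Langevin diffusion $d\varphi = -\nabla_{L^2} H_p(\varphi)\,dt + \sqrt{2}\,d\tilde B$ on $\Omega^\circ_L$. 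Since $V$ is convex and $V(q) \geq c|q|^r - C$ outside a compact set (a consequence of~\eqref{AssPot}), $H_p$ is convex and coercive on $\Omega^\circ_L$, so $\mu_{L,p} \propto e^{-H_p}\,d\varphi$ is a genuine probability measure, and it is the unique invariant --- and reversible --- measure of the associated Markov semigroup $(P_t)_{t\geq 0}$, which is uniformly elliptic on $\Omega^\circ_L$ with $C^1$ drift and hence strong Feller and irreducible.

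\textbf{Distribution, stationarity, ergodicity.} Time-stationarity of $\varphi_{L}(\cdot,\cdot;p)$ follows from the uniqueness statement in Definition~\ref{Prop:Langevin}: for $\tau \in \R$ the shifted process $\varphi_{L}(\cdot+\tau,\cdot;p)$ still satisfies (i)--(iii), now driven by $\{\tilde B_{\cdot+\tau}(x)-\tilde B_\tau(x)\}$, which has the same law as $\{\tilde B_\cdot(x)\}$ since $\tilde B$ has stationary increments, and therefore coincides in law with $\varphi_{L}(\cdot,\cdot;p)$; space-stationarity is obtained the same way from the translation invariance of $\mathbb{T}_L$ and of the law of $\{\tilde B_t(x)\}$. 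Next, $\varphi_{L}$ is a stationary Markov process, because given $\varphi_{L}(t_0,\cdot;p)$ the trajectory on $[t_0,\infty)$ is the forward solution of~\eqref{eq:def.Langevintorus}, a measurable functional of $\varphi_{L}(t_0,\cdot;p)$ and of the increments of $\tilde B$ after time $t_0$, which are independent of the past. Hence its one-time marginal is a $P_t$-invariant probability measure, so it equals $\mu_{L,p}$ by uniqueness, which is the distribution item; and since $\mu_{L,p}$ is the unique invariant measure of an irreducible strong-Feller semigroup it is extremal, so by the standard criterion (Doob's theorem) the stationary process $\varphi_{L}$ is ergodic under time translations.

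\textbf{Differentiability in the slope.} Write $\varphi^p := \varphi_{L}(\cdot,\cdot;p)$ and $v^\ep := \ep^{-1}(\varphi^{p+\ep\lambda} - \varphi^p)$. Subtracting~\eqref{eq:def.Langevintorus} at the slopes $p+\ep\lambda$ and $p$, the Brownian term cancels, and the fundamental theorem of calculus gives $D_p V(p+\ep\lambda+\nabla\varphi^{p+\ep\lambda}) - D_p V(p+\nabla\varphi^p) = \ep\,\a^\ep(\lambda + \nabla v^\ep)$ with $\a^\ep(t,x) := \int_0^1 D_p^2 V\big((1-\theta)(p+\nabla\varphi^p(t,x)) + \theta(p+\ep\lambda+\nabla\varphi^{p+\ep\lambda}(t,x))\big)\,d\theta \in \mathcal{S}^+(\Rd)$; thus $v^\ep$ is a stationary, mean-zero solution of $\partial_t v^\ep - \nabla\cdot\a^\ep(\lambda+\nabla v^\ep) = 0$. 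Testing this equation against $v^\ep$ and using stationarity to discard the time derivative yields $\E[\sum_x \nabla v^\ep\cdot\a^\ep\nabla v^\ep] \leq \E[\sum_x \lambda\cdot\a^\ep\lambda]$, which is bounded uniformly in $\ep$ since $\Lambda_+(q) \leq C|q|_+^{r-2}$ (Lemma~\ref{lemma.upperandlowerboundLambda}) and $\mu_{L,p}$ has finite moments of all orders for $\nabla\varphi$. As $\ep\to 0$ one has $\a^\ep \to \a(\cdot;p) = D_p^2 V(p+\nabla\varphi^p)$ (continuity of $D_p^2 V$ and of $p\mapsto\varphi^p$), so one passes to the limit to produce a stationary finite-second-moment solution $w_{L,p,\lambda}$ of~\eqref{eqstatpardifferentiated}. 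For uniqueness, the difference $\delta$ of two such solutions is stationary and solves $\partial_t\delta-\nabla\cdot\a(\cdot;p)\nabla\delta=0$, so the same energy identity forces $\E[\sum_x\nabla\delta\cdot\a(\cdot;p)\nabla\delta]=0$, i.e. $\a(\cdot;p)^{1/2}\nabla\delta \equiv 0$; by ergodicity of the environment --- and, on the set where $D_p^2 V$ degenerates, by the quantitative control of the moderated-environment estimates developed later in the paper --- this forces $\delta$ to be spatially constant and hence, by the mean-zero normalization, identically $0$, which also pins down $w_{L,p,\lambda}$ independently of any extracted subsequence. I expect this passage to the limit, together with the uniqueness of stationary solutions of the degenerate-elliptic equation~\eqref{eqstatpardifferentiated}, to be the main obstacle and the only real departure from the uniformly convex setting of~\cite{armstrong2022quantitative}: since~\eqref{AssPot} permits $D_p^2 V$ to vanish on $\{|q|\leq R_0\}$, the energy bound controls $\nabla v^\ep$ only in the $\a^\ep$-weighted norm, and converting this into genuine compactness and coercivity (and excluding non-constant elements of the kernel of the degenerate form) is exactly what requires the moderated-environment technique of~\cite{MO16, biskup2018limit} together with the fluctuation estimate~\eqref{eq:22440901}.

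\textbf{Differentiability in the Brownian motions.} Fix $t<s$ and $x\in\mathbb{T}_L$ and perturb the path $\tau\mapsto\tilde B_\tau(y)$ into $\tau\mapsto\tilde B_\tau(y)+\ep\,\delta_x(y)\,g(\tau)$, where $g$ is the Lipschitz, piecewise-affine interpolation that vanishes for $\tau\leq t$, rises linearly to $1$ on $[t,s]$ and equals $1$ for $\tau\geq s$. Since $g$ is Lipschitz, this amounts in the integral formulation~\eqref{eq:reqldefSDE} to adding to the right-hand side the deterministic forcing $\sqrt{2}\,\ep\,\delta_x(y)\,g'(\tau)\,d\tau$, supported in the time interval $(t,s)$. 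Subtracting the unperturbed equation, dividing by $\ep$, and using the same fundamental-theorem-of-calculus identity as above, the difference quotient is a stationary solution of a linear parabolic equation with coefficient $\a^\ep\to\a(\cdot;p)$ and right-hand side $\sqrt{2}\,g'\,\delta_x$; letting $\ep\to 0$, with the a priori estimates again justifying the passage to the limit, Duhamel's (variation-of-parameters) formula against the fundamental solution of the linearized operator --- which by~\eqref{eq:defheatkernelPa} is the heat kernel $P_{\a(\cdot,p)}$ --- identifies the limit with the asserted integral $\int_t^s P_{\a(\cdot,p)}(\cdot,\cdot;s',x)\,ds'$.
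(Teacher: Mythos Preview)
Your treatment of the ergodic properties (distribution, stationarity, ergodicity) is fine and close in spirit to the paper's. The substantive divergence is in the two differentiability items, and there the paper takes a route that sidesteps the difficulties you flag.

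The paper does \emph{not} attack the degenerate case head-on. In Appendix~\ref{App.B} it first reduces to the uniformly convex case $D_p^2 V \geq c I_d$ (adding a small strictly convex perturbation and arguing by approximation), and then proves everything by approximating the stationary dynamic with the Langevin dynamic $\tilde\varphi^K$ started from $0$ at time $K \in \Z_{<0}$. At finite $K$ this is a smooth ODE in the Brownian path, so differentiability in $p$ and in the increments follows from the classical theory (\cite[Ch.~5, Thm.~3.1]{hartman2002ordinary}); the derivatives solve the obvious linearized equations, and Duhamel identifies the Brownian-increment derivative with $\int_t^s P_{\a^K}(\cdot,\cdot;s',x)\,ds'$. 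One then sends $K \to -\infty$: under uniform convexity the heat kernel decays exponentially (the deterministic bound~\eqref{uppboundheatkernel}), which gives locally uniform convergence of $\tilde\varphi^K$ and of its derivatives, and one may exchange the limit and the derivative. No compactness, no moderation.

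By contrast, you work directly with the stationary dynamic and pass $\ep\to 0$ in the difference quotient. As you yourself note, the energy identity only controls $\nabla v^\ep$ in the $\a^\ep$-weighted norm, and converting this into genuine compactness (and ruling out non-constant kernel elements for uniqueness) requires the moderated-environment machinery of Section~\ref{sec:section3moderated}. Even if this is not strictly circular --- the moderation results rely on the Brownian-increment differentiability, not the slope differentiability --- it is a heavy forward reference for what is meant to be a foundational, essentially qualitative proposition, and it makes the argument far more delicate than necessary. The paper's reduction to strict convexity buys exactly the coercivity you are missing.

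One further slip: in the Brownian-increment part you call the difference quotient a ``stationary solution''. It is not --- the perturbation $\ep\,\delta_x\,g_{s,t}$ is supported in $[t,s]$ and breaks time-stationarity. The difference quotient vanishes for times $\leq t$ and is the solution of the linear equation with zero data at time $t$ and forcing $\sqrt{2}\,g'\delta_x$ on $(t,s)$; it is this initial-value structure, not stationarity, that makes Duhamel deliver the heat-kernel integral.
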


In the finite setting (the underlying space being the torus $\mathbb{T}_L$), the proof of these properties follows fairly standard arguments. A detailed sketch of proof can be found in Appendix~\ref{App.B}.

\subsubsection{Helffer-Sj\"{o}strand representation}

The Helffer-Sj\"{o}strand representation formula is a powerful tool to study the $\nabla \varphi$-interface model which was originally introduced by Helffer and Sj\"{o}strand~\cite{HS}, and then used by Naddaf and Spencer~\cite{NS} and Giacomin, Olla and Spohn~\cite{GOS} in order to identify the scaling limit of the model. In this article, we will use this inequality to obtain much less refined information: it is used in Proposition~\ref{prop:sublincorr} of Section~\ref{sec:section5sublinearity} to obtain quantitative estimates on the typical size of the Langevin dynamic. In particular, we only state below the version of the result we need for the proof below, but emphasize that more general versions can be found in~\cite{HS, NS, GOS}.

\begin{proposition}[Helffer-Sj\"{o}strand representation formula~\cite{HS, NS, GOS}]
For any sidelength $L \in \N$ and any slope $p \in \Rd$, one has the identity
\begin{equation*}
        \mathrm{Var}_{L , p} \left[ \varphi (0) \right] = \E \left[ \int_0^\infty P_{\a(\cdot ; p)} (t , 0) \, dt \right].
\end{equation*}
\end{proposition}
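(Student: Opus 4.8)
The plan is to follow the classical Helffer–Sjöstrand strategy, using the Langevin dynamic itself as the natural dynamical realization of the underlying Gibbs measure. First I would fix $L$ and $p$ and consider the stationary Langevin dynamic $\varphi_L(\cdot,\cdot;p)$ from Definition~\ref{Prop:Langevin}, which by Proposition~\ref{prop:propsLangevin} has $\varphi_L(t,\cdot;p)$ distributed according to $\mu_{L,p}$ for every $t$, and which is stationary and ergodic in time. The variance $\mathrm{Var}_{L,p}[\varphi(0)]$ can then be written as a time-covariance expression: since the process is stationary and ergodic, one has
\begin{equation*}
    \mathrm{Var}_{L,p}[\varphi(0)] = \lim_{T\to\infty} \frac{1}{2T} \, \E\!\left[ \left( \varphi_L(T,0;p) - \varphi_L(-T,0;p) \right)^2 \right],
\end{equation*}
or more directly via an integrated-autocorrelation identity. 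The key step is to compute this second moment using the Clark–Ocone / Itô isometry: because $\varphi_L$ is a deterministic functional of the Brownian motions $\tilde B$, one expands $\varphi_L(T,0;p)-\varphi_L(-T,0;p)$ as a stochastic integral against $d\tilde B_s$, and by Proposition~\ref{prop:propsLangevin} the Malliavin derivative (derivative with respect to the increment at time $s$) of $\varphi_L(t,0;p)$ is exactly $\int_s^{\cdot} P_{\a(\cdot,p)}(\cdot,\cdot;s',\cdot)\,ds'$, hence the instantaneous sensitivity is governed by the heat kernel $P_{\a(\cdot;p)}(\cdot,\cdot;s,\cdot)$.

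Concretely, I would differentiate the SDE~\eqref{eq:def.Langevintorus}: for $s<t$, the derivative $\partial \varphi_L(t,0;p)/\partial \tilde B_s(y)$ (in the sense of a Duhamel/variation-of-constants formula for the linearized equation, whose coefficient is $\a(\cdot;p)$) equals $\sqrt{2}\,P_{\a(\cdot;p)}(t,0;s,y)$. Feeding this into the Itô isometry for the martingale part gives
\begin{equation*}
    \mathrm{Var}_{L,p}[\varphi(0)] = 2\,\E\!\left[ \int_{-\infty}^{0} \sum_{y\in\mathbb{T}_L} P_{\a(\cdot;p)}(0,0;s,y)^2 \, ds \right],
\end{equation*}
using stationarity to center the time window at $0$ and the growth condition (ii) at $-\infty$ to discard boundary terms. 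Then I would invoke the reversibility of the dynamics (the Gibbs measure is the invariant reversible measure) together with the Chapman–Kolmogorov/semigroup identity for $P_{\a}$ to collapse $\sum_y P_{\a}(0,0;s,y)^2 = P_{\a}(0,0;2s,0)$ up to the mean-zero normalization; after the change of variables $t = -2s$ this becomes $\E[\int_0^\infty P_{\a(\cdot;p)}(t,0)\,dt]$, which is the claimed identity. The $1/\lvert\mathbb{T}_L\rvert$ correction in the heat-kernel initial data is precisely what makes the reversibility symmetrization compatible with $\varphi$ having average zero on the torus.

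The main obstacle I expect is justifying the Malliavin-calculus / Itô-isometry manipulation rigorously in this degenerate, infinite-time-horizon setting — in particular, making sense of the stochastic integral over $(-\infty,0)$, controlling the contribution near $t\to-\infty$ using only the weak growth condition (ii), and verifying that the linearized equation genuinely has $\sqrt{2}P_{\a}$ as its fundamental solution despite $\a = D_p^2V(p+\nabla\varphi_L)$ being only in $\mathcal{S}^+(\Rd)$ (no uniform ellipticity). On the torus with $L$ fixed these are finite-dimensional SDE facts, so the degeneracy is not fatal here, but care is needed; the cleanest route is probably to work on finite time intervals $[-T,0]$, establish the identity with an explicit error, and pass to the limit $T\to\infty$ using the $L^2$-decay of the heat kernel from Proposition~\ref{prop:propheatkernel} together with stationarity. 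Since the excerpt defers the detailed verification of Proposition~\ref{prop:propsLangevin} to an appendix, I would likewise present this as a consequence of those differentiability properties plus a short Itô-isometry computation, relegating the technical integrability estimates to a remark.
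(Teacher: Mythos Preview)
The paper does not give a proof of this statement; it is quoted from \cite{HS,NS,GOS}. The standard argument there proceeds through the Helffer--Sj\"ostrand operator $\mathcal{L}_{HS}=-L_0-\nabla\cdot\a(\varphi)\nabla$ acting on $L^2(\mu_{L,p})\otimes\ell^2(\mathbb{T}_L)$ (with $L_0$ the generator of the Langevin dynamics): one uses the covariance identity $\mathrm{Var}_\mu[F]=\langle\partial F,\mathcal{L}_{HS}^{-1}\partial F\rangle$, writes $\mathcal{L}_{HS}^{-1}=\int_0^\infty e^{-t\mathcal{L}_{HS}}\,dt$, and represents $e^{-t\mathcal{L}_{HS}}$ by Feynman--Kac as the annealed heat kernel, which for $F=\varphi(0)$ gives $\E\big[\int_0^\infty P_{\a(\cdot;p)}(t,0;0,0)\,dt\big]$ directly.

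Your alternative route via It\^o isometry has a genuine gap at the ``Chapman--Kolmogorov'' step. The identity $\sum_y P_{\a}(0,0;s,y)^2=P_{\a}(0,0;2s,0)$ holds for time-\emph{homogeneous} symmetric kernels, but $\a(t,x)=D^2_pV(p+\nabla\varphi_L(t,x))$ is genuinely time-dependent: Chapman--Kolmogorov only concatenates kernels on \emph{adjacent} time intervals and cannot fold two copies living on the same interval $[s,0]$ into one on $[2s,0]$. Reversibility concerns the \emph{law} of the environment process, not a pathwise symmetry of the quenched $P_\a$; indeed, conditioning on $\varphi_L$ at the midpoint and applying Jensen shows $\E[P_{\a}(2t,0;0,0)]\le\E\big[\sum_y P_{\a}(t,0;0,y)^2\big]$ with strict inequality in general, so your displayed formula for the variance is only an upper bound. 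A related omission: $P_{\a}(0,0;s,y)$ depends on $\a$ on all of $[s,0]$ and is not $\mathcal{F}_s$-adapted, so the Clark--Ocone integrand must be its \emph{predictable projection} $\E[P_{\a}(0,0;s,y)\mid\mathcal{F}_s]$, not $P_{\a}(0,0;s,y)$ itself. Once that conditional expectation is in place, a Markov-plus-reversibility argument at the midpoint does recover the identity in expectation --- but this is essentially the content of the HS commutation, not a semigroup identity for the quenched kernel.
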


\subsection{Log-concavity}

Under the assumption that the potential $V$ is convex, the Gibbs measure $\mu_{L,p}$ is a log-concave probability distribution. This class of measures have been extensively studied in the literature. We collect below two of their important properties (the preservation of log-concavity under marginalization and the Efron's monotonicity theorem) which are important inputs in Section~\ref{sec:section3moderated} (and specifically in the proof of Proposition~\ref{prop.prop2.3}).

\subsubsection{Log-concave measures}

We start with the definition of a log-concave probability measure.

\begin{definition}[Log-concave measure]
For $n \in \N$, a Borel probability measure $\mu$ in $\R^n$ is called log-concave if for any pair of compact convex sets $A , B \subseteq \R^n$ and any $\lambda \in (0 , 1)$, one has the inequality
\begin{equation*}
    \mu(\lambda A + (1 - \lambda)B) \geq \mu(A)^\lambda \mu(B)^{1-\lambda}
\end{equation*}
where $\lambda A + (1 - \lambda)B := \left\{ \lambda x + (1-\lambda)y \, : \, x \in A, y \in B \right\}$.
\end{definition}

Any probability measure which is absolutely continuous with respect to the Lebesgue measure on $\R^n$ and whose density is log-concave (i.e., its logarithm is a concave function) is a log-concave probability measure. In particular, for any $L \in \N$ and any slope $p \in \Rd$, the measure $\mu_{L , p}$ is a log-concave probability measure on the space $\Omega_L^\circ$.

\subsubsection{Preservation of log-concavity under marginalization}

A first (fundamental) property of log-concave measures is that any marginal of a log-concave probability distribution is log-concave (equivalently, log-concavity is preserved under marginalization). This result is a consequence of the Pr\'ekopa-Leindler inequality~\cite{prekopa1971logarithmic, prekopa1973logarithmic, leindler1972certain}.

\begin{proposition}[Pr\'ekopa-Leindler~\cite{prekopa1971logarithmic, prekopa1973logarithmic, leindler1972certain}] \label{prop:prekopa-leindler}
Any marginal distribution of a log-concave distribution is also log concave.
\end{proposition}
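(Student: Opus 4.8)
The plan is to argue directly from the measure-theoretic definition of log-concavity stated above, without passing through a density representation. (Alternatively, once one knows via Borell's theorem that a log-concave measure admits a log-concave density on its affine hull, the statement becomes the classical application of the Pr\'ekopa--Leindler inequality to those densities; I would nonetheless present the direct argument, which is shorter and self-contained and uses only the definition.)

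Write $\R^n = \R^k \times \R^m$ and let $\pi : \R^n \to \R^k$ be the projection onto the first $k$ coordinates; a general linear marginal reduces to this case by precomposing with a suitable invertible linear map $S$, since $\pi \circ S$ realizes any surjective linear map and log-concavity is preserved under invertible linear images (such a map commutes with Minkowski combinations and with dilations, and carries compact convex sets to compact convex sets). So let $\mu$ be log-concave on $\R^n$, set $\nu := \pi_* \mu$, and fix compact convex $A, B \subseteq \R^k$ together with $\lambda \in (0,1)$. The idea is to apply the defining inequality of log-concavity to the compact convex cylinders $A \times K_R$ and $B \times K_R$, where $K_R := [-R,R]^m$. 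Since $K_R$ is convex, $\lambda K_R + (1-\lambda) K_R = K_R$, hence
\[
\lambda (A \times K_R) + (1-\lambda)(B \times K_R) = \bigl( \lambda A + (1-\lambda) B \bigr) \times K_R \subseteq \pi^{-1}\bigl( \lambda A + (1-\lambda) B \bigr),
\]
so that monotonicity of $\mu$ followed by its log-concavity gives
\[
\nu\bigl( \lambda A + (1-\lambda) B \bigr) = \mu\bigl( \pi^{-1}( \lambda A + (1-\lambda) B ) \bigr) \geq \mu(A \times K_R)^{\lambda}\, \mu(B \times K_R)^{1-\lambda}.
\]
Because $A \times K_R \uparrow A \times \R^m = \pi^{-1}(A)$ as $R \to \infty$, continuity from below of $\mu$ yields $\mu(A \times K_R) \to \nu(A)$, and similarly $\mu(B \times K_R) \to \nu(B)$; letting $R \to \infty$ produces $\nu(\lambda A + (1-\lambda) B) \geq \nu(A)^{\lambda} \nu(B)^{1-\lambda}$, which is exactly the definition of log-concavity for $\nu$.

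I do not expect any genuine obstacle here: the argument requires only the elementary set identities above together with continuity from below of a Borel probability measure along the increasing sequence of closed (hence Borel) sets $A \times K_R$. The only conceptual point is the choice of the cylinders $A \times K_R$, which lets the hypothesis on $\mu$ be invoked on honestly compact convex sets and then be removed in the limit $R \to \infty$; everything else is bookkeeping.
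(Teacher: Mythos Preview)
Your argument is correct. The paper does not supply its own proof of this proposition; it simply cites it as a consequence of the Pr\'ekopa--Leindler inequality and moves on. So there is no ``paper's proof'' to compare against in detail.

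That said, it is worth noting that your route is genuinely different from the one implicit in the citation. The classical argument (which the references point to) goes through densities: one first invokes Borell's theorem to say a log-concave measure has a log-concave density on its affine hull, and then applies the Pr\'ekopa--Leindler integral inequality to conclude that the marginal density $x \mapsto \int f(x,y)\,dy$ is again log-concave. Your argument sidesteps both of these ingredients entirely, working purely at the level of the measure-theoretic definition via the cylinder trick $A \times K_R$, $B \times K_R$ and the identity $\lambda K_R + (1-\lambda)K_R = K_R$. This is more elementary and self-contained; the trade-off is that the density-based route, once set up, also yields the stronger functional form of Pr\'ekopa--Leindler that one sometimes needs elsewhere. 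For the purposes of this paper, which only uses the statement as a black box to get log-concavity of $\varphi(x)$ and $\nabla_i\varphi(x)$, your direct argument is entirely sufficient.
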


As a direct consequence of Proposition~\ref{prop:prekopa-leindler}, we obtain the following result.

\begin{corollary}
For any sidelegnth $L \in \N$ and any a slope $p \in \Rd$, if we let $\varphi : \mathbb{T}_L \to \R$ be a random variable distributed according to the Gibbs measure $\mu_{L , p}$, then, for any $x \in \mathbb{T}_L$ and any index $i \in \{1 , \ldots, d\}$, the real-valued random variables $\varphi(x)$ and $\nabla_i \varphi(x) = \varphi(x + e_i)  - \varphi(x)$ have log-concave distributions.
\end{corollary}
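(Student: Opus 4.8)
The plan is to realize the law of each of the one-dimensional random variables $\varphi(x)$ and $\nabla_i\varphi(x)$ as the image of $\mu_{L,p}$ under a linear map and then to invoke Proposition~\ref{prop:prekopa-leindler}. First recall that $\mu_{L,p}$ is a log-concave probability measure on the $(|\mathbb{T}_L|-1)$-dimensional Euclidean space $\Omega^\circ_L$: indeed its density with respect to the Lebesgue measure on $\Omega^\circ_L$ is the map $\varphi \mapsto Z_{L,p}^{-1}\exp\!\left(-\sum_{y \in \mathbb{T}_L} V(p+\nabla\varphi(y))\right)$, which is log-concave since it is the composition of the convex function $(q_y)_{y \in \mathbb{T}_L} \mapsto \sum_{y} V(p+q_y)$ (convexity of $V$) with the linear map $\varphi \mapsto (\nabla\varphi(y))_{y \in \mathbb{T}_L}$, followed by $t \mapsto Z_{L,p}^{-1}e^{-t}$. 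This is precisely the observation already recorded in the excerpt, so this step requires nothing new.

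Next, observe that $\ell_0 : \varphi \mapsto \varphi(x)$ and $\ell_i : \varphi \mapsto \varphi(x+e_i) - \varphi(x)$ are non-trivial linear functionals on $\Omega^\circ_L$; for $\ell_i$ this uses that $e_i \neq 0$, so that $\ell_i$ does not vanish identically on the zero-average subspace. Hence each of $\ell_0,\ell_i$ can be extended to a linear isomorphism $T : \Omega^\circ_L \to \R^{|\mathbb{T}_L|-1}$ whose first coordinate equals the given functional. A linear isomorphism transforms an absolutely continuous log-concave measure into another one (the density gets precomposed with $T^{-1}$ and multiplied by the positive constant $|\det T^{-1}|$, which preserves log-concavity), so the pushforward $T_\ast\mu_{L,p}$ is a log-concave probability measure on $\R^{|\mathbb{T}_L|-1}$. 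The law of $\varphi(x)$ (respectively of $\nabla_i\varphi(x)$) is then the first one-dimensional marginal of $T_\ast\mu_{L,p}$, which is log-concave by Proposition~\ref{prop:prekopa-leindler}. This yields the claim.

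There is no serious obstacle here: the only point requiring a little care is that $\mu_{L,p}$ lives on the hyperplane $\Omega^\circ_L$ rather than on all of $\R^{\mathbb{T}_L}$, so one must argue with linear coordinates intrinsic to $\Omega^\circ_L$ and use that log-concavity is preserved under linear isomorphisms before applying the marginalization statement. Equivalently, one may phrase the whole argument through the single elementary remark that the image of any log-concave probability measure under an arbitrary linear map is again log-concave — itself an immediate consequence of Proposition~\ref{prop:prekopa-leindler} applied to a coordinate projection composed with a linear isomorphism — together with the fact that $\varphi(x)$ and $\nabla_i\varphi(x)$ are exactly such linear images of $\mu_{L,p}$.
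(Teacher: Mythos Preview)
Your proposal is correct and follows exactly the approach the paper intends: the paper states the corollary as a direct consequence of Proposition~\ref{prop:prekopa-leindler} without further detail, and your argument simply fleshes out that deduction by noting that $\varphi(x)$ and $\nabla_i\varphi(x)$ are linear images of the log-concave measure $\mu_{L,p}$, hence marginals after a linear change of coordinates.
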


Log-concavity is used in this article to upgrade stochastic integrability. Specifically, we will make use of the following property of log-concave distributions: if $X$ is a real-valued random variable whose distribution is log-concave then (for some explicitly computable constant $C < \infty$)
\begin{equation} \label{eq:logconcavityimpliesmoments}
    \E \left[ \left| X \right| \right] \leq 1 \implies \forall K \geq 1,~ \mathbb{P} \left[ X \geq CK  \right] \leq \exp \left( - K \right).
\end{equation}
This property follows from the observation that any concave function decaying to minus infinity must decay at least linearly fast.

\subsubsection{Efron's monotonicity theorem}

The second result we need pertaining to log-concave measures is the Efron's monotonicity theorem for pairs of independent log-concave random variables. This result is due to Efron~\cite{Efron1965}.

\begin{theorem}[Efron's monotonicity theorem~\cite{Efron1965}] \label{th:Efronmonotonicity}
    Let $(X , Y)$ be a pair of independent, real-valued  and log-concave random variables and let $\Psi : \R^2 \to \R$ be a function which is nondecreasing in each of its arguments, then the conditional expectation
    \begin{equation*}
            \E \left[ \Psi(X , Y) \, | \, X + Y = s \right] ~\mbox{is nondecrasing in}~ s.
    \end{equation*}
\end{theorem}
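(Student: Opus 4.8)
The plan is to reduce to the case where $X$ and $Y$ have log-concave Lebesgue densities $f$ and $g$ on $\R$ (if either variable is almost surely constant the statement is immediate, and a non-degenerate log-concave law on $\R$ admits a log-concave density; truncating $\Psi$ to a bounded function and passing to the limit removes any integrability concern, so this reduction is routine). In this case the conditional law of $X$ given $X+Y=s$ has density $h_s(x)\propto f(x)\,g(s-x)$, and since $Y=s-X$ on $\{X+Y=s\}$ we may write
\begin{equation*}
    \E\!\left[\Psi(X,Y)\mid X+Y=s\right] = \int_\R \Psi(x,s-x)\,h_s(x)\,dx .
\end{equation*}
Fix $s_1<s_2$, set $c:=s_2-s_1>0$, and let $\eta$ have density $h_{s_1}$ and $\xi$ have density $h_{s_2}$; the goal is to exhibit a single coupling of $\eta$ and $\xi$ in which $\eta\le\xi\le\eta+c$ almost surely.

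This rests on two monotone–likelihood-ratio estimates, each a one-line consequence of log-concavity. First, since the $f$-factor cancels, $h_{s_2}(x)/h_{s_1}(x)\propto g(s_2-x)/g(s_1-x)$, which is nondecreasing in $x$ because $\log g$ is concave; hence $h_{s_1}$ is dominated by $h_{s_2}$ in the monotone–likelihood-ratio order, so $\eta\preceq_{\mathrm{st}}\xi$. Second, using $s_1+c=s_2$ the $g$-factors cancel and $h_{s_1}(x-c)/h_{s_2}(x)\propto f(x-c)/f(x)$, which is nondecreasing in $x$ because $\log f$ is concave; since $h_{s_1}(\cdot-c)$ is the density of $\eta+c$, this gives $\xi\preceq_{\mathrm{st}}\eta+c$. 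Now realize both variables by the quantile coupling $\eta=F_\eta^{-1}(U)$, $\xi=F_\xi^{-1}(U)$ with $U$ uniform on $(0,1)$: because a pointwise inequality between distribution functions reverses under the generalized inverse, $F_\eta\ge F_\xi$ yields $\eta\le\xi$ a.s., while $F_\xi\ge F_\eta(\,\cdot\,-c)$ and $[F_\eta(\,\cdot\,-c)]^{-1}=F_\eta^{-1}+c$ yield $\xi\le\eta+c$ a.s. Thus $0\le\xi-\eta\le c$ almost surely.

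With this coupling the conclusion is immediate from the monotonicity of $\Psi$: almost surely
\begin{equation*}
    \Psi(\xi,\,s_2-\xi)\ \ge\ \Psi(\eta,\,s_2-\xi)\ \ge\ \Psi(\eta,\,s_1-\eta),
\end{equation*}
where the first inequality uses $\xi\ge\eta$ and monotonicity in the first argument, and the second uses $s_2-\xi=(s_1-\eta)+(c-(\xi-\eta))\ge s_1-\eta$ and monotonicity in the second argument. Taking expectations gives $\E[\Psi(X,Y)\mid X+Y=s_2]\ge\E[\Psi(X,Y)\mid X+Y=s_1]$, which is the claim. I expect the only genuinely delicate point to be that the two stochastic dominations $\eta\preceq_{\mathrm{st}}\xi$ and $\xi\preceq_{\mathrm{st}}\eta+c$ must be made to hold on the \emph{same} probability space; this is exactly what the monotone (quantile) coupling supplies, since it realizes every stochastic domination between the two fixed laws through one common nondecreasing map of $U$. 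Everything else — the passage to the density case, the integrability truncation, and the elementary manipulations with generalized inverses — is standard bookkeeping.
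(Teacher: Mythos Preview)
The paper does not supply its own proof of this theorem: it is merely stated with attribution to Efron and then invoked as a black box in the proof of Proposition~\ref{prop.prop2.3}. There is therefore nothing in the paper to compare your argument against.

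Your proof is correct. The two monotone--likelihood-ratio computations are valid consequences of the concavity of $\log f$ and $\log g$ respectively; they yield the pair of stochastic dominations $\eta\preceq_{\mathrm{st}}\xi$ and $\xi\preceq_{\mathrm{st}}\eta+c$, and the quantile coupling $\eta=F_\eta^{-1}(U)$, $\xi=F_\xi^{-1}(U)$ realizes both simultaneously on one probability space, giving $\eta\le\xi\le\eta+c$ almost surely. The final chain $\Psi(\xi,s_2-\xi)\ge\Psi(\eta,s_1-\eta)$ then follows from coordinatewise monotonicity of $\Psi$. The reductions you flag (passing to densities for non-degenerate log-concave laws on $\R$, truncating $\Psi$) are indeed routine. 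This coupling argument is one of the standard modern proofs of Efron's theorem.
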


\subsection{Maximal inequalities}

In this section, we recall some classical properties of maximal functions. We let $(\Omega , \mathcal{F} , \mathbb{P})$ be a probability space, and let $(\tau_t)_{t \in \R}$ be a measure preserving action of $\Z$ on this space. For every measurable function $f : \Omega \to \R$, we define the maximal function
\begin{equation*} 
    M (f) := \sup_{T \geq  1} \frac{1}{T} \int_0^T f(\tau_t \omega).
\end{equation*}
We next record the $L^q$ maximal inequality, which can be obtained as a consequence of the weak type~$(1,1)$ estimate~\cite[Theorem 3.2]{AK81} with the Marcinkiewicz interpolation theorem (see~\cite[Appendix D]{taylor2006measure}). The result is stated and used in~\cite[Appendix A]{MO16}.

\begin{proposition}[$L^p$ Maximal inequality] \label{propmaximalineq}
For any $q \in ( 1 , \infty]$, there exists a constant $C := C(q,d) < \infty$ such that, for any $f \in L^q(\Omega)$,
\begin{equation*}
    \left\| M (f) \right\|_{L^q(\Omega)} \leq C  \left\| f \right\|_{L^q(\Omega)}.
\end{equation*} 
\end{proposition}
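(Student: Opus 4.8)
The plan is to obtain the estimate by real interpolation between the two endpoints $q=1$, in the weak sense, and $q=\infty$, in the strong (in fact trivial) sense. First I would reduce to nonnegative functions: since $|M(f)(\omega)| \le M(|f|)(\omega)$ pointwise, it suffices to bound $\norm{M(|f|)}_{L^q(\Omega)}$ by $C\norm{f}_{L^q(\Omega)}$. I would also record at the outset that $f \mapsto M(|f|)$ is a well-defined, measurable, sublinear operator: for $f \in L^1(\Omega)$ one has, by Fubini and the measure-preserving property, that $t \mapsto |f|(\tau_t\omega)$ is locally integrable for $\mathbb{P}$-a.e.\ $\omega$, so $T \mapsto \int_0^T |f|(\tau_t\omega)\,dt$ is continuous, hence the supremum over $T \ge 1$ equals the supremum over rational $T \ge 1$ and $M(|f|)$ is measurable; sublinearity is immediate from $M(|f+g|) \le M(|f|) + M(|g|)$.

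The $q=\infty$ endpoint is immediate: if $f \in L^\infty(\Omega)$, then for every $T \ge 1$ and a.e.\ $\omega$ we have $\tfrac1T\int_0^T |f|(\tau_t\omega)\,dt \le \norm{f}_{L^\infty(\Omega)}$, whence $\norm{M(|f|)}_{L^\infty(\Omega)} \le \norm{f}_{L^\infty(\Omega)}$. The $q=1$ endpoint is exactly the Wiener-type maximal ergodic inequality for the measure-preserving action $(\tau_t)$, i.e.\ the weak type $(1,1)$ bound of~\cite[Theorem 3.2]{AK81}: there is $C := C(d) < \infty$ such that for every $f \in L^1(\Omega)$ and every $\lambda > 0$,
\[
    \mathbb{P}\left[\, M(|f|) > \lambda \,\right] \le \frac{C}{\lambda}\,\norm{f}_{L^1(\Omega)}.
\]

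With both endpoints in hand, the Marcinkiewicz interpolation theorem, applied to the sublinear operator $f \mapsto M(|f|)$ with the exponent pair $(1,1)$ (weak) and $(\infty,\infty)$ (strong, hence weak), yields for each $q \in (1,\infty)$ a constant $C := C(q,d) < \infty$ with $\norm{M(|f|)}_{L^q(\Omega)} \le C\norm{f}_{L^q(\Omega)}$; the case $q = \infty$ was already handled. Combining with $|M(f)| \le M(|f|)$ gives the claimed inequality for all $q \in (1,\infty]$. There is no genuine obstacle here: the only substantive ingredient is the weak type $(1,1)$ maximal ergodic estimate, which is quoted from the literature, and the remaining steps (measurability, the trivial $L^\infty$ bound, and the interpolation, for which one may consult~\cite[Appendix D]{taylor2006measure}) are routine.
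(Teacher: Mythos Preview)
Your proposal is correct and follows exactly the approach indicated in the paper: the paper does not give a detailed proof but simply records that the inequality follows from the weak type $(1,1)$ estimate of \cite[Theorem 3.2]{AK81} combined with the Marcinkiewicz interpolation theorem (see \cite[Appendix D]{taylor2006measure}), which is precisely what you have written out.
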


\begin{remark}
We will use this result when $\Omega := C(\R) \times \mathbb{T}_L$ is the space of trajectories of the Langevin dynamic, $\mathcal{F}$ is the $\sigma$-algebra generated by the projections, $\mathbb{P}$ is the law of the Langevin dynamic with slope $p \in \Rd$ and $\tau_s$ is the time shift $\tau_s \varphi (t , x ; p) = \varphi (t + s , x ; p)$ (the fact the the operator $\tau_s$ preserves the measure $\mathbb{P}$ is a consequence of the stationarity property stated in Proposition~\ref{prop:propsLangevin}).
\end{remark}

\subsection{Stochastic integrability}

\begin{definition}
    Let $X$ be a random variable. For any exponent $s > 0$ and any constant $K \in (0 , \infty)$, we write
    \begin{equation*}
        X \leq \mathcal{O}_s(K) ~~ \iff  ~~ \mathbb{P} \left[ |X| \geq t K \right] \leq \exp \left( - t^s \right) ~~ \forall t \in [1 , \infty)
    \end{equation*}
    and, for any constant $c > 0$, we write
    \begin{equation*}
        X \leq \mathcal{O}_{\Psi,c}(K) ~~ \iff  ~~  \mathbb{P} \left[ |X| \geq t K \right] \leq \exp \left( - c \left| \ln t \right|^{\frac{r}{r-2}} \right) ~~ \forall t \in [1 , \infty).
    \end{equation*}
\end{definition}

We collect below some useful properties of this notation. The proofs of these results for the $\mathcal{O}_s$ notation can be found in~\cite[Appendix A]{AKMbook} (and the proofs can be extended to the $\mathcal{O}_{\Psi,c}$ notation).

\begin{proposition}[Properties of the $\mathcal{O}_s$ and $\mathcal{O}_{\Psi,c}$ notation] \label{prop:prop2.20}
For any $s > 0$ and any $c > 0$, the notation $\mathcal{O}_s$ and $\mathcal{O}_{\Psi,c}$ satisfy the following properties:
\begin{itemize}
    \item Comparison: there exists $C := C(s , c)$ such that: $X \leq \mathcal{O}_s(K) \implies X \leq \mathcal{O}_{\Psi,c}(C K)$.
    \item Summation: there exists a constant $C := C(s)$ (resp. $C := C(c)$) such that, for any collection $X_1, \ldots, X_N$ and any $K_1 , \ldots, K_N$ satisfying $X_i \leq \mathcal{O}_s(K_i)$ (resp. $X_i \leq \mathcal{O}_{\Psi,c}(K_i)$), $X_1 + \ldots + X_n \leq \mathcal{O}_s(C K_1 + \ldots + C K_N)$ (resp. $X_1 + \ldots + X_n \leq \mathcal{O}_{\Psi , c}(C K_1 + \ldots + C K_N)$).
    \item Integration: Let $t \mapsto X(t)$ be a continuous random function and let $I \subseteq \R$ be a bounded interval of $\R$. Then there exists a constant $C := C(s)$ ($C := C(c)$) such that if $X(t) \leq \mathcal{O}_s(K)$ (resp. $X(t) \leq \mathcal{O}_{\Psi, c}(K)$) for any $t \in I$, then $\int_I X(t) \, dt \leq \mathcal{O}_s(C |I| K)$ (resp. $\int_I X(t) \, dt \leq \mathcal{O}_{\Psi, c}(C |I| K)$).
    \item Product: there exists $C := C(s)$ (resp. $C := C(c)$) such that for any pair $X_1 , X_2$ and $K_1 , K_2$ such that $X_i \leq \mathcal{O}_s(K_i)$ (resp. $X_i \leq \mathcal{O}_{\Psi , c}(K_i)$), then $X_1 X_2 \leq \mathcal{O}_{s/2}(C K_1 K_2)$ (resp. $X_1 X_2 \leq \mathcal{O}_{\Psi , c/2^{r/(r-2)}}(C K_1 K_2)$).
    \item Powers: For any $\alpha > 0$, any random variable $X$ and any constant $K_1$ such that $X \leq \mathcal{O}_s(K_1)$ (resp. $X \leq \mathcal{O}_{\Psi , c}(K_1)$), one has $|X|^\alpha \leq \mathcal{O}_{s/\alpha}(K_1^\alpha)$ (resp. $|X|^\alpha \leq \mathcal{O}_{\Psi , c/\alpha^{r/(r-2)}}(K_1^\alpha)$).
    \item Maximum: There exists a constant $C := C(s)$ (resp. $C := C(c)$) such that for any collection $X_1, \ldots, X_N$ and any $K \geq 1$ satisfying $X_i \leq \mathcal{O}_s(K)$ (resp. $X_i \leq \mathcal{O}_{\Psi , c}(K)$), $\max_{i = 1 , \ldots, N} X_i \leq \mathcal{O}_s(C (\ln N)^{1/s} K)$ (resp. $\max_{i = 1 , \ldots, N} X_i \leq \mathcal{O}_{\Psi , c}(e^{C (\ln N)^{(r-2)/r}} K)$).
    \item Concentration: For any $s \in (1,2)$, there exists a constant $C := C(s)$ such that for any collection $X_1, \ldots, X_N$ of independent random variables satisfying $X_i \leq \mathcal{O}_s(K)$ (for some $K \geq 1$) and $\E[X_i] = 0$, $\sum_{i = 1}^{N} X_i \leq \mathcal{O}_{s}(C \sqrt{N} K)$).
\end{itemize}
\end{proposition}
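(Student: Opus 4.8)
The plan is to prove each property in turn by unwinding the definition of the $\mathcal{O}_s$ and $\mathcal{O}_{\Psi,c}$ notation and performing the corresponding elementary estimate on tail probabilities. The recurring principle is that the function $\Phi_s(t) := t^s$ (for the $\mathcal{O}_s$ notation) and the function $\Phi_{\Psi,c}(t) := c|\ln t|^{r/(r-2)}$ (for the $\mathcal{O}_{\Psi,c}$ notation) are, up to harmless multiplicative constants in the argument, nearly submultiplicative / superadditive in the relevant range $t \geq 1$; all the stated properties are formal consequences of this and a union bound. Since the proofs for $\mathcal{O}_s$ are exactly those of \cite[Appendix A]{AKMbook}, I would only indicate the modifications needed for the $\mathcal{O}_{\Psi,c}$ notation, where the rate is $\exp(-c|\ln t|^{r/(r-2)})$ rather than $\exp(-t^s)$.

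\emph{Comparison.} If $X \leq \mathcal{O}_s(K)$, then for $t \geq 1$ one has $\mathbb{P}[|X| \geq tK] \leq \exp(-t^s)$. To upgrade to $\mathcal{O}_{\Psi,c}(CK)$, I would choose $C := C(s,c)$ large enough that $(Ct)^s \geq$ a quantity dominating $c|\ln t|^{r/(r-2)}$ for all $t \geq 1$; indeed $t \mapsto t^s$ grows (much) faster than any power of $\ln t$, so after a substitution $s := s'/C$ (renaming) and enlarging $C$ this is immediate.

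\emph{Summation, integration, maximum, product, powers.} For \textbf{summation}, write $X_1 + \dots + X_N \geq t(CK_1 + \dots + CK_N)$ and note that this forces $|X_i| \geq t K_i$ for at least one $i$ (taking $C := N$ in the worst case, or the sharper combinatorial argument of \cite{AKMbook}); a union bound then gives $\sum_i \exp(-c|\ln t|^{r/(r-2)}) = N\exp(-c|\ln t|^{r/(r-2)})$, which is reabsorbed by shrinking $c$ and enlarging the prefactor constant (using $t \geq 1$ so $|\ln t| \geq 0$ and the elementary inequality $N e^{-f(t)} \leq e^{-f(t)/2}$ once $f(t) \geq 2\ln N$, handling small $t$ separately by noting the bound is trivial when the right-hand side exceeds $1$). \textbf{Integration} follows from summation by a Riemann-sum approximation of $\int_I X(t)\,dt$ over a partition of $I$ into $|I|/\delta$ pieces and letting $\delta \to 0$, using continuity of $t \mapsto X(t)$ and Fatou; the constant $C(c)$ absorbs the $|I|/\delta$ factor exactly as in the summation step. \textbf{Maximum} is a direct union bound: $\mathbb{P}[\max_i X_i \geq tK] \leq N\exp(-c|\ln t|^{r/(r-2)})$, and one solves for the threshold $t_0$ at which $N\exp(-c|\ln t_0|^{r/(r-2)}) \leq \exp(-c'|\ln t_0|^{r/(r-2)})$, which gives $\ln t_0 \simeq (\ln N)^{(r-2)/r}$, i.e.\ $t_0 \simeq e^{C(\ln N)^{(r-2)/r}}$, matching the stated form. \textbf{Product}: from $|X_1 X_2| \geq tK_1K_2$ one deduces $|X_1| \geq \sqrt{t}K_1$ or $|X_2| \geq \sqrt{t}K_2$, so the tail is bounded by $2\exp(-c|\ln\sqrt t|^{r/(r-2)}) = 2\exp(-c 2^{-r/(r-2)}|\ln t|^{r/(r-2)})$, which is the claimed $\mathcal{O}_{\Psi,c/2^{r/(r-2)}}$ (absorbing the factor $2$ as before). \textbf{Powers}: $|X|^\alpha \geq tK_1^\alpha$ iff $|X| \geq t^{1/\alpha}K_1$, so the tail is $\exp(-c|\ln t^{1/\alpha}|^{r/(r-2)}) = \exp(-c\alpha^{-r/(r-2)}|\ln t|^{r/(r-2)})$, giving the stated exponent $c/\alpha^{r/(r-2)}$.

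\emph{Concentration.} This property is stated only for the $\mathcal{O}_s$ notation with $s \in (1,2)$, so there is nothing new to do beyond citing \cite[Appendix A]{AKMbook}; the proof there is a Bernstein-type argument combining the moment bounds implied by $X_i \leq \mathcal{O}_s(K)$ (all moments finite, with explicit growth) with independence and the centering $\E[X_i] = 0$ to control the moment generating function of $\sum_i X_i$ on a suitable range, then optimizing the Markov exponent. The main (and only real) obstacle in the whole proposition is bookkeeping: each step produces harmless but distinct constants, and one must be careful that in the $\mathcal{O}_{\Psi,c}$ case the "shrink $c$, enlarge prefactor" maneuver is legitimate --- i.e.\ that $\exp(-c|\ln t|^{r/(r-2)})$ with the prefactor reabsorbed is still of the required form for all $t \geq 1$, which is true precisely because the estimate is vacuous (right-hand side $\geq 1$) for $t$ in a neighborhood of $1$, so only the regime $|\ln t| \gg 1$ matters, where superadditivity of $|\ln t|^{r/(r-2)}$ under the relevant scalings is clean. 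No deep input is needed; this is entirely a matter of organizing elementary tail computations, which is why I would relegate the details to an appendix and only sketch the $\mathcal{O}_{\Psi,c}$ modifications in the main text.
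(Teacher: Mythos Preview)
Your proposal is correct and matches the paper's approach exactly: the paper does not prove this proposition at all, but simply states that the $\mathcal{O}_s$ case is in \cite[Appendix A]{AKMbook} and that ``the proofs can be extended to the $\mathcal{O}_{\Psi,c}$ notation.'' You go slightly further by actually sketching how each extension works (union bounds, the $\sqrt{t}$ trick for products, the $t^{1/\alpha}$ substitution for powers, solving for the threshold in the maximum), which is more than the paper provides.
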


\section{The moderated environment} \label{sec:section3moderated}

In this section, we formalize the argument presented in Section~\ref{subsection1.2.3}. Specifically, we establish the following results:
\begin{itemize}
    \item We first obtain a (presumably sharp) stochastic integrability estimate for the gradient of an interface sampled according to the Gibbs measure $\mu_{L , p}$ (see Proposition~\ref{prop.prop2.3});
    \item We then show a fluctuation estimate for the Langevin dynamics, asserting that the probability that they remain in a bounded set for a long time is small (see Proposition~\ref{prop3.4});
    \item We then define the moderated environment associated with the Langevin dynamics and show that it satisfies good stochastic integrability estimates (see Proposition~\ref{prop:stochintmoderated}).
\end{itemize}

\subsection{Stochastic integrability for the gradient of the interface}

In this section, we establish a stochastic integrability estimate for the gradient of an interface sampled according to the Gibbs measure $\mu_{L , p}$. As in~\cite[Proposition 3.1]{D23U}, the proof is based on the Efron's monotonicity theorem for log-concave measure and a coupling argument, originally due to Funaki and Spohn~\cite{FS}, for the Langevin dynamics. The main important feature of Proposition~\ref{prop.prop2.3} is that the decay on the right-hand side of~\eqref{eq:stochintwithslope} is super-Gaussian (as $r > 2$).

\begin{proposition} \label{prop.prop2.3}
There exist two constants $c := c(d , V) > 0$ and $C := C(d , V) < \infty$ such that, for any $L\in \N$, any $p \in \Rd$, any $x \in \mathbb{T}_L$ and any $K \in (0 , \infty)$,
\begin{equation} \label{eq:stochintwithslope}
    \mu_{L , p} \left[ \left| \nabla \varphi(x) \right| \geq K \right] \leq C \exp \left( - c  K^r \right).
\end{equation}
\end{proposition}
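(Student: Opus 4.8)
The target estimate~\eqref{eq:stochintwithslope} is a super-Gaussian tail bound for $|\nabla \varphi(x)|$ under $\mu_{L,p}$, and since the Gibbs measure is log-concave, a single random variable like $\nabla_i \varphi(x)$ automatically has at worst an exponential tail (recall~\eqref{eq:logconcavityimpliesmoments}); so the extra decay $\exp(-cK^r)$ must come from the growth of the potential encoded in Assumption~\eqref{AssPot}, which forces $V(p+\nabla\varphi(x)) \gtrsim |\nabla\varphi(x)|^r$ for large arguments. The plan is to run the argument of~\cite[Proposition 3.1]{D23U}: fix a vertex $x$ and an index $i$, and split the Brownian motion $\tilde B_\cdot(x)$ at an integer time into an increment $X := X_{n,n+1}(x)$ and an independent remainder. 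Using the differentiation formula from Proposition~\ref{prop:propsLangevin}, one checks that $\nabla_i \varphi(n+1,x;p)$ (equivalently, the random variable $\nabla_i\varphi(x)$ under $\mu_{L,p}$, by the distributional identity in Proposition~\ref{prop:propsLangevin}) is, conditionally on everything else, a monotone function of the Gaussian increment $X$ with derivative bounded below by an explicit constant $c_0 > 0$ coming from heat-kernel lower bounds (Proposition~\ref{prop:propheatkernel} and its refinements). Hence $\nabla_i\varphi(x)$ dominates (in the relevant conditional sense) a shifted Gaussian, which already gives a Gaussian tail — but not yet the super-Gaussian rate.

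To upgrade the Gaussian tail to the $r$-th power rate, I would exploit the coupling/monotonicity structure more carefully together with the energy cost. The cleanest route is to work directly with the Gibbs measure and the potential: on the event $\{|\nabla\varphi(x)| \geq K\}$ with $K$ large, the summand $V(p+\nabla\varphi(x))$ in the Hamiltonian is at least $\sim |\nabla\varphi(x)|^r \gtrsim K^r$ (up to lower-order terms controlled by $|p|$), by integrating the lower bound on $D^2_p V$ from Lemma~\ref{lemma.upperandlowerboundLambda}. One then compares the measure of this event with the measure after a shift $\varphi \mapsto \varphi - K' \psi$ for a suitable test function $\psi$ localized near $x$ and with $\sum_{\mathbb{T}_L}\psi = 0$ to stay in $\Omega^\circ_L$; the shift changes the Hamiltonian only by a bounded amount near $x$ plus controlled contributions from neighbouring edges (here the isotropic growth $V(\nabla\varphi) \geq c|\nabla\varphi|^r - C$ is what keeps the neighbouring-edge cost under control, and convexity of $V$ is used to bound the change by the gradient of $V$ evaluated along the segment). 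Carrying out this shift and using Efron's monotonicity theorem (Theorem~\ref{th:Efronmonotonicity}) to control conditional expectations against the symmetric noise, one arrives at $\mu_{L,p}[|\nabla\varphi(x)| \geq K] \leq \exp(C - cK^r)$, which is~\eqref{eq:stochintwithslope} after adjusting constants.

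An alternative, perhaps more robust, implementation avoids the explicit shift and instead builds the super-Gaussian rate from the sensitivity estimate plus log-concavity: having shown that conditionally on the complementary randomness, $\nabla_i\varphi(x)$ is a $1$-Lipschitz-below-to-$c_0$-Lipschitz monotone function of the Gaussian $X$ whose \emph{conditional law} is itself log-concave with a density whose logarithm has curvature at least $\sim |\nabla\varphi(x)|^{r-2}$ on $\{|\nabla\varphi(x)| \geq R_1\}$ (this curvature is exactly $\Lambda_-$, controlled below by Lemma~\ref{lemma.upperandlowerboundLambda}), one integrates the differential inequality $(\ln \rho)'' \leq -c|y|^{r-2}$ for the conditional density $\rho$ of $\nabla_i\varphi(x)$ to get $\rho(y) \leq \exp(C - c|y|^r)$ for $|y|$ large, and then takes the expectation over the complementary randomness. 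Either way, the \textbf{main obstacle} is the same: controlling the effect on the \emph{neighbouring} edges when one perturbs $\varphi$ near $x$ (or, in the second approach, justifying the curvature lower bound for the conditional one-dimensional law rather than for the full Hessian), since Assumption~\eqref{AssPot} only gives convexity information \emph{outside} the compact set $\{|x| \leq R_0\}$; one must argue that on the large-deviation event $\{|\nabla\varphi(x)| \geq K\}$ the relevant slopes are indeed large, so the degenerate region is irrelevant, while simultaneously keeping the bookkeeping of the $\Omega^\circ_L$ (zero-average) constraint and the periodic boundary clean. The coupling argument of Funaki–Spohn, cited in the statement, is precisely the tool that makes the monotonicity in the noise rigorous and lets the one-site estimate pass from the Langevin dynamics to the Gibbs measure.
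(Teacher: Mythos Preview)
Your proposal assembles the right ingredients---log-concavity, Efron's monotonicity, and the Funaki--Spohn coupling---but the way you wire them together is not how the paper proceeds, and both of your concrete routes stall at exactly the obstacle you name.

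First, the Brownian-differentiation idea in your opening paragraph is the technique of Proposition~\ref{prop3.4} (the fluctuation estimate), not of this proposition; the paper does not touch the increments $X_{s,t}(x)$ here at all. The actual proof works entirely at the level of the Gibbs measure via a \emph{perturbed-potential} construction: one picks a one-dimensional convex $\mathcal{V}$ with $\mathcal{V}(z_1) \gtrsim |z_1-p_1|^r$ and such that $z \mapsto V(z) - \mathcal{V}(z_1)$ remains convex, and defines a modified measure $\mu^x_{L,p}$ in which the potential at the single vertex $x$ is replaced by $V - \mathcal{V}$. The algebraic point is that the law of $\nabla_1\varphi(x)$ under $\mu_{L,p}$ coincides with the law of an independent variable $Y$ with density $\propto \exp(-\mathcal{V}(p_1+\cdot))$ conditioned on $\{Y = \nabla_1\varphi^x(x)\}$. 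Efron's theorem is then applied to the pair $(Y, -\nabla_1\varphi^x(x))$ with the increasing test function $F(y) = \exp(cy^r/2)\indc_{\{y>0\}}$: since $Y$ already carries the $\exp(-c|y|^r)$ tail \emph{by construction}, one only needs $\mathbb{P}(Y \geq \nabla_1\varphi^x(x)) \geq c > 0$, and this is where the Funaki--Spohn coupling enters---not to establish monotonicity in the noise, but to transfer the second-moment bound $\E_{L,p}[|\nabla\varphi(x)|^2] \leq C$ (itself proved by an elementary integration-by-parts identity in $\Omega^\circ_L$) from $\mu_{L,p}$ to the perturbed measure $\mu^x_{L,p}$.

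Your shift $\varphi \mapsto \varphi - K'\psi$ and your conditional-curvature alternative both founder on the same point: any perturbation of $\varphi$ near $x$ moves $\nabla\varphi$ at several adjacent edges, and on $\{|\nabla\varphi(x)| \geq K\}$ you have no a priori reason for those neighbouring gradients to avoid the degenerate region $\{|\cdot| \leq R_0\}$, so the energy gained at $x$ can be cancelled elsewhere. Likewise, the conditional log-density of $\nabla_1\varphi(x)$ given the rest of the field involves $V$ evaluated at \emph{all} edges incident to $x$ and $x+e_1$, not just the one edge, so the claimed bound $(\ln\rho)'' \leq -c|y|^{r-2}$ is not available without precisely the control you are trying to prove. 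The perturbed-potential trick sidesteps this cleanly: it modifies the \emph{potential} at one vertex rather than the \emph{field}, so neighbouring edges are untouched, and the super-Gaussian rate is imported directly from the explicit density of $Y$ rather than extracted from a curvature estimate on a complicated conditional law.
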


\begin{remark}
    Since the model is defined on the torus, and thus translation invariant, the law of the random variable $\left| \nabla \varphi(x) \right| $ does not depend on the vertex $x \in \mathbb{T}_L$.
\end{remark}

\begin{remark}
   Compared to~\cite[Proposition 3.1]{D23U}, the main feature of the previous proposition is that the estimate is uniform over the slope $p \in \Rd$. In fact the result is suboptimal in this aspect as the random variable $\left| \nabla \varphi(x) \right| $ should concentrate around $0$ as the norm of the slope increases (this can be seen in inequality~\eqref{eq:26120827}), and the inequality should thus improve as the norm of the slope gets larger. While we believe that the proof could be optimised to capture this phenomenon, we did not try to do so to minimize the technicality of the argument.
\end{remark}

\begin{proof}
Let us select a sidelength $L \in \N$, a slope $p = (p_1 , \ldots, p_d) \in \Rd$ and a vertex $x \in \mathbb{T}_L$. Without loss of generality, we may assume that $|p_1| = \max_{1 \leq i \leq d} |p_i|$. We note that this assumption implies that $|p_1| \geq |p| / \sqrt{d}$. We will prove the following inequality: there exist $c := c(d , V) > 0$ and $C := C(d , V) < \infty$ such that, for any $K \in (0 , \infty)$,
\begin{equation} \label{eq:10241601}
    \mu_{L , p} \left[ \left| \nabla_1 \varphi(x) \right| \geq K \right] \leq 
    C \exp \left( - c  K^r \right).
\end{equation}
The inequality~\eqref{eq:stochintwithslope} can be deduced from~\eqref{eq:10241601} by using the upper bound $ \left| \nabla_1 \varphi(x) \right| \leq \left| \nabla \varphi(x) \right|$.

The rest of the argument is devoted to the proof of~\eqref{eq:10241601}. We first recall that, by the translation invariance of the Gibbs measure $\mu_{L , p}$, for any index $i \in \{ 1 , \ldots, d \}$,
\begin{equation} \label{eq:expegradphiis0}
    \E_{L , p} \left[ \nabla_i \varphi(x)  \right] = \E_{L , p} \left[ \varphi(x+ e_i) -   \varphi(x+ e_i) \right] = \E_{L , p} \left[ \varphi(x+ e_i) \right] -  \E_{L , p} \left[ \varphi(x) \right]  =0.
\end{equation}
We split the argument into three steps.

\medskip

\textit{Step 1. Bound on the random variable $|\nabla \varphi(x)|$.}

\medskip

\textit{Substep 1.1. Bound on the $L^2$-norm.} We first prove the following upper bound on the $L^2$-norm of the random variable $\left| \nabla \varphi(x) \right|$: there exists a constant $C := C(d , V) < \infty$ such that 
\begin{equation} \label{eq:26120827}
    \E_{L , p} \left[ \left| \nabla \varphi(x) \right|^2  \right] \leq \frac{C}{|p|_+^{r-2}}.
\end{equation}
The proof of~\eqref{eq:26120827} is based on the following identity: for any vertex $y \in \mathbb{T}_L$,
\begin{equation} \label{eq:08312612}
    \E \left[ \varphi(y) \nabla \cdot D_p V(p + \nabla  \varphi)(y) \right] = - \frac{\left| \mathbb{T}_L \right| -1 }{\left| \mathbb{T}_L \right|}.
\end{equation}
The identity~\eqref{eq:08312612} is a consequence of the following general identity (which follows from an integration by parts): for any continuously differentiable probability density $f : \R^n \to [0 , \infty)$ such that $|z|f(z)$ tends to $0$ at infinity and $z \to (1 + |z|) \nabla f(z)$ is integrable, and for any index $i \in \{ 1 , \ldots , n \}$,
\begin{equation} \label{identity:f}
    \int_{\R^n} z_i \frac{d f}{d z_i} (z) \, dz = -1.
\end{equation}
We will apply this result in the following setting:
\begin{itemize}
    \item[(i)] \textit{Underlying space:} we consider the vector space $\Omega^\circ_{L}$. Its dimension is $|\mathbb{T}_L| -1 $.
    \item[(ii)] \textit{Probability density:} we consider the density $$f( \varphi ) := \frac{1}{Z_{L , p}} \exp \left( - \sum_{y \in \mathbb{T_L}} V(p + \nabla \varphi (y)) \right).$$
    \item[(iii)] \textit{Coordinates:} we denote by $\varphi_x := \delta_x - \frac{1}{\left| \mathbb{T}_L \right|} \in \Omega^\circ_{L}$ and observe that the following identity holds
\begin{equation*}
    \frac{d f}{d \varphi_x}(\varphi) = \frac{1}{Z_{L , p}} \nabla \cdot D_pV(p + \nabla  \varphi)(x) \exp \left( - \sum_{y \in \mathbb{T_L}} V(p + \nabla \varphi (y)) \right)
\end{equation*}
    as well as the identities
\begin{equation*}
    \varphi(x) =  \sum_{y \in \mathbb{T}_L} \varphi(y) \varphi_x(y)    ~~ \mbox{and}~~ \left\| \varphi_x \right\|_{L^2(\mathbb{T}_L)} = \left(\frac{\left| \mathbb{T}_L \right|-1}{\left| \mathbb{T}_L \right|}\right)^{\frac 12}.
\end{equation*}
\end{itemize}
In the setting above, the identity~\eqref{identity:f} becomes
\begin{equation*}
 \mathbb{E}_{L,p} \left[ \varphi(x) \nabla \cdot D_p V(p + \nabla  \varphi)(x)\right] = -\frac{\left| \mathbb{T}_L \right| -1}{\left| \mathbb{T}_L \right|}.
\end{equation*}
Summing the inequality~\eqref{eq:08312612} over the vertices $x \in \mathbb{T}_L$ and performing a discrete integration by parts (see~\eqref{eq:discreteIPP}), we deduce that
\begin{equation*}
    \E_{L,p} \left[  \sum_{y \in\mathbb{T}_L} D_pV(p + \nabla \varphi(y)) \cdot \nabla \varphi(y) \right] = \left| \mathbb{T}_L \right| -1.
\end{equation*}
We next record the following identity (since $D_p V(p)$ is deterministic, it is a direct consequence of~\eqref{eq:expegradphiis0})
\begin{equation*} 
        \E_{L,p} \left[ D_pV(p ) \cdot \nabla \varphi(x) \right] =0.
\end{equation*}
Combining the two previous identities, we obtain that
\begin{equation*}
    \E_{L,p} \left[  \sum_{y \in\mathbb{T}_L} \left( D_pV(p + \nabla \varphi(y)) - D_pV(p) \right) \cdot \nabla \varphi(y) \right] = \left| \mathbb{T}_L \right| -1.
\end{equation*}
Using the Assumption~\eqref{AssPot} on the potential $V$, we see that, for any realization $\varphi \in \Omega^\circ_{L}$, and any vertex $y \in \mathbb{T}_L$,
\begin{align*}
    \left( D_pV(p + \nabla \varphi(y)) - D_pV(p) \right) \cdot \nabla \varphi(y) & = \left( \left( \int_{0}^1  D_p^2 V(p + s \nabla \varphi(y)) \, ds \right) \nabla \varphi(y)  \right) \cdot \nabla \varphi(y) \\
    & \geq c |p|^{r-2}_+  \left| \nabla \varphi(y) \right|^2 - C.
\end{align*}
A combination of the two previous displays yields the inequality
\begin{equation*}
    \E_{L,p} \left[  \sum_{y \in \mathbb{T}_L} \left| \nabla \varphi(y) \right|^2  \right] \leq \frac{C }{|p|_+^{r-2}} \left| \mathbb{T}_L \right|.
\end{equation*}
Using that the translation invariance of the measure $\mu_{L,p}$, we deduce that
\begin{equation*}
    \E_{L,p} \left[ \left| \nabla \varphi(x) \right|^2 \right] = \E_{L,p} \left[ \frac{1}{\left| \mathbb{T}_L \right|} \sum_{y \in \mathbb{T}_L} \left| \nabla \varphi(y) \right|^2 \right] \leq \frac{C}{|p|_+^{r-2}}.
\end{equation*}

\medskip

\textit{Substep 1.2. Upgrading to exponential moments using log-concavity.} We note that, since the Gibbs measure $\mu_{L, p}$ is log-concave, we can apply the Pr\'ekopa-Leindler inequality~\cite{prekopa1971logarithmic, prekopa1973logarithmic, leindler1972certain} to deduce that the distributions of the random variables $\nabla_1 \varphi
(x), \ldots, \nabla_d \varphi
(x)$ are also log-concave. This implies that their tails decay at least exponentially fast on the scale of their standard deviation. In particular, we have the inequality: for any $K \geq 1$,
\begin{equation*} 
   \mu_{L,p} \left[ \left| \nabla \varphi(t , x) \right| \geq K \right] \leq \exp \left( - c |p|^{(r-2)/2} K \right).
\end{equation*}
In particular, for any exponent $\alpha \in [1 , \infty)$,
\begin{equation} \label{eq:gradphilogconcave}
     \E_{L , p} \left[ \left| \nabla \varphi(x) \right|^{2\alpha} \right] \leq \frac{C_\alpha}{|p|_+^{\alpha(r-2)}}.
\end{equation}
Our goal is then to upgrade the decay on the right-hand side of~\eqref{eq:gradphilogconcave} from exponential to the super exponential rate~\eqref{eq:stochintwithslope} (while losing the factor involving the norm of the slope $p$).

\medskip

\textit{Step 2. Perturbing the potential $V$.} We let $\mathcal{V} : \R \to \R$ be a twice continuously differentiable convex function satisfying the following properties:
\begin{itemize}
\item[(i)] \textit{Lower bound on the growth $\mathcal{V}$:} we assume that $\mathcal{V}(p_1) = 0$ and that there exist two constants $C := C(d , V) < \infty$ and $c := c(d , V) > 0$ such that $\mathcal{V}(z_1) \geq c |z_1 - p_1|^{r} - C$.
\item[(ii)] \textit{Upper bound on the growth of $\mathcal{V}':$} we assume that $\mathcal{V}'(p_1) =0$ and that $\left| \mathcal{V}'(z_1) \right| \leq |z_1 - p_1|^{r-1}$,
\item[(iii)] \textit{Upper bound on the growth of $\mathcal{V}'':$} we assume that the function $z= (z_1 , \ldots, z_n) \mapsto V(z) - \mathcal{V}(z_1)$ is convex.
\end{itemize}
We note that the function $\mathcal{V}$ is allowed to depend on the value of the slope $p_1$, but the constants should only depend on the dimension $d$ and the potential $V$. The existence of the function $\mathcal{V}$ is guaranteed by the Assumption~\eqref{AssPot} on the potential $V$.

We then introduce the collection of convex potentials~$(V_{y})_{y \in \mathbb{T}_L}$ defined as follows: for any vertex $y \in \mathbb{T}_L$ and any $z = (z_1 , \ldots, z_n) \in \R^n$,
\begin{equation*}
    V_{y} (z) := \left\{ \begin{aligned}
    V(z) &~\mbox{if}~ y \neq x, \\
    V(z) - \mathcal{V}(z_1)  &~\mbox{if}~ y = x, \\
    \end{aligned} \right.
\end{equation*}
and let $\varphi^x : \mathbb{T}_L \to \R$ be a random interface distributed according to the Gibbs measure
\begin{equation} \label{eq:09310312}
    \mu^x_{L,p}(d \varphi) := \frac{1}{Z_{L,p}^x} \exp \left( - \sum_{y \in \mathbb{T}_L } V_{y} \left( p+ \nabla \varphi(y) \right) \right)  d \varphi.
\end{equation}
Since the potentials~$(V_{y})_{y \in \mathbb{T}_L}$ are all convex, the measure~\eqref{eq:09310312} is log-concave, and thus the random variables $\nabla_1 \varphi^x(x), \ldots, \nabla_d \varphi^x(x)$ are also log-concave. 

We next prove the following estimate: there exists a constant $C := C(d , V) < \infty$ such that
\begin{equation} \label{eq:10110312}
    \E \left[\left| \nabla \varphi^x(x) \right|^2  \right] \leq \frac{C}{|p|_+^{r-2}}.
\end{equation}
The proof of~\eqref{eq:10110312} is based on a coupling argument for Langevin dynamics. We consider the stationary Langevin dynamic associated with the measure $\mu^x_{L,p}$, i.e., the stationary solution of the stochastic differential equation (the existence of this dynamic can be proved using the same arguments as the ones presented in Appendix~\ref{App.B})
    \begin{equation} \label{eq:21252711}
    d \varphi_{L}^x (t , y ; p) = \nabla \cdot D_p V_{y}(p+ \nabla \varphi_{L}^x) (t , y ; p) + \sqrt{2} d \tilde B_t(y) ~~~\mbox{for}~~~ (t , y) \in \R \times \mathbb{T}_L.
\end{equation}
We next couple the two dynamics~\eqref{eq:21252711} and~\eqref{eq:def.Langevintorus} by assuming that they are driven by the same Brownian motions. Subtracting the two dynamics, we observe that the difference $u := \varphi_{L}(\cdot , \cdot ; p) - \varphi^x_{L}(\cdot , \cdot ; p)$ solves the parabolic equation
\begin{equation} \label{eq:12390312}
    \partial_t u - \nabla \cdot \a \nabla u  = \nabla \cdot \left[ \left(D_p V_{y} - D_p V \right)(p+\nabla \varphi_{L}) (\cdot , \cdot ; p) \right] \hspace{5mm} \mbox{in} ~[0 , \infty] \times \mathbb{T}_L,
\end{equation}
with the definition
\begin{align*}
    \a(t , y) & :=
    \int_0^1 D_p^2 V_{y}(p+ s \nabla \varphi_L(t , y; p ) + (1-s) \nabla \varphi_L^x(t , y;p)) \, ds \\
    & = \int_0^1 D_p^2 V_{y}(p+ \nabla \varphi_L(t , y; p ) - (1-s) \nabla u(t , y)) \, ds  \in \mathcal{S}^+(\Rd).
\end{align*}
Noting that the potentials $V_{y}$ and $V$ are only different at the vertex $x$, and that their difference is given by the function $z = (z_1 , \ldots, z_d) \mapsto \mathcal{V}(z_1)$, we may use an energy estimate on the equation~\eqref{eq:12390312} (i.e., multiply both besides of~\eqref{eq:12390312} by $u$, sum over the vertices $x \in \mathbb{T}_L$, integrate over the times $t \in [0 , T]$ and perform a discrete integration by parts) and obtain, for any $T \geq 0$,
\begin{multline} \label{eq:13110312}
    \sum_{x \in \mathbb{T}_L}  \left| u(T , x) \right|^2 + \int_{0}^T \sum_{y \in  \mathbb{T}_L } \nabla u(t ,y) \cdot \a(t , y) \nabla u(t ,y)  \, dt \\ \leq C \int_0^T \left| \mathcal{V}'\left(p_1 + \nabla_1 \varphi_L(t , x ; p) \right) \right| \left| \nabla_1 u(t,x) \right| \, dt + C \sum_{x \in \mathbb{T}_L}  \left| u(0 , x) \right|^2.
\end{multline}
The inequality~\eqref{eq:12390312} implies the following inequality (forgetting the first term on the left-hand side and the sum in the integral which both contribute positively to the left-hand side)
\begin{equation*}
    \int_{0}^T  \nabla u(t ,x) \cdot \a(t , x) \nabla u(t ,x)  \, dt \leq C \int_0^T \left| \mathcal{V}'\left(p_1 + \nabla_1 \varphi_L(t , x) \right) \right| \left| \nabla_1 u(t,x) \right| \, dt + C \sum_{x \in \mathbb{T}_L}  \left| u(0 , x) \right|^2.
\end{equation*}
Using the definition of the function $\Lambda_-$ introduced~\eqref{eq:defLambda-}, we have the inequality
\begin{equation*}
    \nabla u(t ,x) \cdot \a(t , x) \nabla u(t ,x)  \geq  \Lambda_-(p + \nabla \varphi_L(t , x ; p)) \left| \nabla u(t ,x) \right|^2.
\end{equation*}
Additionally, it follows from the definition of the environment $\a$ and the third property of Lemma~\ref{lemma.upperandlowerboundLambda} that there exists a constant $C := C(d , V) < \infty$ such that, if $\left| \nabla u(t,x) \right| \geq C$, then
\begin{equation*}
    \nabla u(t ,x) \cdot \a(t , x) \nabla u(t ,x) \geq \left| \nabla u(t ,x) \right|^2.
\end{equation*}
The two previous inequalities can be combined so as to obtain the following statement: there exists a constant $C := C(d, V) < \infty$ such that
\begin{equation} \label{eq:13100312}
    \nabla u(t ,x) \cdot \a(t , x) \nabla u(t ,x)  \geq  \frac{1}{2} (\Lambda_-(p + \nabla \varphi_L(t , x ; p)) +1 ) \left| \nabla u(t ,x) \right|^2  - C.
\end{equation}
We then substitute~\eqref{eq:13100312} into~\eqref{eq:13110312} and apply the Cauchy-Schwarz inequality
\begin{equation*}
    (\Lambda_-(p + \nabla \varphi_L(t , x ; p)) +1) \int_{0}^T \left| \nabla u(t ,x) \right|^2 \, dt  \leq C T   + C \int_0^T \left| \mathcal{V}'(p_1+ \nabla_1 \varphi_{L}(t,x;p))\right|^2 \, dt + C \sum_{x \in \mathbb{T}_L }  \left| u(0 , x) \right|^2.
\end{equation*}
Using the definition $u := \varphi_{L}(\cdot , \cdot ; p) - \varphi^x_{L} (\cdot , \cdot ; p)$, we thus obtain
\begin{multline*}
     (\Lambda_-(p + \nabla \varphi_L(t , x ; p)) +1)  \int_{0}^T \left| \nabla \varphi_{L}^x(t ,x; p) \right|^2 \, dt \leq  C T + C\sum_{x \in \mathbb{T}_L }  \left| u(0 , x) \right|^2 \\ 
    + \int_0^T \left( \mathcal{V}'( p_1+ \nabla_1 \varphi_{L}(t,x;p))^2  + (\Lambda_-(p + \nabla \varphi_L(t , x ; p)) +1) \left|\nabla \varphi_{L}(t,x;p)\right|^2\right) \, dt  .
\end{multline*}
Dividing both sides of the inequality by $(\Lambda_-(p + \nabla \varphi_L(t , x ; p)) +1)$, taking the expectation, and using the time stationarity of the gradients $\nabla \varphi_{L} (\cdot , \cdot ; p )$ and $\nabla \varphi_{L}^x (\cdot , \cdot ; p )$, we deduce that, for any $T > 0$,
\begin{align*}
    \E \left[\left| \nabla \varphi^x_L(0 ,x ; p) \right|^2  \right] & \leq C \E \left[ \frac{\mathcal{V}'( p_1 + \nabla_1 \varphi_L(0 , x;p) )^2 + 1}{ \Lambda_-(p + \nabla \varphi_L(0 , x ; p)) +1 }  + \left|\nabla_1 \varphi_L(0,x)\right|^2 \right] + \frac{C}{T} \sum_{x \in \mathbb{T}_L}  \E \left[ \left| u(0 , x) \right|^2 \right].
\end{align*}
Taking the limit $T \to \infty$, using the bound~\eqref{eq:26120827} and the Cauchy-Schwarz inequality, we obtain
\begin{align*}
    \E \left[\left| \nabla \varphi^x_L(0 ,x;p) \right|^2  \right] & \leq  \frac{C}{|p|_+^{r-2}} + C \E \left[ \frac{\mathcal{V}'( p_1 + \nabla_1 \varphi_L(0 , x;p) )^2 +1}{\Lambda_-(p + \nabla \varphi_L(0 , x ; p)) +1 } \right] \\
    & \leq \frac{C}{|p|_+^{r-2}} + C \E \left[ (\mathcal{V}'( p_1 + \nabla_1 \varphi_L(0 , x;p) )^2 + 1)^2\right]^{1/2} \E \left[ \frac{1}{(\Lambda_-(p + \nabla \varphi_L(0 , x ; p)) +1 )^2} \right]^{1/2}.
\end{align*}
We next estimate the first term on the right-hand side. Using the assumption (ii) on the function $\mathcal{V}$ and the inequality of~\eqref{eq:gradphilogconcave} (with $\alpha = r-1 > 1$), we deduce that
\begin{equation*}
    \E \left[ (\mathcal{V}'( p_1 + \nabla_1 \varphi_L(0 , x) )^2 + 1)^2 \right] \leq C \E \left[ \left|\nabla_1 \varphi_L(0 , x) ) \right|^{4(r-1)} +1 \right]  \leq  \frac{C}{|p|^{2(r-1)(r-2)}_+} + C \leq C.
\end{equation*}
For the second term, the result of Lemma~\ref{lemma.upperandlowerboundLambda} implies that there exists a constant $c := c(d , V) > 0$ such that
\begin{equation*}
    \Lambda_-(p + \nabla \varphi_L(t , x ; p)) + 1 \geq 
        \left\{ \begin{aligned}
        c |p|_+^{r-2}  & ~~\mbox{if}~ |\nabla \varphi_L(t , x ; p)| \leq \frac{|p|_+}{2}, \\
        1 & ~~\mbox{otherwise.} 
        \end{aligned} \right.
\end{equation*}
Thus
\begin{equation*}
    \E \left[ \frac{1}{(\Lambda_-(p + \nabla \varphi_L(t , x ; p)) +1 )^2} \right] \leq \frac{C}{|p|^{2(r-2)}_+} + \E \left[ \indc_{\{  \nabla \varphi_L(t , x ; p)) \geq |p|_+/2 \}} \right].
\end{equation*}
Using the inequality~\eqref{eq:gradphilogconcave} (with $K = |p|_+/2$), we obtain
\begin{equation*}
    \E \left[ \frac{1}{(\Lambda_-(p + \nabla \varphi_L(t , x ; p)) +1 )^2} \right]^{1/2} \leq \frac{C}{|p|^{r-2}_+} + e^{- c |p|_+^{r/2}} \leq \frac{C}{|p|^{r-2}_+}.
\end{equation*}
Combining the previous inequalities completes the proof of~\eqref{eq:10110312}.

\medskip

\textit{Step 3. Applying Efron's monotonicity theorem.} In the next step of the proof, we let $Y$ be a real-valued random variable whose law is given by
\begin{equation*}
    \mu_Y := \frac{1}{Z_Y} \exp \left( - \mathcal{V} \left( p_1 +  y \right) \right) dy \hspace{5mm} \mbox{with} \hspace{5mm} Z_Y := \int_\R \exp \left( - \mathcal{V} \left( p_1 +  y \right) \right) dy.
\end{equation*}
We couple the random variables $Y$ and $\varphi^x$ by assuming that they are independent. Using the assumption on the function $\mathcal{V}$, the independence of $Y$ and $\nabla_1 \varphi^x(x)$ and the bound~\eqref{eq:10110312}, we deduce that there exists a constant $c := c(d , V) > 0$ such that
\begin{align} \label{eq:15210312}
    \mathbb{P} \left[ Y \geq \nabla_1 \varphi^x(x) \right] & \geq \mathbb{P} \left[ \left\{ Y \geq 2 \E \left[\left| \nabla_1 \varphi^x(x) \right|  \right]  \right\} \cap \left\{ \nabla_1 \varphi^x(x) \leq 2 \E \left[\left| \nabla_1 \varphi^x(x) \right|  \right]   \right\} \right] \\
    & = \mathbb{P} \left[ \left\{ Y \geq 2 \E \left[\left| \nabla_1 \varphi^x(x) \right|  \right]  \right\} \right)  \mathbb{P} \left(\left\{ \nabla_1 \varphi^x(x) \leq 2 \E \left[\left| \nabla_1 \varphi^x(x) \right|  \right]   \right\} \right] \notag \\
    & \geq c. \notag
\end{align}
We next rely on the observation that the law of random variable $\nabla_1 \varphi(x)$ (where $\varphi$ is distributed according to the measure~$\mu_{L,p}$) is equal to the law of the random variable $Y$ conditionally on the event $\left\{ Y - \nabla_1 \varphi^x(x) = 0 \right\}$. This property is a consequence of the following observation: if $X$ and $Z$ are two independent real-valued random variables with bounded continuous densities $f$ and $g$ then the law of $X$ conditionally on the event $\{ X - Z = 0\}$ has a density proportional to the function $fg$. In particular, for any non-negative function $F : \R \to [0 , \infty)$, one has the identity
\begin{equation} \label{eq:1519031222}
    \E_{L,p} \left[ F(\nabla_1 \varphi(x)) \right] = \E \left[ F(Y) \,  | \,  Y - \nabla_1 \varphi^x(x) = 0 \right].
\end{equation}
Recalling that $c > 0$ the constant in Assumption (i) on the growth of the function $\mathcal{V}$, we introduce the function
\begin{equation*}
    F(z) := \left\{ \begin{aligned}
    0 &~\mbox{if}~ z \leq  0, \\
    \exp \left( \frac{c z^r}{2} \right) &~\mbox{if}~ z \geq 0.
    \end{aligned} \right.
\end{equation*}
Let us note that the function $F$ is nonnegative and increasing.
Assumption (i) on the growth of the function $\mathcal{V}$ implies that there exists a constant $C := C(d, V) < \infty$ such that
\begin{equation} \label{eq:15190312}
    \E \left[ F(Y) \right] = \frac{1}{Z_Y} \int_\R F(z) \exp \left( - \mathcal{V}(p_1 + z)\right) \, dz \leq C.
\end{equation}
We then note that the Efron's monotonicity theorem applied to the pair of independent random variables $(Y , \nabla_1 \varphi^x(x))$, the nonnegativity and monotonicity of the function $F$ imply the almost sure inequality
\begin{equation} \label{eq:15400312}
    \E \left[ F(Y) \,  | \,  Y - \nabla_1 \varphi^x(x) = 0 \right] \indc_{\left\{ Y - \nabla_1 \varphi^x(x) \geq 0 \right\}} \leq \E \left[ F(Y) \,  | \,  Y - \nabla_1 \varphi^x(x) \right].
\end{equation}
Combining the bound~\eqref{eq:15400312} with the lower bound~\eqref{eq:15210312}, the identity~\eqref{eq:1519031222} and the inequality~\eqref{eq:15190312} yields the existence of a constant $C := C(d , V) < \infty$ such that
\begin{align} \label{eq:15420312}
    \E_{L,p} \left[ F(\nabla_1 \varphi(x) ) \right] & = \E \left[ F(Y) \,  | \,  Y - \nabla_1 \varphi^x(x) = 0 \right] \\
    & \leq \frac{1}{\mathbb{P} \left( Y -\nabla_1 \varphi^x(x) \geq 0 \right)} \E \left[ \E \left[ F(Y) \,  | \,  Y - \nabla_1 \varphi^x(x) \right] \right] \notag \\
    & \leq \frac{1}{\mathbb{P} \left( Y - \nabla_1 \varphi^x(x) \geq 0 \right)} \E \left[ F(Y)\right] \notag \\
    & \leq C. \notag
\end{align}
The inequality~\eqref{eq:15420312} implies that there exist two constants $C := C(d , V) < \infty$ and $c := c(d , V) > 0$ such that, for any $K \geq 1$,
\begin{equation} \label{eq:15490312}
    \mu_{L , p} \left[  \nabla_1 \varphi (x) > K \right] \leq C \exp \left( - c K^{r} \right).
\end{equation}
The same argument applied with the potential $\tilde V(z) :=  V(- z)$ yields the upper bound, for any $K \geq 1$,
\begin{equation} \label{eq:15500312}
    \mu_{L , p}  \left[  \nabla_1 \varphi^x (x) < - K \right] \leq C \exp \left( - c K^{r} \right).
\end{equation}
Combining~\eqref{eq:15490312} and~\eqref{eq:15500312} completes the proof of Proposition~\ref{prop.prop2.3}.
\end{proof}

\subsubsection{Stochastic integrability for the Langevin dynamic} \label{fluctuationofgradphi}

In this section, we extend the result of the previous section to the stationary Langevin dynamic using (essentially) a union bound.

\begin{proposition} \label{propositionsubdynamic}
There exist two constants $c := c(d , V) > 0$ and $C := C(d , V) < \infty$ such that, for any $p \in \Rd$, any $T \geq 1$ and any $K \geq 1$,
\begin{equation} \label{eq:13510512}
    \mathbb{P}\left[ \sup_{t \in [0 , T]}  \left| \nabla \varphi_{L} (t , x ; p) \right| \geq K \right] \leq C |p|^{r-1}_+ T \exp \left( - c K^r \right).
\end{equation}
\end{proposition}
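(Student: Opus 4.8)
The plan is to discretize time so as to reduce the supremum over $[0,T]$ to a supremum over a finite grid of times — at grid times the random variable $\nabla\varphi_{L}(t,\cdot;p)$ is distributed according to the Gibbs measure $\mu_{L,p}$, so Proposition~\ref{prop.prop2.3} applies directly — plus a control of the oscillation of $t\mapsto\nabla\varphi_{L}(t,x;p)$ over each grid cell. Fix $p\in\Rd$, $T\ge1$, $K\ge1$, and let $\delta>0$ be a mesh to be chosen at the end (of order $K^{2-r}|p|_+^{-(r-1)}$); set $t_n:=n\delta$ and $N:=\lceil T/\delta\rceil$. Since $[0,T]\subseteq\bigcup_{n=0}^{N-1}[t_n,t_{n+1}]$, the triangle inequality and a union bound over $n$ give
\begin{equation*}
    \mathbb{P}\Big[\sup_{t\in[0,T]}|\nabla\varphi_{L}(t,x;p)|\ge K\Big]\le\sum_{n=0}^{N-1}\mathbb{P}\big[|\nabla\varphi_{L}(t_n,x;p)|\ge\tfrac K2\big]+\sum_{n=0}^{N-1}\mathbb{P}\big[\osc_{[t_n,t_{n+1}]}\nabla\varphi_{L}(\cdot,x;p)\ge\tfrac K2\big],
\end{equation*}
and by the distributional statement of Proposition~\ref{prop:propsLangevin}, each term in the first sum equals $\mu_{L,p}\big[|\nabla\varphi(x)|\ge K/2\big]$, which Proposition~\ref{prop.prop2.3} bounds by $C\exp(-cK^r)$.

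To control the oscillation I would integrate~\eqref{eq:def.Langevintorus} in time, writing~\eqref{eq:reqldefSDE} componentwise: for $t\in[t_n,t_{n+1}]$,
\begin{equation*}
    \nabla_j\varphi_{L}(t,x;p)-\nabla_j\varphi_{L}(t_n,x;p)=\int_{t_n}^{t}\nabla_j\big(\nabla\cdot D_pV(p+\nabla\varphi_{L})(\tau,\cdot;p)\big)(x)\,d\tau+\sqrt2\,\big(\nabla_j\tilde B_t(x)-\nabla_j\tilde B_{t_n}(x)\big).
\end{equation*}
Because $\nabla_j\tilde B_t(x)=\tilde B_t(x+e_j)-\tilde B_t(x)=B_t(x+e_j)-B_t(x)$ — the subtracted spatial average cancels in the discrete gradient — the last term is the increment over $[t_n,t]$ of a Brownian motion with quadratic variation $2t$, so its supremum over the cell is $\mathcal{O}_2(C\sqrt\delta)$ by the reflection principle. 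For the drift, at a fixed time $\tau$ the quantity $\nabla_j(\nabla\cdot D_pV(p+\nabla\varphi_{L})(\tau,\cdot;p))(x)$ is a fixed linear combination (with $d$-dependent coefficients) of $D_pV$ evaluated at $p+\nabla\varphi_{L}(\tau,z;p)$ over the vertices $z$ at distance at most $2$ from $x$, whose number depends only on $d$; each $\nabla\varphi_{L}(\tau,z;p)$ is $\mu_{L,p}$-distributed, so combining Proposition~\ref{prop.prop2.3} with the polynomial bound $|D_pV(q)|\le C|q|_+^{r-1}$ (obtained by integrating the first item of Lemma~\ref{lemma.upperandlowerboundLambda} along the segment from $0$ to $q$) and the \emph{Powers}, \emph{Summation} and \emph{Maximum} rules of Proposition~\ref{prop:prop2.20} yields $|\nabla_j(\nabla\cdot D_pV(p+\nabla\varphi_{L})(\tau,\cdot;p))(x)|\le\mathcal{O}_{r/(r-1)}(C|p|_+^{r-1})$, with a constant uniform in $\tau\in[t_n,t_{n+1}]$. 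The \emph{Integration} rule of Proposition~\ref{prop:prop2.20} then bounds the drift integral over the cell by $\mathcal{O}_{r/(r-1)}(C\delta|p|_+^{r-1})$, and summing the $d$ components gives
\begin{equation*}
    \osc_{[t_n,t_{n+1}]}\nabla\varphi_{L}(\cdot,x;p)\le\mathcal{O}_{r/(r-1)}\big(C\delta|p|_+^{r-1}\big)+\mathcal{O}_2\big(C\sqrt\delta\big).
\end{equation*}

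It remains to optimize $\delta$. Choosing $\delta:=c_0K^{2-r}|p|_+^{-(r-1)}$ with $c_0:=c_0(d,V)$ small enough, $C\delta|p|_+^{r-1}$ is comparable to $K^{2-r}$ and $C\sqrt\delta$ to $K^{1-r/2}$, so feeding these into the definition of the $\mathcal{O}_s$-notation makes the two contributions to $\mathbb{P}[\osc_{[t_n,t_{n+1}]}\nabla\varphi_{L}(\cdot,x;p)\ge K/2]$ at most $\exp(-cK^{(r-1)\cdot\frac{r}{r-1}})=\exp(-cK^r)$ and $\exp(-cK^{\frac r2\cdot2})=\exp(-cK^r)$. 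Since $N\le CTK^{r-2}|p|_+^{r-1}$, summing over the $N$ cells yields
\begin{equation*}
    \mathbb{P}\Big[\sup_{t\in[0,T]}|\nabla\varphi_{L}(t,x;p)|\ge K\Big]\le CTK^{r-2}|p|_+^{r-1}\exp(-cK^r)\le CT|p|_+^{r-1}\exp(-c'K^r),
\end{equation*}
using that $K^{r-2}e^{-(c-c')K^r}$ is bounded on $[1,\infty)$ whenever $0<c'<c$ — this is~\eqref{eq:13510512}. The step I expect to be the crux, and the source of the polynomial prefactor $|p|_+^{r-1}T$, is the drift estimate: the time-integrated drift inherits only the weaker stochastic integrability $\mathcal{O}_{r/(r-1)}$ (the exponent $r/(r-1)\in(1,2)$, and the \emph{Integration} rule does not improve it), which forces the small mesh $\delta$ and hence the large number $N$ of cells. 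One must also avoid controlling the oscillation via a stopping-time argument on $\max_{z\in\mathbb{T}_L}|\nabla\varphi_{L}(t,z;p)|$: that would reintroduce a factor $|\mathbb{T}_L|$ and break the uniformity in $L$; the mechanism above works because the drift at $x$ only involves boundedly many neighbouring vertices, so the union bound there is harmless.
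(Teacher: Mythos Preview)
Your proof is correct and follows essentially the same approach as the paper: discretize $[0,T]$, apply Proposition~\ref{prop.prop2.3} at the grid points via the distributional property of Proposition~\ref{prop:propsLangevin}, and control the oscillation on each cell by decomposing the SDE into a drift integral (bounded via the polynomial growth of $D_pV$ and Proposition~\ref{prop.prop2.3}) plus a Brownian increment. The only cosmetic difference is the mesh: the paper takes $1/N = |p|_+^{-(r-1)}K^{-r}$ whereas you take $\delta\sim K^{2-r}|p|_+^{-(r-1)}$, but both choices yield the same final bound after absorbing the polynomial factor in $K$ into the exponential.
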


\begin{remark}
Once again, the result is not optimal as the right-hand side of~\eqref{eq:13510512} should improve (instead of deteriorate) as $|p| \to \infty$.
\end{remark}

\begin{proof}
Fix $K \geq 1$ and let $N :=  |p|_+^{r-1} K^{r}$. We have the inclusion of events
\begin{multline} \label{eq:11580512}
    \left\{ \sup_{t \in [0 , T]}  \left| \nabla \varphi_{L} (t , x ; p)  \right| \geq K  \right\} \subseteq \left\{ \sup_{n \in \left\{ 0 , \ldots, \lfloor T N \rfloor \right\}} \left| \nabla \varphi_{L} \left(\frac{n}{N} , x ; p \right) \right| \geq \frac{K}{2} \right\} \\ 
    \bigcup \left\{ \sup_{n \in \left\{ 0 , \ldots, \lfloor T N \rfloor \right\}} \sup_{t \in \left[ \frac{n}{N} , \frac{n+1}{N} \right]} \left| \nabla \varphi_{L} \left(t , x ; p \right) - \nabla \varphi_{L} \left(\frac{n}{N} , x ; p \right) \right| \geq \frac{K}{2} \right\}.
\end{multline}
We then bound the probabilities of the two terms on the right-hand side separately. For the first one, we use a union bound together with the result of Proposition~\ref{prop.prop2.3} and the identity $N :=   |p|_+^{r-1} K^{r} $ to obtain
\begin{align} \label{eq:13540512}
    \mathbb{P}\left[ \sup_{n \in \left\{ 0 , \ldots, \lfloor T N \rfloor \right\}} \left| \nabla \varphi_{L} \left(\frac{n}{N} , x ;p \right) \right| \geq \frac{K}{2} \right] & \leq  \sum_{n = 0}^{\lfloor T N \rfloor} \mathbb{P}\left[  \left| \nabla \varphi_{L} \left(\frac{n}{N} , x ; p \right) \right| \geq \frac{K}{2} \right] \\
    & \leq C  |p|_+^{r-1} K^r T \exp \left( - c K^r \right) \notag \\
    & \leq  C  |p|_+^{r-1} T \exp \left( - c K^r \right), \notag
\end{align}
where we reduced the value of the constant $c$ in the third line to absorb the polynomial factor $K^r$. For the second term on the right-hand side of~\eqref{eq:11580512}, we first fix an integer $n \in \left\{ 0 , \ldots, \lfloor T N \rfloor \right\}$ and use the definition of the Langevin dynamic~\eqref{eq:reqldefSDE} to write
\begin{equation*}
     \nabla \varphi_{L} \left(t , x ; p \right) - \nabla \varphi_{L} \left(\frac{n}{N} , x ; p \right) = \int_{\frac{n}{N}}^t \nabla \left( \nabla \cdot D_p V(p + \nabla \varphi_{L} (\cdot , \cdot ; p)) \right) (s , x) \, ds + \nabla B_{t}(x) - \nabla B_{\frac{n}{N}}(x).
\end{equation*}
This implies
\begin{multline} \label{eq:13340512}
    \sup_{t \in \left[ \frac{n}{N} , \frac{n+1}{N} \right]} \left| \nabla \varphi_{L} \left(t , x ;p \right) - \nabla \varphi_{L} \left(\frac{n}{N} , x ; p \right) \right| \\ \leq \int_{\frac{n}{N}}^{\frac{n+1}{N}} \left| \nabla \left( \nabla \cdot D_p V(p + \nabla \varphi_{L} (\cdot , \cdot ; p)) \right) (s , x) \right| \, ds +  \sup_{t \in \left[ \frac{n}{N} , \frac{n+1}{N} \right]} \left| \nabla B_{t}(x) - \nabla B_{\frac{n}{N}}(x) \right|.
\end{multline}
Using the definition of the discrete gradient and Assumption~\eqref{AssPot}, we see that
\begin{align*}
    \left| \nabla \left( \nabla \cdot D_pV(\nabla \varphi_L (\cdot , \cdot ; p)) \right) (s , x) \right| & \leq \sum_{y \sim x} \left| D_pV (p + \nabla \varphi_{L} (t,y ; p)) \right| \\
    & \leq C |p|_+^{r-1} + C \sum_{y \sim x} \left|\nabla \varphi_{L} (t , y ;p) \right|^{r-1}.
\end{align*}
Using Proposition~\ref{prop:prop2.20} ``Integration" and noting that $|p|_+^{r-1} / N = 1/ K^r \leq 1$, we deduce that
\begin{align} \label{eq:13350512}
    \mathbb{P}\left[ \int_{\frac{n}{N}}^{\frac{n+1}{N}} \left| \nabla \left( \nabla \cdot D_pV(p + \nabla \varphi_L(\cdot , \cdot ; p)) \right) (s , x) \right| \, ds \geq \frac{K}{4} \right] & \leq C \exp \left( - c (N K )^{\frac{r}{r-1}} \right) \\
    & \leq C \exp \left( - c K^{r}\right). \notag
\end{align}
The supremum of the Brownian motions can be estimated by noting that the difference of two independent Brownian motions is equal in law (up to a multiplicative constant equal to $\sqrt{2}$) to a Brownian motion. This leads to the inequality
\begin{align} \label{eq:13360512}
    \mathbb{P}\left[  \sup_{t \in \left[ \frac{n}{N} , \frac{n+1}{N} \right]} \left| \nabla B_{t}(x) - \nabla B_{\frac{n}{N}}(x) \right| \geq \frac{K}{4} \right] & \leq
    \mathbb{P}\left[ \sup_{t \in \left[ 0 , 1 \right]} \left| B_{t} \right| \geq c \sqrt{N}K \right] \\ & \leq C \exp \left( - c N K^2 \right) \notag \\
    & \leq C \exp \left( - c K^r \right). \notag
\end{align}
Combining the inequalities~\eqref{eq:13340512},~\eqref{eq:13350512} and~\eqref{eq:13360512} with a union bound, we deduce that
\begin{align} \label{eq:13570512}
    \lefteqn{\mathbb{P}\left[  \sup_{n \in \left\{ 0 , \ldots, \lfloor T N \rfloor \right\}} \sup_{t \in \left[ \frac{n}{N} , \frac{n+1}{N} \right]} \left| \nabla \varphi_{L} \left(t , x ;p \right) - \nabla \varphi_{L} \left(\frac{n}{N} , x ;p \right) \right| \geq \frac{K}{2} \right] } \qquad & \\ & \leq \sum_{n = 0}^{ \lceil T N \rceil} \mathbb{P}\left[ \sup_{t \in \left[ \frac{n}{N} , \frac{n+1}{N} \right]} \left| \nabla \varphi_{L} \left(t , x ;p \right) - \nabla \varphi_{L} \left(\frac{n}{N} , x ; p \right) \right| \geq \frac{K}{2} \right] \notag \\
    & \leq C NT \exp \left( - c K^r \right)\notag  \\
    & \leq C |p|^{r-1}_+ K^r T \exp \left( - c K^r \right) \notag \\
    & \leq  C |p|^{r-1}_+ T \exp \left( - c K^r \right). \notag
\end{align}
Combining~\eqref{eq:11580512},~\eqref{eq:13540512} and~\eqref{eq:13570512} completes the proof of~\eqref{eq:13510512}.
\end{proof}

\subsection{A fluctuation estimate for the Langevin dynamic}

Building upon the stochastic integrability estimate for the dynamic established in Proposition~\ref{propositionsubdynamic}, we prove that the gradient of dynamic cannot remain contained in a bounded set for a long time. In the following statement, we will use the value $R_1$ introduced in Lemma~\ref{lemma.upperandlowerboundLambda} (but similar conclusions would hold with more general constants).

\begin{proposition}[Fluctuation for the Langevin dynamic] \label{prop3.4}
There exist two constants $C := C(d , V) < \infty$ and $c := c(d , V) >0$ such that, for any $T \geq 1$ and any vertex $x \in \mathbb{T}_L$, 
\begin{equation} \label{upperboundfluctuation}
    \mathbb{P}\left[ \, \forall t \in [0 , T], \, \left| p + \nabla \varphi_{L} (t , x ; p)  \right| \leq R_1 \, \right] \leq C \exp \left( - c \left( \ln T \right)^{\frac{r}{r-2}} \right).
\end{equation}
\end{proposition}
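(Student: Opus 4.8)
The plan is to reduce to the regime of bounded slope and then iterate a ``one-step sensitivity'' estimate for the Langevin dynamic, following the heuristic of Section~\ref{subsection1.2.3}. Since the right-hand side of~\eqref{upperboundfluctuation} exceeds $1$ when $T$ is bounded, we may assume $T$ is larger than a constant. Set $M := (\ln T)^{1/(r-2)}$. If $|p| \geq 2R_1 + M$, then on the event in~\eqref{upperboundfluctuation} one has $|\nabla\varphi_L(t,x;p)| \geq |p| - R_1$ for every $t$, so that event is contained in $\{|\nabla\varphi_L(0,x;p)| \geq |p|-R_1\}$; since $\varphi_L(0,\cdot;p)$ has law $\mu_{L,p}$, Proposition~\ref{prop.prop2.3} bounds its probability by $C\exp(-c(|p|-R_1)^r)$, and the elementary inequality $(|p|-R_1)^r \geq R_1^r + (\ln T)^{r/(r-2)}$ (valid when $|p|-R_1 \geq R_1 + M$) shows this is $\leq C\exp(-c(\ln T)^{r/(r-2)})$. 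It remains to handle the main regime $|p| \leq 2R_1 + M$.

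Fix $N := \lfloor T\rfloor$ and let $\mathcal{F}_n$ be the $\sigma$-algebra generated by the Brownian increments up to time $n$; by causality and the Markov structure of~\eqref{eq:def.Langevintorus} it determines $\varphi_L(n,\cdot;p)$, and conditionally on $\mathcal{F}_n$ the increment $X_{n,n+1}(x) = \tilde B_{n+1}(x) - \tilde B_n(x)$ is a non-degenerate Gaussian (conditional variance in $[\tfrac12,1]$) independent of $\mathcal{F}_n$. Introduce a cutoff $K \geq 1$ (to be chosen) and the good event $G := \{\, \sup_{t\in[0,T]}\max_{|y-x|\leq 2}|\nabla\varphi_L(t,y;p)| \leq K\,\}$; by Proposition~\ref{propositionsubdynamic} and a union bound over the finitely many vertices near $x$ one gets $\mathbb{P}[G^c] \leq C|p|_+^{r-1}T\exp(-cK^r)$. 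The core is a conditional one-step estimate: on $G$ (or an $\mathcal{F}_n$-measurable surrogate obtained by freezing the state at time $n$ and adding a short-time a priori bound),
\[
 \mathbb{P}\!\left[\, \bigl|p + \nabla\varphi_L(n+1,x;p)\bigr| \leq R_1 \ \big|\ \mathcal{F}_n\,\right] \leq 1 - c(K),
\]
with $c(K) > 0$. To obtain this one uses that $\varphi_L(n+1,\cdot;p)$ is a differentiable function of the Gaussian increment $X_{n,n+1}(x)$, with, by Proposition~\ref{prop:propsLangevin},
\[
 \frac{\partial\varphi_L(\cdot,\cdot;p)}{\partial X_{n,n+1}(x)} = \int_n^{n+1} P_{\a(\cdot;p)}(\cdot,\cdot;s',x)\,ds'.
\]
On the event in~\eqref{upperboundfluctuation} the environment at $x$ satisfies $\Lambda_+(t,x) \leq CR_1^{r-2}$ (Lemma~\ref{lemma.upperandlowerboundLambda}), and on $G$ the eigenvalues at the nearby vertices are $\leq C(M+K)^{r-2}$; feeding this into the energy identity~\eqref{eq:energyestimateheatkernel} and the pointwise/$L^2$ bounds of Proposition~\ref{prop:propheatkernel} controls how fast the heat kernel started at $x$ can leave $x$, and hence shows that the component $\nabla_1\varphi_L(n+1,x;p)$, viewed as a function of $X_{n,n+1}(x)$ with everything else frozen, is monotone with a quantitatively controlled slope. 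The set of increment values keeping $|p+\nabla\varphi_L(n+1,x;p)| \leq R_1$ is therefore contained in an interval whose Gaussian mass is $\leq 1 - c(K)$.

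Granting the one-step estimate, write $A$ for the event in~\eqref{upperboundfluctuation}. Applying it successively, using the tower property and the independence of the increments $(X_{n,n+1}(x))_{n}$, yields $\mathbb{P}[A\cap G] \leq (1-c(K))^N \leq \exp(-c(K)N)$, while $\mathbb{P}[G^c] \leq C|p|_+^{r-1}T\exp(-cK^r)$ as above. Choosing $K := C_0(\ln T)^{1/(r-2)}$ with $C_0$ large makes the bound on $\mathbb{P}[G^c]$ at most $C\exp(-c(\ln T)^{r/(r-2)})$ (as $cK^r \gtrsim (\ln T)^{r/(r-2)}$ dominates $\ln T$ and $\ln|p|_+$), and the calibration of how fast $c(K)$ is allowed to degrade with $K$ ensures $c(K)N \geq c(\ln T)^{r/(r-2)}$, so $\mathbb{P}[A\cap G] \leq C\exp(-c(\ln T)^{r/(r-2)})$ as well; summing the two contributions proves~\eqref{upperboundfluctuation}.

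The main obstacle is the one-step sensitivity estimate: one must produce a configuration-uniform lower bound on the response of the \emph{gradient} $\nabla\varphi_L(n+1,x;p)$ to a single Brownian increment, relying only on energy-type estimates for the heat kernel since the maximum principle is unavailable, and — crucially — one must track precisely how this lower bound, and hence the gap $c(K)$, deteriorates with the cutoff $K$. It is exactly this deterioration, balanced against the super-polynomial decay $\exp(-cK^r)$ supplied by Proposition~\ref{propositionsubdynamic}, that converts the exponential-in-$T$ rate available when $r=2$ into the stretched-logarithmic rate $\exp(-c(\ln T)^{r/(r-2)})$.
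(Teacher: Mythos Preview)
Your overall architecture is correct and matches the paper's: reduce large $|p|$ via Proposition~\ref{prop.prop2.3}, decompose the Brownian motion into increments, iterate a one-step sensitivity estimate for $\nabla\varphi_L$, and control the complement by a good event whose probability is governed by Proposition~\ref{propositionsubdynamic}. However, the one-step sensitivity estimate---which you yourself flag as the main obstacle---is not actually established, and with your choice of \emph{unit} time intervals it cannot be obtained by the tools you invoke. On your good event $G$ (and also after perturbing the increment), the eigenvalues at vertices $y\sim x$ are only bounded by $C(M+K)^{r-2}\sim C\ln T$. The derivative of the heat-kernel gradient satisfies
\[
\bigl|\partial_t \nabla P_{\a}(t,x;s,x)\bigr| \;\leq\; C\sum_{y\sim x}\Lambda_+(t,y)\,\bigl|\nabla P_{\a}(t,y;s,x)\bigr|,
\]
so over a unit interval the gradient can move by an amount of order $\ln T$; the initial value $\nabla_1 P_{\a}(s,x;s,x)=-1$ is then useless for producing a lower bound on $\int_n^{n+1}\nabla_1 P_{\a}(n+1,x;s,x)\,ds$. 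The energy identity~\eqref{eq:energyestimateheatkernel} only gives decay of the $L^2$ norm, not a lower bound on the gradient at a point, so it does not rescue this. Your final balancing sentence (``the calibration of how fast $c(K)$ is allowed to degrade with $K$ ensures $c(K)N\geq c(\ln T)^{r/(r-2)}$'') is an assertion with no computation behind it.

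The paper's fix is exactly here: it takes increments over intervals of length $1/N$ with $N\sim \ln T$, chosen so that $\Lambda_+\cdot(1/N)\leq 1/(8d)$ on the good event. Then $|\partial_t\nabla P|\leq N/2$, and since the interval has length $1/N$ one retains $\tfrac{\partial}{\partial X_l(x)}\nabla_1\varphi_L\bigl(\tfrac{l+1}{N},x;p\bigr)\leq -\tfrac34$ uniformly; monotonicity forces the ``trapped'' set of increment values to have Lebesgue measure $\leq \tfrac{8}{3}R_1$, which translates into a conditional gap of only $1-T^{-9/10}$ (because the increment now has small variance $1/N$), but there are $\sim NT$ steps, yielding $\exp(-T^{1/10})$ for the intersection. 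The $(\ln T)^{r/(r-2)}$ rate then comes entirely from the good-event probability, not from the iteration. A secondary issue in your argument is that $G$ is not $\mathcal{F}_n$-measurable, so the tower-property iteration as written is not valid; the paper sidesteps this by building the conditional gap directly into the definition of the events $A_l$ and proving the inclusion $\{\text{trapped}\}\subseteq\bigcap_l A_l\cup G_T^c$ afterwards.
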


\begin{remark}
    The proof is in fact almost identical to the one of~\cite[Proposition 3.3]{D23U}, the only (nontrivial) difference is that we prove an estimate which holds uniformly over the slopes $p \in \Rd$.
\end{remark}

\begin{proof} The argument is split into different steps.

\medskip

\textit{Step 1. Reducing the problem to large times.}

\medskip

We fix a vertex $x \in \mathbb{T}_L$ and prove the following estimate: there exist two constants $C := C(d,V) < \infty$ and $c := c(d , V) > 0$ and a time $T_0 := T_0(d , V) < \infty$ such that, for any $T \geq  e^{T_0 |p|^{r-2}_+}$,
\begin{equation} \label{upperboundfluctuation2}
    \mathbb{P}\left[ \, \forall t \in [0 , T], \, \left| p + \nabla \varphi_{L} (t , x ; p)  \right| \leq R_1\, \right] \leq C \exp \left( - c \left( \ln T \right)^{\frac{r}{r-2}} \right).
\end{equation}
The constant $T_0$ will only depend on the parameters $d$ and $V$. It will be chosen following three constraints in the proof below (they are stated at the beginning of Steps 2 and 4).

The bound~\eqref{upperboundfluctuation} can be deduced from~\eqref{upperboundfluctuation2}. Indeed, for $T \leq e^{T_0 |p|^{r-2}_+}$, the inequality~\eqref{upperboundfluctuation} can proved directly as follows
\begin{align*}
     \mathbb{P}\left[ \, \forall t \in [0 , T], \, \left| p + \nabla \varphi_{L} (t , x ; p)  \right| \leq R_1 \, \right] & \leq  \mathbb{P}\left[ \, \left| p + \nabla \varphi_{L}(0 , x;p) \right| \leq R_1 \, \right] \\
     & \leq  \mathbb{P}\left[ \, \left| \nabla \varphi_{L}(0 , x;p) \right| \geq |p| - R_1 \, \right]. \\
\end{align*}
Using that the dynamic at time $0$ is distributed according to the Gibbs measure $\mu_{L , p}$ together with the Proposition~\ref{prop.prop2.3}, we obtain
\begin{align*}
     \mathbb{P}\left[ \, \forall t \in [0 , T], \, \left| p + \nabla \varphi_{L} (t , x ; p)  \right| \leq R_1 \, \right]  & \leq C \exp \left( - c(|p| - R_1)^r \right) \\
     & \leq C \exp \left( - c |p|^r \right) \\
     & \leq C \exp \left( - c (\ln T)^{\frac{r}{r-2}} \right),
\end{align*}
where in the second line, we increased the constant $C$ and reduced the exponent $c$ to absorb the constant $R_1$ (using that $R_1$ only depends on the potential $V$).

\medskip

\textit{Step 2. Setting up the argument.} 

\medskip

Let us fix $T \geq e^{T_0 |p|^{r-2}_+}$ and set $N := (\ln T)/R_1^2$ (various constraints will be made on $T_0$ below). We impose here a first constraint on the time $T_0$ and assume that it is chosen large enough so that, for any $T \geq e^{T_0}$, the following computation can be performed (the requirement plays a role in the last inequality)
\begin{align} \label{eq:19132612}
    \mathbb{P} \left[ \left| B_{1/N} \right| \geq \frac{4}{3} R_1 \right] & = \mathbb{P} \left[ \left| B_{1} \right| \geq \frac{4}{3} \sqrt{\ln T} \right] \\
    & = \frac{2}{\sqrt{2\pi}} \int_{\frac{4}{3}\sqrt{\ln T}}^\infty e^{-\frac{x^2}{2}} \, dx \notag  \\
    & \geq \frac{2}{\sqrt{2\pi}} \int_{\frac{4}{3}\sqrt{\ln T}}^{\frac{4}{3}\sqrt{\ln T} + 1} e^{-\frac{x^2}{2}} \, dx \notag \\
    & \geq  \frac{2}{\sqrt{2\pi}}  \exp \left( - \frac{1}{2} \left( \frac{4}{3} \sqrt{\ln T} + 1 \right)^2  \right) \notag \\
    & \geq \frac{1}{T^{\sfrac{9}{10}}}. \notag
\end{align}
We next decompose the Brownian motions into mutually independent Brownian bridges and increments. To be more specific, we introduce the following notation:
\begin{itemize}
    \item For each $k \in \Z$ and each $y \in \mathbb{T}_L$, we let $W_k(\cdot ; y)$ be the Brownian bridge defined by the formula
    \begin{equation} \label{notation:Brownianbridges}
        \forall t \in \left[0, \frac{1}{N}\right], \hspace{5mm} W_k(t ; y) := B_{t + \frac{k}{N}}(y) - B_{\frac{k}{N}}(y) - N t (B_{\frac{k+1}{N}}(y) - B_{\frac{k}{N}}(y)).
    \end{equation}
    We will denote by $\mathcal{W} := \left\{  W_k (\cdot ; y) \, : \, k \in \Z, y \in \mathbb{T}_L \right\}$ the collection of Brownian bridges.
    \item For each $k \in \N$ and each $y \in \mathbb{T}_L$, we denote by $X_k(y)$ the increment
    \begin{equation} \label{notation:Increments}
            X_k(y) := B_{\frac{k+1}{N}}(y) - B_{\frac{k}{N}}(y).
    \end{equation}
    We will denote by $\mathcal{X} := \left\{ X_k(y) \, : \, k \in \Z, \, y \in \mathbb{T}_L\right\}$ the set of all the increments. For $l \in \N \times \mathbb{T}_L$, the set $\mathcal{X}_{l} := \left\{ X_k(y) \, : \, k \in \Z, \, y \in \mathbb{T}_L, (k,y) \neq (l , x) \right\}$ denotes the collection of all the increments except~$X_l(x)$ (recalling that the vertex $x$ is fixed in the argument).
\end{itemize}
We then introduce the notation 
$$\mathcal{R} := (\mathcal{X}, \mathcal{W}).$$
The set of all possible pairs $\mathcal{R}$ will be denoted by 
$$\Omega := \R^{\Z \times \mathbb{T}_L} \times C\left( \left[0,\frac 1N\right] , \R\right)^{\Z \times \mathbb{T}_L}.$$
Since the dynamic $\left\{ \varphi_L(t , x ; p) \, : \, t \geq 0, \, x \in \mathbb{T}_L \right\}$ can interpreted as deterministic functions of $\mathcal{R} \in \Omega$, we  will write
\begin{equation*}
    \varphi_L(t , x ; p) := \varphi_L(t , x ; p) \left(  \mathcal{R} \right).
\end{equation*}
For $ l\in \Z $, we denote by $\mathcal{R}_{l} := (\mathcal{X}_{l}, \mathcal{W})$ and by $\Omega_{l}$ the set of possible values for $\mathcal{R}_{l}$. We have the identities $\mathcal{R} = (X_l(x), \mathcal{R}_{l})$ and $\Omega = \R \times \Omega_{l}$. To emphasize the dependency of the dynamic on the increment $X_l(x)$, we will write
\begin{equation} \label{eq:15342911}
    \varphi_L(t , x ; p) = \varphi_L(t , x;p) \left( X_l(x), \mathcal{R}_{l} \right).
\end{equation}
We denote by $\mathcal{F}_{\mathcal{R}, l}$ the $\sigma$-algebra generated by $\mathcal{R}_{l}$ and note that the increment $X_{l}(x)$ is independent of the $\sigma$-algebra $\mathcal{F}_{\mathcal{R}, l}$.

\medskip

\textit{Step 3. Introducing the bad events $(A_l)_l$ and estimating the probability of their intersection.}

\medskip

For any~$l \in \N$, we introduce the following random subset of $\R$ (depending on the collection $\mathcal{R}_{l}$),
\begin{equation} \label{eq:11200112}
    \mathcal{A}_l (\mathcal{R}_{l}) := \left\{  X \in \R \, : \, \left| p+ \nabla \varphi_{L} \left(\frac{l+1}{N} , x ;p \right) \left( X, \mathcal{R}_{l} \right) \right| \leq R_1  \right\} \subseteq \R,
\end{equation}
where we used the notation introduced in~\eqref{eq:15342911}. In words, the set $\mathcal{A}_l (\mathcal{R}_{l})$ is the set of all possible values for the increment $X_l(x)$ such that the norm of the gradient of the dynamic $\varphi_L(\cdot , \cdot ; p)$ computed at time $(l+1)/N$ at the vertex $x$ with Brownian bridges and increments given by $\mathcal{R} = (X_l(x) , \mathcal{R}_{l})$ belongs to the interval $[-R_1 , R_1]$. 

We finally introduce the event $A_l \subseteq \Omega$ defined as follows
\begin{equation} \label{eq:11210112}
    A_l := \left\{ \mathcal{R} := (X_l(x),  \mathcal{R}_{l}) \in \Omega \, : \,  X_l(x) \in \mathcal{A}_l (\mathcal{R}_{l}) ~~\mbox{and}~~ \frac{1}{\sqrt{2 \pi N}}\int_{\mathcal{A}_l (\mathcal{R}_{l})} e^{- \frac{x^2}{2N}} \, dx \leq 1- \frac{1}{T^{\sfrac{9}{10}}} \right\}.
\end{equation}
Since the law of the increment $X_l(x)$ is Gaussian of variance $1/N$ and since $X_l(x)$ is independent of the set $\mathcal{R}_{l}$, we have the almost sure upper bound
\begin{equation} \label{eq:28111220}
    \E \left[ \indc_{A_{l}}  \big| \mathcal{F}_{\mathcal{R}, l} \right] \leq 1- \frac{1}{T^{\sfrac{9}{10}}}.
\end{equation}
We next estimate the probability for the intersection of all the events $A_{l}$ for $l \in \left\{ 0 , \ldots, \lfloor NT \rfloor \right\}$ and prove the following stretched exponential decay in the time $T$,
\begin{equation} \label{eq:11492811}
    \mathbb{P}\left[ \bigcap_{l = 0}^{ \lfloor N T \rfloor} A_l \right] \leq \exp \left( - T^{\sfrac{1}{10}} \right).
\end{equation}
The proof of~\eqref{eq:11492811} is obtained by consecutive conditioning. We first note that, since the dynamic~$\varphi_L(t , x ;p)$ depends only on the increments $X_l(x)$ and the Brownian bridges $W_l(\cdot; y)$ such that $t \geq \frac{l}{N}$, the events $(A_0, , \ldots, A_{\lfloor NT \rfloor - 1})$ do not depend on the increment $X_{\lfloor NT \rfloor}(x)$, and are thus measurable with respect to the $\sigma$-algebra $\mathcal{F}_{\mathcal{R}, \lfloor NT \rfloor}$. Combining this observation with the upper bound~\eqref{eq:28111220}, we obtain
\begin{align*}
    \mathbb{P}\left[ \bigcap_{l = 0}^{\lfloor NT \rfloor } A_l \right] & = \E \left[ \prod_{l = 0}^{\lfloor NT \rfloor } \indc_{A_l} \right] \\
    & = \E \left[ \E \left[ \prod_{l = 0}^{\lfloor NT \rfloor } \indc_{A_l}  \bigg| \mathcal{F}_{\mathcal{R}, \lfloor NT \rfloor}\right]\right] \\
    & =  \E \left[\left(  \prod_{l = 0}^{\lfloor NT \rfloor - 1}  \indc_{A_l} \right) \times \E \left[ \indc_{A_{\lfloor NT \rfloor }}  | \mathcal{F}_{\mathcal{R}, \lfloor NT \rfloor} \right]\right] \\
    & \leq \left( 1- \frac{1}{T^{\sfrac{9}{10}}} \right)\mathbb{P}\left[ \bigcap_{l = 0}^{\lfloor NT \rfloor -1} A_l \right].
\end{align*}
We may then iterate the previous computation, noting that, for any $l \in \left\{0 , \ldots, \lfloor NT \rfloor-1 \right\}$, the events $(A_1 , \ldots, A_{l})$ are measurable with respect to the $\sigma$-algebra $\mathcal{F}_{\mathcal{R} , l +1 }$. This leads to the upper bound, for~$T_0$ sufficiently large depending only on $d$ and $V$ (imposing here a second constraint on $T_0$) so that, for any $T \geq e^{T_0}$, $\lfloor NT \rfloor +1 \geq T (\ln T)/R_1^2 \geq T$,
\begin{equation*}
    \mathbb{P}\left[ \bigcap_{l = 0}^{ \lfloor N T \rfloor} A_l \right] \leq \left( 1- \frac{1}{T^{\sfrac{9}{10}}} \right)^{\lfloor NT \rfloor +1}
     \leq  \exp \left( - \frac{\lfloor NT \rfloor +1}{T^{\sfrac{9}{10}}} \right) 
     \leq \exp \left( - T^{\sfrac{1}{10}} \right).
\end{equation*}

\medskip

\textit{Step 4. Introducing a bad event and estimating its probability.}

\medskip

We next note that, by the identity $N := \ln T /R_1^2$ and the Assumption~\eqref{AssPot}, we may find a time $T_G := T_G(d , V) < \infty$ and a constant $C_G := C_G(d , V) < \infty$ such that the following implication holds: for any $T \geq T_G $ and any slope $p \in \Rd$,
\begin{equation} \label{eq:8.570112}
    \left| p + \nabla \varphi_{L} (t , x ; p) \right| \leq \frac{(\ln T)^{\frac{1}{r-2}}}{C_G} \implies   \Lambda_+ (t , x ;p) \leq \frac{N}{8d}.
\end{equation}
We impose here a third constraint on the time $T_0$ and assume that it is larger than $T_G$. We then define the interval $I_T$
\begin{equation*}
    I_T := \left[ - \frac{(\ln T)^{\frac{1}{r-2}}}{(\sqrt{2}(4d)^2)C_G} , \frac{(\ln T)^{\frac{1}{r-2}}}{(\sqrt{2}(4d)^2)C_G}  \right]
\end{equation*}
as well as the ``bad" event (or, to be precise, the complementary of a ``good" event $G_T$ for notational convenience)
\begin{equation*}
    G_T^c := \left\{ \mathcal{R} \in \Omega \, : \, \sup_{t \in [0,T]} \sum_{y \sim x} \left| p + \nabla \varphi_L (t , y ; p)(\mathcal{R}) \right| \geq \frac{(\ln T)^{\frac{1}{r-2}}}{2 C_G} \right\} \bigcup \bigcup_{k=0}^{\lfloor NT \rfloor} \left\{  X_k(x) \notin I_T    \right\}.
\end{equation*}
We then show that the probability of the event $G_T^c$ is close to $0$. To this end, we impose the fourth and final constraint on the time $T_0$ and assume that it satisfies the inequality
\begin{equation*}
    T_0 \geq (4 C_G)^{r-2}.
\end{equation*}
This choice implies that, for any $T \geq e^{T_0 |p|^{r-2}_+}$, 
\begin{equation*}
    \frac{(\ln T)^{\frac{1}{r-2}}}{2 C_G}  \geq 2 |p|_+.
\end{equation*}
We next estimate the probability of the event $G_T^c$ by using Proposition~\ref{propositionsubdynamic}, that the law of the increments $\left\{ X_k(y) \, : \, 1 \leq k \leq \lfloor NT \rfloor \right\}$ is Gaussian of variance $1/N = R_1^2/\ln T$ and a union bound. We obtain
\begin{equation} \label{eq:18092811}
    \mathbb{P}\left[ G_T^c \right] \leq C |p|^{r-1}_+ T \exp \left( - c \left( \ln T \right)^{\frac{r}{r-2}} \right) + C NT \exp \left( - c \left( \ln T \right)^{\frac{r}{r-2}} \right) \leq  C \exp \left( - c \left( \ln T \right)^{\frac{r}{r-2}} \right).
\end{equation}

\medskip

\textit{Step 5. Proving that the Langevin dynamic fluctuates in the complement of the bad events.}

\medskip

We will now prove the inclusion of events
\begin{equation} \label{eq:18082811}
    \left\{ \mathcal{R} \in \Omega \, : \, \forall t \in [0 , T], \, \left| p+ \nabla \varphi_{L} (t , x ; p)  \right| \leq R_1  \right\} \subseteq \bigcap_{l = 0}^{ \lfloor N T \rfloor} A_l \cup G_T^c.
\end{equation}
Proposition~\ref{upperboundfluctuation} is then obtained by combining~\eqref{eq:11492811},~\eqref{eq:18092811},~\eqref{eq:18082811} and a union bound.

By Proposition~\ref{prop:propsLangevin}, we know that, for any $y \in \mathbb{T}_L$, the derivative of the $\nabla \varphi_{L} (t , y; p)$ with respect to the increment $X_l(x)$ is given by the following identity
\begin{equation*}
    \frac{\partial \nabla \varphi_{L} (t , y ; p)}{\partial X_l(x)} =  \sqrt{2} N \int_{\frac{l}{N}}^{\frac{l+1}{N}} \nabla P_{\a(\cdot ; p)} (t , y ; s , x) \, ds.
\end{equation*}
The pointwise upper bound on the heat kernel stated in Proposition~\ref{prop:propheatkernel} implies the following estimate on its gradient, for any vertex $y \in \mathbb{T}_L $ and any pair of times $(t , s) \in \R \times \R$ with $t \geq s$,
\begin{align} \label{eq:13462811}
    \left| \nabla P_{\a(\cdot ; p)} \left( t , y ; s , x  \right) \right| & \leq \sum_{i=1}^d \left|  P_{\a(\cdot ; p)} \left( t , y ; s , x  \right) \right| + \left|  P_{\a(\cdot ; p)} \left( t , y + e_i ; s , x  \right) \right| \\
    & \leq 4d. \notag
\end{align}
A combination of the previous displays implies the following bound, for any $\mathcal{R} = (X_l(x), \mathcal{R}_{l}) \in \Omega$ and any $(t,y)  \in \R \times \mathbb{T}_L$,
\begin{equation} \label{eq:08250112}
    \left| \frac{\partial \nabla \varphi_L(t , y ; p)}{\partial X_l(x)} (X_l(x), \mathcal{R}_{l}) \right| \leq 4 \sqrt{2}d.
\end{equation}
We then fix a realization of the increments and Brownian bridges $\mathcal{R} := (X_{l}(x), \mathcal{R}_{l}) \in \Omega$ and assume that $\mathcal{R} \in G_T$. We first claim that, for any increment $X \in I_T$,
\begin{equation} \label{eq:08260112}
   \sup_{t \in [0 , T]}\sum_{y \sim x} \left| p + \nabla \varphi_L \left( t, y ; p \right) \left( X , \mathcal{R}_{l} \right) \right| \leq \frac{(\ln T)^{\frac{1}{r-2}}}{C_G}.
\end{equation}
To prove~\eqref{eq:08260112}, we first use~\eqref{eq:08250112} and deduce that
\begin{equation*}
    \sup_{t \in [0 , T]}\sum_{y \sim x} \left| \nabla \varphi_L \left( t, y ; p \right)(X , \mathcal{R}_{l}) -  \nabla \varphi_L \left( t, y; p  \right)(X_{l}(x) , \mathcal{R}_{l}) \right|  \leq \sqrt{2} (4 d)^2 \left| X - X_{l}(x) \right|
     \leq  \frac{(\ln T)^{\frac{1}{r-2}}}{2C_G}.
\end{equation*}
By the assumption $(X_{l}(x), \mathcal{R}_{l}) \in G_T$, we have that
\begin{equation*}
     \sup_{t \in [0 , T]} \sum_{y \sim x} \left| p + \nabla \varphi_L \left( t, y ; p\right)(X_{l}(y) , \mathcal{R}_{l}) \right| \leq \frac{(\ln T)^{\frac{1}{r-2}}}{2 C_G}.
\end{equation*}
A combination of the two previous displays with the triangle inequality yields, for any $X \in I_T$,
\begin{equation*}
     \sup_{t \in [0 , T]} \sum_{y \sim x} \left| p + \nabla \varphi_L \left( t, y ; p \right)(X , \mathcal{R}_{l})\right| \leq  \frac{(\ln T)^{\frac{1}{r-2}}}{C_G}.
\end{equation*}
Using the definition of the constant $C_G$ and the implication~\eqref{eq:8.570112}, we have proved the following result: for any $T \geq T_G$, any $\mathcal{R} := (X_l(x), \mathcal{R}_{l}) \in G_T$ and any increment $X \in I_T$, one has the upper bound
\begin{equation*}
    \sup_{t \in [0 , T]} \sum_{y \sim x}  \Lambda_+ (t , y ; p)(X , \mathcal{R}_{l,y}) \leq \frac{N}{8d}.
\end{equation*}
The previous upper bound is useful as it can be used to control the derivative in time of the heat kernel. Indeed, using the identity $\partial_t P_{\a(\cdot ; p)} = \nabla \cdot {\a(\cdot ; p)} \nabla P_{\a(\cdot ; p)}$ together with the bound~\eqref{eq:13462811}, we obtain the estimate, for any pair of times $(s , t)  \in \R \times \R$ with $t \geq s$,
\begin{align*}
    \left| \partial_t \nabla P_{\a(\cdot ; p)} (t , x ; s , y) \right|
    & \leq \sum_{y \sim x}  \left| \a(t , y ; p)  \nabla P_{\a(\cdot ; p)} (t , y ; s , x) \right| \\
    & \leq \sum_{y \sim x} \Lambda_+ (t , y ; p) \left| \nabla P_{\a(\cdot ; p)} (t , y ; s , x) \right| \\
    & \leq 4d \sum_{y \sim x} \Lambda_+ (t , y ; p) \\
    & \leq \frac{N}{2}.
\end{align*}
We next consider the index $i = 1$ (but note that the argument would be identical with any index $i \in \{ 1 , \ldots, d \}$) and note that the following identity holds $ \nabla_1 P_{\a(\cdot ; p)} (s , x ; s , x)= -1$. From this observation, we deduce that, for any $\mathcal{R} := (X_l(x), \mathcal{R}_{l}) \in G_T$ and any increment $X \in I_T$,
\begin{align*}
    \frac{\partial \nabla_1 \varphi_L\left( \frac{l+1}{N} , x ; p\right)}{\partial X_l(x)} (X, \mathcal{R}_{l}) & = \sqrt{2} N \int_{\frac{l}{N}}^{\frac{l+1}{N}} \nabla_1  P_{\a(\cdot ; p)} \left(\frac{l+1}{N}, x ; s , x \right) (X, \mathcal{R}_{l}) \, ds  \\
    &  \leq - \sqrt{2} N \int_{\frac{l}{N}}^{\frac{l+1}{N}} 1 - \frac{N}{2} \left(\frac{l+1}{N} - s \right) \, ds \\
    & \leq - \frac{3}{4}.
\end{align*}
This upper bound on the derivative of the gradient of the dynamic implies that, for any $(X_l(x) , \mathcal{R}_{l}) \in G_T$, the function
\begin{equation*}
     X \mapsto - \nabla_1 \varphi_L\left( \frac{l+1}{N} , x ; p \right)(X , \mathcal{R}_{l}) - \frac{3}{4} X ~~\mbox{is increasing on the interval}~I_T.
\end{equation*}
This implies the following upper bound on the Lebesgue measure of the set $\mathcal{A}_l (\mathcal{R}_{l}) \cap I_T$,
\begin{equation*}
    \left| \mathcal{A}_l (\mathcal{R}_{l}) \cap I_T \right| \leq \frac{8}{3} R_1,
\end{equation*}
which then yields the estimate, for any $T \geq  e^{T_0 |p|^{r-2}_+}$ (in particular, the computation~\eqref{eq:19132612} applies),
\begin{equation*}
    \frac{1}{\sqrt{2 \pi N}}\int_{\mathcal{A}_l (\mathcal{R}_{l})} e^{- \frac{x^2}{2N}} \, dx  \leq 1 -  \frac{1}{\sqrt{2 \pi N}}\int_{I_T \setminus [-\frac{4}{3}R_1 , \frac{4}{3} R_1]} e^{- \frac{x^2}{2N}} \, dx \leq 1 -  \frac{1}{T^{\sfrac{9}{10}}}.
\end{equation*}
From the definitions~\eqref{eq:11200112} and~\eqref{eq:11210112}, the previous inequality implies, for any $l \in \{1 , \ldots , \lfloor NT \rfloor \}$,
\begin{equation*}
    G_T \cap A_l = G_T \cap \left\{ \mathcal{R} \in \Omega \, : \,  \left|p +  \nabla \varphi_L \left(\frac{l+1}{N} , x ; p \right) \left(\mathcal{R} \right) \right| \leq R_1 \right\}.
\end{equation*}
Taking the intersection over $l \in \{1 , \ldots , \lfloor NT \rfloor \}$ completes the proof of~\eqref{eq:18082811}.
\end{proof}

\subsection{The moderated environment}

In this section, we introduce the moderated environment (following the presentation of Section~\ref{sec:intromoderatedenvt}) and establish its main features: specifically, we introduce it in Section~\ref{sec:defmoderatedevt}, establish a stochastic integrability estimate in Section~\ref{sec:stochintestmoderate} (which implies that all its moments are finite), and show how it can be used to ``moderate" the heat equation in Section~\ref{sec:moderationheatequation} (this last part follows closely the articles~\cite{MO16, biskup2018limit}).

\subsubsection{Definition} \label{sec:defmoderatedevt}

Following the insight of Mourrat and Otto~\cite{MO16}, we introduce the \emph{moderated environment}. We first introduce the two functions
\begin{equation*}
    k_t := \frac{\delta}{(1 + t)^{4}} \hspace{5mm} \mbox{and} \hspace{5mm}  K_t :=  k_t + \int_{t}^{\infty} s k_s \, ds,
\end{equation*}
where $\delta := \delta(d)> 0$ is chosen sufficiently small so that, for any $t , s' \in (0 , \infty)$ with $s' \geq t$,
\begin{equation} \label{K*KsmallerthanK}
    \int_{t}^{s'} K_{s - t} K_{s' - s} \, ds \leq K_{s' - t} ~~ \mbox{and}~~ \int_{0}^{\infty} K_{s} \, ds \leq 1.
\end{equation}
Equipped with these functions, we introduce the moderated environment.

\begin{definition}[Moderated environment for the Langevin dynamic] \label{def:moderatedLangevin}
    Given an integer $L \in \N$ and a slope $p \in \Rd$, we introduce the moderated environment
    \begin{equation*}
    \mathbf{m}(t , x ; p) :=  |p|^{r-2}_+ \int_t^{\infty} k_{|p|^{r-2}_+ (s-t)} \frac{\Lambda_- (s, x ; p)  \wedge |p|^{r-2} }{( s-t)^{-1} \sum_{y \sim x}\int_t^s \left( |p|^{r-2} + \Lambda_+ \left( s' , x ; p\right)  \right) \, ds'} \, ds.
\end{equation*}
\end{definition}

\begin{remark}
Let us make two remarks about the previous definition:
\begin{itemize}
\item By the stationarity of the Langevin dynamic (and thus of the environment $\a$), the law of the random variable $\mathbf{m}(t,x;p)$ does not depend on $t$ nor on $x$.
\item The definition of the moderated environment $ \mathbf{m}(\cdot , \cdot ; p)$ has been scaled with respect to the norm of the slope $p \in \Rd$: with this definition, the random variable $\mathbf{m}(t , x ; p) $ is typically of order $1$ (see Proposition~\ref{prop:stochintmoderated}, this is different from $\a(t , x ; p)$ which is a symmetric matrix whose eigenvalues are typically of order $|p|_+^{r-2}$) and this scaling has been chosen so as to obtain the inequality stated in Proposition~\ref{prop4.3} (with a constant independent of the norm $|p|$ on the right-hand side).
\end{itemize}
\end{remark}

\subsubsection{Stochastic integrability} \label{sec:stochintestmoderate}

In the following proposition, we obtain some control over the probability of the moderated environment to be small or large.

\begin{proposition}[Stochastic integrability for the moderated environment] \label{prop:stochintmoderated}
There exist two constants $c := c(d , V) > 0$ and $C := C(d , V) < \infty$ such that, for any $K \geq 1$, any $L \in \N$ and any $(t , x , p) \in \R \times \Zd \times \Rd$,
\begin{equation} \label{eq:18510612}
    \mathbb{P} \left[ \mathbf{m}(t , x ; p)  \geq K \right] \leq C \exp \left( - c K^{\frac{r}{r-2}} \right)
\end{equation}
and
\begin{equation} \label{eq:18500612}
    \mathbb{P} \left[ \mathbf{m}(t , x ; p )^{-1} \geq K \right] \leq C \exp \left( - c (\ln K)^{\frac{r}{r-2}} \right).
\end{equation}
\end{proposition}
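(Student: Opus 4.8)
## Proof Proposal

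The plan is to prove the two bounds \eqref{eq:18510612} and \eqref{eq:18500612} separately, in each case reducing the estimate on $\mathbf{m}(t,x;p)$ to the fluctuation and stochastic integrability estimates already established, namely Propositions~\ref{prop.prop2.3}, \ref{propositionsubdynamic} and \ref{prop3.4}. By the stationarity remark following Definition~\ref{def:moderatedLangevin} we may fix $t = 0$, $x \in \mathbb{T}_L$ and write $\mathbf{m} := \mathbf{m}(0,x;p)$. The key structural observation is that the integrand in the definition of $\mathbf{m}$ is a ratio: the numerator $\Lambda_-(s,x;p) \wedge |p|^{r-2}$ is controlled from below as soon as the gradient $p + \nabla\varphi_L(s,x;p)$ is large (by Lemma~\ref{lemma.upperandlowerboundLambda}), while the denominator, an average of $|p|^{r-2} + \Lambda_+(s',x;p)$ over $s' \in [0,s]$ and neighbours $y \sim x$, is controlled from above as soon as all the nearby gradients stay bounded over the time interval $[0,s]$ (again by Lemma~\ref{lemma.upperandlowerboundLambda}, which gives $\Lambda_+(p) \le C|p|_+^{r-2}$).

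\medskip

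\textbf{Upper bound \eqref{eq:18510612}.} Here the issue is that $\mathbf{m}$ could be \emph{large}, which happens when the numerator is large relative to the denominator; since the numerator is capped at $|p|^{r-2}$ and the denominator is at least $|p|^{r-2}\int_0^\infty k_{|p|_+^{r-2}s}\,ds \cdot |p|_+^{r-2}$ on the relevant range (because of the $+|p|^{r-2}$ term we added to $\Lambda_+$), one gets a deterministic bound $\mathbf{m} \le C$ up to contributions from the first slice of time where $\Lambda_+$ near $x$ could be huge. More precisely, split the $s$-integral at a threshold and note $k_{|p|_+^{r-2}s}$ decays like $(1+|p|_+^{r-2}s)^{-4}$; the only way to make $\mathbf{m} \ge K$ is to have an anomalously large gradient $\sup_{s'\le s, y\sim x}|p+\nabla\varphi_L(s',y;p)|$ forcing $\Lambda_+$ up, but a large $\Lambda_+$ appears in the denominator, not the numerator, so it in fact \emph{decreases} $\mathbf{m}$. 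Thus the only genuine source of largeness is a short time window right after $0$ where the average denominator hasn't yet accumulated mass; estimating that contribution uses Proposition~\ref{prop.prop2.3} (at a single time) to show that $|\nabla\varphi_L(0,x;p)|$, hence $\Lambda_+(0,x;p) \le C|\nabla\varphi_L(0,x;p)|_+^{r-2}$, being of size $K^{(r-2)/r}$ has probability $\le C\exp(-cK^{(r-2)\cdot r/(r\cdot(r-2))}) = C\exp(-cK)$; tracking the precise power of $s$ in $k_s$ converts this into the stated $\exp(-cK^{r/(r-2)})$ after optimizing over the cutoff (the exponent $r/(r-2)$ arising because the numerator's cap $|p|^{r-2}$ scales like a gradient to the power $r-2$, and a gradient of size $G$ costs $\exp(-cG^r)$, so $\mathbf{m}\simeq G^{r-2}$ of size $K$ costs $\exp(-cK^{r/(r-2)})$).

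\medskip

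\textbf{Lower bound \eqref{eq:18500612}.} This is the substantive estimate and the main obstacle. Here $\mathbf{m}$ is \emph{small} precisely when, over a long time interval starting at $0$, the numerator $\Lambda_-(s,x;p)$ stays near zero — that is, the gradient $p+\nabla\varphi_L(s,x;p)$ stays in the degenerate region $\{|q| \le R_1\}$ — while the denominator does not help because $\Lambda_-$ being small forces the whole integrand small. The strategy: if $\mathbf{m} \le 1/K$, then by the decay of $k$ and a pigeonhole/layer-cake argument, there must exist a time scale $T \simeq$ (some power of) $\ln K$ such that $\Lambda_-(s,x;p) \wedge |p|^{r-2}$ is small, i.e., below a threshold, for \emph{all} $s$ in an interval of length $\sim T$ — more carefully, one shows $\mathbf{m}^{-1} \ge K$ implies that the set of $s$ where $|p+\nabla\varphi_L(s,x;p)| > R_1$ has small $k$-weighted measure, which (because $k_s$ decays polynomially, $k_s \simeq s^{-4}$ for large $s$) forces $|p+\nabla\varphi_L(s,x;p)| \le R_1$ throughout an interval $[0,T]$ with $T$ a suitable power of $K$ (roughly $T \simeq K^{c}$). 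Feeding this into the fluctuation estimate Proposition~\ref{prop3.4} gives $\mathbb{P}[\,\forall s\in[0,T],\,|p+\nabla\varphi_L(s,x;p)|\le R_1\,] \le C\exp(-c(\ln T)^{r/(r-2)}) = C\exp(-c(\ln K)^{r/(r-2)})$, as required. One must also handle the part of the $s$-integral where the denominator (involving $\Lambda_+$ of neighbours) is anomalously \emph{large}, which would also make the integrand small without the gradient being small; but a large $\Lambda_+$ near $x$ means a large gradient, controlled by Proposition~\ref{propositionsubdynamic}, whose contribution one bounds by a union bound over dyadic time scales and absorbs into the same $\exp(-c(\ln K)^{r/(r-2)})$ error.

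\medskip

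The technical heart of the argument, and where I expect to spend the most effort, is the bookkeeping in the lower bound: extracting from the single scalar inequality $\mathbf{m}^{-1} \ge K$ a statement of the form ``the gradient stays bounded on a long time interval" requires carefully exploiting the specific polynomial decay rate of $k_s = \delta(1+s)^{-4}$, balancing it against the $(s-t)^{-1}\int_t^s(\cdots)ds'$ normalization in the denominator, and choosing the right time scale $T = T(K)$ so that both the ``gradient small'' event (Proposition~\ref{prop3.4}) and the ``gradient large somewhere'' event (Proposition~\ref{propositionsubdynamic}) contribute errors of matching order $\exp(-c(\ln K)^{r/(r-2)})$. The $|p|$-uniformity comes for free since Propositions~\ref{prop.prop2.3}--\ref{prop3.4} are already stated uniformly in $p$, and the $|p|_+^{r-2}$ prefactors in Definition~\ref{def:moderatedLangevin} are designed precisely to cancel the slope-dependence of $\Lambda_\pm$.
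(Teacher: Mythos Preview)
Your overall strategy matches the paper's: for the lower bound, reduce $\{\mathbf{m}^{-1}\ge K\}$ to the event that the gradient stays in the degenerate region $\{|q|\le R_1\}$ for a time $T$ which is a power of $K$ (handled by Proposition~\ref{prop3.4}), union a ``bad regularity'' event (handled by Propositions~\ref{prop.prop2.3}--\ref{propositionsubdynamic}). Two points need sharpening.

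\emph{First}, your claim that ``$|p|$-uniformity comes for free'' is too optimistic. In the paper's inclusion, the relationship between $K$ and $T$ reads $K\simeq |p|_+^{4(r-2)}T^6$ (the $|p|$ factor comes from evaluating $k_{|p|_+^{r-2}s}$ at $s\sim T$), so when $K$ is moderate relative to $|p|$ the resulting $T$ is $O(1)$ and Proposition~\ref{prop3.4} gives nothing. The paper therefore splits off the regime $K\le\exp(|p|_+^{r-2})$ and handles it by a direct pointwise argument on the integrand: in that regime $|p|$ is large, so by Proposition~\ref{prop.prop2.3} the integrand is $\ge c$ except on an event of probability $\le C\exp(-c|p|^r)\le C\exp(-c(\ln K)^{r/(r-2)})$. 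Only for $K\ge\exp(|p|_+^{r-2})$ does one run the Proposition~\ref{prop3.4} argument.

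\emph{Second}, the extraction of the long interval is not ``pigeonhole/layer-cake'' but a contrapositive regularity argument. The paper shows: if at some $t\in[0,T]$ the gradient exceeds $R_1$, and the dynamics are time-regular (meaning $\sup_{[0,T]}|D_pV(p+\nabla\varphi_L)|\le T/2$ near $x$, and the Brownian increments on scale $1/T$ are $\le 1/2$), then $|p+\nabla\varphi_L|\ge R_1/2$ on a window of length $\sim 1/T$ around $t$, forcing $\Lambda_-\ge 1$ there; plugging this window into the definition of $\mathbf{m}$ and using $k_s\sim s^{-4}$ gives $\mathbf{m}\gtrsim |p|_+^{-4(r-2)}T^{-6}$. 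The time-regularity conditions are your anticipated ``bad events''.

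For the upper bound, your ``short time window where the denominator hasn't accumulated mass'' is off: the denominator is $\ge 2d|p|_+^{r-2}$ uniformly in $s$ (since $\Lambda_+\ge 1$), so there is no such window. The paper instead bounds the integrand pointwise in $s$: it shows $\{\text{integrand}\ge K\}\subseteq\{\Lambda_-(s,x;p)\ge K\}$ for large $K$ (with an intermediate case using the numerator cap $|p|_+^{r-2}$), and applies Proposition~\ref{prop.prop2.3}.
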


\begin{remark}
    The same proof can be used to show the slightly upgraded estimate: for any $K \geq 1$, any $L \in \N$ and any $(t , x , p) \in \R \times \Zd \times \Rd$,
    \begin{equation} \label{ineq:infmoderated}
    \mathbb{P} \left[ \inf_{\substack{s \in (t , t + 1/|p|_+^{r-2}) }} \mathbf{m}(s , x ; p)  \geq K \right] \leq C \exp \left( - c K^{\frac{r}{r-2}} \right).
\end{equation}
    This estimate will be used in the proof below.
\end{remark}

\begin{remark} \label{Rem:remark3.13}
The inequalities~\eqref{eq:18500612} and~\eqref{eq:18510612} imply that all the moments of the random variables $\mathbf{m}(t , x ; p)$ and $\mathbf{m}(t , x ; p)^{-1}$ are finite and bounded uniformly in the slope $p \in \Rd$: for any exponent $\gamma \in (1 , \infty)$, there exists a constant $C_\gamma := C(d , V , \gamma) < \infty$ such that for any $(t , x , p) \in \R \times \Zd \times \Rd$,
    \begin{equation*}
        \E \left[  \mathbf{m}(t , x ; p)^\gamma  \right] + \E \left[  \mathbf{m}(t , x ; p)^{-\gamma} \right] \leq C_\gamma.
    \end{equation*}
\end{remark}

\begin{proof}

     Using Proposition~\ref{prop.prop2.3}, Lemma~\ref{lemma.upperandlowerboundLambda} and Proposition~\ref{prop:prop2.20} ``Integration", there exist two constants $C := C(d , V) < \infty$ and $c :=  c(d , V)  > 0$ such that, for any $(t , x , p) \in \R \times \Zd \times \Rd$ and any $s > t$,
    \begin{multline} \label{eq:13030103}
        \mathbb{P} \left[ \Lambda_- (t , x ; p) \leq c |p|_+^{r-2} \right] +  \mathbb{P} \left[ ( s-t)^{-1} \sum_{y \sim x}\int_t^s \left( |p|_+^{r-2}  + \Lambda_+ \left( s' , y ; p\right)  \right) \, ds' \geq C |p|_+^{r-2} \right]  \\
        \leq C \exp \left(  - c |p|^r \right).
    \end{multline}
    For later purposes, we let $C_0$ and $c_0$ be two constants depending on $d$ and $V$, chosen large and small enough respectively, so that, for any $(t , x , p) \in \R \times \Zd \times \Rd$,
    \begin{equation*}
    \mathbb{P} \left[ \left| D_p V \left( p + \nabla \varphi_L(t , x ; p) \right) \right| \leq C_0 |p|_+^{r-1} \right] \leq C_0 \exp \left( - c_0 |p|^r_+ \right),
    \end{equation*}
    and, for any $T \geq 1$,
    \begin{equation} \label{eq:182804044}
    \mathbb{P} \left[ \sup_{t \in [0, T]} \left| D_p V \left( p + \nabla \varphi_L(t , x ; p) \right) \right| \leq C_0 |p|_+^{r-1} \right] \leq C_0 T \exp \left( - c_0 |p|^r_+ \right).
    \end{equation}
    The existence of these constants is guaranteed by Proposition~\ref{prop.prop2.3} and Proposition~\ref{propositionsubdynamic}.

    \medskip
    
    We break the argument into several steps.

    \medskip

    \textit{Step 1. Proof of the inequality~\eqref{eq:18510612}.}

    \medskip

    For the upper bound~\eqref{eq:18510612}, we note that, by Proposition~\ref{prop:prop2.20} ``Integration", it is sufficient to prove that there exist two constants $C := C(d , V) < \infty$ and $c := c(d , V) > 0$ such that, for any $(t , x , p) \in \R \times \Zd \times \Rd$ and any $s > t$,
    \begin{equation} \label{eq:09220403}
        \mathbb{P} \left[ \frac{\Lambda_- (s, x ; p)  \wedge |p|_+^{r-2}  }{( s-t)^{-1} \sum_{y \sim x}\int_t^s \left( |p|_+^{r-2}  + \Lambda_+ \left( s' , y ; p\right)  \right) \, ds'} \geq K \right] \leq  C \exp \left( - c K^{\frac{r}{r-2}} \right).
    \end{equation}

    \textit{Substep 1.1. Small values of $K$.} 
    
    \medskip
    
    We note that it is sufficient to prove the inequality~\eqref{eq:09220403} under the assumption that $K$ is larger than the constant $C_0$. This can be achieved by increasing the value of the constant $C$ in~\eqref{eq:09220403} so that the right-hand side is larger than $1$ for any $K \leq C_0$.

    \medskip

    \textit{Substep 1.2. Intermediate values of $K$.} 
    
    \medskip

    We prove the inequality~\eqref{eq:09220403} in the case $C_0 \leq K \leq |p|_+^{r-2}$. If the assumption $C_0 \leq |p|_+^{r-2}$ is not satisfied, then this case can be disregarded. Under the assumption $K \geq C_0$, we can use the inclusion of events
    \begin{equation*}
        \left\{ \frac{\Lambda_- (s, x ; p)  \wedge |p|_+^{r-2}  }{( s-t)^{-1} \sum_{y \sim x}\int_t^s \left( |p|_+^{r-2}  + \Lambda_+ \left( s' , y ; p\right)  \right) \, ds'} \geq K \right\} \subseteq \left\{ \Lambda_- (s, x ; p)  \geq C_0 |p|_+^{r-2} \right\}.
    \end{equation*}
    Using Proposition~\ref{eq:13030103} and the assumption $K \leq |p|_+^{r-2}$, we deduce that
    \begin{align*}
             \mathbb{P} \left[  \frac{\Lambda_- (s, x ; p)  \wedge  |p|_+^{r-2} }{( s-t)^{-1} \sum_{y \sim x}\int_t^s \left(  |p|_+^{r-2} + \Lambda_+ \left( s' , y ; p\right)  \right) \, ds'} \geq K \right] & \leq C \exp \left( - c |p|_+^r \right) \\
              & \leq C \exp \left( - c K^{\frac{r}{r-2}} \right). 
    \end{align*}
    This completes the proof of the upper bound~\eqref{eq:18510612} in the case $C_0 \leq K \leq |p|_+^{r-2}$. 

     \medskip
        
    \textit{Substep 1.3. Large values of $K$.}

    \medskip
    
    We now prove~\eqref{eq:18510612} in the case $K \geq |p|_+^{r-2}$. In this setting, we write similarly
    \begin{equation*}
        \left\{ \frac{\Lambda_- (s, x ; p)  \wedge |p|_+^{r-2}  }{( s-t)^{-1} \sum_{y \sim x}\int_t^s \left( |p|_+^{r-2}  + \Lambda_+ \left( s' , y ; p\right)  \right) \, ds'} \geq K \right\} \subseteq \left\{ \Lambda_- (s, x ; p)  \geq K \right\}.
    \end{equation*}
    Using Proposition~\ref{prop.prop2.3}, we obtain that the probability of the event on the right-hand side is smaller than $C \exp \left( - c K^{\frac{r}{r-2}} \right)$. The proof of the inequality~\eqref{eq:18510612} is complete.

    \medskip

    \textit{Step 2. Proof of the inequality~\eqref{eq:18500612}}

    \medskip

    We next prove the (more difficult) inequality~\eqref{eq:18500612}. As it was the case for the proof of the inequality~\eqref{eq:18510612}, we split the argument into different cases depending on the value of the constant $K$. To this end, we introduce a constant $K_0$ which shall only depend on the dimension $d$ and the potential $V$. Its specific value obeys two constraints: the first one is that it is larger than the ratio $C_0 / c_0$ (and in particular, larger than $C_0$ since we assumed $c_0 \in (0 , 1]$). The second condition is stated in~\eqref{eq:secondcondK0} below.

    \medskip

    \textit{Substep 2.1. Small values of $K$.} 

    \medskip
    
    First, if the constant $K$ is smaller than $K_0$, we may increase the value of the constant $C$ in~\eqref{eq:18500612} so that the inequality holds, by ensuring that the right-hand side is larger than $1$ for any $K \leq K_0$. 

    \medskip

    \textit{Substep 2.2. Intermediate values of $K$.}

    \medskip
    
    We next assume that
    \begin{equation*}
       \exp ( |p|_+^{r-2} ) \geq K_0 ~~ \mbox{and} ~~ K_0 \leq K \leq \exp ( |p|_+^{r-2} ).
    \end{equation*} 
    If the first inequality is not satisfied, then this case can be disregarded. By Proposition~\ref{prop:prop2.20} ``Integration", it is sufficient to show that, for any $(t , x , p) \in \R \times \Zd \times \Rd$ and any $s > t$,
    \begin{equation} \label{eq:10120403}
        \mathbb{P} \left[ \frac{\Lambda_- (s, x ; p)  \wedge |p|_+^{r-2}  }{( s-t)^{-1} \sum_{y \sim x}\int_t^s \left( |p|_+^{r-2}  + \Lambda_+ \left( s' , y ; p\right)  \right) \, ds'} \leq K^{-1} \right] \leq  C \exp \left( - c \left( \ln  K \right)^{\frac{r}{r-2}} \right).
    \end{equation}
    To prove the inequality~\eqref{eq:10120403}, we use the assumption $K \geq C_0 / c_0$ and write
    \begin{multline*}
        \left\{ \frac{\Lambda_- (s, x ; p)  \wedge |p|_+^{r-2}  }{( s-t)^{-1} \sum_{y \sim x}\int_t^s \left( |p|_+^{r-2}  + \Lambda_+ \left( s' , y ; p\right)  \right) \, ds'} \leq K^{-1} \right\} \subseteq \\
       \left\{ \Lambda_- (s, x ; p) \leq c_0 |p|_+^{r-2} \right\}  \cup \left\{ ( s-t)^{-1} \sum_{y \sim x}\int_t^s \left( |p|_+^{r-2}  + \Lambda_+ \left( s' , y ; p\right)  \right) \, ds' \geq C_0 |p|_+^{r-2} \right\}.
    \end{multline*}
    Taking the probability on both sides and using the inequality~\eqref{eq:13030103},
    \begin{align*}
        \lefteqn{ \mathbb{P} \left[  \frac{\Lambda_- (s, x ; p)  \wedge |p|_+^{r-2}  }{( s-t)^{-1} \sum_{y \sim x}\int_t^s \left( |p|_+^{r-2}  + \Lambda_+ \left( s' , y ; p\right)  \right) \, ds'} \leq K \right] } \qquad & \\ &
        \leq  \mathbb{P} \left[ \Lambda_- (s, x ; p) \leq c_0 |p|_+^{r-2}  \right] +   \mathbb{P} \left[ ( s-t)^{-1} \sum_{y \sim x}\int_t^s \left( |p|_+^{r-2}  + \Lambda_+ \left( s' , y ; p\right)  \right) \, ds' \geq C_0 |p|_+^{r-2} \right] \\
        & \leq 2 C_0 \exp \left( - c_0 |p|^r \right).
    \end{align*}
    Using the assumption $K \leq \exp ( |p|_+^{r-2} )$, we deduce~\eqref{eq:10120403} from the previous display.

    \medskip

    \textit{Substep 2.3. Large values of $K$.}

    \medskip
    
We finally treat the case $K \geq \exp ( |p|_+^{r-2} ) \vee K_0$. To ease the notation, we only prove the result for $t= 0$ and $x=0$. This can be done without loss of generality by the stationarity (with respect to both space and time) of the Langevin dynamic. We first prove the following inclusion of events: there exists a constant $c_1 := c_1(d, V) > 0$ such that, for any $T \geq 1$,
\begin{align} \label{eq:11030712}
    \left\{ \mathbf{m}(0 , 0 ; p) \leq \frac{c_1}{|p|_+^{4(r-2)} T^{6}} \right\} \subseteq & \left\{ \sup_{t \in [0,T]}  \left| p +  \nabla \varphi_L\left( t, 0 ; p \right) \right| \leq  R_1 \right\} \notag \\
    \qquad & \bigcup \left\{ \sup_{t \in [0,T]} \Lambda_+ (t , 0 ; p) + \sum_{y \sim 0} \left| D_p V \left( p +  \nabla \varphi_L\left( t , y ; p \right) \right) \right|  \geq \frac{T}{2} \right\} \notag \\
    \qquad & \bigcup \left\{ \sup_{\substack{t , t' \in [0,T]\\ |t - t'| \leq \frac{1}{T}}} \left| \nabla B_{t'} \left( 0\right) - \nabla B_{t} \left( 0 \right) \right| \geq \frac{1}{2} \right\}.
\end{align}
The inclusion~\eqref{eq:11030712} asserts that, for the moderated environment $ \mathbf{m}(0 , 0 ; p)$ to be small, the norm $ \left| p +  \nabla \varphi_L\left( t, 0 ; p \right) \right|$ must remain smaller than $R_1$ for a long time (this behaviour is ruled out by Proposition~\ref{prop3.4}), or must behave very irregularly, this condition is represented by the second and third events on the right-hand side of~\eqref{eq:11030712}, and happens with small probability.

We first prove~\eqref{eq:11030712}. This inclusion is equivalent to the following implication: for any $T \geq 1$,
\begin{multline} \label{eq:17260712}
    \sup_{t \in [0,T]}  \left| p +  \nabla \varphi_L\left( t, 0 ; p \right) \right| \geq  R_1, ~ \sup_{t \in [0,T]} \Lambda_+ (t , 0 ; p)  + \sum_{y \sim 0} \left| D_p V \left( p +  \nabla \varphi_L\left( t , y ; p \right) \right) \right| \leq \frac{T}{2} \\ ~ \mbox{and} ~ \sup_{\substack{t , t' \in [0,T]\\ |t - t'| \leq \frac{1}{T}}} \left| \nabla B_{t'} \left( 0 \right) - \nabla B_{t} \left( 0 \right) \right| \leq \frac12 \implies \mathbf{m}(0 , 0 ; p) \geq \frac{c_1}{|p|_+^{4(r-2)} T^{6}}.
\end{multline}
We assume that the three conditions on the left-hand side of~\eqref{eq:17260712} are satisfied and fix a time $t \in [0 , T]$ such that $\left| p + \nabla \varphi_L\left( t, 0 ; p \right) \right| \geq  R_1$. Using the definition of the Langevin dynamic~\eqref{eq:reqldefSDE}, we see that, for any time $s \in \left[t - \frac{1}{2T} , t + \frac{1}{2T} \right]$,
\begin{align*}
    \left| \nabla \varphi_L(s , 0 ; p) - \nabla \varphi_L(t , 0 ; p) \right| &
    \leq \left| \int_{t}^s \nabla \left( \nabla \cdot D_p V( p + \nabla \varphi_L(\cdot , \cdot ; p)) \right)(s',0)  \, ds' \right| + \left| \nabla B_s(0) - \nabla B_t(0)\right| \\
    & \leq \int_{t - \frac{1}{2T}}^{t + \frac{1}{2T}} \sum_{y \sim 0}\left| D_p V \left( p + \nabla \varphi_L\left( s' , 0 ; p \right) \right) \right| \, ds' + \frac12 \\
    & \leq 1.
\end{align*}
Using the assumption $R_1 \geq 2$ (which can be made without loss of generality, see Lemma~\ref{lemma.upperandlowerboundLambda}), we deduce that, for any $s \in \left[t - \frac{1}{2T} , t + \frac{1}{2T} \right]$, $\left| p + \nabla \varphi_L(s , 0 ; p) \right| \geq \frac{R_1}{2}$. Consequently, for any  $s \in \left[t - \frac{1}{2T} , t + \frac{1}{2T} \right]$,
\begin{equation*}
    \Lambda_-(s , 0 ; p) \geq 1.
\end{equation*}
The left-hand side of~\eqref{eq:17260712} yields the upper bound, for any $s \in [0,T]$,
\begin{equation*}
     \Lambda_+(s , 0 ; p) \leq \frac{T}{2}.
\end{equation*}
Combining the two previous displays with the definition of the moderated environment and the definition of the function~$k$, we deduce that, for any $T \geq 1$,
\begin{align*}
    \mathbf{m}(0, 0 ; p) & = |p|^{r-2}_+ \int_0^{\infty} k_{|p|^{r-2}_+ s} \frac{\Lambda_- (s, 0 ; p)  \wedge |p|^{r-2} }{s^{-1} \sum_{y \sim 0}\int_0^s \left( |p|^{r-2} + \Lambda_+ \left( s' , y ; p\right)  \right) \, ds'} \, ds \\
    & \geq  |p|^{r-2}_+ \int_{t - \frac{1}{2T}}^{t + \frac{1}{2T}} k_{|p|^{r-2}_+ s} \frac{\Lambda_- (s, 0 ; p)  \wedge |p|^{r-2} }{s^{-1} \sum_{y \sim 0}\int_0^s \left( |p|^{r-2} + \Lambda_+ \left( s' , y ; p\right)  \right) \, ds'} \, ds  \\
    & \geq \frac{|p|^{r-2}_+}{|p|^{r-2}_+ + T}  \int_{t - \frac{1}{2T}}^{t + \frac{1}{2T}} k_{|p|^{r-2}_+ s} ds \\
    & \geq \frac{c_1}{|p|_+^{4(r-2)} T^{6}}.
\end{align*}
The proof of~\eqref{eq:17260712}, and thus of~\eqref{eq:11030712} is complete. For $K \geq \exp ( |p|_+^{r-2} ) \vee K_0,$ we denote by $T_K$ the unique nonnegative solution to the equation
\begin{equation} \label{eq:secondcondK0}
    K = \frac{|p|_+^{4(r-2)} T_K^{6}}{c_1}.
\end{equation}
We impose here the second condition on the constant $K_0$: we assume that it is sufficiently large so that $T_K \geq 4 C_0 |p|_+^{r-1}$. This can be done thanks to the assumption $K \geq \exp ( |p|_+^{r-2} )$ and since the exponential grows faster than any polynomial. Note that these assumptions imply that $\ln K \leq C \ln T_K$ for a large constant $C := C(d , V) < \infty$.

Using the inclusion of events~\eqref{eq:11030712}, we see that~\eqref{eq:18500612} can be proved by estimating the probabilities of the three events on the right-hand side of~\eqref{eq:11030712}. For the first event, we use Proposition~\ref{prop3.4} and write
\begin{align*}
    \mathbb{P} \left[ \sup_{t \in [0,T_K]}  \left| p +  \nabla \varphi_L\left( t, 0 ; p \right) \right| \leq  R_1 \right] & \leq C \exp \left( - c \left( \ln T_K \right)^{\frac{r}{r-2}} \right)\\
    & \leq C \exp \left( - c \left( \ln K \right)^{\frac{r}{r-2}} \right).
\end{align*}
For the second term, we use the inequalities~\eqref{eq:13030103} and~\eqref{eq:182804044} (together with Assumption~\eqref{AssPot}) to obtain that
\begin{align*}
     \mathbb{P} \left[ \sup_{t \in [0,T_K]} \Lambda_+ (t , 0 ; p) + \sum_{y \sim 0} \left| D_p V \left( p +  \nabla \varphi_L\left( t , y ; p \right) \right) \right|  \geq \frac{T_K}{2}   \right] & \leq C |p|_+^{r-1} T_K \exp \left( - c  T_K^{\frac{r}{r-1}}\right) \\
     & \qquad + C |p|_+^{r-1} T_K \exp \left( - c  T_K^{\frac{r}{r-2}}\right) \\ &
    \leq C \exp \left( - c \left( \ln K \right)^{\frac{r}{r-2}} \right).
\end{align*}
For the third term, we note that
\begin{align*}
     \mathbb{P} \left[ \sup_{\substack{t , t' \in [0,T_K]\\ |t - t'| \leq \frac{1}{T_K}}} \left| \nabla B_{t'} \left( 0 \right) - \nabla B_{t} \left( 0 \right) \right| \geq \frac{1}{2} \right] & 
    \leq  \sum_{l = 0}^{\lceil T_K^2 \rceil} \mathbb{P} \left( \sup_{t \in \left[ \frac{l-1}{T_K} , \frac{l+1}{T_K} \right] } \left| \nabla B_t(0) - \nabla B_{\frac{l}{T_K}}(0) \right| \geq \frac{1}{4} \right) \\
    & \leq (T_K^2+2)  \mathbb{P} \left( \sup_{t \in \left[ 0 , \frac{2}{T_K} \right] } \left| \nabla B_t(0) - \nabla B_{\frac{1}{T_K}}(0) \right| \geq \frac{1}{4} \right) \\
    & \leq (T_K^2+2)  \mathbb{P} \left( \sup_{t \in \left[ 0 , 2 \right] } \left| \nabla B_t(0) - \nabla B_{1}(0) \right| \geq \frac{\sqrt{T_K}}{4} \right) \\
    & \leq C (T_K^2+2) \exp \left( - c T_K \right).
\end{align*}
We then estimate (crudely) the last term on the right-hand side so as to obtain
\begin{equation*}
    \mathbb{P} \left[ \sup_{\substack{t , t' \in [0,T_K]\\ |t - t'| \leq \frac{1}{T_K}}} \left| \nabla B_{t'} \left( 0\right) - \nabla B_{t} \left( 0 \right) \right| \geq \frac{1}{2} \right] \leq C \exp \left( - c \left( \ln K \right)^{\frac{r}{r-2}} \right).
\end{equation*}
Combining the three previous displays with~\eqref{eq:11030712} yields
\begin{equation*}
     \mathbb{P} \left[ \mathbf{m}(t , 0 ; p)^{-1} \geq K \right]  \leq   C \exp \left( - c \left( \ln K \right)^{\frac{r}{r-2}} \right).
\end{equation*}
This implies~\eqref{eq:18500612}.
\end{proof}

\subsection{Moderation for solutions of the heat equation} \label{sec:moderationheatequation}

In this section, we adapt the arguments of Mourrat and Otto~\cite[Proposition 4.6]{MO16} to environments which are not bounded from above. Using the terminology introduced in~\cite[Definition 3.1]{MO16}, we show that the environment $\a$ is $(w , C K)$-moderate. Specifically, we establish Proposition~\ref{prop4.3} whose proof closely follows the one of~\cite{MO16}.

In the statement below, for a fixed vertex $x \in \mathbb{T}_L$, we will use the notation $\sum_{y \sim_2 x}$ to sum over all the vertices $y \in \mathbb{T}_L$ are at distance at most $2$ from $x$ (for the Euclidean norm $|\cdot|$).

\begin{proposition} \label{prop4.3}
There exists a constant $C := C(d) > 0$ such that, for every $t \geq 0$ and every solution $u : (0 , \infty) \times \mathbb{T}_L \to \R$ of the parabolic equation\begin{equation*}
    \partial_t u - \nabla \cdot \a(\cdot ; p) \nabla u = 0 ~~~\mbox{in} ~~~ (0 , \infty) \times \mathbb{T}_L,
\end{equation*}
one has the inequality, for any $(t , x) \in \R \times \mathbb{T}_L$,
\begin{equation*}
    \mathbf{m}(t , x ; p) \left| \nabla u(t, x) \right|^2 \leq C  \sum_{y \sim_2 x}  \int_t^\infty K_{|p|^{r-2}_+ (s - t)} \nabla u(s , y) \cdot \a(s , y ; p) \nabla u(s , y) \, ds.
\end{equation*}
\end{proposition}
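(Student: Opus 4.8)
The plan is to derive the inequality from a pointwise-in-time control of $\nabla u(\cdot,x)$ coming from the equation, combined with the precise structure of the denominator in the definition of $\mathbf{m}$. Fix $(t,x)$ with $t\ge 0$, abbreviate $\kappa:=|p|_+^{r-2}$, and write $N(s):=\Lambda_-(s,x;p)\wedge|p|^{r-2}$ and $T(s):=(s-t)^{-1}\sum_{y\sim_2 x}\int_t^s\bigl(|p|^{r-2}+\Lambda_+(s',y;p)\bigr)\,ds'$, so that $\mathbf{m}(t,x;p)=\kappa\int_t^\infty k_{\kappa(s-t)}\,N(s)/T(s)\,ds$. Applying a discrete gradient to $\partial_t u=\nabla\cdot\a(\cdot;p)\nabla u$ and noting that $\nabla\bigl(\nabla\cdot\a(\cdot;p)\nabla u\bigr)(\tau,x)$ is, up to $C(d)$ many coefficients of size $C(d)$, a combination of the vectors $\a(\tau,y;p)\nabla u(\tau,y)$ with $y$ at Euclidean distance at most $2$ from $x$, the fundamental theorem of calculus in time gives, for every $s>t$,
$$\bigl|\nabla u(t,x)\bigr|\le\bigl|\nabla u(s,x)\bigr|+C\int_t^s\sum_{y\sim_2 x}\bigl|\a(s',y;p)\nabla u(s',y)\bigr|\,ds'.$$
The decisive point is to convert $|\a\nabla u|$ into the coercive quantity $\nabla u\cdot\a\nabla u$: since $\a(s',y;p)$ is symmetric and nonnegative with largest eigenvalue at most $\Lambda_+(s',y;p)$ one has $\a^2\le\Lambda_+\a$, hence $\bigl|\a(s',y;p)\nabla u(s',y)\bigr|\le\Lambda_+(s',y;p)^{1/2}\bigl(\nabla u(s',y)\cdot\a(s',y;p)\nabla u(s',y)\bigr)^{1/2}$.

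Squaring the displayed inequality, applying Cauchy–Schwarz to the space–time sum, and writing $g(s):=\sum_{y\sim_2 x}\nabla u(s,y)\cdot\a(s,y;p)\nabla u(s,y)$, I get, for every $s>t$,
$$\bigl|\nabla u(t,x)\bigr|^2\le 2\bigl|\nabla u(s,x)\bigr|^2+C\Bigl(\int_t^s\sum_{y\sim_2 x}\Lambda_+(s',y;p)\,ds'\Bigr)\Bigl(\int_t^s g(s')\,ds'\Bigr).$$
Now multiply by $\kappa\,k_{\kappa(s-t)}\,N(s)/T(s)\ge 0$ and integrate over $s\in(t,\infty)$: the left-hand side becomes exactly $\mathbf{m}(t,x;p)|\nabla u(t,x)|^2$, and the right-hand side splits into two terms. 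For the first term, $N(s)|\nabla u(s,x)|^2\le\Lambda_-(s,x;p)|\nabla u(s,x)|^2\le\nabla u(s,x)\cdot\a(s,x;p)\nabla u(s,x)\le g(s)$, because $x$ itself lies in $\{y\sim_2 x\}$; moreover $\kappa/T(s)\le C$, since $\Lambda_+\ge 1$ forces $T(s)\ge|p|^{r-2}\vee 1$ while $\kappa=|p|_+^{r-2}\le C(|p|^{r-2}\vee 1)$; and $k_\sigma\le K_\sigma$ for every $\sigma\ge0$ by definition of $K$. Hence the first term is at most $C\int_t^\infty K_{\kappa(s-t)}g(s)\,ds$.

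For the second term, the definition of $T(s)$ gives $\bigl(\int_t^s\sum_{y\sim_2 x}\Lambda_+(s',y;p)\,ds'\bigr)/T(s)\le s-t$, and $N(s)\le|p|^{r-2}\le\kappa$, so this term is bounded by $C\int_t^\infty\kappa^2(s-t)\,k_{\kappa(s-t)}\bigl(\int_t^s g(s')\,ds'\bigr)\,ds$. Exchanging the order of integration and substituting $\sigma=\kappa(s-t)$ turns the inner integral into $\int_{\kappa(s'-t)}^\infty\sigma k_\sigma\,d\sigma=K_{\kappa(s'-t)}-k_{\kappa(s'-t)}\le K_{\kappa(s'-t)}$, so the second term is likewise at most $C\int_t^\infty K_{\kappa(s-t)}g(s)\,ds$. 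Recalling that $g(s)=\sum_{y\sim_2 x}\nabla u(s,y)\cdot\a(s,y;p)\nabla u(s,y)$ and adding the two bounds yields the claimed inequality.

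The main obstacle is the first step. One must recognise that the right object to estimate pointwise in time is $\int_t^s\sum_{y\sim_2 x}|\a\nabla u|$ — this is the origin of the enlarged neighbourhood $y\sim_2 x$ on the right-hand side — and, crucially, that it has to be routed through $|\a\nabla u|\le\Lambda_+^{1/2}(\nabla u\cdot\a\nabla u)^{1/2}$ rather than the naive $|\a\nabla u|\le\Lambda_+|\nabla u|$: the latter produces the non-coercive weight $\Lambda_+|\nabla u|^2$, which cannot be absorbed and the argument fails. The Cauchy–Schwarz step must then be arranged so that precisely the factor $\int_t^s\sum_{y\sim_2 x}\Lambda_+$ appears, since this is exactly what the denominator $T(s)$ built into $\mathbf{m}$ cancels. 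The rest is bookkeeping: matching the prefactor $|p|_+^{r-2}$ and the weight $k$ against $K$ is routine once $k$ and $K$ are used only through $k_\sigma\le K_\sigma$ and $\int_\tau^\infty\sigma k_\sigma\,d\sigma\le K_\tau$, both immediate from the definition of $K$.
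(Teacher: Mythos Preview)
Your proof is correct and follows essentially the same route as the paper's: split $|\nabla u(t,x)|^2$ into a ``same-time'' part controlled via $\Lambda_-|\nabla u|^2\le\nabla u\cdot\a\nabla u$ and a time-increment part handled through the equation, the key bound $|\a\nabla u|\le\Lambda_+^{1/2}(\nabla u\cdot\a\nabla u)^{1/2}$, and Cauchy--Schwarz so that the resulting $\int\Lambda_+$ factor cancels the denominator in $\mathbf m$; then absorb both pieces using $k\le K$ and $\int_\tau^\infty\sigma k_\sigma\,d\sigma\le K_\tau$. The only cosmetic difference is that you square after the fundamental-theorem-of-calculus step while the paper squares before, and you carry the neighbourhood $\sim_2$ through the denominator $T(s)$ whereas the paper's definition of $\mathbf m$ uses $\sim$ (the paper's own proof has the same slack at this point).
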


\begin{remark}
    This inequality is already known and plays also an important role in the article of Biskup and Rodriguez~\cite{biskup2018limit} (see Lemma 2.11 there) to identify the scaling limit of a random walk evolving in a degenerate environment. The proof is added below for completeness (and because some minor modifications need to be incorporated in the argument to take into account that the environment is not bounded from above).
\end{remark}

\begin{proof}
The proof closely follows the ones of~\cite[Proposition 4.6]{MO16},~\cite[Lemma 2.11]{biskup2018limit} and~\cite[Proposition 4.3]{D23U} with some additional technicalities to take into account that the environment is not bounded from above (compared to~\cite{MO16, biskup2018limit}) and the scaling with respect to the slope $p$ has to be taken into account in the analysis (compared to both~\cite{MO16, biskup2018limit, D23U}).

To ease the notation, we assume that $t = 0$. We first estimate
\begin{align} \label{eq:16110612}
\mathbf{m}(0 , x ; p) \left| \nabla u(0 , x) \right|^2 & =  |p|_+^{r-2} \int_0^{\infty} k_{|p|_+^{r-2} s} \frac{\Lambda_-(s,x) \wedge |p|_+^{r-2} }{s^{-1} \sum_{y \sim x} \int_0^s (\Lambda_+ (s',y) + |p|_+^{r-2}) \, ds'} |\nabla u(0 , x)|^2 \, ds \\
                        & \leq 2 |p|_+^{r-2}  \int_t^{\infty} k_{|p|_+^{r-2} s} \frac{\Lambda_-(s,x) \wedge |p|_+^{r-2} }{s^{-1} \sum_{y \sim x } \int_0^s (\Lambda_+ (s',y) + |p|_+^{r-2}) \, ds'} |\nabla u(s , x)|^2 \, ds \notag \\
                        & \quad + 2 |p|_+^{r-2}  \int_t^{\infty} k_{|p|_+^{r-2} s} \frac{\Lambda_-(s,x) \wedge |p|_+^{r-2}}{s^{-1} \sum_{y \sim x } \int_0^s (\Lambda_+ (s',y) + |p|_+^{r-2}) \, ds'} |\nabla u(s , x) - \nabla u(0 , x)|^2 \, dt. \notag
\end{align}
The first term on the right-hand side can be estimated by upper bounding the numerator and lower bounding the denominator as follows
\begin{multline*}
\int_0^{\infty} k_{|p|_+^{r-2}  s} \frac{\Lambda_-(s,x) \wedge |p|_+^{r-2} }{s^{-1} \sum_{y \sim x } \int_0^s (\Lambda_+ (s',y) + |p|_+^{r-2}) \, ds'} |\nabla u(s , x)|^2 \, ds \\ \leq  \frac{1}{|p|_+^{r-2}}  \int_0^{\infty} k_{|p|_+^{r-2}  s} \Lambda_-(s,x ; p) | \nabla u(s , x)|^2 \, ds.
\end{multline*}
Using that $\Lambda_-(s,x ; p)$ is smaller than the smallest eigenvalue of the matrix $\a(s , x ; p)$, we can write
\begin{align}  \label{eq:16110312}
     \lefteqn{|p|_+^{r-2} \int_0^{\infty} k_{|p|_+^{r-2}  s} \frac{\Lambda_-(s,x) \wedge |p|_+^{r-2} }{s^{-1} \sum_{y \sim x } \int_0^s (\Lambda_+ (s',y) + |p|_+^{r-2}) \, ds'} |\nabla u(s , x)|^2 \, ds } \qquad & \\ & \leq \int_0^{\infty} k_{|p|_+^{r-2}  s} \Lambda_-(s,x ; p) | \nabla u(s , x)|^2 \, ds \notag \\
     & \leq \int_0^{\infty} k_{|p|_+^{r-2}  s}  \nabla u(s , x)  \cdot \a(s , x ; p) \nabla u(s , x) \, ds. \notag
\end{align}
We next estimate the second term on the right-hand side of~\eqref{eq:16110612}. To this end, we use the identity $\partial_t u = \nabla \cdot \a(\cdot ; p) \nabla u$ and write
\begin{align*}
    \left| \nabla u(s , x) - \nabla u(0 , x) \right|^2 & \leq C \sum_{y \sim x} ( u(s , y) - u(0 , y) )^2  \\
    & \leq C \sum_{y \sim x}  \left(  \int_0^s \nabla \cdot \a \nabla u(s' , y)\, ds' \right)^2.
\end{align*}
Applying the Cauchy-Schwarz inequality to the terms on the right-hand side, we obtain
\begin{align*}
    \left( \int_0^s \nabla \cdot \a \nabla u(s' , y) \, ds' \right)^2 & \leq C \sum_{y' \sim y} \left( \int_0^s \left| \a(s' , y' ; p) \nabla u(s' , y') \right| \, ds' \right)^2 \\
    & \leq C \sum_{y' \sim y} \left( \int_0^s \Lambda_+ (s' , y' ; p) \, ds' \right) \left( \int_0^s \nabla u(s' , y') \cdot \a(s' , y') \nabla u(s' , y')  \, ds' \right).
\end{align*}
Combining the two previous displays yields
\begin{align*}
    \left| \nabla u(s , x) - \nabla u(0 , x) \right|^2 & \leq C \sum_{y' \sim y} \sum_{y \sim x} \left( \int_0^s \Lambda_+ (s' , y' ; p) \, ds' \right) \left( \int_0^s \nabla u(s' , y') \cdot \a(s' , y ; p) \nabla u(s' , y)  \, ds' \right) \\
    & \leq C \sum_{y \sim_2 x} \left( \int_0^s \Lambda_+ (s' , y ; p) \, ds' \right) \left( \int_0^s \nabla u(s' , y) \cdot \a(s' , y ; p) \nabla u(s' , y)  \, ds' \right).
\end{align*}
We thus obtain
\begin{multline*}
    \frac{\Lambda_-(s,x) \wedge |p|_+^{r-2}}{s^{-1} \sum_{y \sim x } \int_0^s (\Lambda_+ (s',y ; p) + |p|_+^{r-2}) \, ds'} \left| \nabla u(s , x) - \nabla u(0 , x) \right|^2 \\
    \leq C s \sum_{y \sim x} \int_0^s \nabla u (s' , y) \cdot \a(s', y ; p) \nabla u(s' , y)  \, ds'.
\end{multline*}
Combining the previous estimate with~\eqref{eq:16110612} and~\eqref{eq:16110312}, we deduce that
\begin{align*}
    \mathbf{m}(0 , x ; p) \left| \nabla u(0 , x) \right|^2 &
    \leq C \int_0^{\infty} k_{|p|_+^{r-2}  s}  \nabla u(s , x)  \cdot \a(s , x ; p) \nabla u(s , x) \, ds \\
    & \qquad + C    \sum_{y \sim x} \int_{0}^\infty |p|_+^{r-2} s k_{|p|_+^{r-2} s} \int_0^s  \nabla u (s' , y) \cdot \a(s', y ; p) \nabla u(s' , y) \, ds' \\
    &  \leq C   \sum_{y \sim x} \int_0^\infty K_{|p|_+^{r-2} s} \nabla u (s , y) \cdot \a(s, y ; p) \nabla u(s , y)  \, ds.
\end{align*}
The proof of Proposition~\ref{prop4.3} is complete.
\end{proof}

\section{Sublinearity of the Langevin dynamic} \label{sec:section5sublinearity}

In this section, we establish the following sublinearity estimates on the Langevin dynamic. The proof makes use of the Helffer-Sj\"{o}strand representation formula, together with the results established on the moderated environment (the argument is similar to the one of~\cite{MO16, D23U}, with some adaptations to take into account the nature of the parabolic equation studied here; specifically the lack of maximum principle).

\begin{proposition}[Sublinearity for the Langevin dynamic] \label{prop:sublincorr}
There exists a constant $C := C(d , V) < \infty$ such that the following estimate holds: for any integer $L \in \N$, any slope $p \in \Rd$ and any $(t , x) \in \R \times \mathbb{T}_L$,
    \begin{equation} \label{eq:sublinearitycorr}
         \left| \varphi_L(t , x ; p) \right| \leq  \mathcal{O}_1 (C L^{7/8}) . 
    \end{equation}
\end{proposition}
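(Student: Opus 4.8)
The plan is to control the $L^2$ norm of the Langevin dynamic $\varphi_L(\cdot,\cdot;p)$ via the Helffer--Sj\"ostrand representation formula, which gives
\[
\mathrm{Var}_{L,p}\left[\varphi(0)\right] = \E\left[\int_0^\infty P_{\a(\cdot;p)}(t,0)\,dt\right],
\]
and then combine this with the moderated-environment machinery (Propositions~\ref{prop:stochintmoderated} and~\ref{prop4.3}) to turn an integrated heat-kernel bound into a pointwise bound on $\varphi_L(t,x;p)$ with the claimed $L^{7/8}$ scaling and $\mathcal{O}_1$ stochastic integrability. Since $\varphi_L(t,\cdot;p)$ has law $\mu_{L,p}$, which is log-concave (Proposition~\ref{prop:prekopa-leindler}), once we have an $L^2$ (or $L^1$) bound of the right order the upgrade to $\mathcal{O}_1$-type tails comes for free from~\eqref{eq:logconcavityimpliesmoments}; and by stationarity in time the estimate at a single time $t$ suffices.

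First I would estimate $\int_0^\infty \|P_{\a(\cdot;p)}(t,\cdot;0)\|_{L^2(\mathbb{T}_L)}^2\,dt$. The naive energy estimate~\eqref{eq:energyestimateheatkernel} only gives monotonicity; to get decay one needs a Nash-type argument, but the environment is degenerate, so this is exactly where the moderated environment enters. Using Proposition~\ref{prop4.3} applied to $u = P_{\a(\cdot;p)}(\cdot,\cdot;0)$ one converts the degenerate Dirichlet energy $\sum_x \nabla P\cdot \a \nabla P$ into a weighted average (against the kernel $K$) of $\mathbf{m}(s,y;p)|\nabla P(s,y)|^2$, i.e.\ into a genuinely elliptic quantity after paying a factor $\mathbf{m}^{-1}$. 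Combined with the uniform moment bounds on $\mathbf{m}^{-1}$ from Remark~\ref{Rem:remark3.13} (all moments finite, uniformly in $p$), one obtains a Nash-type inequality for the moderated heat kernel, hence on-diagonal decay $\int_0^\infty \|P(t,\cdot)\|_{L^2}^2\,dt \lesssim$ (a polynomial in $L$ times a random constant with good integrability). Feeding this into the Helffer--Sj\"ostrand formula and summing/averaging over $x\in\mathbb{T}_L$ yields $\E_{L,p}[|\varphi_L(t,x;p)|^2]\le \mathcal{O}_1(C L^{7/4})$ or so — the exponent $7/4$ (i.e.\ $7/8$ after taking the square root) coming from the interplay between the torus volume $L^d$, the heat-kernel diagonal lower bound $\gtrsim L^{-d}$, the time-cutoff scale in $\mathbf{m}$, and the slack one must leave for the degenerate estimates to close; this is not the sharp diffusive scaling (which would be $L$ in $d=2$ with a log), matching the paper's stated suboptimality of exponents. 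Finally, log-concavity of $\mu_{L,p}$ together with the $L^2$ bound upgrades the moment estimate to the pointwise $\mathcal{O}_1$ bound~\eqref{eq:sublinearitycorr}, and stationarity removes the dependence on $t$.

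The main obstacle will be the degeneracy in the Nash-type iteration: one cannot directly run the standard on-diagonal heat-kernel argument because the lower bound on $\a$ vanishes. The key technical point is to use Proposition~\ref{prop4.3} to replace the bad Dirichlet form with the good one at the cost of $\mathbf{m}^{-1}$, and then to control the accumulated factors of $\mathbf{m}^{-1}$ (and $\Lambda_+$ from the upper bound, via Lemma~\ref{lemma.upperandlowerboundLambda} and Proposition~\ref{prop.prop2.3}) through the uniform moment bounds of Remark~\ref{Rem:remark3.13} and the $L^q$ maximal inequality (Proposition~\ref{propmaximalineq}) applied along the time direction. Bookkeeping the $p$-dependence carefully — since $\mathbf{m}$ and the kernel $K$ are scaled by $|p|_+^{r-2}$ precisely so that the final bound is uniform in $p$ — and keeping the stochastic integrability at the level $\mathcal{O}_1$ through the various summations (using the Summation, Product, Powers and Integration rules of Proposition~\ref{prop:prop2.20}) is the bulk of the work; none of it is conceptually hard once the moderated-environment estimate is in hand, but the exponent $7/8$ is obtained only after being somewhat wasteful, consistent with the paper not optimizing rates.
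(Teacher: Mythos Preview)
Your architecture is correct and matches the paper: reduce to a variance bound via Helffer--Sj\"ostrand, use the moderated environment (Proposition~\ref{prop4.3} and the moment bounds of Remark~\ref{Rem:remark3.13}) to cope with degeneracy, and upgrade the $L^2$ bound to $\mathcal{O}_1$ by log-concavity of $\mu_{L,p}$.

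Where you diverge is the mechanism for heat-kernel decay. You invoke a Nash-type iteration to get on-diagonal decay of $\|P_{\a}(t,\cdot)\|_{L^2}$. The paper does \emph{not} run a Nash argument; it instead splits $\int_0^\infty P_{\a}(t,0)\,dt$ at a threshold $T=L^{9/4}$ and treats the two pieces by different, more elementary means. For small times it uses the zero-mean of $P_\a(t,\cdot)$ on $\mathbb{T}_L$ and a path argument to get $|P_\a(t,0)|\le C\sqrt{L}\bigl(\sum_x|\nabla P_\a(t,x)|^2\bigr)^{1/2}$, then combines this with the bound $\int_0^\infty\sum_x\mathbf{m}\,|\nabla P_\a|^2\le C|p|_+^{2-r}$ (Step~1, a direct consequence of $\partial_t\mathcal{E}_t=-2\mathcal{D}_t$ and Proposition~\ref{prop4.3}) to obtain $\int_0^T P_\a(t,0)\,dt\lesssim \sqrt{LT}\,\sup\mathbf{m}^{-1/2}$. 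For large times it derives a differential inequality $-\partial_t\bar{\mathcal{E}}_t\ge c|p|_+^{r-2}L^{-2}\bigl(\inf_{x,s}\mathbf{m}\bigr)\bar{\mathcal{E}}_t$ from Poincar\'e on the torus, yielding exponential decay in $t/(L^2\mathbf{M})$ with $\mathbf{M}$ controlled by the $L^q$ maximal inequality (Proposition~\ref{propmaximalineq}). Balancing $\sqrt{LT}$ against $L^6/T^2$ at $T=L^{9/4}$, and absorbing the subpolynomial factors $\sup\mathbf{m}^{-1}$ and $\E[\mathbf{M}^3]\lesssim e^{C(\ln L)^{(r-2)/r}}$ into a small polynomial margin, gives $\mathrm{Var}_{L,p}[\varphi(0)]\le CL^{7/4}$, hence $|\varphi_L|\le\mathcal{O}_1(CL^{7/8})$.

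Your Nash route (\`a la Mourrat--Otto) is in principle viable, but two points deserve care. First, the quantity you say you will estimate, $\int_0^\infty\|P_\a(t,\cdot)\|_{L^2}^2\,dt$, is not what the Helffer--Sj\"ostrand formula produces; you need $\int_0^\infty P_\a(t,0)\,dt$, and the passage (e.g.\ via $P_\a(t,0)^2\le\mathcal{E}_t$ plus Cauchy--Schwarz against an exponential weight, as the paper does in Substep~3.2) should be made explicit. Second, your sketch does not explain how the specific exponent $7/8$ arises; in the paper it falls out of the concrete small-time/large-time balance above, and a Nash iteration would produce a different bookkeeping that you would have to carry through to see what exponent you actually get.
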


\begin{remark}
Let us make three remaks about the previous result:
\begin{itemize}
\item The exponent 7/8 on the right-hand side of~\eqref{eq:sublinearitycorr} is not optimal (this quantity should grow like the square-root of a logarithm of $L$ in two dimensions and be bounded in dimensions $3$ and higher). Similarly, the stochastic integrability is certainly not optimal.
\item For the purpose of obtaining a quantitative version of the hydrodynamical limit, any exponent strictly smaller than $1$ is enough (and in particular $7/8$ satisfies this property). We decided to minimise the technical complexity of the proof rather than optimise the right-hand side of~\eqref{eq:sublinearitycorr}. Nevertheless, we believe that the arguments presented below can be improved to obtain sharper estimates (although obtaining the optimal result seems to require a substantial additional amount of work).
\item The dependency of the right-hand side in the slope $p \in \Rd$ is also suboptimal as the estimate should improve as $|p| \to \infty$.
\end{itemize}
\end{remark}

\begin{proof}
In order to prove Proposition~\ref{prop:sublincorr}, it is enough to prove the upper bound on the variance
\begin{equation} \label{eq:11360202}
    \mathrm{Var}_{L , p} \left[ \varphi(0) \right] \leq C L^{7/8}.
\end{equation}
Indeed, by the definition of the periodic measure $\mu_{L , p}$, we have the identity $\E_{L , p} \left[ \varphi(0) \right] = 0$, and the inequality~\eqref{eq:11360202} is in fact an estimate on the $L^2$-norm of the random variable $\varphi(0)$. Combining this observation with the log-concavity of the random variable $\varphi(0)$, we can upgrade the stochastic integrability estimate from $L^2$ to exponential moments (see~\eqref{eq:logconcavityimpliesmoments}).

In order to prove the inequality~\eqref{eq:11360202}, we use the Helffer-Sj\"{o}strand representation formula
\begin{equation} \label{eq:HScorrectorsubadditive}
    \mathrm{Var}_{L , p} \left[ \varphi(0) \right] = \E \left[ \int_0^\infty P_{\a(\cdot ; p)}(t , 0) \, dt \right],
\end{equation}
and estimate the term on the right-hand side. To ease the notation, we fix a slope $p \in \Rd$ and write $\a$ instead of $\a(\cdot ; p)$.

We follow the proof of~\cite[Proof of Theorem 3.2]{MO16}, we introduce the notation
\begin{equation*}
    \mathcal{E}_t := \sum_{x \in \mathbb{T}_L} P_\a (t , x)^2, \hspace{5mm}  \mathcal{D}_t = \sum_{x \in \mathbb{T}_L} \nabla P_\a(t , x) \cdot \a(t , x) \nabla P_\a(t , x),
\end{equation*}
as well as the moderated quantities
\begin{equation*}
    \bar{\mathcal{E}}_t := \int_t^\infty K_{|p|^{r-2}_+(s-t)} \mathcal{E}_s \, ds \hspace{5mm} \mbox{and} \hspace{5mm} \bar{\mathcal{D}}_t := \int_t^\infty K_{|p|^{r-2}_+ (s-t)} \mathcal{D}_s \, ds.
\end{equation*}
We note that the following identities hold
\begin{equation} \label{eq:02021345}
    - \partial_t \mathcal{E}_t = 2\mathcal{D}_t \hspace{5mm} \mbox{and} \hspace{5mm}  - \partial_t \bar{\mathcal{E}}_t = 2\bar{\mathcal{D}}_t.
\end{equation}
In particular, both $\mathcal{E}_t$ and $\bar{\mathcal{E}}_t$ are decreasing, and thus they are always smaller than $1$ (since $\mathcal{E}_0 = 1$ by definition, and $\bar{\mathcal{E}}_0 \leq 1$ by~\eqref{K*KsmallerthanK} and the inequality $|p|_+ \geq 1$). We next split the proof into different steps.

\medskip

\textit{Step 1. Bounding the $L^2$-norm of the gradient of the heat kernel.}

\medskip

We prove the following inequality: there exists a constant $C := C(d) < \infty$ such that
\begin{equation} \label{eq:1114091222}
    \int_0^\infty \sum_{x \in \mathbb{T}_L} \mathbf{m}(t , x ; p) \left| \nabla P_\a(t , x) \right|^2 dt \leq \frac{C}{|p|^{r-2}_+} .
\end{equation}
The proof of the inequality~\eqref{eq:1114091222} is obtained by first noting that, since the energy $\mathcal{E}_t$ is smaller than $1$, we may integrate the first identity of~\eqref{eq:02021345} over the times $t \in (0 , \infty)$ and obtain
\begin{equation*}
     \int_0^\infty \sum_{x \in \mathbb{T}_L} \nabla P_\a(t , x) \cdot \a(t , x) \nabla P_\a(t , x) \, dt \leq \frac{1}{2}.
\end{equation*}
We next upper bound the left-hand side of~\eqref{eq:1114091222} using Proposition~\ref{prop4.3} (and noting that, for any $x \in \mathbb{T}_L$, there are $C := C(d)$ vertices $y$ satisfying $y \sim_2 x$)
\begin{align*}
    \int_0^\infty \sum_{x \in \mathbb{T}_L} \mathbf{m}(t , x ; p) \left| \nabla P_\a(t , x) \right|^2 dt & \leq C \int_0^\infty \int_t^\infty \sum_{x \in \mathbb{T}_L} K_{|p|^{r-2}_+ (s - t)} \nabla u(s , x) \cdot \a(s , x ; p) \nabla u(s , x) \, ds \, dt \\
    & = C \int_0^\infty \int_0^\infty \sum_{x \in \mathbb{T}_L} \indc_{\{ t \leq s \}} K_{|p|^{r-2}_+ (s - t)} \nabla u(s , x) \cdot \a(s , x ; p) \nabla u(s , x) \, dt \, ds.
\end{align*}
We next note that, by~\eqref{K*KsmallerthanK}, for any $s \geq 0$,
\begin{equation*}
    \int_0^\infty \indc_{\{ t \leq s \}} K_{|p|^{r-2}_+ (s - t)} \, dt \leq \int_0^\infty K_{|p|^{r-2}_+  t} \, dt \leq \frac{1}{|p|_+^{r-2}}.
\end{equation*}
Combining the three previous displays, we obtain
\begin{equation*}
    \int_0^\infty \sum_{x \in \mathbb{T}_L} \mathbf{m}(t , x ; p) \left| \nabla P_\a(t , x) \right|^2 dt  \leq \frac{C}{|p|_+^{r-2}} \int_0^\infty \sum_{x \in \mathbb{T}_L} \nabla P_\a(t , x) \cdot \a(t , x) \nabla P_\a(t , x) \, dt  \leq \frac{C}{|p|_+^{r-2}}.
\end{equation*}

\medskip

\textit{Step 2. Bounding the integral of the heat kernel: the small times.}

\medskip

In this step, we prove the following inequality: for any time $T \geq 0$,
\begin{equation} \label{ineq:1diemnsionalline}
    \int_0^T P_\a (t , 0) \leq \frac{C \sqrt{L T}}{|p|_+^{r/2-1}} \sup_{(t , x) \in [0 , T] \times \mathbb{T}_L} \mathbf{m}(t , x ; p)^{-1/2}.
\end{equation}
The proof of the inequality~\eqref{ineq:1diemnsionalline} is based on the following observation: for any vertex $y \in \mathbb{T}_L$, there exists a path connecting $0$ to $y$ whose length is at most $d L$. We select one of these paths according to an arbitrary criterion and denote it by $\mathcal{L}_y$ (it is thus a collection of less than $dL$ adjacent vertices starting at $0$ and ending at $y$). We may then estimate the difference $P_\a (t , 0) - P_\a (t , y)$ by the sum of the norm of the gradient of the heat kernel over the line $\mathcal{L}_y$ as follows
    \begin{equation*}
    \left| P_\a (t , 0) - P_\a (t , y) \right|  \leq \sum_{x \in \mathcal{L}_y} \left| \nabla P_\a (t , x)\right|.
    \end{equation*}
Applying the Cauchy-Schwarz inequality, we further deduce that
\begin{equation*}
\left| P_\a (t , 0) - P_\a (t , y) \right| \leq C \sqrt{ L} \sqrt{  \sum_{x \in \mathcal{L}_y} \left| \nabla P_\a (t , x)\right|^2 } \leq  C \sqrt{ L} \sqrt{ \sum_{x \in \mathbb{T}_L} \left| \nabla P_\a (t , x)\right|^2 }.   
\end{equation*}
We next sum both sides of the inequality over the vertices $y \in \mathbb{T}_L$ (note that the right-hand side does not depend on $y$) and use the identity $\sum_{y \in \mathbb{T}_L}P_\a (t , y) = 0$ to obtain
\begin{align*}
    \left| P_\a (t , 0) \right| = \left|  P_\a (t , 0) - \frac{1}{\left| \mathbb{T}_L \right|} \sum_{y \in \mathbb{T}_L}  P_\a (t , y) \right| & \leq \frac{1}{\left| \mathbb{T}_L \right|} \sum_{y \in \mathbb{T}_L} \left|  P_\a (t , 0) - P_\a (t , y) \right|  \\
    & \leq C \sqrt{ L} \sqrt{ \sum_{x \in \mathbb{T}_L} \left| \nabla P_\a (t , x)\right|^2 }.
\end{align*}
Integrating the previous inequality over the times $t \in (0 , T)$ and applying the Cauchy-Schwarz inequality, we further deduce that
\begin{align*}
    \left( \int_0^T P_\a (t , 0) \, dt \right)^2 & \leq   L T \int_0^T \sum_{x \in \mathbb{T}_L} \left| \nabla P_\a (t , x)\right|^2 \\
                            & \leq C L T \sup_{(t , x) \in [0 , T] \times \mathbb{T}_L} \mathbf{m}(t , x ; p)^{-1}  \int_0^T \sum_{x \in \mathbb{T}_L} \mathbf{m}(t , x ; p) \left| \nabla P_\a(t , x) \right|^2 dt \\
                            & \leq \frac{C L T}{|p|_+^{r-2}}  \sup_{(t , x) \in [0 , T] \times \mathbb{T}_L} \mathbf{m}(t , x ; p)^{-1}.
\end{align*}

\medskip

\textit{Step 3. Bounding the integral of the heat kernel: the large times.}

\medskip

In this step, we introduce the random variable
    \begin{equation} \label{def.defvariableM}
        \mathbf{M} := \sup_{T \geq 1} \left(  \frac{1}{T} \int_0^T  \inf_{\substack{x \in \mathbb{T}_L\\ s \in (t , t + |p|_+^{2-r}) }} \mathbf{m}(s , x ; p) \, dt \right)^{-1},
    \end{equation}
    and prove the following inequality: there exist two constants $C := C(d) < \infty$ and $c := c(d) > 0$ such that, for any $T \geq 2$,
    \begin{equation} \label{eq:ineqPalargetimes}
        \int_T^\infty P_\a(t , 0) \, dt \leq C L^2\mathbf{M} \exp \left( - c \frac{T}{L^2\mathbf{M}} \right).
    \end{equation}
The core of the proof relies on the following differential inequality on the moderated energy: for any $t \geq 0$,
\begin{equation} \label{eq:diffineqlargetimes}
    -  \partial_t \bar{\mathcal{E}}_t \geq  \frac{c |p|^{r-2}_+}{L^2} \left( \inf_{\substack{x \in \mathbb{T}_L\\ s \in (t , t + |p|_+^{2-r}) }} \mathbf{m}(s , x ; p) \right)  \bar{\mathcal{E}}_t.
\end{equation}

We then decompose this step into two substeps.

\medskip
\textit{Substep 3.1. Proof of the inequality~\eqref{eq:diffineqlargetimes}.}

\medskip

We start with the following computation which relies on Proposition~\ref{prop4.3} and the first inequality of~\eqref{K*KsmallerthanK} (scaled in $|p|_+^{r-2}$): for any $t \geq 0$,
\begin{align*}
    \lefteqn{\int_{t}^\infty K_{|p|_+^{r-2}(s-t)}\sum_{x \in \mathbb{T}_L}  \mathbf{m}(s , x ; p) \left| \nabla P_\a(s , x) \right|^2 \, ds} \qquad & \\
    & \leq C \int_{t}^\infty K_{|p|_+^{r-2}(s-t)} \int_s^{\infty} K_{|p|_+^{r-2}(s'-s)} \sum_{x \in \mathbb{T}_L} \nabla P_\a(s' ,x) \cdot \a(s' , x) \nabla P_\a(s' ,x) \, ds' ds \\
    & \leq \frac{C}{|p|^{r-2}_+}  \int_{t}^\infty K_{|p|^{r-2}_+(s-t)} \sum_{x \in \mathbb{T}_L}  \nabla P_\a(s , x) \cdot \a(s , x) \nabla P_\a(s , x) \, ds \\
    & \leq \frac{C}{|p|^{r-2}_+} \bar{\mathcal{D}}_t .
\end{align*}
Combining the previous inequality with the identity~\eqref{eq:02021345}, we obtain the differential inequality
\begin{equation*}
   -  \partial_t \bar{\mathcal{E}}_t  \geq c|p|^{r-2}_+ \int_{t}^\infty K_{|p|_+^{r-2}(s-t)}\sum_{x \in \mathbb{T}_L}  \mathbf{m}(s , x ; p) \left| \nabla P_\a(s , x) \right|^2 \, ds.
\end{equation*}
We then lower bound the right-hand side by applying the Poincar\'e inequality (on the torus $\mathbb{T}_L$ using the assumption $\sum_{x \in \mathbb{T}_L} P_\a(s , x) = 0$). We obtain
\begin{align*}
        -  \partial_t \bar{\mathcal{E}}_t & \geq c|p|^{r-2}_+  \int_{t}^\infty K_{|p|_+^{r-2}(s-t)}  \left( \inf_{x \in \mathbb{T}_L} \mathbf{m}(s , x ; p) \right) \sum_{x \in \mathbb{T}_L} \left| \nabla P_\a(s , x) \right|^2 \, ds \\
        & \geq \frac{c|p|^{r-2}_+}{L^2}  \int_{t}^\infty K_{|p|_+^{r-2} (s-t)} \left( \inf_{x \in \mathbb{T}_L} \mathbf{m}(s, x ; p) \right) \sum_{x \in \mathbb{T}_L} \left| P_\a(s , x) \right|^2 \, ds.
\end{align*}
We next lower bound the integral on the right-hand side by reducing the interval of integration from $(t , \infty)$ to $(t , t + 1/|p|^{r-2}_+)$
\begin{align} \label{eq:10211903}
    -  \partial_t \bar{\mathcal{E}}_t & \geq \frac{c|p|^{2(r-2)}_+}{L^2}  \int_{t}^{t + |p|_+^{2-r}} K_{|p|_+^{r-2}(s-t)}   \left( \inf_{x \in \mathbb{T}_L} \mathbf{m}(s , x ; p) \right) \sum_{x \in \mathbb{T}_L} \left| P_\a(s , x) \right|^2 \, ds \\
    & \geq \frac{c|p|^{2(r-2)}_+}{L^2} \left( \inf_{\substack{x \in \mathbb{T}_L\\ s \in (t , t + |p|_+^{2-r}) }} \mathbf{m}(s , x ; p) \right) \int_{t}^{t + |p|_+^{2-r}} K_{|p|_+^{r-2}(s-t)}  \sum_{x \in \mathbb{T}_L} \left| P_\a(s , x) \right|^2 \, ds. \notag
\end{align}
We finally lower bound the term on the right hand side by using the two following observations:
\begin{equation*}
     \int_{t}^{t + |p|_+^{2-r}} K_{|p|_+^{r-2}(s-t)} \, ds \geq c \int_{t + |p|_+^{2-r}}^{\infty} K_{|p|_+^{r-2}(s-t)} \, ds ~~\mbox{and} ~~ s \mapsto \sum_{x \in \mathbb{T}_L} \left| P_\a(s , x) \right|^2 ~\mbox{is decreasing.}
\end{equation*}
We obtain
\begin{align} \label{eq:10221903}
    \int_{t}^{t + |p|_+^{2-r}} K_{|p|_+^{r-2}(s-t)}  \sum_{x \in \mathbb{T}_L} \left| P_\a(s , x) \right|^2 \, ds & \geq \sum_{x \in \mathbb{T}_L} \left| P_\a( t + |p|_+^{2-r} , x) \right|^2  \int_{t}^{t + |p|_+^{2-r}} K_{|p|_+^{r-2}(s-t)}  \, ds \\
    & \geq c \sum_{x \in \mathbb{T}_L} \left| P_\a( t + |p|_+^{2-r} , x) \right|^2 \int_{t + |p|_+^{2-r}}^{\infty} K_{|p|_+^{r-2}(s-t)}  \, ds \notag \\
    & \geq c \int_{t + |p|_+^{2-r}}^{\infty} K_{|p|_+^{r-2}(s-t)} \sum_{x \in \mathbb{T}_L} \left| P_\a( s , x) \right|^2 \, ds. \notag
\end{align}
Combining the inequalities~\eqref{eq:10211903} and~\eqref{eq:10221903}, we obtain the lower bound
\begin{align*}
-  \partial_t \bar{\mathcal{E}}_t & \geq \frac{c|p|^{2(r-2)}_+}{L^2} \left( \inf_{\substack{x \in \mathbb{T}_L\\ s \in (t , t + |p|_+^{2-r}) }} \mathbf{m}(s , x ; p) \right) \int_{t}^{\infty} K_{|p|_+^{r-2}(s-t)}  \sum_{x \in \mathbb{T}_L} \left| P_\a(s , x) \right|^2 \, ds \\
                                & = \frac{c|p|^{2(r-2)}_+}{L^2} \left( \inf_{\substack{x \in \mathbb{T}_L\\ s \in (t , t + |p|_+^{2-r}) }} \mathbf{m}(s , x ; p) \right) \bar{\mathcal{E}}_t,
\end{align*}
which is exactly~\eqref{eq:diffineqlargetimes}.

\medskip

\textit{Substep 3.2. Deducing~\eqref{eq:ineqPalargetimes} from~\eqref{eq:diffineqlargetimes}.}

\medskip

By integrating~\eqref{eq:diffineqlargetimes} and using that the initial value $ \bar{\mathcal{E}}_0$ is smaller than $1$, we obtain the inequality
\begin{equation*}
    \bar{\mathcal{E}}_t \leq \exp \left( - \frac{c |p|^{r-2}_+ \int_0^t  \inf_{x \in \mathbb{T}_L,  s \in (t , t + |p|_+^{2-r})} \mathbf{m}(s, x ; p) \, ds}{L^2}\right)  \leq  \exp \left( - \frac{c |p|^{r-2}_+ t}{L^2 \mathbf{M}}\right).
\end{equation*}
We next fix a time $T \geq 1$ and estimate the following quantity (using the previous inequality)
\begin{equation*}
    \int_{T}^\infty \bar{\mathcal{E}}_t \exp \left( \frac{c |p|^{r-2}_+ t}{2 L^2 \mathbf{M}}\right) \, dt \leq \int_T^\infty \exp \left( - \frac{c |p|^{r-2}_+  t}{2\mathbf{M} L^2}\right) \, dt \leq \frac{C L^2 \mathbf{M}}{ |p|^{r-2}_+} \exp \left( - \frac{  c |p|^{r-2}_+ T}{2\mathbf{M} L^2}\right) .
\end{equation*}
We then simplify the term on the left-hand side by writing
\begin{align*}
    \int_T^\infty \bar{\mathcal{E}}_t \exp \left( \frac{c |p|^{r-2}_+ t}{2 L^2 \mathbf{M}}\right) \, dt & = \int_T^\infty  \int_T^\infty \indc_{\{t \leq s\}} K_{|p|^{r-2}_+(s-t)} \mathcal{E}_s \exp \left(  \frac{c |p|^{r-2}_+ t}{2 L^2 \mathbf{M}}\right) \, ds\, dt.
\end{align*}
Using Fubini's theorem and the inequality, for any $s \geq T+1/|p|^{r-2}_+$,
\begin{equation*}
     \exp \left( \frac{c |p|^{r-2}_+ s}{2 L^2 \mathbf{M}}\right)  \leq C |p|^{r-2}_+\int_T^\infty \indc_{\{t \leq s\}} K_{|p|^{r-2}_+(s-t)} \exp \left( \frac{c |p|^{r-2}_+ t}{2 L^2 \mathbf{M}}\right) \, dt,
\end{equation*}
we obtain that
\begin{align*}
     \int_{T+1/|p|^{r-2}_+}^\infty  \mathcal{E}_s \exp \left( \frac{c  |p|^{r-2}_+ s}{2 L^2 \mathbf{M}}\right) \, ds \leq C |p|^{r-2}_+ \int_T^\infty \bar{\mathcal{E}}_t \exp \left( \frac{c  |p|^{r-2}_+ t}{2 L^2 \mathbf{M}}\right) \, dt & 
     \leq C L^2 \mathbf{M} \exp \left( - \frac{c  |p|^{r-2}_+ T}{2\mathbf{M} L^2}\right) \\ &
     \leq C L^2 \mathbf{M} \exp \left( - \frac{c T}{2\mathbf{M} L^2}\right) .
\end{align*}
Using the (trivial) inequality $P_\a(t , 0 )^2 \leq \mathcal{E}_t$ and the Cauchy-Schwarz inequality, we obtain, for any $T \geq 1$,
\begin{align*}
    \int_{T+1/|p|^{r-2}_+}^\infty P_{\a}(t , 0) \, dt & \leq \sqrt{ \int_T^\infty \mathcal{E}_t \exp \left( \frac{c t}{2 L^2 \mathbf{M}}\right)  \, dt  \int_T^\infty \exp \left( - \frac{c t}{2 L^2 \mathbf{M}}\right) \, dt} \\
    & \leq C L^2 \mathbf{M} \exp \left( - \frac{c T}{2\mathbf{M} L^2}\right).
\end{align*}
Noting that $1/|p|_+^{r-2} \leq 1$, we obtain, for any $T \geq 2$ (by reducing the value of the constant $c$)
\begin{equation*}
    \int_{T}^\infty P_{\a}(t , 0) \, dt \leq C L^2 \mathbf{M} \exp \left( -  \frac{c T}{2\mathbf{M} L^2}\right).
\end{equation*}

\medskip

\textit{Step 4. The conclusion.} By the identity~\eqref{eq:HScorrectorsubadditive} we may write, for any $T \geq 2$,
\begin{align} \label{eq:131633}
    \mathrm{Var}_{L , p} \left[ \varphi(0) \right] & = \E \left[ \int_0^\infty P_{\a}(t , 0) \, dt \right] \\
                    & = \E \left[ \int_0^T P_{\a}(t , 0) \, dt +  \int_T^\infty P_{\a}(t , 0) \, dt \right]   \notag \\
                    & \leq C \sqrt{\frac{LT}{|p|_+^{r-2}}} \E \left[ \sup_{(t , x) \in [0 , T] \times \mathbb{T}_L} \mathbf{m}(t , x ; p)^{-1} \right] + C \E \left[ L^2 \mathbf{M} \exp \left( - \frac{c T}{L^2 \mathbf{M}}  \right) \right]. \notag
\end{align}
We then set $T := L^{9/4}$ (for the argument below, the important property of the exponent $9/4$ is that it is slightly larger than $2$) and estimate the two terms on the right-hand side of~\eqref{eq:131633}.

\medskip

\textit{Substep 4.1. Estimating the first term on the right-hand side of~\eqref{eq:131633}.} We first write
\begin{equation*}
    \frac{\sqrt{LT}}{|p|_+^{r/2 - 1}} \E \left[ \sup_{(t , x) \in [0 , T] \times \mathbb{T}_L} \mathbf{m}(t , x ; p)^{-1} \right]  \leq \frac{L^{13/8}}{|p|_+^{r/2 - 1}}  \E \left[ \sup_{(t , x) \in [0 , T] \times \mathbb{T}_L} \mathbf{m}(t , x ; p)^{-1} \right].
\end{equation*}
We then estimate the expectation of the supremum of the inverse of the moderated environment on the right-hand side. To this end, we note that , by the inequality~\eqref{ineq:infmoderated} (of Proposition~\ref{prop:stochintmoderated}), we have the inequality, for any $(t , x) \in \R \times \mathbb{T}_L$,
\begin{equation*}
     \sup_{\substack{s \in (t , t + 1/|p|_+^{r-2}) }} \mathbf{m}(s , x ; p)^{-1} \leq C \mathbf{m}(t , x ; p)^{-1} \leq \mathcal{O}_{\Psi, c} (C).
\end{equation*}
Using Proposition~\ref{prop:prop2.20} ``Maximum" with $N := |p|_+^{r-2} T \left| \mathbb{T}_L \right| = |p|_+^{r-2} (2L+1)^d L^{9/4} $ variables, we obtain
\begin{equation*}
    \sup_{(t , x) \in [0 , T] \times \mathbb{T}_L} \mathbf{m}(t , x ; p)^{-1} \leq \mathcal{O}_{\Psi , c} \left( C e^{C \left| \ln \left( |p|_+^{r-2} L^{d + 9/4} \right) \right|^{(r-2)/r}  } \right).
\end{equation*}
The previous inequality implies that
\begin{align} \label{eq:18352406}
    \frac{\sqrt{LT}}{|p|_+^{r/2 - 1}} \E \left[ \sup_{(t , x) \in [0 , T] \times \mathbb{T}_L} \mathbf{m}(t , x ; p)^{-1} \right] & \leq \frac{C L^{13/8}}{|p|_+^{r/2 - 1}} e^{C \left| \ln \left( |p|_+^{r-2} L^{d + 9/4} \right) \right|^{(r-2)/r}} \\
    & = C L^{7/4} \times \frac{e^{C \left| \ln \left( |p|_+^{r-2} L^{d + 9/4} \right) \right|^{(r-2)/r}}}{L^{1/8} |p|_+^{r/2 - 1}} \notag
    \\ 
    & \leq C L^{7/4}, \notag
\end{align}
where, in the third inequality we used that the term involving the exponential grows subpolynomially fast (so that the second term on the right-hand side is bounded). 

\medskip

\textit{Substep 4.1. Estimating the second term on the right-hand side of~\eqref{eq:131633}.}
For the second term on the right-hand side of~\eqref{eq:131633}, we first use the inequality $\exp(- t) \leq C/t^2$ and obtain
\begin{equation} \label{eq:18042406}
    \E \left[ L^2 \mathbf{M} \exp \left( -  \frac{c T}{L^2 \mathbf{M}}   \right) \right] \leq \frac{C L^6}{T^2} \E \left[ \mathbf{M}^3 \right] \leq C L^{3/2} \E \left[ \mathbf{M}^3 \right].
\end{equation}
There remains to estimate the expectation of the random variable $\mathbf{M}.$ We first recall its definition~\eqref{def.defvariableM} and use the convexity of the function $x \mapsto 1/x$ over the positive real numbers to write
\begin{equation*}
     \mathbf{M} := \sup_{T \geq 1} \left(  \frac{1}{T} \int_0^T  \inf_{\substack{x \in \mathbb{T}_L\\ s \in (t , t + |p|_+^{2-r}) }} \mathbf{m}(s , x ; p) \, dt \right)^{-1} \leq \sup_{T \geq 1}  \frac{1}{T} \int_0^T  \sup_{\substack{x \in \mathbb{T}_L\\ s \in (t , t + |p|_+^{2-r}) }} \mathbf{m}(s , x ; p)^{-1} \, dt.
\end{equation*}
Using Proposition~\ref{prop:prop2.20} ``Maximum" with $N := \left| \mathbb{T}_L \right|= (2L+1)^d$ random variables, we deduce that, for any $t \in \mathbb{R},$
\begin{equation*}
    \sup_{\substack{x \in \mathbb{T}_L\\ s \in (t , t + |p|_+^{2-r}) }} \mathbf{m}(s , x ; p)^{-1} \leq \mathcal{O}_{\Psi , c} \left( C e^{C \left( \ln L \right)^{(r-2) / r} } \right).
\end{equation*}
Using Proposition~\ref{propmaximalineq}, we thus have
\begin{equation*}
     \E \left[ \mathbf{M}^3 \right] \leq C \E \left[ \sup_{\substack{x \in \mathbb{T}_L\\ s \in (0 , |p|_+^{2-r}) }} \mathbf{m}(s , x ; p)^{-3} \right] \leq C e^{C \left( \ln L \right)^{(r-2) / r} } .
\end{equation*}
Combining the previous inequality (and noting that the term on the right-hand side grows subpolynomially fast) with~\eqref{eq:18042406}, we obtain
\begin{equation*}
    \E \left[ L^2 \mathbf{M} \exp \left( -  \frac{c T}{L^2 \mathbf{M}}   \right) \right] \leq C L^{3/2}  e^{C \left( \ln L \right)^{(r-2) / r} } \leq C L^{7/4}  \times \left( L^{-1/4} e^{C \left( \ln L \right)^{(r-2) / r} } \right) \leq C L^{7/4}.
\end{equation*}
Combining the previous inequality with~\eqref{eq:131633} and~\eqref{eq:18352406} completes the proof of Proposition~\ref{prop:sublincorr}.
\end{proof}

\section{Strict convexity of the surface tension}

This section is devoted to the study of the surface tension of the model. In particular, we show quantitatively the convergence of the finite-volume surface tension and establish its strict convexity (as stated in~\eqref{eq:conv.barsigma} of Theorem~\ref{main.thm}). We first recall the definition of the finite-volume surface tension (see Definition~\ref{def.surfacetension}): for any integer $L \in \N$ and any slope $p \in \Rd$
\begin{equation} \label{def.fintevolumesurfacetension}
    \bar \sigma_L(p) := \frac{1}{\left| \mathbb{T}_L \right|} \ln \left( \frac{Z_{L , p}}{Z_{L , 0}} \right) ~~\mbox{with}~~ Z_{L , p} := \int_{\Omega^\circ_L} \exp \left( - \sum_{x \in \mathbb{T}_L} V(p + \nabla \varphi(x)) \right) \, d\varphi.
\end{equation}
Let us remark that the finite-volume surface tension is twice-continuously differentiable (although its regularity can degenerate as $L$ tends to infinity). The following statement shows that it converges in the space $C^{1}_{\mathrm{loc}}(\Rd)$ to the infinite-volume surface tension.

\begin{proposition}[Quantitative convergence of the finite-volume surface tension] \label{prop:quantconvsurftens}
There exists a continuously differentiable convex function $\bar \sigma : \Rd \to \R$ and a constant $C := C(d , V) < \infty$ such that, for any slope $p \in \Rd$ and any $L \in \N$,
\begin{equation} \label{eq:quantconvsurftens}
    \left|  \bar \sigma_{L}(p) - \bar \sigma(p) \right| \leq \frac{C |p|^{r-1}_+ }{L^{1/8}} ~~\mbox{and}~~ \left| D_p  \bar \sigma_{L}(p) - D_p \bar \sigma(p) \right| \leq \frac{C |p|^{r-2}_+ }{L^{1/8}}.
\end{equation}
\end{proposition}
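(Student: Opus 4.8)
The plan is to reduce the statement to a quantitative bound on the gradient $D_p\bar\sigma_L$ and then to build $\bar\sigma$ as the limit of the functions $\bar\sigma_L$. Since $\bar\sigma_L(0)=0$ for every $L$, the fundamental theorem of calculus $\bar\sigma_L(p)=\int_0^1 p\cdot D_p\bar\sigma_L(tp)\,dt$ shows that the first inequality in \eqref{eq:quantconvsurftens} follows from the second one once $\bar\sigma$ is defined (the extra factor $|p|$ is absorbed in $|p|_+^{r-1}=|p|_+\cdot|p|_+^{r-2}$). So it suffices to prove that $(D_p\bar\sigma_L(p))_{L\in\N}$ is a Cauchy sequence with the rate $C|p|_+^{r-2}L^{-1/8}$, and to set $D_p\bar\sigma(p):=\lim_{L\to\infty}D_p\bar\sigma_L(p)$. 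The entry point is the identity $D_p\bar\sigma_L(p)=\E_{L,p}[D_pV(p+\nabla\varphi(x))]$ recorded after Definition~\ref{def.surfacetension}: by the translation invariance of $\mu_{L,p}$ and the stationarity of the Langevin dynamic (Proposition~\ref{prop:propsLangevin}) it equals the expected space–time average of the flux $D_pV(p+\nabla\varphi_L(\cdot,\cdot;p))$, which is precisely the averaged flux of the finite-volume corrector controlled by Proposition~\ref{prop:sublincorr}.

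The main step is a comparison of two scales. Given $L\le L'$ chosen so that $2L'+1$ is a multiple of $2L+1$ — so that $\mathbb{T}_L$ is a quotient of $\mathbb{T}_{L'}$, configurations on $\mathbb{T}_L$ embed into those on $\mathbb{T}_{L'}$, and the two dynamics can be compared — I would run a two-scale expansion: localize the dynamic $\varphi_{L'}(\cdot,\cdot;p)$ on $\R\times\mathbb{T}_L$ to produce an approximate stationary solution of the slope-$p$ equation whose defect is supported near the periodization scale, and compare its averaged flux with that of $\varphi_L(\cdot,\cdot;p)$. Testing the difference of the two equations against the difference of the two solutions and integrating by parts (a parabolic Caccioppoli estimate, Proposition~\ref{prop:paraboliccacciop}) bounds the discrepancy of the flux averages by $L^{-1}$ times energy quantities of the correctors; inserting the pointwise sublinearity bound $|\varphi_L(t,x;p)|\le\mathcal O_1(CL^{7/8})$ of Proposition~\ref{prop:sublincorr}, the upper eigenvalue bound $\Lambda_+\lesssim|p|_+^{r-2}$ from Lemma~\ref{lemma.upperandlowerboundLambda}, and the $p$-uniform moment estimates on $\nabla\varphi$ of Proposition~\ref{prop.prop2.3} then yields $|D_p\bar\sigma_L(p)-D_p\bar\sigma_{L'}(p)|\le C|p|_+^{r-2}L^{-1/8}$, uniformly in $L'$. (A convenient shortcut for one of the two inequalities is the direct bound $\bar\sigma_{L'}(p)\le\bar\sigma_L(p)+o(1)$ obtained by restricting the integral defining $Z_{L',p}$ to $\mathbb{T}_L$-periodic configurations; only the reverse inequality genuinely needs the corrector comparison.) Inserting an intermediate commensurable scale handles arbitrary pairs $L_1,L_2$, so the sequence is Cauchy; passing $L'\to\infty$ through commensurable scales also identifies the limit with the surface tension of \eqref{def.surfacetensionintro}, and gives \eqref{eq:quantconvsurftens} for every $L\in\N$.

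Finally, the limiting function inherits the required regularity: each $\bar\sigma_L$ is convex (a standard consequence of the log-concavity of $\mu_{L,p}$, e.g. via a Helffer–Sjöstrand/Brascamp–Lieb representation of its Hessian as a nonnegative quadratic form), and \eqref{eq:quantconvsurftens} furnishes locally uniform convergence of both $\bar\sigma_L$ and $D_p\bar\sigma_L$, so $\bar\sigma$ is convex and continuously differentiable with $D_p\bar\sigma=\lim_L D_p\bar\sigma_L$. \textbf{The main obstacle} is the two-scale comparison in the degenerate setting with constants uniform in the slope $p$: without uniform ellipticity the energy and Caccioppoli estimates must be routed through the moderated environment $\mathbf{m}(\cdot,\cdot;p)$ and the $p$-uniform stochastic integrability of $\nabla\varphi$ and of $\mathbf{m}$ established in Section~\ref{sec:section3moderated}, and it is exactly this detour — together with the suboptimal exponent $7/8$ in Proposition~\ref{prop:sublincorr} — that produces the rate $L^{-1/8}$ rather than the $L^{-1}$ one would expect in the uniformly convex case.
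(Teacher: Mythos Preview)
Your outline is on the right track—reduce to a Cauchy estimate on $D_p\bar\sigma_L$, identify $D_p\bar\sigma_L$ as an expected flux, compare dynamics at two scales via a Caccioppoli-type estimate, and insert the sublinearity from Proposition~\ref{prop:sublincorr}—but you overcomplicate two points relative to the paper.

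First, no two-scale expansion, localization, or commensurability condition is needed. The paper simply couples $\varphi_L(\cdot,\cdot;p)$ and $\varphi_{2L}(\cdot,\cdot;p)$ by driving them with the \emph{same} Brownian motions (identifying $\Lambda_L$ with a subset of $\Lambda_{2L}$). The noise cancels exactly and the difference $v_{L,p}:=\varphi_L-\varphi_{2L}$ solves the deterministic parabolic equation $\partial_t v_{L,p}-\nabla\cdot\a_L\nabla v_{L,p}=0$ in $\R\times\Lambda_L$, with $\a_L:=\int_0^1 D_p^2V(p+s\nabla\varphi_L+(1-s)\nabla\varphi_{2L})\,ds$. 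Writing the flux difference as $\E[\a_L\nabla v_{L,p}]$, Cauchy--Schwarz plus the Caccioppoli inequality and the space--time stationarity of both dynamics give
\[
\bigl|D_p\bar\sigma_L(p)-D_p\bar\sigma_{2L}(p)\bigr|^2\le \E[\Lambda_{+,L}]\cdot \frac{C}{L^2}\,\E\bigl[\Lambda_{+,L}\,|v_{L,p}|^2\bigr],
\]
after which Proposition~\ref{prop.prop2.3} (for $\Lambda_{+,L}$) and Proposition~\ref{prop:sublincorr} (for $|v_{L,p}|\le\mathcal O_1(CL^{7/8})$) close the argument. Doubling suffices; the Cauchy sequence along $n\mapsto 2^nL$ then defines $\bar\sigma$.

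Second, your diagnosis of the ``main obstacle'' is off: this step does \emph{not} require the moderated environment. Proposition~\ref{prop:paraboliccacciop} is valid for an arbitrary nonnegative $\a$, and the quantity it delivers on the right-hand side involves $\Lambda_{+,L}$, whose moments are handled directly by Proposition~\ref{prop.prop2.3}. The moderation machinery was already spent in Section~\ref{sec:section5sublinearity} to obtain the sublinearity of the corrector; here one simply invokes that result as a black box. Your detour through $\mathbf m$ would not be wrong, just unnecessary.
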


\begin{remark}
    Let us make two remarks about the previous proposition:
    \begin{itemize}
    \item It follows from the definition~\eqref{def.fintevolumesurfacetension} that $\bar \sigma_L(0) = 0$ for any $L \in \N$. From this observation, we deduce that that second inequality of~\eqref{eq:quantconvsurftens} implies the first one.
    \item The rate of convergence is once again not optimal, but we remark that Proposition~\ref{prop:quantconvsurftens} uses Proposition~\ref{prop:sublincorr} as an input, and that any improvement on the right-hand side of~\eqref{eq:sublinearitycorr} would yield an improved rate of convergence on the right-hand sides of~\eqref{eq:quantconvsurftens}.
    \end{itemize}
\end{remark}

The next proposition establishes the strict convexity of the finite-volume surface tension. The upper and lower bounds on the second derivative of the surface tension are uniform in the parameter $L$, and thus implies the strict convexity of the infinite-volume surface tension (stated in~\eqref{eq:conv.barsigma} of Theorem~\ref{main.thm} and in Proposition~\ref{prop:strictconvsurftens} below).

\begin{proposition}[Strict convexity of the finite-volume surface tensions] \label{prop:strictconvfinitevol}
There exist two constants  $\lambda_- := \lambda_- (d , V) > 0$ and $\lambda_+ := \lambda_+(d , V) < \infty$ such that, for any slope $p \in \Rd$, any $L \in \N$ and any $\lambda \in \Rd$,
\begin{equation} \label{eq:strictconvsurfens}
    \lambda_- |p|_+^{r-2} I_d \leq D^2_p \sigma_L (p) \leq \lambda_+ |p|_+^{r-2} I_d.
\end{equation}
\end{proposition}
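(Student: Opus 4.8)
The plan is to differentiate the finite-volume surface tension twice in the slope and identify the Hessian with the expectation of a homogenized (effective) matrix associated to the differentiated corrector equation, then bound this matrix from above and below using the estimates on $\Lambda_\pm$ from Lemma~\ref{lemma.upperandlowerboundLambda} together with the stochastic integrability estimate of Proposition~\ref{prop.prop2.3}. Concretely, I would first recall the first-derivative identity $D_p\bar\sigma_L(p)=\E_{L,p}[D_pV(p+\nabla\varphi(x))]$ and differentiate once more in $p$ in the direction $\lambda\in\Rd$; using that $\varphi(0,\cdot;p)$ is distributed as $\mu_{L,p}$ and that $\partial_p\varphi$ in the direction $\lambda$ is the stationary solution $w_{L,p,\lambda}$ of the differentiated parabolic equation~\eqref{eqstatpardifferentiated} (Proposition~\ref{prop:propsLangevin}), one obtains a representation of the form
\begin{equation*}
    \lambda\cdot D_p^2\bar\sigma_L(p)\,\lambda = \E_{L,p}\Big[\,\tfrac{1}{|\mathbb{T}_L|}\sum_{y\in\mathbb{T}_L}(\lambda+\nabla w_{L,p,\lambda}(y))\cdot \a(y;p)(\lambda+\nabla w_{L,p,\lambda}(y))\,\Big],
\end{equation*}
i.e.\ the Hessian is the effective (homogenized) coefficient obtained by minimizing the Dirichlet energy of $\a(\cdot;p)$ over gradient perturbations of the affine field $\lambda\cdot x$. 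The cleanest way to get this is via the Helffer--Sjöstrand / integration-by-parts computation already used implicitly in the excerpt (the surface tension is a log-partition function, so its Hessian is a covariance, which after an integration by parts becomes the displayed minimized energy); alternatively one can differentiate the SDE directly and use stationarity.

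From this variational representation the upper bound $D_p^2\bar\sigma_L(p)\le \lambda_+|p|_+^{r-2}I_d$ is immediate by taking the trivial test field (no perturbation) in the minimization and using $\a(y;p)=D_p^2V(p+\nabla\varphi(y))\le \Lambda_+(p+\nabla\varphi(y))I_d\le C|p+\nabla\varphi(y)|_+^{r-2}I_d$ from Lemma~\ref{lemma.upperandlowerboundLambda}; then one takes expectations and controls $\E_{L,p}[|\nabla\varphi(y)|^{r-2}]$ (hence $\E_{L,p}[|p+\nabla\varphi(y)|_+^{r-2}]\le C|p|_+^{r-2}$) via the super-Gaussian tail of Proposition~\ref{prop.prop2.3} (this is where the factor $|p|_+^{r-2}$, rather than something worse, comes out — the slope dominates, and the fluctuation moments are $O(1)$). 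For the lower bound one uses that the effective matrix dominates the harmonic mean / is bounded below by a dual variational principle: $\lambda\cdot D_p^2\bar\sigma_L(p)\lambda\ge |\lambda|^2\big(\E_{L,p}[\a(0;p)^{-1}]\big)^{-1}$ is too lossy in general, so instead I would use $\a(y;p)\ge \Lambda_-(p+\nabla\varphi(y);\,\cdot\,)I_d$ and the refined lower bound on $\Lambda_-$ from Lemma~\ref{lemma.upperandlowerboundLambda}: on the event $\{|\nabla\varphi(y)|\le |p|_+/2\}$ one has $\Lambda_-\ge c|p|_+^{r-2}$, and this event has overwhelming probability by Proposition~\ref{prop.prop2.3}; combining with a Jensen/Cauchy--Schwarz argument on the minimized energy (the minimizer $w$ has zero mean gradient, so $\E[\nabla w]=0$ and the energy of $\lambda+\nabla w$ is at least $c|p|_+^{r-2}|\lambda|^2$ minus a small error from the bad event, which is absorbed since $r>2$ makes the tail beat the polynomial loss) yields $D_p^2\bar\sigma_L(p)\ge \lambda_-|p|_+^{r-2}I_d$.

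The main obstacle is the lower bound, and specifically making rigorous the step ``the effective matrix is bounded below by $c|p|_+^{r-2}$ despite $\a$ being degenerate on a small-probability set.'' The subtlety is that a minimized quadratic energy can be made small by a corrector that concentrates its gradient precisely where $\a$ is small; one cannot simply pull the infimum of $\Lambda_-$ out of the expectation. The way around this is to first show that on the high-probability event where $\Lambda_-(p+\nabla\varphi(y))\ge c|p|_+^{r-2}$ for all (or most) $y$, the energy is clearly $\ge c|p|_+^{r-2}|\lambda|^2$, and then argue that the contribution of configurations with many degenerate sites is negligible because Proposition~\ref{prop.prop2.3} gives an $e^{-c|p|^r}$ tail — super-Gaussian — which beats the at-most-polynomial-in-$|p|$ prefactors coming from $\Lambda_+\le C|p|_+^{r-2}$ and the volume. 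I would also need to confirm that $w_{L,p,\lambda}$ has enough integrability for these manipulations (finite second moments, guaranteed by Proposition~\ref{prop:propsLangevin}), and that the whole argument is uniform in $L$, which it is since every bound above is $L$-independent. Finally, passing to the infinite-volume statement~\eqref{eq:conv.barsigma} of Theorem~\ref{main.thm} is then routine: Proposition~\ref{prop:quantconvsurftens} gives $C^1_{\mathrm{loc}}$ convergence of $\bar\sigma_L$, and a uniform two-sided Hessian bound is stable under such convergence, giving $\bar\sigma\in C^{1,1}$ with the claimed almost-everywhere Hessian bounds (with $\lambda_\pm$ absorbing the harmless $+1$ in $|p|_+^{r-2}$ versus $|x|^{r-1}+1$, up to adjusting constants).
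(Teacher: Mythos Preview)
Your variational identity for the Hessian is correct and matches the paper's Proposition~\ref{prop:identsecondder}, and your argument for the \emph{upper} bound is valid---indeed it is more direct than what the paper does (the paper runs a Cauchy--Schwarz with the moderated environment for the upper bound as well, whereas your ``take the trivial competitor $v=0$'' together with $\E[\Lambda_+(t,x;p)]\le C|p|_+^{r-2}$ from Lemma~\ref{lemma.upperandlowerboundLambda} and Proposition~\ref{prop.prop2.3} closes immediately).

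The gap is in the \emph{lower} bound. Two issues. First, the claimed implication ``on $\{|\nabla\varphi(y)|\le |p|_+/2\}$ one has $\Lambda_-\ge c|p|_+^{r-2}$'' is false for small $|p|$: Lemma~\ref{lemma.upperandlowerboundLambda} only gives $\Lambda_-(q)\ge c|q|_+^{r-2}$ when $|q|\ge R_1/2$, and for $|p|\le R_1$ your event does not force $|p+\nabla\varphi|\ge R_1/2$. For bounded slopes the degenerate region $\{\Lambda_-\approx 0\}$ has probability of order one, not $e^{-c|p|^r}$, so there is no tail to beat. Second, and more fundamentally, even granting a correct good event $G$ with $\P(G^c)$ small, your Jensen/Cauchy--Schwarz step needs to control $\E[\mathbf 1_{G^c}|\nabla w_{L,p,\lambda}|]$ to recover $|\lambda|$ from $\E[\lambda+\nabla w]=\lambda$; but the only a~priori bound available is on the \emph{energy} $\E[\nabla w\cdot\a\nabla w]$, and since $\a$ is degenerate exactly on $G^c$ this gives no control of $|\nabla w|$ there. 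The minimizer could in principle concentrate all its gradient on the degenerate set.

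The paper resolves this with the moderated environment of Section~\ref{sec:section3moderated}. The key input is Proposition~\ref{prop4.3}, which converts energy control into a weighted $L^2$ bound: $\E[\mathbf m(t,x;p)\,|\lambda+\nabla w|^2]\le C|p|_+^{2-r}(\lambda,D_p^2\sigma_L(p)\lambda)$. Because $\mathbf m$ has all inverse moments (Proposition~\ref{prop:stochintmoderated}), one can then run Cauchy--Schwarz as
\[
|\lambda|^2=\big|\E[\lambda+\nabla w]\big|^2\le \E[\mathbf m^{-1}]\,\E[\mathbf m|\lambda+\nabla w|^2]\le \frac{C}{|p|_+^{r-2}}(\lambda,D_p^2\sigma_L(p)\lambda),
\]
which is exactly the lower bound. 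The point is that $\mathbf m$ is a \emph{dynamic} weight built from time averages of $\Lambda_-$ along the Langevin trajectory; the fluctuation estimate of Proposition~\ref{prop3.4} guarantees that the dynamics cannot linger in the degenerate region, which is why $\E[\mathbf m^{-1}]<\infty$ even though $\E[\Lambda_-^{-1}]$ may be infinite. Your static good-set decomposition does not capture this mechanism.
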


As mentioned above, taking the limit $L \to \infty$, yields a similar result for the infinite-volume surface tension, with one important difference compared to the Proposition~\ref{prop:strictconvfinitevol}: while the finite-volume surface tensions are twice continuously differentiable (since the potential $V$ is assumed to satisfy this regularity property), we do not a priori know that the infinite-volume surface tension satisfies the same regularity property.  Nevertheless, convexity arguments allow to show that the infinite volume surface tension $\bar \sigma$ is $C^{1 , 1}(\Rd)$, which implies that the Hessian of $\bar \sigma$ is well-defined on a set of full measure and satisfies the same inequalities as~\eqref{eq:strictconvsurfens}.
    
We mention that, while it is not formally proved here, we believe that the techniques of~\cite{AW,armstrong2022quantitative} could be adapted to the present setting to establish the $C^2$-regularity of the infinite-volume surface tension.

We formalize in the statement below the discussion of the previous paragraph.

\begin{proposition}[Strict convexity of the surface tension] \label{prop:strictconvsurftens}
The following statements hold true:
\begin{itemize}
\item The function $p \mapsto D_p \bar \sigma(p)$ is differentiable almost everywhere; we denote its derivative by $D_p^2 \bar \sigma$.
\item There exist two constants $\lambda_- := \lambda_- (d , V) > 0$ and $\lambda_+ := \lambda_+(d , V) < \infty$ such that, for almost every slope $p \in \Rd$ and any $\lambda \in \Rd$,
\begin{equation} \label{ineq:strictconvexity}
    c |p|_+^{r-2} \left| \lambda \right|^2 \leq \left( \lambda , D^2_p \bar \sigma (p) \lambda \right) \leq C |p|_+^{r-2} \left| \lambda \right|^2.
\end{equation}
\end{itemize}
\end{proposition}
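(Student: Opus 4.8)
The plan is to derive Proposition~\ref{prop:strictconvsurftens} from Proposition~\ref{prop:strictconvfinitevol} and Proposition~\ref{prop:quantconvsurftens} by a soft convexity argument. First I would record that, by Proposition~\ref{prop:strictconvfinitevol}, each finite-volume surface tension $\bar\sigma_L$ is convex with $\lambda_-|p|_+^{r-2}I_d \leq D_p^2\bar\sigma_L(p) \leq \lambda_+|p|_+^{r-2}I_d$ uniformly in $L$. The upper Hessian bound gives a \emph{local} Lipschitz bound on $D_p\bar\sigma_L$: on any ball $B_R$ the maps $p\mapsto D_p\bar\sigma_L(p)$ are equi-Lipschitz with constant $C_R := \lambda_+(R+1)^{r-2}$. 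Combined with the $C^1_{\mathrm{loc}}$-convergence $D_p\bar\sigma_L \to D_p\bar\sigma$ from~\eqref{eq:quantconvsurftens}, this forces $D_p\bar\sigma$ to be locally Lipschitz with the same local constants, i.e.\ $\bar\sigma\in C^{1,1}(\Rd)$ and $\abs{D_p\bar\sigma(p)-D_p\bar\sigma(q)}\leq \lambda_+(\abs{p}\vee\abs{q}+1)^{r-2}\abs{p-q}$. By Rademacher's theorem, $D_p\bar\sigma$ is therefore differentiable almost everywhere, which establishes the first bullet, and its a.e.\ derivative $D_p^2\bar\sigma$ satisfies the a.e.\ upper bound $D_p^2\bar\sigma(p)\leq \lambda_+\abs{p}_+^{r-2}I_d$ — this is the first half of~\eqref{ineq:strictconvexity} (note the statement writes it with constants $c,C$; I would keep $\lambda_-,\lambda_+$ and remark the $c,C$ in the display are to be read as these).

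Next I would obtain the lower bound. The cleanest route is to transfer the uniform strict convexity through the limit at the level of first derivatives: for $L$ fixed, the lower Hessian bound $D_p^2\bar\sigma_L\geq \lambda_-\abs{\cdot}_+^{r-2}I_d$ is equivalent to the monotonicity-type inequality
\begin{equation*}
    \left( D_p\bar\sigma_L(p) - D_p\bar\sigma_L(q)\right)\cdot(p-q) \geq \lambda_- \int_0^1 \abs{(1-t)q+tp}_+^{r-2}\,dt\;\abs{p-q}^2
\end{equation*}
for all $p,q\in\Rd$, obtained by writing $D_p\bar\sigma_L(p)-D_p\bar\sigma_L(q)=\int_0^1 D_p^2\bar\sigma_L((1-t)q+tp)(p-q)\,dt$ and using that $\bar\sigma_L\in C^2$. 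Passing to the limit $L\to\infty$ using~\eqref{eq:quantconvsurftens}, the same inequality holds for $\bar\sigma$, for \emph{every} pair $p,q$. Now at any point $p$ of differentiability of $D_p\bar\sigma$, dividing by $\abs{p-q}^2$ and letting $q\to p$ along a fixed direction $\lambda$ yields $\lambda\cdot D_p^2\bar\sigma(p)\lambda \geq \lambda_-\abs{p}_+^{r-2}\abs{\lambda}^2$, which is the lower half of~\eqref{ineq:strictconvexity}. (Alternatively one can argue directly: convolve $\bar\sigma$ with a mollifier, note the smoothed functions inherit the first-derivative monotonicity inequality hence a pointwise Hessian lower bound, and the Hessians of the mollifications converge weakly-$*$ to $D_p^2\bar\sigma$ on the differentiability set.)

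The main obstacle — such as it is — is purely one of bookkeeping the local (slope-dependent) nature of the bounds: the Hessian bounds degenerate both as $\abs{p}\to\infty$ (upper bound grows like $\abs{p}^{r-2}$) and are only polynomially nondegenerate from below, so one must be careful that the $C^1_{\mathrm{loc}}$ convergence in Proposition~\ref{prop:quantconvsurftens}, which comes with the slope-dependent rate $C\abs{p}_+^{r-2}/L^{1/8}$, is genuinely uniform on compact sets (it is, since $\abs{p}_+^{r-2}$ is bounded on compacts) so that equi-Lipschitz constants and the limiting monotonicity inequality both survive. There is no real analytic difficulty: everything reduces to the elementary facts that a locally uniform limit of convex functions with locally uniform two-sided Hessian control is $C^{1,1}$ with the same control, and that such control passes to the a.e.\ Hessian via Rademacher. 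I would conclude by noting, as the paragraph preceding the proposition already does, that the $C^2$-regularity of $\bar\sigma$ is expected but not proved, so the a.e.\ statement is the best one can extract from this argument.
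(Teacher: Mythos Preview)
Your proposal is correct and follows precisely the approach the paper indicates: the paper does not write out a formal proof of this proposition but only remarks (in the paragraph preceding it) that ``taking the limit $L\to\infty$'' in Proposition~\ref{prop:strictconvfinitevol} together with ``convexity arguments'' yields $\bar\sigma\in C^{1,1}(\Rd)$ and hence the a.e.\ Hessian bounds. Your argument---transferring the uniform local Lipschitz bound on $D_p\bar\sigma_L$ through the $C^1_{\mathrm{loc}}$-convergence of Proposition~\ref{prop:quantconvsurftens}, invoking Rademacher, and passing the first-derivative monotonicity inequality to the limit before differentiating---is exactly the way to make that paragraph rigorous.
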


\subsection{Two identities for the finite-volume surface tension}

In order to prove Proposition~\ref{prop:quantconvsurftens} and Proposition~\ref{prop:strictconvfinitevol}, we first establish an identity which relates the gradient of the finite-volume surface tension to the Langevin dynamic. We recall that the notation $\E_{L,p}$ refers to the expectation under the Gibbs measure $\mu_{L , p}$ (see Definition~\ref{def:periodicGibbsmeasure}), and that the notation $w_{L , p , \lambda}$ refers to the function introduced in Proposition~\ref{prop:propsLangevin}.

\begin{proposition} \label{prop:identsecondder}
Given $p, \lambda \in \Rd$, the gradient of the finite-volume surface tension satisfies the identities, for any $L \in \N$, any $p \in \Rd$ and  any $(t , x) \in \R \times \mathbb{T}_L$,
\begin{align*}
    D_p \bar \sigma_L(p) & = \E_{L , p} \left[ D_p V(p + \nabla \varphi(x))\right] \\
                        & = \E \left[ D_p V(p + \nabla \varphi_L(t, x ; p))\right].
\end{align*}
For any $\lambda \in \Rd$, the Hessian of the finite-volume surface tension satisfies the identities
    \begin{align} \label{id:secondderivativefintevolsur}
        \left( \lambda , D^2_p \sigma_L (p) \lambda \right) & = \E \left[ \lambda \cdot \mathbf{A}(t , x ; p) \left( \lambda + \nabla w_{{L, p, \lambda}}(t , x) \right) \right] \\
        & = \E \left[ \left(\lambda + \nabla w_{{L, p, \lambda}}(t , x) \right) \cdot \mathbf{A}(t , x ; p) \left( \lambda + \nabla w_{{L, p, \lambda}}(t , x) \right) \right]. \notag
    \end{align}
\end{proposition}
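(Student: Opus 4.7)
The plan is to obtain the first derivative identity by direct differentiation of the partition function, and then to obtain the Hessian identity by differentiating a second time, using the Langevin dynamics and the equation satisfied by $w_{L,p,\lambda}$ to symmetrise.

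\textbf{Step 1 (first derivative).} I would differentiate $p\mapsto \ln Z_{L,p}$ directly under the integral sign using dominated convergence (the Assumption~\eqref{AssPot} ensures enough integrability to differentiate twice in $p$, since $\mu_{L,p}$ decays super-exponentially by Proposition~\ref{prop.prop2.3}). This yields
\begin{equation*}
    D_p \bar\sigma_L(p) \;=\; \frac{1}{|\mathbb{T}_L|}\sum_{x\in\mathbb{T}_L}\E_{L,p}\bigl[D_pV(p+\nabla\varphi(x))\bigr],
\end{equation*}
and by the translation invariance of $\mu_{L,p}$ (noted after Definition~\ref{def:periodicGibbsmeasure}), each term of the sum is the same, giving $D_p\bar\sigma_L(p)=\E_{L,p}[D_pV(p+\nabla\varphi(x))]$ for any $x$. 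The second form $\E[D_pV(p+\nabla\varphi_L(t,x;p))]$ is then immediate from the distributional identity in Proposition~\ref{prop:propsLangevin} (the Distribution item), valid at every time $t\in\R$.

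\textbf{Step 2 (Hessian, first equality of~\eqref{id:secondderivativefintevolsur}).} Starting from the Langevin representation $D_p\bar\sigma_L(p)=\E[D_pV(p+\nabla\varphi_L(t,x;p))]$, I would differentiate along the direction $\lambda\in\Rd$. The differentiability with respect to the slope (Proposition~\ref{prop:propsLangevin}) gives $\partial_p\varphi_L(t,x;p)\cdot\lambda=w_{L,p,\lambda}(t,x)$, so (once the interchange of $\E$ and $\partial_p$ is justified by the polynomial bound of Lemma~\ref{lemma.upperandlowerboundLambda} on $D_p^2V$ together with the moment bounds on $\nabla\varphi_L$ and $\nabla w_{L,p,\lambda}$)
\begin{equation*}
    \lambda\cdot D_p^2\bar\sigma_L(p)\;=\;\E\bigl[D_p^2V(p+\nabla\varphi_L(t,x;p))\bigl(\lambda+\nabla w_{L,p,\lambda}(t,x)\bigr)\bigr].
\end{equation*}
Pairing with $\lambda$ and using the definition $\mathbf{A}(t,x;p):=D_p^2V(p+\nabla\varphi_L(t,x;p))$ yields the first equality in~\eqref{id:secondderivativefintevolsur}.

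\textbf{Step 3 (symmetrisation, second equality of~\eqref{id:secondderivativefintevolsur}).} The task reduces to proving
\begin{equation*}
    \E\bigl[\nabla w_{L,p,\lambda}(t,x)\cdot\mathbf{A}(t,x;p)\bigl(\lambda+\nabla w_{L,p,\lambda}(t,x)\bigr)\bigr]\;=\;0.
\end{equation*}
To see this, I would test the equation~\eqref{eqstatpardifferentiated} against $w_{L,p,\lambda}(t,\cdot)$, sum over $x\in\mathbb{T}_L$, and integrate by parts on the torus to get the pointwise-in-time identity
\begin{equation*}
    \tfrac{1}{2}\partial_t\!\sum_{x\in\mathbb{T}_L}\!w_{L,p,\lambda}(t,x)^{2} \;+\; \sum_{x\in\mathbb{T}_L}\nabla w_{L,p,\lambda}(t,x)\cdot\mathbf{A}(t,x;p)\bigl(\lambda+\nabla w_{L,p,\lambda}(t,x)\bigr)\;=\;0.
\end{equation*}
Taking expectations and using the stationarity of $w_{L,p,\lambda}$ in time (inherited from the stationarity of the Langevin dynamic, Proposition~\ref{prop:propsLangevin}), the first term vanishes because $t\mapsto\E\sum_x w_{L,p,\lambda}(t,x)^2$ is constant. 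Then spatial stationarity collapses the sum: every vertex contributes equally, so the pointwise expectation vanishes, and adding this null term to the first equality of~\eqref{id:secondderivativefintevolsur} produces the second.

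\textbf{Main obstacle.} The only nonroutine point is justifying the differentiation under the expectation in Step 2 and the time-stationarity cancellation in Step 3, both of which require a priori $L^2$ control on $\nabla w_{L,p,\lambda}$ uniformly in~$t$. This is precisely the uniqueness statement in Proposition~\ref{prop:propsLangevin} (``stationary solution with finite second moments''), so once that input is invoked, the dominated-convergence step and the time-average cancellation both go through, and all other manipulations are standard discrete calculus on the torus.
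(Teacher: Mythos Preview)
Your proposal is correct and follows essentially the same approach as the paper: differentiate the partition function and use translation invariance for the gradient identity, differentiate again via the slope-differentiability of the dynamics for the first Hessian identity, and for the symmetrisation test~\eqref{eqstatpardifferentiated} against $w_{L,p,\lambda}$, integrate by parts on the torus, and use space--time stationarity to kill the $\partial_t$ term. The only cosmetic difference is the order in which stationarity and the torus summation are invoked in Step~3; the paper first averages over the torus via spatial stationarity and then applies time stationarity, whereas you sum first and average last, but the content is identical.
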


\begin{proof}[Proof of Proposition~\ref{prop:strictconvfinitevol}]
    Let us fix an integer $L \in \N$ and recall the notation introduced in Section~\ref{subsec2.1.6}. We first differentiate the right-hand side of the identity~\eqref{def.fintevolumesurfacetension} and obtain the identity, for any $p \in \Rd$ and any $L \in \N$,
    \begin{equation*}
        D_p \bar \sigma_L(p) = \E_{L , p} \left[ \frac{1}{\left| \mathbb{T}_L \right|} \sum_{x \in \mathbb{T}_L} D_p V(p + \nabla \varphi(x))\right]. 
    \end{equation*}
    Using the translation invariance of the measure $\mu_{L , p}$, we may fix a vertex $x \in \mathbb{T}_L$ and rewrite the previous identity as follows
    \begin{equation*}
       D_p \bar \sigma_L(p)  = \E_{L , p} \left[ D_p V(p + \nabla \varphi(x))\right].
    \end{equation*}
    We next make use of the stationary Langevin dynamics introduced in Section~\ref{sec:introdLangevindyn}. Specifically, we observe that, by the first item of Proposition~\ref{prop:propsLangevin} ``Distribution", for any time $t \in \R$ and any $x \in \mathbb{T}_L$,
    \begin{equation*}
         D_p \bar \sigma_L(p) = \E \left[ D_p V(p + \nabla \varphi_L(t, x ; p))\right].
    \end{equation*}
    We then differentiate a second time the finite-volume surface tension and use the fourth item of Proposition~\ref{prop:propsLangevin} ``Differentiability with respect to the slope". We obtain
    \begin{equation*}
        \left( \lambda , D^2_p \sigma_L (p) \lambda \right) = \E \left[ \lambda \cdot \mathbf{A}(\cdot ; p) \left( \lambda + \nabla w_{L,p, \lambda} \right) \right],
    \end{equation*}
            where $w_{L,p, \lambda}$ is the stationary solution to the parabolic equation
    \begin{equation*}
        \partial_t  w_{L , p , \lambda} - \nabla \cdot \a(\cdot ; p )( \lambda +  \nabla w_{L , p , \lambda} ) = 0   \hspace{5mm} \mbox{in} \hspace{3mm}  \R \times \mathbb{T}_L.
    \end{equation*}
    There only remains to prove the second line of~\eqref{id:secondderivativefintevolsur}. It is equivalent to the identity: for any $(t , x) \in \R \times \mathbb{T}_L$,
    \begin{equation*}
        \E \left[ \nabla w_{{p, \lambda}}(t , x) \cdot \mathbf{A}(t , x ; p) \left( \lambda + \nabla w_{{p, \lambda}}(t , x) \right) \right] = 0.
    \end{equation*}
    Using the spatial stationarity of the Langevin dynamic and a discrete integration by parts, we have the identities
    \begin{align*}
        \E \left[ \nabla w_{{p, \lambda}}(t , x) \cdot \mathbf{A}(t , x ; p) \left( \lambda + \nabla w_{{p, \lambda}}(t , x) \right) \right] & = \frac{1}{\left| \mathbb{T}_L \right|} \E \left[ \sum_{y \in \mathbb{T}_L} \nabla w_{{p, \lambda}}(t , y) \cdot \mathbf{A}(t , y ; p) \left( \lambda + \nabla w_{{p, \lambda}}(t , y) \right) \right] \\
                            & = - \frac{1}{\left| \mathbb{T}_L \right|} \E \left[ \sum_{y \in \mathbb{T}_L} w_{{p, \lambda}}(t , y) \nabla \cdot \mathbf{A}(t , y ; p) \left( \lambda + \nabla w_{{p, \lambda}}(t , y) \right) \right] \\
                            & = - \frac{1}{\left| \mathbb{T}_L \right|} \E \left[ \sum_{y \in \mathbb{T}_L} w_{{p, \lambda}}(t , y)  \partial_t w_{{p, \lambda}}(t , y) \right] \\
                            & =  - \frac{1}{2}\E \left[  \partial_t w_{{p, \lambda}}(t , x)^2 \right],
    \end{align*}
    where in the last line, we used the spatial stationarity of the dynamic. The term on the right-hand side can be further simplified as follows (using the time stationarity to conclude in the third equality)
    \begin{align*}
    \E \left[ \nabla w_{{p, \lambda}}(t , x) \cdot \mathbf{A}(t , x ; p) \left( \lambda + \nabla w_{{p, \lambda}}(t , x) \right) \right] & =  - \frac{1}{2}\E \left[  \partial_t w_{{p, \lambda}}(t , x)^2 \right] =  - \frac{1}{2}\partial_t \E \left[   w_{{p, \lambda}}(t , x)^2 \right] =0.
    \end{align*}
\end{proof}

\subsection{Quantitative convergence of the finite-volume surface tensions}

This section is devoted to the proof of Proposition~\ref{prop:quantconvsurftens}. In the proof below, and to ensure that all the quantities are well-defined, we identify the vertices of the box $\Lambda_L$ with the ones of the torus $\mathbb{T}_L$, and the vertices of $\Lambda_{2L}$ the ones of $\mathbb{T}_{2L}$. This identification allows to identify the vertices of $\mathbb{T}_L$ with a subset of~$\mathbb{T}_{2L}$.

\begin{proof}[Proof of Proposition~\ref{prop:quantconvsurftens}]
We will prove the identity, for any integer $L \in \N$ and any slope $p \in \Rd$,
\begin{equation} \label{eq:finitevolLand2L}
        \left| D_p  \bar \sigma_{L}(p) - D_p \bar \sigma_{2L}(p) \right| \leq \frac{C |p|^{r-2}_+ }{L}.
\end{equation}
The inequality~\eqref{eq:finitevolLand2L} is sufficient to conclude as it implies that the sequence $n \mapsto D_p  \bar \sigma_{2^n L}(p)$ is Cauchy, and thus converges.

To prove~\eqref{eq:finitevolLand2L}, we first use Proposition~\ref{prop:identsecondder} and write, for any $(t , x) \in \R \times \Lambda_L$,
\begin{equation} \label{eq:difffintevolsurftens}
        D_p  \bar \sigma_{L}(p) - D_p \bar \sigma_{2L}(p) = \E \left[ D_p V(p + \nabla \varphi_L(t, x ; p)) - D_p V(p + \nabla \varphi_{2L}(t, x ; p))  \right].
\end{equation}
We then define the symmetric positive matrix and maximal eigenvalue
\begin{equation} \label{def.coeffAL}
    \left\{ \begin{aligned}
    \a_L(t , x ; p) & := \int_0^1 D_p^2 V(p + s \nabla \varphi_L(t, x ; p) + (1-s) \nabla \varphi_{2L}(t, x ; p)) \, ds, \\  
    \Lambda_{+ , L} (t , x ; p) & := \sup_{\substack{\xi \in \Rd \\ | \xi | \leq 1} } \xi \cdot \a_L(t , x ; p) \xi,
    \end{aligned} \right.
\end{equation}
and the function
\begin{equation} \label{def.coeffvL}
    v_{L , p} = \varphi_L(t, x ; p) - \varphi_{2L}(t, x ; p).
\end{equation}
Let us note that the map $v_{L , p}$ solves the parabolic equation
\begin{equation} \label{eq:15500703}
    \partial_t v_{L , p} - \nabla \cdot \a_L(\cdot , \cdot ; p)  \nabla v_{L , p}  = 0 ~~\mbox{in}~~ \R \times \Lambda_L.
\end{equation}
The identity~\eqref{eq:15500703} is obtained by subtracting the two Langevin dynamics used to define the functions $ \varphi_L(\cdot , \cdot ; p)$ and $ \varphi_{2L}(\cdot , \cdot ; p)$, and by noticing that, since the dynamics are driven by the same Brownian motions, these terms disappear when taking the difference. 

Using the notation~\eqref{def.coeffAL} and~\eqref{def.coeffvL}, we may rewrite the identity~\eqref{eq:difffintevolsurftens} as follows, for any $(t , x) \in \R \times \Lambda_L$,
\begin{equation*}
        D_p  \bar \sigma_{L}(p) - D_p \bar \sigma_{2L}(p) := \E \left[ \a_L(t , x ; p) \nabla v_{L , p} (t , x) \right].
\end{equation*}
The Cauchy-Schwarz inequality implies that
\begin{equation} \label{eq:17110703}
    \left| D_p  \bar \sigma_{L}(p) - D_p \bar \sigma_{2L}(p) \right| \leq \sqrt{\E \left[ \Lambda_{+ , L} (t , x ; p) \right] \E \left[ \nabla v_{L , p}(t , x) \cdot \a_L(t , x ; p) \nabla v_{L , p}(t , x) \right]}.
\end{equation}
The first term on the right-hand side is bounded by a constant which does not depend on $L$. Specifically, by Proposition~\ref{prop.prop2.3} and the growth 
Assumption~\eqref{AssPot} on the Hessian of $V$
\begin{equation} \label{eq:real5.12}
    \E \left[ \Lambda_{+ , L} (t , x ; p) \right] \leq C \E \left[ |p + \nabla \varphi_{L}(t , x ; p)|^{r-2} \right] \leq C |p|^{r-2}.
\end{equation}
For the second term on the right-hand side of~\eqref{eq:17110703}, we use the space and time stationarity of the dynamic and combine it with the Caccioppoli inequality to obtain
\begin{align*}
    \lefteqn{ \E \left[ \nabla v_{L , p}(t , x) \cdot \a_L(t , x ; p) \nabla v_{L , p}(t , x) \right] } \qquad & \\ 
    & = \frac{1}{(L/2)^2 \left| \Lambda_{L/2}\right|} \E \left[ \int_{-(L/2)^2}^0 \sum_{y \in \Lambda_{L/2}}  \nabla v_{L , p}(s , y) \cdot \a_L(s , y ; p) \nabla v_{L , p}(s , y) \, ds \right] \\
    & \leq \frac{C}{L^2 \left| \Lambda_{L/2}\right|} \frac{1}{L^2} \E \left[ \int_{-L^2}^0 \sum_{y \in \Lambda_{L}}  \Lambda_{+ , L} (t , x ; p)  \left|v_{L , p}(s , y) \right|^2 \, ds \right] \\
    & \leq \frac{C}{L^2}  \E \left[  \Lambda_{+ , L} (t , x ; p)  \left|v_{L , p}(t , x) \right|^2 \, ds \right],
\end{align*}
where we used the space and time stationarity in the last line. The last term on the right-hand side can be estimated using Proposition~\ref{prop.prop2.3} and the Assumption~\eqref{AssPot} (for the term $\Lambda_{+ , L}$) and Proposition~\ref{prop:sublincorr} (for the term $v_{L , p}$). We obtain
\begin{equation*}
    \E \left[ \nabla v_{L , p}(t , x) \cdot \a_L(t , x ; p) \nabla v_{L , p}(t , x) \right] \leq \frac{C |p|^{r-2} (L^{7/8})^2}{L^2} \leq \frac{C |p|^{r-2}}{L^{1/8}}.
\end{equation*}
Combining the previous inequality with~\eqref{eq:17110703} and~\eqref{eq:real5.12} completes the proof of Proposition~\ref{prop:strictconvfinitevol}.
\end{proof}

\subsection{Strict convexity of the surface tension}

This section is devoted to the proof of Proposition~\ref{prop:strictconvfinitevol}. We note that the techniques used in this proof are similar to the ones of the article of Biskup and Rodriguez~\cite{biskup2018limit}.

\begin{proof}[Proof of Proposition~\ref{prop:strictconvfinitevol}]
We fix $L \in \N$ and $p , \lambda \in \Rd$ and recall the (first) identity of~\eqref{id:secondderivativefintevolsur}
\begin{equation*}
     \left( \lambda , D^2_p \sigma_L (p) \lambda \right) = \E \left[ \lambda \cdot \mathbf{A}(t, x ; p) \left( \lambda + \nabla w_{{L, p, \lambda}}(t,x) \right) \right].
\end{equation*}
We next split the argument into three steps. The first step provides some useful estimates on the $L^2$-norm of the gradient of the map $w_{p , \lambda}$, the second step contains the proof of the upper bound of~\eqref{eq:strictconvsurfens} and the third step is devoted to the proof of the lower bound.

\medskip

\textit{Step 1. Estimates for the gradient of the map $w_{p , \lambda}$.}

\medskip

In this step, we prove the two following estimates: there exists a constant $C := C(d , V) < \infty$ such that, for any $(t , x) \in \R \times \mathbb{T}_L$,
    \begin{equation} \label{ineqseconddernotmoder}
        \E \left[ \nabla w_{L, p, \lambda} (t , x) \cdot  \mathbf{A}(t , x ; p) \nabla w_{L, p, \lambda} (t , x)  \right] \leq C |p|_+ ^{r-2}|\lambda|^2,
    \end{equation}
    and, for any $(t , x) \in \R \times \mathbb{T}_L$,
    \begin{equation} \label{ineqseconddermoder}
        \E \left[ \mathbf{m}(t , x ; p) \left| \lambda +  \nabla w_{L, p, \lambda} (t , x) \right|^2 \right] \leq C|\lambda|^2.
    \end{equation}
We first collect a few properties of the map $w_{L , p, \lambda}$. First, by the two identities of~\eqref{id:secondderivativefintevolsur}, we have, for any $(t , x) \in \R \times \mathbb{T}_L,$
    \begin{equation*}
        \E \left[ \nabla w_{L, p, \lambda} (t , x) \cdot  \mathbf{A}(t , x ; p) \nabla w_{L, p, \lambda} (t , x)  \right] =  - \E \left[ \lambda \cdot  \mathbf{A}(t , x ; p) \nabla w_{L, p, \lambda} (t , x)  \right].
    \end{equation*}
   The right-hand side of the previous identity can be estimated by the Cauchy-Schwarz inequality
    \begin{equation*}
    \left| \E \left[ \lambda \cdot  \mathbf{A}(t , x ; p) \nabla w_{L, p, \lambda} (t , x)  \right] \right| \leq \sqrt{\E \left[ \lambda \cdot  \mathbf{A}(t , x ; p) \lambda   \right] \E \left[ \nabla w_{L, p, \lambda} (t , x)  \cdot  \mathbf{A}(t , x ; p) \nabla w_{L, p, \lambda} (t , x)  \right]}.
    \end{equation*}
    By Proposition~\ref{prop.prop2.3} and the growth Assumption~\eqref{AssPot} on the Hessian of $V$, we have
    \begin{equation*}
        \E \left[ \lambda \cdot  \mathbf{A}(t , x ; p) \lambda   \right] \leq \E \left[ \Lambda_+(t , x ; p) \right] \left| \lambda \right|^2 \leq C |p|_+^{r-2} \left| \lambda \right|^2.
    \end{equation*}
    Combining the three previous displays, we obtain the inequality
    \begin{equation*}
         \E \left[ \nabla w_{L, p, \lambda} (t , x) \cdot  \mathbf{A}(t , x ; p) \nabla w_{L, p, \lambda} (t , x)  \right]  \leq C |p|_+^{r-2} \left| \lambda \right|^2.
    \end{equation*}
   We next prove the inequality~\eqref{ineqseconddermoder}. To this end, we combine the previous inequality with Proposition~\ref{prop4.3} and obtain
    \begin{multline*}
        \mathbf{m}(t , x ; p)^2 \left| \lambda + \nabla w_{L, p, \lambda} (t , x) \right|^2 \\
        \leq C \sum_{y \sim_2 x}  \int_t^\infty K_{|p|_+^{r-2} (t - s)} \left( \lambda + \nabla w_{L, p, \lambda} (t , y) \right) \cdot \a(s , y ; p) \left( \lambda + \nabla w_{L, p, \lambda} (t , y) \right) \, ds.
    \end{multline*}
    Taking the expectation on both sides and using the space and time stationarity of the map $w_{L,p , \lambda}$, we obtain the upper bound
    \begin{align*}
            \E \left[ \mathbf{m}(t , x ; p)^2 \left| \lambda + \nabla w_{L, p, \lambda} (t , x) \right|^2 \right] & \leq \frac{C}{|p|_+^{r-2}} \E \left[ \left( \lambda + \nabla w_{L, p, \lambda} (t , x) \right) \cdot \mathbf{A}(t , x ; p)  \left( \lambda + \nabla w_{L, p, \lambda} (t , x) \right) \right] \\
            & \leq C |\lambda|^2.
    \end{align*}
    This is~\eqref{ineqseconddermoder}.

\medskip

\textit{Step 2. Upper bound of~\eqref{eq:strictconvsurfens}.}

\medskip

We start from the inequality~\eqref{ineqseconddernotmoder} and apply the Cauchy-Schwarz inequality as follows
\begin{align} \label{eq:upperboundsecondder}
     \left( \lambda , D^2_p \sigma_L (p) \lambda \right) & = \E \left[ \lambda \cdot \mathbf{A}(t, x ; p) \left( \lambda + \nabla w_{{L , p, \lambda}}(t,x) \right) \right] \\
     & \leq \sqrt{\E \left[ \mathbf{m}(t  ,x ; p)^{-1} \left| \mathbf{A}(\cdot ; p) \lambda \right|^2  \right] \E \left[  \mathbf{m}(t  ,x ; p) \left| \lambda + \nabla w_{{L , p, \lambda}} \right|^2 \right]}. \notag
\end{align}
We then estimate the two terms on the right-hand side. For the first one, we use the Cauchy-Schwarz inequality a second time
\begin{equation*}
    \E \left[ \mathbf{m}(t  ,x ; p)^{-1} \left| \mathbf{A}(\cdot ; p) \lambda \right|^2  \right] \leq \sqrt{\E \left[ \mathbf{m}(t  ,x ; p)^{-2} \right] \E \left[ \left| \mathbf{A}(\cdot ; p) \lambda \right|^4  \right]}.
\end{equation*}
The first term is estimated by Proposition~\ref{prop:stochintmoderated} (and Remark~\ref{Rem:remark3.13} which provides moments estimate on the moderated environment), and the second term is estimated thanks to Proposition~\ref{prop.prop2.3} and the growth Assumption~\eqref{AssPot} on the Hessian of $V$. We obtain
\begin{equation*}
        \E \left[ \mathbf{m}(t  ,x ; p)^{-1} \left| \mathbf{A}(\cdot ; p) \lambda \right|^2  \right] \leq C |p|^{2(r-2)} |\lambda|^{2}.
\end{equation*}
The second term of~\eqref{eq:upperboundsecondder} is estimated more directly by using~\eqref{ineqseconddermoder}
\begin{equation*}
    \E \left[  \mathbf{m}(t  ,x ; p) \left| \lambda + \nabla w_{{L , p, \lambda}} \right|^2 \right] \leq C \left| \lambda \right|^2.
\end{equation*}
Combining the two previous displays, we obtain that
\begin{equation*}
    \left( \lambda , D^2_p \sigma_L (p) \lambda \right) \leq C |p|^{r-2} \left| \lambda \right|^2,
\end{equation*}
which is the desired inequality.

\medskip

\textit{Step 3. Lower bound of~\eqref{eq:strictconvsurfens}.}

\medskip

We now prove the lower bound of Proposition~\ref{prop:strictconvfinitevol}. We first use Proposition~\ref{prop4.3} to write, for any $(t , x) \in \R \times \mathbb{T}_L$,
\begin{multline*}
     \mathbf{m}(t , x ; p) \left| \lambda + \nabla w_{{L, p, \lambda}}(t , x) \right|^2 \\
     \leq C \sum_{y \sim_2 x}  \int_t^\infty K_{|p|^{r-2}_+ (t - s)} \left(\lambda  + \nabla w_{{L,p, \lambda}}(s , y) \right) \cdot \a(s , y ; p) \left( \lambda + \nabla w_{{L, p, \lambda}}(s , y)\right) \, ds.
\end{multline*}
Taking the expectation on both sides and using the space and time stationarity of the map $w_{{p, \lambda}}$, we obtain the inequality
\begin{equation*}
    \E \left[ \mathbf{m}(t , x ; p) \left| \lambda + \nabla w_{{L, p, \lambda}}(t , x) \right|^2 \right] \leq \frac{C}{|p|_+^{r-2}} \E \left[ \left( \lambda + \nabla w_{L, p, \lambda} (t , x) \right) \cdot \mathbf{A}(t , x ; p)  \left( \lambda + \nabla w_{L, p, \lambda} (t , x) \right) \right].
\end{equation*}
Using the identity~\eqref{id:secondderivativefintevolsur}, we deduce that
\begin{equation*}
    \E \left[ \mathbf{m}(t , x ; p) \left| \lambda + \nabla w_{{L, p, \lambda}}(t , x) \right|^2 \right] \leq \frac{C}{|p|_+^{r-2}}  \left( \lambda , D^2_p \sigma_L (p) \lambda \right).
\end{equation*}
We next lower bound the term on the left-hand side using the spatial stationarity of the function $w_{L , p , \lambda}$ (which implies that the expectation of its gradient is equal to $0$), Proposition~\ref{prop:stochintmoderated}, the triangle and Cauchy-Schwarz inequalities as follows
\begin{align*}
 \left| \lambda \right|^2 & = \left| \E \left[ \lambda + \nabla w_{{L, p, \lambda}}(t , x) \right] \right|^2 \\
                        & \leq \E \left[  \left| \lambda + \nabla w_{{L, p, \lambda}}(t , x) \right| \right]^2  \\
                        & \leq \E \left[ \mathbf{m}(t , x ; p)^{-1} \right] \E \left[ \mathbf{m}(t , x ; p) \left| \lambda + \nabla w_{{L, p, \lambda}}(t , x) \right|^2 \right] \\
                        & \leq \frac{C}{|p|_+^{r-2}}  \left( \lambda , D^2_p \sigma_L (p) \lambda \right).
\end{align*}
Multiplying both sides of the previous inequality by $|p|_+^{r-2}/C$ completes the proof of the lower bound of~\eqref{eq:strictconvsurfens}.
\end{proof}

\section{Sublinearity of the flux of the Langevin dynamic} \label{sec:section5sublinearityflux}

In this section, we build upon the results established in the previous sections to derive a quantitative estimate on the weak norm of the flux of the Langevin dynamic. We recall the notation for the parabolic cylinder $Q_L := (- L^2 , 0) \times \Lambda_L$ and of the norm $\underline{W}^{-1 , r}_{\mathrm{par}} (Q_L)$ introduced in Section~\ref{subsecfunctions}.

\begin{proposition}[Sublinearity for the Langevin dynamic and its flux] \label{prop:sublinearityflux}
There exists a constant $C := C(d , V) < \infty$ such that the following estimate holds: for any integer $L \in \N$ and any slope $p \in \Rd$,
    \begin{equation*}
        \left\| D_p V (p + \nabla \varphi_{L}(\cdot , \cdot ; p)) - D_p \bar \sigma_L(p) \right\|_{\underline{W}^{-1 , r}_{\mathrm{par}} (Q_L)} \leq  \mathcal{O}_{1/2} (C |p|_+^{r-1} L^{15/16}) .
    \end{equation*}
\end{proposition}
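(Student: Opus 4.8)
The plan is to control the weak norm $\underline{W}^{-1,r}_{\mathrm{par}}(Q_L)$ of the fluctuating flux $\FF := D_p V(p + \nabla \varphi_L(\cdot,\cdot;p)) - D_p \bar\sigma_L(p)$ by combining the parabolic multiscale Poincar\'e inequality (Proposition~\ref{prop.multiscalePoinc}) with a quantitative estimate on the spatial averages of $\FF$ over parabolic subcylinders. Writing $L = 3^n$ up to a bounded factor (the general case follows by monotonicity of the norms in $L$), the multiscale inequality reduces the problem to bounding $\| \FF \|_{\underline{L}^r(Q_{3^n})}$ — which is straightforward from the stochastic integrability of the gradient of the dynamic (Proposition~\ref{propositionsubdynamic}) and the growth Assumption~\eqref{AssPot}, giving a contribution $\mathcal{O}_{\Psi,c}(C|p|_+^{r-1})$ — and the multiscale sum $\sum_{m=0}^n 3^m \big( |\mathcal{Z}_{m,n}|^{-1} \sum_{z \in \mathcal{Z}_{m,n}} |(\FF)_{z + Q_{3^m}}|^r \big)^{1/r}$, where $(\FF)_{z+Q_{3^m}}$ denotes the average of $\FF$ over the shifted cylinder. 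The dominant term in this sum is $m = n$, which is the spatial average over the whole cylinder $Q_L$; the point is to show this average is small, of order $L^{-1/16}$ (times $|p|_+^{r-1}$), with good stochastic integrability.

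The key step is therefore to estimate $(\FF)_{Q_{3^m}}$, i.e., the space-time average of $D_pV(p + \nabla \varphi_L) - D_p\bar\sigma_L(p)$ over a parabolic cylinder of scale $3^m \le L$. First I would use the identity $D_p\bar\sigma_L(p) = \E[D_pV(p + \nabla\varphi_L(t,x;p))]$ from Proposition~\ref{prop:identsecondder}, so that the average is a fluctuation of a stationary random field around its mean. To exploit decorrelation in time I would use the time-ergodicity and stationarity of the dynamic together with the $L^q$ maximal inequality (Proposition~\ref{propmaximalineq}): the time-average of $D_pV(p + \nabla\varphi_L)$ over an interval of length $3^{2m}$ at a fixed vertex is controlled by its mean plus a fluctuation that decays like a negative power of $3^{2m}$. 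The cleanest way to get the decay is to relate the flux average to a discrete divergence: one can write $D_pV(p+\nabla\varphi_L) = D_p\bar\sigma_L(p) + \partial_t(\text{something}) + \nabla\cdot(\text{something})$ using the defining SDE~\eqref{eq:reqldefSDE}, namely integrating $d\varphi_L = \nabla\cdot D_pV(p+\nabla\varphi_L)\,dt + \sqrt{2}\,d\tilde B$; testing against the cylinder indicator and integrating by parts transfers the average of the flux onto a boundary/time increment of $\varphi_L$ plus a Brownian increment. The $\varphi_L$ boundary term is controlled by the sublinearity estimate of Proposition~\ref{prop:sublincorr} ($|\varphi_L| \le \mathcal{O}_1(CL^{7/8})$), which after dividing by the cylinder volume yields the power gain, and the Brownian increment term is controlled by Gaussian concentration (Proposition~\ref{prop:prop2.20} ``Concentration'').

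More concretely, the structure I expect is: for a single cylinder $z + Q_{3^m}$, decompose $(\FF)_{z+Q_{3^m}}$ into (a) a time-boundary term $\sim 3^{-2m}\cdot(\text{average of }\varphi_L\text{ at two times})$, bounded using Proposition~\ref{prop:sublincorr} by $\mathcal{O}_{1}(C 3^{-2m} L^{7/8})$; (b) a spatial-divergence term that integrates to a surface term of size $\sim 3^{-m}\cdot(\text{boundary average of }D_pV)$, bounded by $\mathcal{O}_{\Psi,c}(C 3^{-m}|p|_+^{r-1})$; and (c) a Brownian term $\sim 3^{-2m}(\tilde B_{t_1} - \tilde B_{t_0})$, which is Gaussian of the right size $\sim 3^{-m}$. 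Summing over $z \in \mathcal{Z}_{m,n}$ in $\ell^r$ and using the independence of the Brownian increments across disjoint cylinders (for the concentration in term (c)) and Proposition~\ref{prop:prop2.20} ``Summation'', ``Maximum'', ``Powers'', I get $\big(|\mathcal{Z}_{m,n}|^{-1}\sum_z |(\FF)_{z+Q_{3^m}}|^r\big)^{1/r} \le \mathcal{O}_{1/2}(C|p|_+^{r-1}(3^{-m} + 3^{-2m}L^{7/8}))$. Multiplying by $3^m$ and summing over $m = 0,\dots,n$: the $3^m \cdot 3^{-m} = 1$ terms contribute $\sim n = \log L$, a subpolynomial factor absorbed into the $L^{15/16}$; the $3^m \cdot 3^{-2m} L^{7/8} = 3^{-m}L^{7/8}$ terms are dominated by $m=0$ giving $L^{7/8}$; wait — the largest contribution actually comes from balancing, and the stated exponent $15/16$ comes from optimizing where the $L^{7/8}$ sublinearity bound (valid at scale $L$) is inserted versus the number of scales. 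The main obstacle is bookkeeping: tracking the $|p|_+$-dependence uniformly through all terms, ensuring the stochastic integrability exponent degrades only to $1/2$ (the ``Product'' rule in Proposition~\ref{prop:prop2.20} halves the exponent, hence $\mathcal{O}_{1/2}$ rather than $\mathcal{O}_1$), and correctly handling that the sublinearity input Proposition~\ref{prop:sublincorr} is stated at scale $L$ on the torus $\mathbb{T}_L$ while the multiscale inequality runs over subcylinders of $Q_L$ in $\Z^d$ — one must first pass to the periodic setting and use that $\varphi_L$ restricted to subcylinders still obeys the parabolic equation with the environment $\a(\cdot;p)$.
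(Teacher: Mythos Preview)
Your overall framework (multiscale Poincar\'e to reduce to subcylinder averages, plus the $L^r$ bound on the flux from Proposition~\ref{prop.prop2.3}) matches the paper. The gap is in the key step: bounding $(\FF)_{z+Q_{3^m}}$ for each subcylinder.

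Your proposed identity ``$D_pV(p+\nabla\varphi_L) = D_p\bar\sigma_L(p) + \partial_t(\cdot) + \nabla\cdot(\cdot)$'' cannot be extracted from the SDE: the equation $\partial_t\varphi_L = \nabla\cdot D_pV(p+\nabla\varphi_L) + \text{noise}$ controls only the \emph{divergence} of the flux vector field $F := D_pV(p+\nabla\varphi_L)$, not its individual components $F_i$. Testing against the cylinder indicator, as you describe, yields an identity for $\int_Q \nabla\cdot F$ (a boundary flux), which is not the quantity $(\FF_i)_Q$ required by the multiscale Poincar\'e inequality. If instead one tests against $x_i$ to isolate $F_i$, the spatial integration by parts produces a boundary term of size $|x_i|\cdot|F|$ on $\partial\Lambda_{3^m}$; since $|x_i|\sim 3^m$ there, this contribution is $O(|p|_+^{r-1})$ \emph{with no decay in $3^m$}, so your claimed $3^{-m}$ gain in item~(b) is incorrect and the multiscale sum blows up like $3^n=L$ rather than $L^{15/16}$.

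The paper's mechanism is different and does not use the SDE structure for this step. It first proves a localization lemma (Lemma~\ref{lemm:lemmacouplingtwoboxes}): the flux average of $\varphi_L$ over $z+Q_{\ell_1}$ is close to that of a \emph{local} Langevin dynamic $\varphi_z$ living on a torus of sidelength $\sim\ell_1$, with error $\mathcal{O}_{1/2}(C|p|_+^{r-2}L^{7/8}/\ell_1)$ (proof: subtract the two dynamics, note the difference solves a linear parabolic equation, apply Caccioppoli, and bound the right-hand side via Proposition~\ref{prop:sublincorr}). The local dynamics on disjoint boxes are driven by disjoint Brownian motions and are therefore \emph{independent}; after a combinatorial partition into $2d$ families of non-overlapping boxes one applies the concentration property of Proposition~\ref{prop:prop2.20} to obtain $|(\FF)_{Q_\ell}| \le \mathcal{O}_{1/2}\bigl(C|p|_+^{r-1}(L^{7/8}/\ell)^{1/2}\bigr)$ for $L^{7/8}\le\ell\le L$ (Lemma~\ref{lemm:concenineqflux}), after optimizing an intermediate scale $\ell_1 = (L^{7/8}\ell)^{1/2}$. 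Inserting this into the multiscale sum, splitting at $m\sim\tfrac{7}{8}n$ (trivial bound~\eqref{lemm:concenineqfluxsmallscales} for small $m$, Lemma~\ref{lemm:concenineqflux} for large $m$), gives the exponent $15/16$. The idea you are missing is precisely this localization-to-independence step; ergodicity and the maximal inequality alone do not supply the quantitative decay needed here.
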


\begin{remark}
As in Section~\ref{sec:section5sublinearity}, the exponent $15/16$ and the stochastic integrability are not optimal (but the principal feature of the previous inequality is that $15/16$ is strictly smaller than $1$ which is sufficient to establish Theorem~\ref{main.thm}).
\end{remark}

\begin{remark} \label{rem:remark6.3}
    In this section, we adopt the following point of view: we see the stationary Langevin dynamic (as introduced in Definition~\ref{Prop:Langevin}) as a function defined on the box $\Lambda_L$ which solves the system of stochastic differential equations~\eqref{eq:def.Langevintorus} in the box $\Lambda_L$ with periodic boundary conditions. 

    We may then extend this definition of stationary Langevin dynamics to more general boxes: given a vertex $y \in \mathbb{Z}^d$ and an integer $L \in \N$, we let $\tilde B_t^y(x) :=B_t(x) - \frac{1}{\left| \Lambda_L \right|} \sum_{x_1 \in (y + \Lambda_L)} B_t(x_1)$ and then let $\varphi_{y , L} : \R \times (y + \Lambda_L) \to \R$ be the unique solution to the system of stochastic differential equations (with periodic boundary conditions)
    \begin{equation*}
     d \varphi_{y,L}(t , x ;p) = \nabla \cdot D_p V(p + \nabla \varphi_{y,L}(\cdot , \cdot ; p)) (t , x) \, dt  + \sqrt{2} d \tilde{B}_t^y(x) \hspace{5mm} \mbox{for} \hspace{5mm} (t , x) \in \R \times (y + \Lambda_L),
\end{equation*}
     which satisfies (i) and (ii) of Definition~\ref{Prop:Langevin}. This formalism allows to consider overlapping boxes (while it is not necessarily clear how one can make sense of overlapping tori).
\end{remark}

The proof of Proposition~\ref{prop:sublinearityflux} contains two main steps. We first prove that the space-time average of the flux of the dynamic (i.e., the quantity $\left( D_p V (p + \nabla \varphi_{L}(\cdot , \cdot ; p)) \right)_{Q_{\ell}}$ for $L , \ell \in \N$ with $\ell \leq L$) is well-approximated by a sum of independent random variables and then use a concentration inequality to show that it is concentrated around its mean (which is equal to $D_p \bar \sigma_L(p)$). This is the purpose of Section~\ref{sec:locflux} and Section~\ref{section:concentrationflux}.

We then combine this result with the multiscale Poincar\'e inequality (Proposition~\ref{prop.multiscalePoinc}) and Proposition~\ref{prop:quantconvsurftens} to deduce Proposition~\ref{prop:sublinearityflux}. This final step is carried out in Section~\ref{sec:section6.3}.

\subsection{Localization for the flux} \label{sec:locflux}

An important part of the proof of Proposition~\ref{prop:sublinearityflux} is to establish the following result asserting that the spatial-time average of the flux of the dynamic computed in two different (but overlapping) parabolic cylinders is small. This estimate is then used (in Section~\ref{section:concentrationflux} below) to show that the space-time average of the flux of the dynamic is well-approximated by a sum of independent random variables, and thus concentrated around its mean.

\begin{lemma} \label{lemm:lemmacouplingtwoboxes}
There exists a constant $C:= C(d , V) < \infty$ such that the following holds. For an slope $p \in \Rd$ and any triplet $(y_1, L_1) , (y_2, L_2), (y , \ell) \in \Zd \times \N$ such that $(y + \Lambda_{2\ell}) \subseteq (y_1 + \Lambda_{L_1})$ and $(y + \Lambda_{2\ell}) \subseteq (y_2 + \Lambda_{L_2})$, the following inequality holds
\begin{equation*}
    \left| \left( D_p V \left( p + \nabla \varphi_{y_1, L_1} (\cdot , \cdot ; p) \right) \right)_{y + Q_\ell} - \left( D_p V \left( p + \nabla \varphi_{y_2, L_2} (\cdot , \cdot ; p) \right) \right)_{y + Q_\ell} \right| \leq  \mathcal{O}_{1/2} \left( C |p|^{r-2}_+ \frac{(L_1 + L_2)^{7/8}}{\ell} \right).
\end{equation*}
\end{lemma}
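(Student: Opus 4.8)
The plan is to couple the two Langevin dynamics $\varphi_{y_1, L_1}(\cdot , \cdot ; p)$ and $\varphi_{y_2, L_2}(\cdot , \cdot ; p)$ by running them with the same Brownian motions on the common region $y + \Lambda_{2\ell}$, exactly as in the proof of Proposition~\ref{prop:quantconvsurftens}. First I would write, for $(t,x) \in \R \times (y + \Lambda_{2\ell})$,
\begin{equation*}
    D_p V\left( p + \nabla \varphi_{y_1, L_1}(t , x ; p) \right) - D_p V\left( p + \nabla \varphi_{y_2, L_2}(t , x ; p) \right) = \a_\ell(t , x ; p) \nabla v(t , x),
\end{equation*}
where $v := \varphi_{y_1, L_1}(\cdot , \cdot ; p) - \varphi_{y_2, L_2}(\cdot , \cdot ; p)$ and $\a_\ell(t,x;p) := \int_0^1 D_p^2 V(p + s \nabla \varphi_{y_1,L_1}(t,x;p) + (1-s)\nabla \varphi_{y_2,L_2}(t,x;p))\, ds$ with maximal eigenvalue $\Lambda_{+,\ell}(t,x;p)$. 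The key point is that $v$ solves the homogeneous parabolic equation $\partial_t v - \nabla \cdot \a_\ell(\cdot,\cdot;p) \nabla v = 0$ in $\R \times (y+\Lambda_{2\ell})$, since both dynamics are driven by the same Brownian motions in this box and the noise cancels upon subtraction. Averaging the right-hand side over $y + Q_\ell$ and applying Cauchy-Schwarz,
\begin{equation*}
    \left| \left( \a_\ell \nabla v \right)_{y + Q_\ell} \right| \leq \left( \left( \Lambda_{+,\ell}(\cdot,\cdot;p) \right)_{y+Q_\ell} \right)^{1/2} \left( \left( \nabla v \cdot \a_\ell \nabla v \right)_{y + Q_\ell} \right)^{1/2}.
\end{equation*}

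Next I would bound each factor. For the first factor, Assumption~\eqref{AssPot} gives $\Lambda_{+,\ell}(t,x;p) \leq C(|p + \nabla \varphi_{y_1,L_1}(t,x;p)|^{r-2}_+ + |p + \nabla \varphi_{y_2,L_2}(t,x;p)|^{r-2}_+)$, and Proposition~\ref{prop.prop2.3} (via the ``Powers'' and ``Integration'' properties of Proposition~\ref{prop:prop2.20}) yields a pointwise stochastic-integrability bound $\Lambda_{+,\ell}(t,x;p) \leq \mathcal{O}_{\Psi, c}(C|p|^{r-2}_+)$, hence the space-time average over $y + Q_\ell$ satisfies $\left( \Lambda_{+,\ell}(\cdot,\cdot;p) \right)_{y+Q_\ell} \leq \mathcal{O}_{\Psi,c}(C|p|^{r-2}_+)$. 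For the second factor, I apply the parabolic Caccioppoli inequality (Proposition~\ref{prop:paraboliccacciop}) on $y + Q_{2\ell}$ (which is contained in both $y_1 + \Lambda_{L_1}$ and $y_2 + \Lambda_{L_2}$, so the equation holds there):
\begin{equation*}
    \int_{-\ell^2}^0 \sum_{x \in y + \Lambda_\ell} \nabla v(t,x) \cdot \a_\ell(t,x;p) \nabla v(t,x) \, dt \leq \frac{C}{\ell^2} \int_{-4\ell^2}^0 \sum_{x \in y + \Lambda_{2\ell}} \Lambda_{+,\ell}(t,x;p) |v(t,x)|^2 \, dt.
\end{equation*}
Dividing by $|Q_\ell|$, I get $\left( \nabla v \cdot \a_\ell \nabla v \right)_{y+Q_\ell} \leq \frac{C}{\ell^2} \left( \Lambda_{+,\ell}(\cdot,\cdot;p) |v|^2 \right)_{y + Q_{2\ell}}$, and then Proposition~\ref{prop:sublincorr} applied to both $\varphi_{y_1,L_1}$ and $\varphi_{y_2,L_2}$ (each of which is $\mathcal{O}_1(C L_i^{7/8})$, and after subtracting a common translate one sees $v$ itself is $\mathcal{O}_1(C(L_1+L_2)^{7/8})$ pointwise) combined with the bound on $\Lambda_{+,\ell}$ and the ``Product''/``Integration'' rules gives $\left( \Lambda_{+,\ell} |v|^2 \right)_{y+Q_{2\ell}} \leq \mathcal{O}_{1/2}(C|p|^{r-2}_+(L_1+L_2)^{7/4})$, so that $\left( \nabla v \cdot \a_\ell \nabla v \right)_{y+Q_\ell} \leq \mathcal{O}_{1/2}(C|p|^{r-2}_+ (L_1+L_2)^{7/4}/\ell^2)$.

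Finally I would multiply the two estimates: using the ``Product'' and ``Powers'' rules of Proposition~\ref{prop:prop2.20}, the product of $\mathcal{O}_{\Psi,c}(C|p|^{r-2}_+)^{1/2}$ and $\mathcal{O}_{1/2}(C|p|^{r-2}_+(L_1+L_2)^{7/4}/\ell^2)^{1/2}$ is $\mathcal{O}_{1/2}(C|p|^{r-2}_+ (L_1+L_2)^{7/8}/\ell)$, which is the claimed bound. I expect the main obstacle to be the bookkeeping needed to pass from the pointwise estimates to the space-time averages over parabolic cylinders while tracking the correct stochastic-integrability exponent — in particular, making sure that the stochastic integrability does not deteriorate beyond $\mathcal{O}_{1/2}$ when multiplying the square roots, and correctly reconciling the $\mathcal{O}_{\Psi,c}$ bounds coming from Proposition~\ref{prop.prop2.3} (which are super-Gaussian) with the $\mathcal{O}_1$ bounds coming from Proposition~\ref{prop:sublincorr}. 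A secondary technical point is verifying the geometry: the hypotheses $(y+\Lambda_{2\ell}) \subseteq (y_i + \Lambda_{L_i})$ are exactly what is needed for $v$ to solve the homogeneous equation on $y + Q_{2\ell}$, so that the Caccioppoli inequality can be applied there, and one must also check that one may translate the dynamics so that the difference $v$, rather than each $\varphi_{y_i, L_i}$ separately, is controlled sublinearly — this uses the translation-covariance built into Remark~\ref{rem:remark6.3}.
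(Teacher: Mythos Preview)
Your proposal is correct and follows essentially the same route as the paper's own proof: define the coupled environment $\overline{\a}$ and the difference $v = \varphi_{y_1,L_1} - \varphi_{y_2,L_2}$, observe that $v$ solves the homogeneous parabolic equation on $y + Q_{2\ell}$, apply Cauchy--Schwarz followed by the Caccioppoli inequality (Proposition~\ref{prop:paraboliccacciop}), and then invoke Proposition~\ref{prop.prop2.3} for $\overline{\Lambda}_+$ and Proposition~\ref{prop:sublincorr} for $|v|$ together with the rules of Proposition~\ref{prop:prop2.20}. The obstacles you anticipate (tracking the $\mathcal{O}_{1/2}$ integrability through the product and handling the translated dynamics via Remark~\ref{rem:remark6.3}) are exactly the bookkeeping points the paper leaves implicit.
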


\begin{proof}
    The beginning of the proof is similar to the one of Proposition~\ref{prop:quantconvsurftens}. To simplify the notation in the argument below, we will assume that $y = y_1 = y_2 = 0$. We then fix a slope $p \in \Rd$ and the values of the three integers $L_1 , L_2, \ell$ such that they satisfy the assumption of the statement of the lemma, and define the symmetric positive matrix and maximal eigenvalue
\begin{equation*}
    \left\{ \begin{aligned}
    \overline{\a}(t , x ; p) & := \int_0^1 D_p^2 V(p + s \nabla \varphi_{L_1}(t, x ; p) + (1-s) \nabla \varphi_{L_2}(t, x ; p)) \, ds \\  
    \overline{\Lambda}_{+} (t , x ; p) & := \sup_{\substack{\xi \in \Rd \\ | \xi | \leq 1} } \xi \cdot \overline{\a}(t , x ; p) \xi
    \end{aligned} \right.
\end{equation*}
and the function
\begin{equation*}
    v(t , x) = \varphi_{L_1}(t, x ; p) - \varphi_{L_2}(t, x ; p) ~~\mbox{for}~~ (t , x) \in \R \times \Lambda_{2 \ell}.
\end{equation*}
Let us note that the map $v$ solves the parabolic equation
\begin{equation*}
    \partial_t v - \nabla \cdot  \overline{\a}(\cdot , \cdot ; p)  \nabla v  = 0 ~~\mbox{in}~~ \R \times \Lambda_{2\ell},
\end{equation*}
and that we have the identity
\begin{align*}
    \lefteqn{\left( D_p V \left( p + \nabla \varphi_{L_1} (\cdot , \cdot ; p) \right) \right)_{Q_\ell} - \left( D_p V \left( p + \nabla \varphi_{L_2} (\cdot , \cdot ; p) \right) \right)_{Q_\ell}} \qquad & \\  & 
    = \frac{1}{\left|Q_\ell \right|} \int_{I_\ell} \sum_{x \in \Lambda_\ell}  D_p V \left( p + \nabla \varphi_{L_1} (\cdot , \cdot ; p)  \right) - D_p V \left( p + \nabla \varphi_{L_2} (\cdot , \cdot ; p)  \right) \, dt \\
    & = \frac{1}{\left|Q_\ell \right|} \int_{I_\ell} \sum_{x \in \Lambda_\ell}  \overline{\a}(t  , x ; p)  \nabla v(t , x) \, dt.
\end{align*}
We next estimate the term on the right-hand side by applying first the Cauchy-Schwarz inequality and then the Caccioppoli inequality (Proposition~\ref{prop:paraboliccacciop}). We obtain
\begin{align*}
    \left( \int_{I_\ell} \sum_{x \in \Lambda_\ell}  \overline{\a}(t  , x ; p)  \nabla v(t , x) \, dt \right)^2 & \leq \int_{I_\ell} \sum_{x \in \Lambda_\ell} \overline{\Lambda}_{+} (t , x ; p) \, dt \times \int_{I_\ell} \sum_{x \in \Lambda_\ell} \nabla v(t , x) \cdot \overline{\a}(t  , x ; p)  \nabla v(t , x) \, dt \\
    & \leq \frac{C}{\ell^2} \int_{I_\ell} \sum_{x \in \Lambda_\ell} \overline{\Lambda}_{+} (t , x ; p) \, dt \times  \int_{I_{2\ell}} \sum_{x \in \Lambda_{2\ell}} \overline{\Lambda}_{+}(t , x ; p) \left| v(t , x) \right|^2  \, dt.
\end{align*}
From Proposition~\ref{prop.prop2.3} and the growth Assumption~\eqref{AssPot} (for the term $\overline{\Lambda}_{+}(t , x ; p)$) and Proposition~\ref{prop:sublincorr} (for the term $|v(t , x)|$) together with Proposition~\ref{prop:prop2.20} ``Summation", ``Integration" and ``Product", we obtain that
\begin{equation*}
    \left| \left( D_p V \left( p + \nabla \varphi_{L_1} (\cdot , \cdot ; p) \right) \right)_{Q_\ell} - \left( D_p V \left( p + \nabla \varphi_{L_2} (\cdot , \cdot ; p) \right) \right)_{Q_\ell} \right| \leq \mathcal{O}_{1/2} \left( C |p|^{r-2}_+ \frac{(L_1 + L_2)^{7/8}}{\ell} \right).
\end{equation*}
\end{proof}

\subsection{Concentration for the space-time average of the flux} \label{section:concentrationflux}

In this section, we use the result of Lemma~\ref{lemm:lemmacouplingtwoboxes} to show that the space-time average of the flux $\left( D_p V \left( p + \nabla \varphi_{L} (\cdot , \cdot ; p) \right) \right)_{Q_\ell}$ is well approximated by a sum of independent random variable, and then apply a concentration inequality (specifically, the one stated in Proposition~\ref{prop:prop2.20} ``Concentration") to deduce that it is concentrated around its mean. 

\begin{lemma} \label{lemm:concenineqflux}
There exists a constant $C:= C(d , V) < \infty$ such that the following holds. For $p \in \Rd$ and any $L , \ell \in \N$ with $\ell \leq L$,
\begin{equation*}
    \left| \left( D_p V \left( p + \nabla \varphi_{L} (\cdot , \cdot ; p) \right) \right)_{Q_\ell} - D_p \bar \sigma_L(p)  \right| \leq  \mathcal{O}_{1/2} \left( C |p|^{r-1}_+ \left( \frac{ L^{7/8}}{\ell} \right)^{\frac 12} \right).
\end{equation*}
\end{lemma}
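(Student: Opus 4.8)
The plan is to prove Lemma~\ref{lemm:concenineqflux} by a coarse-graining argument that exhibits the space-time average $\left( D_p V \left( p + \nabla \varphi_{L} (\cdot , \cdot ; p) \right) \right)_{Q_\ell}$ as a sum of finitely many independent (or nearly independent) contributions, plus an error controlled by Lemma~\ref{lemm:lemmacouplingtwoboxes}. First I would tile the box $\Lambda_\ell$ (and the time interval $(-\ell^2,0)$) by a collection of roughly $(\ell/\ell_0)^d$ (resp.\ $(\ell/\ell_0)^2$ in time) disjoint sub-cylinders $z + Q_{\ell_0}$ of a smaller mesoscopic scale $\ell_0 \leq \ell$, so that $\left( D_p V(p + \nabla \varphi_L) \right)_{Q_\ell}$ is the average of the mesoscopic averages $\left( D_p V(p + \nabla \varphi_L) \right)_{z + Q_{\ell_0}}$ over the tiles $z$. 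The key point is that, by Lemma~\ref{lemm:lemmacouplingtwoboxes} (applied with $(y_1,L_1)=(0,L)$ and $(y_2,L_2)=(z,\ell_0')$ for a slightly enlarged box $z+\Lambda_{\ell_0'}$ around the tile), each such mesoscopic average computed in the big box $\Lambda_L$ differs from the corresponding average computed in the \emph{local} box $z + \Lambda_{\ell_0'}$ by an error of size $\mathcal{O}_{1/2}\left( C |p|^{r-2}_+ L^{7/8}/\ell_0 \right)$; and the local averages $\left( D_p V(p + \nabla \varphi_{z,\ell_0'}) \right)_{z + Q_{\ell_0}}$ over disjoint (non-overlapping) tiles $z$ are \emph{independent} random variables, because the Langevin dynamics $\varphi_{z,\ell_0'}$ in disjoint boxes are driven by disjoint families of Brownian motions.

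Next I would center these local contributions. Each local average $\left( D_p V(p + \nabla \varphi_{z,\ell_0'}) \right)_{z + Q_{\ell_0}}$ has expectation equal to $D_p \bar\sigma_{\ell_0'}(p)$ by the first identity of Proposition~\ref{prop:identsecondder} (and translation/time-stationarity), and by Proposition~\ref{prop:quantconvsurftens} this differs from $D_p\bar\sigma_L(p)$ by at most $C|p|_+^{r-2}(\ell_0')^{-1/8}$. Each centered local average satisfies a stochastic integrability bound: using Proposition~\ref{prop.prop2.3} and Assumption~\eqref{AssPot} (which gives $|D_p V(p+\nabla\varphi)| \leq C|p|_+^{r-1} + C|\nabla\varphi|^{r-1}$, hence $\mathcal{O}_{\Psi,c}(C|p|_+^{r-1})$ pointwise), together with Proposition~\ref{prop:prop2.20} ``Integration'', the local average is $\mathcal{O}_{\Psi,c}(C|p|_+^{r-1})$, so in particular (by the ``Comparison'' property) it is $\mathcal{O}_1(C|p|_+^{r-1})$ and its centered version is too. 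Applying Proposition~\ref{prop:prop2.20} ``Concentration'' (with $s$ slightly above $1$, which is permitted since $1<s<2$; I would use the fact that $\mathcal{O}_{\Psi,c}$ control implies $\mathcal{O}_s$ control for any fixed $s \in (1,2)$ after adjusting the constant) to the $N \asymp (\ell/\ell_0)^{d+2}$ independent centered local averages, the average over tiles concentrates around its mean at rate $\mathcal{O}_{s}\left( C|p|_+^{r-1} N^{-1/2} \right) = \mathcal{O}_{s}\left( C|p|_+^{r-1} (\ell_0/\ell)^{(d+2)/2} \right)$, and since $d \geq 2$ this is at most $\mathcal{O}_{1/2}(C|p|_+^{r-1}\ell_0/\ell)$ after weakening the integrability.

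Collecting the three error terms --- the coupling error $C|p|_+^{r-2}L^{7/8}/\ell_0$ from Lemma~\ref{lemm:lemmacouplingtwoboxes}, the surface-tension convergence error $C|p|_+^{r-2}\ell_0^{-1/8}$ from Proposition~\ref{prop:quantconvsurftens}, and the concentration fluctuation $C|p|_+^{r-1}\ell_0/\ell$ --- I would then optimize the choice of the intermediate scale $\ell_0$. Balancing the dominant coupling error $L^{7/8}/\ell_0$ against the fluctuation $\ell_0/\ell$ gives $\ell_0 \asymp (L^{7/8}\ell)^{1/2}$, yielding a total error of order $(L^{7/8}/\ell)^{1/2}$, which (after checking that the remaining error $\ell_0^{-1/8}$ and lower-order terms are dominated, and absorbing everything into the constant) gives exactly the claimed bound $\mathcal{O}_{1/2}\left( C |p|_+^{r-1}(L^{7/8}/\ell)^{1/2} \right)$. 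The main obstacle I anticipate is bookkeeping: one must be careful that the enlarged local boxes $z+\Lambda_{\ell_0'}$ used in the coupling step remain contained in $\Lambda_L$ while the tiles $z+Q_{\ell_0}$ (on which the non-overlapping local dynamics are independent) genuinely partition $Q_\ell$, so the enlargement factor $\ell_0'/\ell_0$ must be a bounded constant and one must handle the boundary tiles; and one must track how the stochastic integrability exponent degrades through the ``Comparison'', ``Integration'', ``Concentration'' and final re-weakening steps so that the output is honestly $\mathcal{O}_{1/2}$. None of these is deep, but they require care to keep the constants depending only on $d$ and $V$ and the $p$-dependence exactly as stated.
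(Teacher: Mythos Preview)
Your approach is essentially the same as the paper's: tile $Q_\ell$ by mesoscopic cylinders $z+Q_{\ell_1}$, use Lemma~\ref{lemm:lemmacouplingtwoboxes} to replace the big-box flux average on each tile by a local one, apply concentration to the independent local averages, correct the mean via Proposition~\ref{prop:quantconvsurftens}, and optimize $\ell_1 \asymp (L^{7/8}\ell)^{1/2}$. Two points of your sketch need repair, though neither changes the outcome.

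First, the local averages over tiles at the \emph{same} spatial location $y$ but different times are \emph{not} independent: the stationary dynamic $\varphi_{y,\ell_0'}(\cdot,\cdot;p)$ is driven by the full history $\{B_t(x):t\in\R,\ x\in y+\Lambda_{\ell_0'}\}$, so time-shifted averages of it share the same Brownian motions. Only spatially separated local boxes give independence, and hence the effective sample size for the concentration step is $N\asymp(\ell/\ell_0)^{d}$, not $(\ell/\ell_0)^{d+2}$. The paper deals with this (and with the overlap of the enlarged local boxes) by splitting the index set into $O(1)$ sub-collections whose spatial boxes $y+\Lambda_{2\ell_1}$ are pairwise disjoint and applying ``Concentration'' within each. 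Since $d\geq 2$ already gives $(\ell_0/\ell)^{d/2}\leq \ell_0/\ell$, your final balancing is unaffected.

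Second, the ``Comparison'' item of Proposition~\ref{prop:prop2.20} goes $\mathcal O_s\Rightarrow\mathcal O_{\Psi,c}$, not the other way, so you cannot pass from an $\mathcal O_{\Psi,c}$ bound on the local averages to an $\mathcal O_s$ bound in order to invoke ``Concentration''. The fix is to avoid $\mathcal O_{\Psi,c}$ altogether here: Proposition~\ref{prop.prop2.3} gives $|\nabla\varphi|\leq\mathcal O_r(C)$ directly, so by Assumption~\eqref{AssPot} and ``Powers'' each local flux average is $\mathcal O_{r/(r-1)}(C|p|_+^{r-1})$; since $r>2$ puts $r/(r-1)\in(1,2)$, ``Concentration'' applies, and the result can then be weakened to $\mathcal O_{1/2}$.
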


\begin{remark}
This estimate only provides valuable information about the space-time average of the flux of the Langevin dynamic when $\ell$ is large (and specifically, when $\ell \in (L^{7/8} , L)$). For smaller values of $\ell$, we will use the following inequality
\begin{equation} \label{lemm:concenineqfluxsmallscales}
        \left| \left( D_p V \left( p + \nabla \varphi_{L} (\cdot , \cdot ; p) \right) \right)_{Q_\ell} - D_p \bar \sigma_L(p)  \right| \leq  \mathcal{O}_{1} \left( C |p|^{r-1}_+ \right),
\end{equation}
which follows from the Assumption~\eqref{AssPot} on the potential $V$ and the stochastic integrability estimate stated in Proposition~\ref{prop.prop2.3}.
\end{remark}

\begin{proof}
Let us fix two integers $L , \ell \in \N$ such that $L^{7/8} \leq \ell \leq L$ (this can be done without loss of generality as~\eqref{lemm:concenineqfluxsmallscales} can be used if $\ell \leq L^{7/8}$). We then select an integer $\ell_1 \leq \ell$ whose explicit value will be decided later in the proof, and partition the parabolic cylinder $Q_\ell$ into parabolic cylinders of the form $z + Q_{\ell_1}$ with $z \in \mathcal{Z}$ (for some finite suitably chosen set $\mathcal{Z} \subseteq Q_\ell$ whose cardinality is of order $(\ell/\ell_1)^{d}$). For each $z = (s , y) \in \mathcal{Z}$, we will use the shorthand notation (N.B. the function $\varphi_{z}$ is defined on the set $\R \times (y + \Lambda_{2\ell_1})$ and valued in $\R$)
\begin{equation*}
    \varphi_{z} (\cdot , \cdot ; p) :=  \varphi_{y , 2\ell_1} (\cdot , \cdot ; p).
\end{equation*}
We then write
\begin{align} \label{eq:concetrationflux}
    \lefteqn{\left| \left( D_p V \left( p + \nabla \varphi_{L} (\cdot , \cdot ; p) \right) \right)_{Q_\ell} - D_p \bar \sigma_L(p) \right|} \qquad & \\ & 
    \leq  \underset{\eqref{eq:concetrationflux}-(i)}{\underbrace{\left| \left( D_p V \left( p + \nabla \varphi_{L} (\cdot , \cdot ; p) \right) \right)_{Q_\ell} - \frac{1}{\left| \mathcal{Z} \right|} \sum_{z \in \mathcal{Z}} \left( D_p V \left( p + \nabla \varphi_{z} (\cdot , \cdot ; p) \right) \right)_{z + Q_{\ell_1}}  \right|}} \notag \\
    & \quad + \underset{\eqref{eq:concetrationflux}-(ii)}{\underbrace{\left|  \frac{1}{\left| \mathcal{Z} \right|} \sum_{z \in \mathcal{Z}} \left( \left( D_p V \left( p + \nabla \varphi_{z} (\cdot , \cdot ; p) \right) \right)_{z + Q_{\ell_1}}  - D_p \bar \sigma_{2\ell_1}(p) \right) \right|  }} \notag \\ 
    &  \quad +  \underset{\eqref{eq:concetrationflux}-(iii)}{\underbrace{\left| D_p \bar \sigma_{2\ell_1}(p) - D_p \bar \sigma_L(p) \right|}}. \notag
\end{align}
We next estimate the three terms on the right-hand side separately (showing that they are all small). For the term \eqref{eq:concetrationflux}-(i), we use with the identity
\begin{equation*}
    \left( D_p V \left( p + \nabla \varphi_{L} (\cdot , \cdot ; p) \right) \right)_{Q_\ell} = \frac{1}{\left| \mathcal{Z} \right|} \sum_{z \in \mathcal{Z}} \left( D_p V \left( p + \nabla \varphi_{L} (\cdot , \cdot ; p) \right) \right)_{z + Q_{\ell_1}}
\end{equation*}
together with Lemma~\ref{lemm:lemmacouplingtwoboxes} and Proposition~\ref{prop:prop2.20} ``Summation" to obtain
\begin{align} \label{ineq:5111}
    \eqref{eq:concetrationflux}-(i)
    & \leq \frac{1}{\left| \mathcal{Z} \right|} \sum_{z \in \mathcal{Z}} \left| \left( D_p V \left( p + \nabla \varphi_{L} (\cdot , \cdot ; p) \right) \right)_{z + Q_{\ell_1}} - \left( D_p V \left( p + \nabla \varphi_{z} (\cdot , \cdot ; p) \right) \right)_{z + Q_{\ell_1}} \right| \\
    & \leq \mathcal{O}_{1/2} \left( C |p|^{r-2}_+ \frac{ L^{7/8}}{\ell_1} \right). \notag
\end{align}
For the term~\eqref{eq:concetrationflux}-(ii), we will prove the inequality
\begin{equation} \label{ineq:5112}
    \eqref{eq:concetrationflux}-(ii) \leq \mathcal{O}_1 \left( C \left( \frac{\ell_1 }{\ell} \right)^{\frac d2} \right).
\end{equation}
To establish~\eqref{ineq:5112}, we would like to argue that the terms in the sum on the left of~\eqref{eq:concetrationflux}-(ii) are independent random variables and apply the concentration inequality of Proposition~\ref{prop:prop2.20} ``Concentration". This strategy faces the following technical issue: the terms of the sum of~\eqref{eq:concetrationflux}-(ii) are not all independent since two cylinders of the form $(z + Q_{2 \ell_1})$ and $(z' + Q_{2 \ell_1})$ with $z , z' \in \mathcal{Z}$ may have some overlap. 

To overcome this difficulty, we refine the partition of the cylinder $Q_\ell$ as follows: we let $\mathcal{Z}_1 , \ldots, \mathcal{Z}_{2d}$ be a partition of the set $\mathcal{Z}$ (each one of them containing roughly $\left| \mathcal{Z} \right| / (2d)$ points) satisfying the following property:
\begin{equation} \label{eq:15182606}
    \forall i \in \{ 1 , \ldots, 2d \}, \, \forall z , z' \in \mathcal{Z}_i~\mbox{with}~ z = (s , y) ~\mbox{and}~ z' = (s' , y'), ~ (y + \Lambda_{2 \ell_1}) \cap (y' + \Lambda_{2 \ell_1}) = \emptyset.
\end{equation}
Using this refined partition, we may rewrite the sum as follows
\begin{equation*}
 \frac{1}{\left| \mathcal{Z} \right|} \sum_{z \in \mathcal{Z}} \left( D_p V \left( p + \nabla \varphi_{z} (\cdot , \cdot ; p) \right) \right)_{z + Q_{\ell_1}} 
 = \frac{1}{2d} \sum_{i = 1}^{2d} \frac{1}{\left| \mathcal{Z}_i \right|}  \sum_{z \in \mathcal{Z}_i} \left( D_p V \left( p + \nabla \varphi_{z + Q_{2\ell_1}} (\cdot , \cdot ; p) \right) \right)_{z + Q_{\ell_1}} .
\end{equation*}
We then fix an integer $i \in \{ 1 , \ldots, 2d \}$ and show the inequality
\begin{equation} \label{ineq:51133}
    \frac{1}{\left| \mathcal{Z}_i \right|}  \sum_{z \in \mathcal{Z}_i} \left( \left( D_p V \left( p + \nabla \varphi_{z} (\cdot , \cdot ; p) \right) \right)_{z + Q_{\ell_1}}  - D_p \bar \sigma_{2\ell_1}(p) \right) \leq  \mathcal{O}_1 \left( C \left( \frac{\ell_1 }{\ell} \right)^{\frac d2} \right).
\end{equation}
The inequality~\eqref{ineq:5112} then follows by applying Proposition~\ref{prop:prop2.20} ``Summation".

To prove~\eqref{ineq:51133}, we note that the random variables inside the sum are independent (this is a consequence of the property~\eqref{eq:15182606}), that, by the space and time stationarity of the Langevin dynamic, for any $z \in \mathcal{Z}_i$,
\begin{equation*}
    \E \left[ \left( D_p V \left( p + \nabla \varphi_{z} (\cdot , \cdot ; p) \right) \right)_{z + Q_{\ell_1}} \right] =  D_p \bar \sigma_{2\ell_1}(p),
\end{equation*}
and that, by Proposition~\ref{prop.prop2.3} and Proposition~\ref{prop:prop2.20} (since we are averaging the flux over parabolic cylinders)
\begin{equation*}
    \left| \left( D_p V \left( p + \nabla \varphi_{z} (\cdot , \cdot ; p) \right) \right)_{z + Q_{\ell_1}} \right| \leq \mathcal{O}_{r/(r-1)} (C|p|^{r-1}_+).
\end{equation*}
We can thus apply Proposition~\ref{prop:prop2.20} ``Concentration" and obtain
\begin{equation*}
    \left| \frac{1}{\left| \mathcal{Z}_i \right|}  \sum_{z \in \mathcal{Z}_i} \left( D_p V \left( p + \nabla \varphi_{z + Q_{2\ell_1}} (\cdot , \cdot ; p) \right) \right)_{z + Q_{\ell_1}}  - D_p \bar \sigma_{2\ell_1}(p) \right| \leq  \mathcal{O}_{r/(r-1)} \left( \frac{C|p|^{r-1}_+}{|\mathcal{Z}_i|^{1/2}} \right).
\end{equation*}
Using that the cardinality of the set $\mathcal{Z}_i$ is of the same order as the cardinality of $\mathcal{Z}$, which is itself of order $(\ell / \ell_1)^d$, we obtain~\eqref{ineq:51133}.

The term \eqref{eq:concetrationflux}-(iii) is the simplest to estimate, and it follows from Proposition~\ref{prop:quantconvsurftens} that
\begin{equation} \label{ineq:5113}
    \eqref{eq:concetrationflux}-(iii) \leq \frac{C}{\ell_1}.
\end{equation}
Combining the estimates~\eqref{ineq:5111},~\eqref{ineq:5112} and~\eqref{ineq:5113}, we have obtained
\begin{align*}
    \left| \left( D_p V \left( p + \nabla \varphi_{L} (\cdot , \cdot ; p) \right) \right)_{Q_\ell} - D_p \bar \sigma_L(p) \right| & \leq  \mathcal{O}_{1/2} \left( \frac{ C L^{7/8}}{\ell_1} \right) + \mathcal{O}_{r/(r-1)} \left( C \left( \frac{\ell_1 }{\ell} \right)^{\frac d2} \right) + \frac{C}{\ell_1} \\
    & \leq \mathcal{O}_{1/2} \left( C |p|^{r-2}_+ \frac{ L^{7/8}}{\ell_1}  +  C |p|^{r-1}_+ \left( \frac{\ell_1 }{\ell} \right)^{\frac d2} \right) + \frac{C |p|^{r-1}_+}{\ell_1}.
\end{align*}
We then simplify the right-hand side (at the cost of being suboptimal) by using the inequalities $1 \leq L^{7/8}$, $|p|^{r-2}_+ \leq |p|^{r-1}_+$ and $2 \leq d$. We obtain
\begin{equation*}
    \left| \left( D_p V \left( p + \nabla \varphi_{L} (\cdot , \cdot ; p) \right) \right)_{Q_\ell} - D_p \bar \sigma_L(p) \right| \leq  \mathcal{O}_{1/2} \left( C |p|^{r-1}_+ \left( \frac{ L^{7/8}}{\ell_1}  + \frac{\ell_1 }{\ell} \right) \right).
\end{equation*}
We then optimise the previous display by choosing $\ell_1 := \sqrt{ L^{\frac{7}{8}} \ell } = L^{\frac{7}{16}} \ell^{\frac{1}{2}}$ (this value is admissible if $L^{7/8} \leq \ell$ so that $\ell_1 \leq \ell$). We obtain
\begin{equation*}
    \left| \left( D_p V \left( p + \nabla \varphi_{L} (\cdot , \cdot ; p) \right) \right)_{Q_\ell} - D_p \bar \sigma_L(p) \right| \leq  \mathcal{O}_{1/2} \left( C |p|^{r-1}_+ \left( \frac{ L^{7/8}}{\ell} \right)^{\frac 12} \right),
\end{equation*}
which is the desired inequality.
\end{proof}

\subsection{Estimating the weak norm of the flux via the multiscale Poincar\'e inequality}
\label{sec:section6.3}

In this section, we combine the result of Lemma~\ref{lemm:concenineqflux} together with the multiscale Poincar\'e inequality (Proposition~\ref{prop.multiscalePoinc}) to complete the proof of Proposition~\ref{prop:sublinearityflux}.

\begin{proof}[Proof of Proposition~\ref{prop:sublinearityflux}]
We only prove the result when $L = 3^n$ for some integer $n \in \N$. We first apply the multiscale Poincar\'e inequality
\begin{align} \label{eq:12190204}
     \lefteqn{\left\| D_p V (p + \nabla \varphi_{3^n}(\cdot , \cdot ; p)) - D_p \bar \sigma_{3^n}(p) \right\|_{\underline{W}^{-1,r}_{\mathrm{par}} (Q_{3^n})} } \qquad & \\ & \leq  C  \left\| D_p V (p + \nabla \varphi_{3^n}(\cdot , \cdot ; p)) - D_p \bar \sigma_{3^n}(p) \right\|_{\underline{L}^{r} (Q_{3^n})} \notag \\
     & \quad + C \sum_{m = 0}^n 3^m \left( \left| \mathcal{Z}_m \right|^{-1}  \sum_{z \in \mathcal{Z}_m} \left| \left( D_p V (p + \nabla \varphi_{3^n}(\cdot , \cdot ; p)) \right)_{z + Q_{3^m}} - D_p \bar \sigma_{3^n}(p)  \right|^r \right)^{\sfrac 1r}. \notag
\end{align}
We next estimate the two terms on the right-hand side. For the first term, we use Proposition~\ref{prop.prop2.3} (which provides a strong stochastic integrability estimate for the gradient of the Langevin dynamic at any time and any point in space), together with the growth Assumption~\eqref{AssPot} on the potential $V$ and Proposition~\ref{prop:prop2.20} ``Summation" and ``Integration". We obtain the bound
\begin{equation*}
     \left\| D_p V (p + \nabla \varphi_{3^n}(\cdot , \cdot ; p)) - D_p \bar \sigma_L(p) \right\|_{\underline{L}^{r} (Q_{3^n})} \leq \mathcal{O}_1(C |p|_+^{r-1}).
\end{equation*}
For the second term on the right-hand of~\eqref{eq:12190204}, we split it into two cases: whether $m \geq \frac{7}{8} n$ (in which case, we apply Lemma~\ref{lemm:concenineqflux}) or $m \leq 7n/8$ (in which case, we apply~\eqref{lemm:concenineqfluxsmallscales}).

\medskip

\textit{Case 1: Large values of $m$.}  We assume here that $m \geq 7n/8$, which implies $3^m \geq (3^n)^{\frac78}$. By Lemma~\ref{lemm:concenineqflux} and the space and time stationarity of the Langevin dynamic, we have the inequality
\begin{equation*}
     \left| \left( D_p V (p + \nabla \varphi_{3^n}(\cdot , \cdot ; p)) - D_p \bar \sigma_L(p) \right)_{z + Q_{3^m}}  \right|^r \leq \mathcal{O}_{1/(2r)} \left( C |p|^{r(r-2)}_+  3^{7r n/16 - rm/2} \right).
\end{equation*}
Summing over the vertices $z \in \mathcal{Z}_m$, and applying Proposition~\ref{prop:prop2.20} ``Summation", we deduce that
\begin{equation*}
        \left( \left| \mathcal{Z}_m \right|^{-1}  \sum_{z \in \mathcal{Z}_m} \left| \left( D_p V (p + \nabla \varphi_{3^n}(\cdot , \cdot ; p)) - D_p \bar \sigma_L(p) \right)_{z + Q_{3^m}}  \right|^r \right)^{\sfrac 1r} \leq \mathcal{O}_{1/2} \left( C |p|^{r-1}_+  3^{7n/16 - m/2} \right).
\end{equation*}
Multiplying both sides of the previous inequality by $3^m$ and summing over the integers $m \in (7n / 8 , n)$, we obtain
\begin{equation} \label{eq:16230204}
    \sum_{m = \lfloor \frac{7n}{8} \rfloor }^n 3^m \left( \left| \mathcal{Z}_m \right|^{-1}  \sum_{z \in \mathcal{Z}_m} \left| \left( D_p V (p + \nabla \varphi_{3^n}(\cdot , \cdot ; p)) - D_p \bar \sigma_L(p) \right)_{z + Q_{3^m}}  \right|^r \right)^{\sfrac 1r}
    \leq \mathcal{O}_{1/2} \left( C |p|^{r-1}_+ 3^{\frac{15n}{16}} \right),
\end{equation}
where we have used the inequality
$$\sum_{m = \lfloor \frac{7n}{8} \rfloor }^n 3^m 3^{7n/16 - m/2} = 3^{7n/16} \sum_{m = \lfloor \frac{7n}{8} \rfloor }^n 3^{m/2} \leq C 3^{15n/16}.$$

\medskip

\textit{Case 2: Small values of $m$.}  We assume here that $m \geq 7n/8$. By the inequality~\eqref{lemm:concenineqfluxsmallscales} and the space and time stationarity of the Langevin dynamic, we have the inequality
\begin{equation*}
     \left| \left( D_p V (p + \nabla \varphi_{3^n}(\cdot , \cdot ; p)) - D_p \bar \sigma_L(p) \right)_{z + Q_{3^m}}  \right|^r \leq \mathcal{O}_{1/(2r)} \left( C |p|^{r(r-1)}_+ \right).
\end{equation*}
Multiplying both sides of the previous inequality by $3^m$ and summing over the integers $m \in (0 , 7n/8)$, we obtain
\begin{equation*}
    \sum_{m = 0 }^{\lfloor \frac{7n}{8} \rfloor} 3^m \left( \left| \mathcal{Z}_m \right|^{-1}  \sum_{z \in \mathcal{Z}_m} \left| \left( D_p V (p + \nabla \varphi_{3^n}(\cdot , \cdot ; p)) - D_p \bar \sigma_L(p) \right)_{z + Q_{3^m}}  \right|^r \right)^{\sfrac 1r}
    \leq \mathcal{O}_{1/2} \left( C |p|^{r-1}_+ 3^{\frac{7n}{8}} \right).
\end{equation*}
Combining the inequality with the estimates~\eqref{eq:16230204},~\eqref{eq:16230204} and~\eqref{eq:16230204} (and noting that the second one is the largest), we obtain
\begin{equation*}
    \left\| D_p V (p + \nabla \varphi_{3^n}(\cdot , \cdot ; p)) - D_p \bar \sigma_{3^n}(p) \right\|_{\underline{W}^{-1,r}_{\mathrm{par}} (Q_{3^n})} \leq \mathcal{O}_{1/2} \left( C |p|^{r-1}_+  3^{15n/16} \right).
\end{equation*}
\end{proof}

\section{Quantitative hydrodynamic limit} \label{sec:section6}

This section contains the proof of the hydrodynamic limit (Theorem~\ref{main.thm}) and is based on a two-scale expansion (making use of the results established in the previous sections).

It is structured as follows:
\begin{itemize}
    \item As it is more convenient to state the main result, we rescale the problem and work on the torus $\mathbb{T}^\ep$ (i.e., the macroscopic scale is of size $1$ and the microscopic scale is of size $\ep$). For this reason we introduce some (suitably rescaled) notation and norms adapted to the discretized torus $\mathbb{T}^\ep$. In Section~\ref{sec:sec7.2} and~\ref{sec:sec7.3}, we introduce and study an approximation scheme for the homogenized equation which is used to pass from the discrete setting (where the Langevin dynamics are defined) to the continuous one (where the homogenized equation is defined).
    \item Section~\ref{Section.construct2scale} is devoted to the construction of the two-scale expansion. Following a standard technique in stochastic homogenization (see, e.g.,~\cite[Chatper 11]{AKMbook} or~\cite{FN19} for recent quantitative results making use of the technique) we introduce a mesoscopic scale and a partition of unity. In Section~\ref{sec:deferrorterms}, the main error terms are introduced and they are all proved to be small (in suitable norms) in Appendix~\ref{sec:appendixB}.
    \item Finally, Section~\ref{secTh.quantitativehydr} implements the two-scale expansion and proves Theorem~\ref{main.thm}, following mostly standard techniques and making use of the estimates established in the previous sections.
\end{itemize}

For the rest of this section, we select a smooth initial condition $f \in C^\infty (\mathbb{T})$, and allow all the constants to depend on the function $f$, as well as on the dimension $d$ and the potential $V$. We remark that an explicit, quantitative dependence of the constants in the initial condition $f$ can be extracted from the argument.

To measure the stochastic integrability of the various random variables, we will only use the super-polynomial stochastic integrability $\mathcal{O}_{\Psi ,c}$ (which is the weakest introduced in this article). Throughout the section, we allow the constant $C < \infty$ and the exponent $c > 0$ to vary from line to line. These two parameters shall only depend on the dimension $d$, the potential $V$ and the initial condition $f$. 

We finally recall that $r$ is the exponent encoding the growth of the potential $V$ (following Assumption~\eqref{AssPot}), and that we denote by $r' = r/(r-1)$ the conjugate exponent of $r$.

\subsection{Preliminaries: microscopic notation and periodic Sobolev spaces} \label{sec:7.1preliminareis}

In this section, we introduce the notation used in this section.

\subsubsection{Discrete gradient}
For each point $x \in \mathbb{T}^\ep$, each $\ep \in (0 , 1)$ and each $u : \mathbb{T}^\ep \to \R$, we introduce the definition of the (vector-valued) discrete gradient
\begin{equation*}
    \nabla^\ep u (t , x) := \left( \nabla_1^\ep u(t , x) , \ldots, \nabla_d^\ep u(t , x) \right) \in \Rd,
\end{equation*}
with 
\begin{equation*}
    \nabla^\ep u (t , x) := \ep^{-1} \left( u(t , x+e_i) - u(t , x) \right).
\end{equation*}
For a discrete vector field $F = (F_1 , \ldots, F_d) : \mathbb{T}^\ep \to \R$, we define its discrete divergence as follows
\begin{equation*}
    \nabla^\ep \cdot F := \ep^{-1} \sum_{i = 1}^d F_i(t , x) -  F_i(t , x - e_i).
\end{equation*}

\subsubsection{Average value}

We denote the average value of a function $u : \mathbb{T}^\ep \to \R$ by (N.B. we use the convention that the torus $\mathbb{T}^\ep$ contains exactly $\ep^{-d}$ vertices)
\begin{equation*}
    \left( u  \right)_{\mathbb{T}^\ep} := \ep^d \sum_{x \in \mathbb{T}^\ep} u(x).
\end{equation*}
Given a discrete box $\Lambda_\ep \subseteq \mathbb{T}^\ep$, we denote by $\left| \Lambda_\ep \right|$ its cardinality and define the average value of a function $u : \mathbb{T}^\ep \to \R$ according to the identity
\begin{equation*}
    \left( u \right)_{\Lambda_\ep} := \left| \Lambda_\ep \right|^{-1} \sum_{x \in \Lambda_\ep} u(x).
\end{equation*}
Given a parabolic cylinder $Q^\ep := I \times \Lambda_\ep \subseteq (0,1) \times \mathbb{T}^\ep$ (where $I \subseteq (0,1)$ is an interval whose length is denoted by $|I|$), and a function $u : Q^\ep \to \R$, we denote by
\begin{equation*}
     \left( u \right)_{Q^\ep} := |I|^{-1} \int_I \left| \Lambda_\ep \right|^{-1} \sum_{x \in \Lambda^\ep} u(t , x) \, dt.
\end{equation*}

\subsubsection{Sobolev spaces on $\mathbb{T}^\ep$}
In this section, we let $q \in (1 , \infty)$ be an exponent and denote by $q' = q/(q-1)$ its conjugate exponent. We then introduce the Sobolev and parabolic Sobolev spaces adapted to the torus $\mathbb{T}^\ep$
\begin{itemize}
    \item \emph{$L^2$-norm:}
    $
    \left\| u \right\|_{L^2 \left( \mathbb{T}^\ep \right)}^2 :=  \ep^d \sum_{x \in \mathbb{T}^\ep} \left| u(x)\right|^2,
    $ 
    \item \emph{$L^q$-norm:}
    $
    \left\| u \right\|_{L^q \left( \mathbb{T}^\ep \right)}^q := \ep^d \sum_{x \in \mathbb{T}^\ep} \left| u(x)\right|^q,
    $
    \item \emph{$W^{1,q}$-norm:}
    $
        \left\| u \right\|_{W^{1,q}(\mathbb{T}^\ep)} :=  \left\| u \right\|_{L^q (\mathbb{T}^\ep)} + \left\| \nabla^\ep u \right\|_{L^q (\mathbb{T}^\ep)},
    $
    \item \emph{$W^{-1,q}$-norm:}
        $
        \left\| u \right\|_{W^{-1,q}(\mathbb{T}^\ep)} := \sup \left\{\ep^d \sum_{x \in \mathbb{T}^\ep} u(x) v(x) \, : \,   \left\| v \right\|_{W^{1,q'}(\mathbb{T}^\ep)} \leq 1 \right\}.
        $
\end{itemize}

\subsubsection{Parabolic Sobolev spaces on $(0,1) \times \mathbb{T}^\ep$}
We then define the following norms and parabolic Sobolev spaces, for any function $u : (0,1) \times \mathbb{T}^\ep \to \R$,
\begin{itemize}
    \item \emph{$L^2$-norm:}
    $
    \left\| u \right\|_{\underline{L}^2 \left( (0 ,1) \times \mathbb{T}^\ep \right)}^2 :=   \int_{(0 , 1)}\left\| u(t , \cdot)\right\|^2_{L^2(\mathbb{T}^\ep)} \, dt,
    $
    \item \emph{$L^q$-norm:}
    $
    \left\| u \right\|_{\underline{L}^q \left( (0 ,1) \times \mathbb{T}^\ep \right)}^q :=  \int_{(0,1)}\left\| u(t , \cdot)\right\|^q_{L^q(\mathbb{T}^\ep)} \, dt,
    $
    \item \emph{$L^\infty$-norm:} $
    \left\| u \right\|_{L^\infty \left( (0 ,1) \times \mathbb{T}^\ep \right)} := \sup_{(t , x) \in (0 ,1) \times \mathbb{T}^\ep} \left| u(t, x)\right|,
    $
    \item \emph{$L^qW^{1,q}$-norm:}
    $
        \left\| u \right\|_{L^q((0 , 1) ,W^{1,q}(\mathbb{T}^\ep))}^q :=  \int_{(0 , 1)}\left\| u(t , \cdot)\right\|^q_{W^{1,q}(\mathbb{T}^\ep))} \, dt,
    $
    \item \emph{$L^{q}W^{-1,q}$-norm:}
         $
        \left\| u \right\|_{L^{q}((0 , 1) , W^{-1,q}(\mathbb{T}^\ep))}^{q} := \int_{(0 , 1)}\left\| u(t , \cdot)\right\|^{q}_{W^{-1,q}(\mathbb{T}^\ep)} \, dt,
        $
    \item  \emph{$W^{1,q}_{\mathrm{par}}$-norm:}
    $
    \left\| u \right\|_{W^{1,q}_{\mathrm{par}}((0 ,1) \times \mathbb{T}^\ep)} := \left\| u \right\|_{L^q \left( (0 ,1) \times \mathbb{T}^\ep \right)} + \left\| \nabla^\ep u \right\|_{\underline{L}^q \left( (0 ,1) \times \mathbb{T}^\ep  \right)} + \left\| \partial_t u \right\|_{\underline{L}^{q}((0 , 1) , W^{-1,q}(\mathbb{T}^\ep))},
    $
    \item  \emph{$\hat{W}^{-1,q}_{\mathrm{par}}$-norm:}
    $
    \left\| u \right\|_{\hat{W}^{-1,q}_{\mathrm{par}}((0 ,1) \times \mathbb{T}^\ep)} := \sup \left\{ \ep^d \int_{(0,1)}\sum_{x \in \mathbb{T}^\ep} u(t , x) v(t , x) \, dt : \,  \left\| v \right\|_{W^{1,q'}_{\mathrm{par}}((0 ,1) \times \mathbb{T}^\ep)} \leq 1 \right\}.
    $
\end{itemize}

The following statement identifies the structure of the space $\hat{W}^{-1,q}_{\mathrm{par}}$ and is used at the end of the proof of Theorem~\ref{main.thm}. It is discrete version of~\cite[Lemma 3.11]{ABM} with the $L^2$-norm is replaced by the $L^q$-norm.

\begin{lemma}[Identification of $\hat{W}^{-1,q}_{\mathrm{par}}((0,1) \times \mathbb{T}^\ep)$] \label{lem.idenH-1par}
There exists a constant $C := C(d,q) < \infty$ such that, for any $f \in L^{q}((0,1) \times \mathbb{T}^\ep) $, there exist a continuous function $h : (0,1) \times \mathbb{T}^\ep \to \R$ and a function $h^* : (0,1) \times \mathbb{T}^\ep \to \R$ such that
\begin{equation*}
    \left\{ \begin{aligned}
     \partial_t h  + h^* & = f, \\
    \left\| h \right\|_{L^q((0 , 1) ,W^{1,q}(\mathbb{T}^\ep))} & \leq C \left\| f \right\|_{\hat{W}^{-1,q}_{\mathrm{par}}((0 ,1) \times \mathbb{T}^\ep)}, \\
    \left\| h^* \right\|_{L^{q}((0 , 1) , W^{-1,q}(\mathbb{T}^\ep))} & \leq C \left\| f \right\|_{\hat{W}^{-1,q}_{\mathrm{par}}((0 ,1) \times \mathbb{T}^\ep)},
    \\
    h(0, \cdot)  = h(1 , \cdot) & = 0.
    \end{aligned} \right.
\end{equation*}
\end{lemma}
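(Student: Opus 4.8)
The plan is to reduce the statement to a known identification result for the parabolic $\hat W^{-1,q}_{\mathrm{par}}$-space (the $L^2$-version being~\cite[Lemma 3.11]{ABM}) and to transfer it to the discrete periodic setting. The key observation is that the $\hat W^{-1,q}_{\mathrm{par}}$-norm is, by definition, the dual norm of $W^{1,q'}_{\mathrm{par}}$ with respect to the spatial-temporal $L^2$ pairing, and that a functional $f$ with finite such norm must be realizable as $\partial_t h + h^*$ where $h$ has the time-regularity carried by $\partial_t$ and $h^*$ carries the spatial $W^{-1,q}$ part. First I would set up the abstract functional-analytic framework: consider the Banach space $X := L^q((0,1), W^{1,q}(\mathbb{T}^\ep)) \times L^q((0,1), W^{-1,q}(\mathbb{T}^\ep))$ (with $\mathbb{T}^\ep$ finite, so all these are finite-dimensional and all norms are genuine norms up to the quotient by constants in space, which is handled since we work with test functions $v$ of zero spatial average implicitly — or more simply since $\mathbb{T}^\ep$ is finite there is no subtlety) and the bounded linear map $T : X \to W^{1,q'}_{\mathrm{par}}((0,1)\times\mathbb{T}^\ep)^*$ given by $T(h,h^*) := \partial_t h + h^*$ acting on test functions by integration by parts in time, i.e.\ $\langle T(h,h^*), v\rangle = -\ep^d\int_0^1 \sum_x h\,\partial_t v \, dt + \ep^d \int_0^1\sum_x h^* v\,dt$, which is well-defined precisely when $h(0,\cdot) = h(1,\cdot) = 0$ so that the boundary terms vanish.

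The heart of the argument is to show $T$ is surjective with a bounded right inverse of the claimed norm. I would argue this by duality / a Hahn--Banach or minimal-norm-selection argument: given $f \in L^q((0,1)\times\mathbb{T}^\ep)$ viewed as an element of the dual of $W^{1,q'}_{\mathrm{par}}$, consider the closed subspace $N \subseteq W^{1,q'}_{\mathrm{par}}((0,1)\times\mathbb{T}^\ep)$ on which the pairing $\langle f, \cdot\rangle$ is controlled, and observe that the map $v \mapsto (\nabla^\ep v, v, \partial_t v)$ realizes $W^{1,q'}_{\mathrm{par}}$ as a closed subspace of a product of $L^{q'}$-spaces; then $f$ restricted to that subspace extends (Hahn--Banach) to a functional on the whole product of norm at most $\|f\|_{\hat W^{-1,q}_{\mathrm{par}}}$, and the Riesz representation of that extension in the product of $L^q$-spaces gives three functions $(g, g_0, g_1)$ with $f = -\nabla^\ep\cdot g + g_0 - \partial_t g_1$ (in the distributional sense against test functions) and $\|g\|_{L^q} + \|g_0\|_{L^q} + \|g_1\|_{L^q} \le C\|f\|_{\hat W^{-1,q}_{\mathrm{par}}}$. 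Setting $h := -g_1$ and $h^* := -\nabla^\ep\cdot g + g_0$ then gives $\partial_t h + h^* = f$ with the stated bounds, except that $h = -g_1$ need not vanish at $t = 0, 1$ and need not be continuous in time.

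To fix the boundary and continuity conditions I would follow the standard device from~\cite[Lemma 3.11]{ABM}: one first replaces $g_1$ by its mollification/truncation in time so that it becomes continuous (in the finite-dimensional-in-space setting this is just convolving the $\mathbb{T}^\ep$-valued $L^q$ function of $t\in(0,1)$ with a smooth kernel, which is bounded on $L^q$), absorbing the commutator into $h^*$; and then one subtracts from $h$ an affine-in-time interpolation $t\mapsto (1-t)h(0,\cdot) + t\,h(1,\cdot)$ (well-defined now that $h$ is continuous in time with values in $W^{1,q}(\mathbb{T}^\ep)$) to enforce $h(0,\cdot) = h(1,\cdot) = 0$; the time-derivative of this correction, namely $h(1,\cdot) - h(0,\cdot) \in W^{1,q}(\mathbb{T}^\ep) \subseteq W^{-1,q}(\mathbb{T}^\ep)$, is constant in time and gets folded into $h^*$, and its norm is controlled by $\sup_t\|h(t,\cdot)\|_{W^{1,q}}$, which a trace/interpolation estimate in the one-dimensional time variable bounds by $C(\|h\|_{L^q((0,1),W^{1,q})} + \|\partial_t h\|_{L^q((0,1),W^{-1,q})}) \le C\|f\|_{\hat W^{-1,q}_{\mathrm{par}}}$. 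The main obstacle I anticipate is bookkeeping the norms through the mollification-and-interpolation corrections while keeping the constant $C$ dependent only on $d$ and $q$ (and crucially \emph{independent of $\ep$} — this forces one to phrase everything in terms of the scaled norms on $\mathbb{T}^\ep$ and to use that the time-trace estimate is purely one-dimensional and hence $\ep$-free); once that is arranged, the discrete-vs-continuous distinction is purely notational since $\mathbb{T}^\ep$ is finite and $\nabla^\ep$, $\nabla^\ep\cdot$ are bounded operators with the integration-by-parts identity built in.
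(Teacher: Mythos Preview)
Your Hahn--Banach strategy---embed $W^{1,q'}_{\mathrm{par}}$ isometrically into a product of $L^{q'}$-type spaces via $v \mapsto (v, \nabla^\ep v, \partial_t v)$, extend the functional $\langle f, \cdot\rangle$, and read off $(h,h^*)$ from the dual representation---is exactly the paper's argument. Where you go astray is in the last paragraph: the mollification-plus-affine-subtraction scheme is both unnecessary and, as written, flawed.

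The flaw is the trace estimate $\sup_t \|h(t,\cdot)\|_{W^{1,q}} \le C\bigl(\|h\|_{L^q W^{1,q}} + \|\partial_t h\|_{L^q W^{-1,q}}\bigr)$. Lions--Magenes-type interpolation only places $h$ into $C([0,1];Z)$ for an interpolation space $Z$ between $W^{1,q}$ and $W^{-1,q}$, not into $C([0,1];W^{1,q})$; on a finite torus all norms are equivalent, but the equivalence constants blow up as $\ep \to 0$, precisely the $\ep$-dependence you flagged as the obstacle to avoid. There is a second issue: bounding $\|\partial_t h\|_{L^q W^{-1,q}} = \|f - h^*\|_{L^q W^{-1,q}}$ by $\|f\|_{\hat W^{-1,q}_{\mathrm{par}}}$ would require $\|f\|_{L^q W^{-1,q}} \le C\|f\|_{\hat W^{-1,q}_{\mathrm{par}}}$, which is the reverse of the trivial inequality.

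The correction is in fact unnecessary because you lose information by passing to ``the distributional sense against test functions''. The Hahn--Banach identity $\langle f,u\rangle = \langle g_0, u\rangle + \langle g, \nabla^\ep u\rangle + \langle g_1, \partial_t u\rangle$ holds for \emph{every} $u \in W^{1,q'}_{\mathrm{par}}$, including those not vanishing at $t=0,1$. Continuity of $h = -g_1$ is immediate: since $\mathbb{T}^\ep$ is finite, $h^* = -\nabla^\ep\cdot g + g_0$ lies genuinely in $L^q((0,1)\times\mathbb{T}^\ep)$ (not merely in a negative Sobolev space), so $\partial_t h = f - h^* \in L^q((0,1)\times\mathbb{T}^\ep)$ and $h(\cdot,x)$ is absolutely continuous for each $x$. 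Once $h$ is absolutely continuous, integrate by parts in time in the Hahn--Banach identity for a general $u$: comparing with the already-established relation $\partial_t h + h^* = f$, the residual boundary term $\bigl[\,\ep^d\sum_x h(t,x)u(t,x)\,\bigr]_{t=0}^{t=1}$ must vanish for arbitrary traces $u(0,\cdot)$, $u(1,\cdot)$, which forces $h(0,\cdot)=h(1,\cdot)=0$ directly.
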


\subsection{Solutions of parabolic equations and regularity estimates} \label{sec:sec7.2}

In this section, we state some regularity estimates on the solution $\bar u$ of the equation~\eqref{def.barumainthm}.

\begin{proposition}[Regularity for solutions of the homogenized equation] \label{prop.reghomogenizedsolution}
    Let $f \in C^\infty(\mathbb{T})$ be a smooth initial condition and let $\bar u : [0 , 1] \times \mathbb{T} \to \R$ be the solution of the parabolic equation
    \begin{equation} \label{parabolicbis}
        \left\{ \begin{aligned}
            \partial_t \bar u - \nabla \cdot D_p \bar \sigma (\nabla \bar u) = 0 &~~\mbox{in}~~ (0 , 1) \times \mathbb{T}, \\
            \bar u(0 , \cdot) = f & ~~\mbox{in}~~ \mathbb{T}.
        \end{aligned} \right.
    \end{equation}
    Then there exists a constant $C := C (d , V, f) < \infty$ such that the map $\bar u$ satisfies the following regularity estimates:
    \begin{itemize}
        \item $W^{1, \infty}$ and $H^2$ regularity estimate:
        \begin{equation*}
            \left\| \bar u \right\|_{L^\infty((0 , 1) \times \mathbb{T})} + \left\| \nabla \bar u \right\|_{L^\infty((0 , 1) \times \mathbb{T})}  + \left\| \nabla^2 \bar u \right\|_{L^2((0 , 1) \times \mathbb{T})} \leq C.
        \end{equation*}
        \item $H^1$ regularity for the time derivative:
        \begin{equation*}
            \left\| \partial_t \bar u \right\|_{L^2((0 , 1) \times \mathbb{T})}  + \left\| \partial_t \nabla \bar u \right\|_{L^2((0 , 1) \times \mathbb{T})}  \leq C.
        \end{equation*}
    \end{itemize}
\end{proposition}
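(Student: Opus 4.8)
The plan is to establish Proposition~\ref{prop.reghomogenizedsolution} by a standard energy-method bootstrap for the quasilinear parabolic equation~\eqref{parabolicbis}, crucially exploiting the convexity and $C^{1,1}$ regularity of $\bar\sigma$ stated in~\eqref{eq:conv.barsigma} (equivalently Proposition~\ref{prop:strictconvsurftens}). The key structural point is that, although $\bar\sigma$ is only $C^{1,1}$ (so $D_p^2\bar\sigma$ exists only almost everywhere), the bounds $\lambda_-(|p|^{r-1}+1) I_d \leq D_p^2\bar\sigma(p) \leq \lambda_+(|p|^{r-1}+1) I_d$ hold for a.e.\ $p$, which is enough to run $L^2$-based energy estimates: when one differentiates the equation (in space or time) the resulting linearized equation has a uniformly elliptic -- indeed degenerate-from-below-only-harmless -- coefficient matrix $\mathbf{A}(t,x) := D_p^2\bar\sigma(\nabla\bar u(t,x))$ with $\mathbf{A} \geq \lambda_- I_d$, and the upper bound is controlled once $\nabla\bar u$ is known to be bounded. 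So the logical order is: (1) basic energy estimates giving $\bar u \in L^\infty_t L^2_x$ and $\nabla\bar u \in L^2_{t,x}$; (2) the $L^\infty$ bound on $\bar u$ via the maximum principle (using that $D_p\bar\sigma(0)$ is well-defined and the equation is in divergence form, so constants are sub/supersolutions after subtracting, or via a De Giorgi/Moser iteration, but for smooth periodic data the comparison principle is cleanest); (3) the gradient bound $\nabla\bar u \in L^\infty_{t,x}$; (4) the higher estimates $\nabla^2\bar u, \partial_t\bar u, \partial_t\nabla\bar u \in L^2_{t,x}$.

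Concretely: First I would record that~\eqref{parabolicbis} has a unique smooth solution for $C^\infty$ periodic data -- this is classical since after the a priori estimates below the equation is uniformly parabolic on the relevant range of gradients; alternatively one may mollify $\bar\sigma$ to $\bar\sigma_\delta$ (preserving~\eqref{eq:conv.barsigma} up to constants), obtain the estimates uniformly in $\delta$, and pass to the limit. For (1), test the equation against $\bar u$: $\tfrac12\tfrac{d}{dt}\|\bar u\|_{L^2}^2 + \int \nabla\bar u \cdot D_p\bar\sigma(\nabla\bar u) = 0$; by convexity and the growth of $\bar\sigma$ (which follows from~\eqref{eq:conv.barsigma}, giving $p \cdot D_p\bar\sigma(p) \gtrsim |p|^{r+1} - C$, in particular $\gtrsim |p|^2 - C$), this yields $\|\bar u\|_{L^\infty((0,1)\times\mathbb{T})}$ plus $\|\nabla\bar u\|_{L^2((0,1)\times\mathbb{T})}$ bounds, with the $L^\infty$-in-time-$L^2$-in-space bound coming for free. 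For the genuine $L^\infty$ bound in (2) I would use the comparison principle: $\bar u$ and $\bar u + \|f\|_{L^\infty} + t\,|\nabla\cdot D_p\bar\sigma(0)|$ type barriers, or simply note $\min f \leq \bar u \leq \max f$ when $D_p\bar\sigma(0)=0$ (which holds by symmetry $\bar\sigma(p)=\bar\sigma(-p)$ inherited from $V$, or can be arranged); more robustly, since $D_p\bar\sigma$ is $C^{0,1}$ and monotone, $\partial_t w - \nabla\cdot(\mathbf{A}\nabla w) = 0$ with $w = \bar u - \|f\|_{L^\infty}$ and the maximum principle applies.

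For (3), the gradient bound, I would differentiate the equation in the direction $e_k$: $w_k := \partial_k \bar u$ solves $\partial_t w_k - \nabla\cdot(\mathbf{A}\nabla w_k) = 0$ with $\mathbf{A} = D_p^2\bar\sigma(\nabla\bar u) \geq \lambda_- I_d$ (using the a.e.\ lower bound; this linearization is justified rigorously via the mollified $\bar\sigma_\delta$). The maximum principle for this linear divergence-form parabolic equation gives $\|\nabla\bar u(t,\cdot)\|_{L^\infty} \leq \|\nabla f\|_{L^\infty}$ for all $t$. This is the step where the convexity of $\bar\sigma$ (ellipticity of $\mathbf{A}$, even if only degenerate-elliptic from below with the good sign) is essential, and it is also the main obstacle because the differentiation must be justified despite $\bar\sigma$ being merely $C^{1,1}$ -- the clean route is the mollification argument, obtaining the bound uniformly for $\bar\sigma_\delta$ and then passing to the limit using uniqueness. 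Once $\|\nabla\bar u\|_{L^\infty}$ is known, say $\leq M$, the coefficient $\mathbf{A}$ also satisfies $\mathbf{A} \leq \lambda_+(M^{r-1}+1) I_d =: \Lambda_0 I_d$, so the equation is uniformly parabolic with ellipticity ratio depending only on $d,V,f$.

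For (4), with uniform parabolicity in hand: testing the $w_k$ equation against $\partial_t w_k$ (or using the standard energy identity $\tfrac12\tfrac{d}{dt}\int \nabla w_k \cdot \mathbf{A} \nabla w_k + \int |\partial_t w_k|^2 \lesssim \int |\partial_t \mathbf{A}||\nabla w_k|^2$, and absorbing using $|\partial_t\mathbf{A}| \lesssim |\partial_t\nabla\bar u|$ together with Cauchy--Schwarz and the $L^\infty$ gradient bound) yields $\|\partial_t\nabla\bar u\|_{L^2((0,1)\times\mathbb{T})}$ together with $\|\nabla^2\bar u\|_{L^2((0,1)\times\mathbb{T})}$ (from elliptic regularity applied slice-by-slice: $-\nabla\cdot(\mathbf{A}\nabla\bar u) = -\partial_t\bar u$, and $\|\nabla^2\bar u(t,\cdot)\|_{L^2} \lesssim \|\partial_t\bar u(t,\cdot)\|_{L^2} + \|\nabla\bar u(t,\cdot)\|_{L^2}$ using that $\mathbf{A}$ is Lipschitz in $x$ via $\nabla\bar u$). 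Finally $\|\partial_t\bar u\|_{L^2}$ follows from the equation once $\|\nabla^2\bar u\|_{L^2}$ and $\|\nabla\bar u\|_{L^\infty}$ are controlled, or directly by testing the equation against $\partial_t\bar u$: $\int|\partial_t\bar u|^2 + \tfrac{d}{dt}\int\bar\sigma(\nabla\bar u) = 0$, whence $\|\partial_t\bar u\|_{L^2((0,1)\times\mathbb{T})}^2 \leq \int_{\mathbb{T}}\bar\sigma(\nabla f) < \infty$. Throughout, all constants depend only on $d$, $V$ (through $\lambda_\pm$ and $r$) and on $f$ (through $\|f\|_{L^\infty}$, $\|\nabla f\|_{L^\infty}$, $\int\bar\sigma(\nabla f)$, and higher Sobolev norms of $f$ used to start the energy estimates for the time derivative), which is the asserted dependence. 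I would present steps (1)--(4) as separate short paragraphs, citing the comparison principle and standard parabolic energy identities without reproducing the routine computations, and flag the $C^{1,1}$-versus-$C^2$ subtlety (handled by mollifying $\bar\sigma$) as the one genuinely delicate point.
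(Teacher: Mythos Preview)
Your proposal is correct and follows essentially the same approach as the paper: the paper does not give a self-contained proof but refers to the standard strategy of differentiating the equation in space and time, relying on the strict convexity of $\bar\sigma$ from Proposition~\ref{prop:strictconvsurftens}, and cites Lions for the $L^2$ estimates and DiBenedetto--Friedman for the $L^\infty$ gradient bound. Your outline spells out exactly this program, and you correctly flag the one delicate point (that $\bar\sigma$ is only $C^{1,1}$, handled by mollification), which the paper does not comment on explicitly.
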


\begin{remark}
    The existence and uniqueness of a solution to the parabolic equation~\eqref{parabolicbis} in the space $L^r ((0 , \infty), W^{1,r}(\mathbb{T})) \, \cap \, C((0 , \infty) , W^{-1 , r'}(\mathbb{T}))$ follows from standard arguments (for instance the monotonicity method of~\cite[Chapter 1, Section 8 and Chapter 2, Section 1]{lions1969quelques}). 
    
    The regularity is obtained by differentiating the parabolic equations with respect to the time and space variables and makes crucial use of the strict convexity of the surface tension established in Proposition~\ref{prop:strictconvsurftens}, see~\cite[Chapter 1, Theorem 8.1 and Chapter 2, Theorem 1.3]{lions1969quelques} for the $L^2$ estimates and~\cite{FriedmanDiBenedetto1984, Friedman1985} for the $L^\infty$-estimates (N.B. These references are concerned with the parabolic $p$-Laplacian, but the same arguments apply to the equation~\eqref{parabolicbis}, and are in fact easier because the equation is not degenerate)
\end{remark}

\subsection{Approximation scheme for nonlinear parabolic equations} \label{sec:sec7.3}

The proof requires an argument to pass from the discrete setting (where the Langevin dynamics are defined) to the continuous setting (where the function $\bar u$ is defined). This is the subject of this section, where we define a discretized approximation of the function $\bar u$ and quantify the error made by this procedure.

\begin{definition}[Discretized parabolic equation]
For $f \in C^\infty(\mathbb{T})$, we let $\bar u^\ep: (0 ,1) \times \mathbb{T}^\ep \to \R$ be the solution of the discrete parabolic equation
\begin{equation} \label{def.discequep}
    \left\{ \begin{aligned}
     \partial_t \bar u^\ep - \nabla^\ep \cdot D_p \bar \sigma (\nabla^\ep \bar u^\ep) & = 0 &~\mbox{in} &~  (0 , 1) \times \mathbb{T}^\ep, \\
     \bar u^\ep (0 , \cdot) & = f &~\mbox{on} &~ \mathbb{T}^\ep.
    \end{aligned} \right.
\end{equation}
\end{definition}
We state below some regularity properties satisfied by the discrete solution $\bar u^\ep$.
\begin{proposition}[Regularity for the discretized solution] \label{prop:prop7.5}
     Then there exists a constant $C := C (d , V, f) < \infty$ such that the map $\bar u^\ep$ satisfies the following regularity estimates:
    \begin{itemize}
        \item $W^{1, \infty}$ and $H^2$ regularity estimate:
        \begin{equation*}
            \left\| \bar u^\ep \right\|_{L^\infty((0 , 1) \times \mathbb{T}^\ep)} + \left\| \nabla^\ep \bar u^\ep \right\|_{L^\infty((0 , 1) \times \mathbb{T}^\ep)}  + \left\| \nabla^{2,\ep} \bar u^\ep \right\|_{L^2((0 , 1) \times \mathbb{T}^\ep)} \leq C.
        \end{equation*}
        \item $H^1$ regularity for the time derivative:
        \begin{equation*}
            \left\| \partial_t \bar u^\ep \right\|_{L^2((0 , 1) \times \mathbb{T}^\ep)}  + \left\| \partial_t \nabla^\ep \bar u^\ep \right\|_{L^2((0 , 1) \times \mathbb{T}^\ep)}  \leq C.
        \end{equation*}
    \end{itemize}
\end{proposition}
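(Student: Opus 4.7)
The plan is to mirror the regularity theory for the continuous equation (Proposition~\ref{prop.reghomogenizedsolution}) in the discrete setting, using the strict convexity of $\bar\sigma$ from Proposition~\ref{prop:strictconvsurftens} as the main ingredient and the smoothness of $f$ as the source of uniform-in-$\ep$ control. For each fixed $\ep \in (0,1)$, equation~\eqref{def.discequep} is a finite-dimensional system of ODEs on $\R^{\mathbb{T}^\ep}$ with a locally Lipschitz right-hand side (by $C^{1,1}$ regularity of $\bar\sigma$), so local existence and uniqueness are standard; the a priori $L^2$ estimate, obtained by testing with $\bar u^\ep$ itself and using the convexity inequality $p\cdot D_p\bar\sigma(p)\geq c|p|^r-C$ (a consequence of Proposition~\ref{prop:strictconvsurftens}), gives
\begin{equation*}
    \partial_t\|\bar u^\ep\|_{L^2(\mathbb{T}^\ep)}^2 + c\|\nabla^\ep \bar u^\ep\|_{L^r(\mathbb{T}^\ep)}^r \leq C,
\end{equation*}
which forces global existence on $(0,1)$. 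An $L^\infty$ bound on $\bar u^\ep$ then follows from a Moser iteration: test the equation with $|(\bar u^\ep-M)_+|^{k-1}$ for $M=\|f\|_{L^\infty}$ and iterate in $k$, using the lower bound on $D^2_p\bar\sigma$ together with a discrete Sobolev inequality.

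For the gradient $L^\infty$ estimate, I would observe that the discrete finite difference $v^\ep_i:=\nabla^\ep_i \bar u^\ep$ solves the linear parabolic equation
\begin{equation*}
    \partial_t v^\ep_i - \nabla^\ep\cdot\bigl[\a^\ep(t,x)\nabla^\ep v^\ep_i\bigr]=0, \qquad \a^\ep(t,x):=\int_0^1 D^2_p\bar\sigma\bigl(s\nabla^\ep\bar u^\ep(t,x+e_i)+(1-s)\nabla^\ep\bar u^\ep(t,x)\bigr)\,ds,
\end{equation*}
with bounded initial data (since $f\in C^\infty$). One can then apply a discrete analog of the De Giorgi/Moser iteration of DiBenedetto--Friedman for the parabolic $p$-Laplacian to $v^\ep_i$: test with $|v^\ep_i|^{k-2}v^\ep_i$, exploit the lower bound $\a^\ep\geq c(|\nabla^\ep \bar u^\ep|_+)^{r-2}I_d$ on the elliptic coefficients, combine with the $L^r$ bound on $\nabla^\ep\bar u^\ep$ from the basic energy estimate, and iterate. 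The constants in the resulting $L^\infty$-bound depend only on $\|\nabla f\|_{L^\infty}$, $d$, and $V$, and are uniform in $\ep$.

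The remaining $H^2$ and time-derivative estimates then become standard. Once $\|\nabla^\ep\bar u^\ep\|_{L^\infty}\leq C$ is known, the coefficient $\a^\ep(t,x):=D^2_p\bar\sigma(\nabla^\ep\bar u^\ep)$ is uniformly elliptic (with constants depending only on $C$), so differentiating~\eqref{def.discequep} in space and testing with $\nabla^\ep\bar u^\ep$ yields $\|\nabla^{2,\ep}\bar u^\ep\|_{L^2}\leq C$ via a parabolic Caccioppoli-type computation; differentiating in time and testing with $\partial_t\bar u^\ep$ produces the $L^2$ control on $\partial_t\bar u^\ep$ and, iterating once more, on $\partial_t\nabla^\ep\bar u^\ep$.

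The main obstacle is the uniform $L^\infty$ gradient estimate in Step 2: the adaptation of DiBenedetto--Friedman-type arguments to the discrete lattice $\mathbb{T}^\ep$, while essentially notational, must be carried out carefully so that no constant is lost as $\ep\to 0$, and so that the degeneracy of $D^2_p\bar\sigma$ at the origin is absorbed by the Moser scheme rather than appearing on the right-hand side. All the other steps are standard discrete energy computations, in direct parallel with the continuous estimates of Proposition~\ref{prop.reghomogenizedsolution}.
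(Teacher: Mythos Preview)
Your proposal follows essentially the same route as the paper, which also only sketches the argument: differentiate the equation for the $L^2$-estimates and adapt the De Giorgi--Moser/DiBenedetto--Friedman iteration for the $L^\infty$-estimates. One small correction: you refer to ``the degeneracy of $D^2_p\bar\sigma$ at the origin,'' but by Proposition~\ref{prop:strictconvsurftens} one has $D^2_p\bar\sigma(p)\geq \lambda_-|p|_+^{r-2}I_d\geq \lambda_- I_d$ (since $|p|_+=|p|+1\geq 1$ and $r>2$), so the homogenized equation is \emph{not} degenerate from below---only the upper ellipticity constant grows with $|p|$. This actually simplifies the gradient $L^\infty$ step (and the paper explicitly notes this in the remark after Proposition~\ref{prop.reghomogenizedsolution}).
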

The $L^2$-estimates can be obtained directly by differentiating the equation~\eqref{def.discequep} (which should be easier to justify than in the continuous setting since this equation is a high-dimensional ordinary differential equation). The $L^\infty$-estimates can be obtained by adapting the arguments of~\cite{FriedmanDiBenedetto1984, Friedman1985}.

The following proposition quantifies the difference of the $L^2$-norm between the solution $\bar u$ of the continuous parabolic equation~\eqref{def.barumainthm} and the solution $\bar u^\ep$ of the discretized equation~\eqref{def.discequep}. In order to state the result, we extend the map $\bar u^\ep$ and its gradient from the discrete setting to the continuous one into piecewise constant functions by setting
\begin{equation*}
    \bar u^\ep (t , x) := \sum_{y \in \ep \Zd} \bar u^\ep(t , y) \indc_{\{ y \in x +  [-\ep, \ep]^d \}} \hspace{5mm} \mbox{and} \hspace{5mm} \nabla^\ep \bar u^\ep(t , x) := \sum_{y \in \ep \Zd} \nabla^\ep \bar u^\ep (t , y) \indc_{\{ y \in x +  [-\ep, \ep]^d \}}.
\end{equation*}
The $L^2$-norm in Proposition~\ref{prop.approx} then denotes the continuous one on the space $\R \times \mathbb{T}.$
 
\begin{proposition}[Approximation by the discrete equation] \label{prop.approx}
There exists a constant $C := C(d , V , f) < \infty$ such that, for any $\ep > 0$,
\begin{equation*}
    \left\| \bar u^\ep - \bar u  \right\|_{L^2((0, \infty) \times \mathbb{T})} + \| \nabla^\ep \bar u^\ep - \nabla \bar u  \|_{L^2((0, \infty) \times \mathbb{T})} \leq C \ep^{\frac12} .
\end{equation*}
\end{proposition}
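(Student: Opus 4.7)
The plan is to derive the approximation estimate by an energy method, comparing the restriction of $\bar u$ to the grid $\mathbb{T}^\ep$ with the discrete solution $\bar u^\ep$, and exploiting the strict convexity of $\bar \sigma$ established in Proposition~\ref{prop:strictconvsurftens} to get monotonicity for the nonlinear operator $-\nabla \cdot D_p \bar \sigma(\nabla \cdot)$.

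First I would set $u^\ep(t,x) := \bar u(t,x)$ for $(t,x) \in (0,1) \times \mathbb{T}^\ep$ (the natural restriction to the grid) and quantify how well $u^\ep$ satisfies the discrete equation~\eqref{def.discequep}. Using Taylor's formula together with the $H^2$ and $W^{1,\infty}$ regularity of $\bar u$ from Proposition~\ref{prop.reghomogenizedsolution}, one obtains a consistency estimate of the form $\| \nabla^\ep u^\ep - \nabla \bar u \|_{L^2((0,1) \times \mathbb{T})} \leq C \ep$. Subtracting~\eqref{def.barumainthm} from~\eqref{def.discequep}, the map $u^\ep$ satisfies $\partial_t u^\ep - \nabla^\ep \cdot D_p \bar \sigma(\nabla^\ep u^\ep) = R^\ep$, where the residual decomposes as the sum of (i) the discrete-vs-continuous divergence error applied to the flux $D_p \bar \sigma(\nabla \bar u)$, and (ii) the nonlinearity increment $\nabla^\ep \cdot (D_p \bar \sigma(\nabla \bar u) - D_p \bar \sigma(\nabla^\ep u^\ep))$. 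Both contributions are bounded in $L^2((0,1), W^{-1,2}(\mathbb{T}^\ep))$ by $C\ep$, using the $H^2$-regularity of $\bar u$, the $C^{1,1}$-regularity of $\bar \sigma$ (Proposition~\ref{prop:quantconvsurftens}), and the $L^\infty$ bound on $\nabla \bar u$ (which restricts $D_p \bar \sigma$ to a bounded set on which it is Lipschitz with constant depending only on $d$, $V$, $f$).

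The next step is the energy estimate on the difference $w^\ep := u^\ep - \bar u^\ep$, which vanishes at $t=0$ and satisfies
\begin{equation*}
    \partial_t w^\ep - \nabla^\ep \cdot \left( D_p \bar \sigma(\nabla^\ep u^\ep) - D_p \bar \sigma(\nabla^\ep \bar u^\ep) \right) = R^\ep.
\end{equation*}
Testing with $w^\ep$, performing a discrete integration by parts, and invoking the strict convexity lower bound $D_p^2 \bar \sigma \geq \lambda_- |p|_+^{r-2} I_d \geq \lambda_- I_d$ of Proposition~\ref{prop:strictconvsurftens} (uniform in $p$ because $r>2$ forces $|p|_+^{r-2} \geq 1$), integration of the resulting fundamental theorem of calculus identity $(D_p \bar \sigma(a) - D_p \bar \sigma(b))\cdot(a-b) \geq \lambda_- |a-b|^2$ gives
\begin{equation*}
    \tfrac12 \| w^\ep(1,\cdot) \|_{L^2(\mathbb{T}^\ep)}^2 + \lambda_- \| \nabla^\ep w^\ep \|_{L^2((0,1) \times \mathbb{T}^\ep)}^2 \leq \int_0^1 \left\langle R^\ep(t,\cdot) , w^\ep(t,\cdot) \right\rangle \, dt.
\end{equation*}
Estimating the right-hand side by the duality pairing $W^{-1,2}\times W^{1,2}$ and absorbing with Young's inequality yields $\| \nabla^\ep w^\ep \|_{L^2((0,1) \times \mathbb{T}^\ep)} + \| w^\ep \|_{L^\infty((0,1), L^2(\mathbb{T}^\ep))} \leq C \ep$.

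Finally, transferring back to the continuous setting through the piecewise constant extensions (the difference between the piecewise-constant extension and the continuous $\bar u$, $\nabla \bar u$ is itself of order $\ep$ in $L^2$ thanks to the $W^{1,\infty}$ and $H^2$ regularity), and combining with the Step 1 consistency estimate, we obtain
\begin{equation*}
    \| \bar u^\ep - \bar u \|_{L^2((0,1) \times \mathbb{T})} + \| \nabla^\ep \bar u^\ep - \nabla \bar u \|_{L^2((0,1) \times \mathbb{T})} \leq C \ep \leq C \ep^{1/2},
\end{equation*}
which is even stronger than the stated estimate. The chief technical obstacle is handling the nonlinearity $D_p \bar \sigma(\nabla \cdot)$: the saving feature is that both $\nabla \bar u$ (by Proposition~\ref{prop.reghomogenizedsolution}) and $\nabla^\ep \bar u^\ep$ (by Proposition~\ref{prop:prop7.5}) are controlled in $L^\infty$ uniformly in $\ep$, so $D_p \bar \sigma$ behaves like a uniformly Lipschitz, uniformly monotone operator on the range of values that actually occur, reducing the problem to a textbook energy estimate for a linear parabolic equation.
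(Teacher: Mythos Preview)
Your approach is correct and matches what the paper has in mind: the paper does not give its own proof but defers to \cite[Proposition~4.2]{armstrong2022quantitative}, which is precisely the energy/monotonicity argument you outline, and explicitly notes that the only adaptation needed here is to handle the unbounded Hessian of $\bar\sigma$ --- which you do by invoking the $L^\infty$ bounds on $\nabla\bar u$ and $\nabla^\ep\bar u^\ep$ from Propositions~\ref{prop.reghomogenizedsolution} and~\ref{prop:prop7.5} to restrict $D_p\bar\sigma$ to a compact range where it is uniformly Lipschitz and uniformly monotone. One minor caution: your claim that the residual term (i), i.e.\ the continuous-versus-discrete divergence error on $D_p\bar\sigma(\nabla\bar u)$, is $O(\ep)$ in $L^2((0,1),W^{-1,2}(\mathbb{T}^\ep))$ relies only on $\bar u\in H^2$, and carrying this through cleanly in the discrete $W^{-1,2}$ norm takes a little care (one needs a discrete analogue of $\|f-\tau_h f\|_{H^{-1}}\le |h|\,\|f\|_{L^2}$); the paper is content with the weaker rate $\ep^{1/2}$, which is certainly what your argument delivers without further work.
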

The proof is essentially identical to the one of~\cite[Proposition 4.2]{armstrong2022quantitative}, we will thus omit the technical details (N.B. the proof is written under the assumption that the second derivative of the surface tension is bounded from above but the adaptation to the present setting is straightforward). We also refer to~\cite[Appendix I]{FS} for a qualitative version of the result (under the assumption that the Hessian of the surface tension is bounded from above).

\subsection{Construction of the two-scale expansion} \label{Section.construct2scale}

\begin{figure}
        \centering
        \includegraphics[scale=0.7]{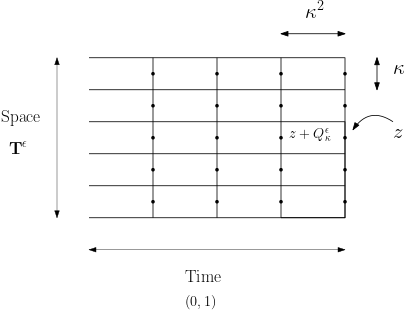}
    \caption{Partitioning the cylinder $(0,1) \times \mathbb{T}^\ep$: the set $\mathcal{Z}_\kappa$ is the collection of black dots and the set $(0,1) \times \mathbb{T}^\ep$ is partitioned into cylinders of the form $\left\{z + Q_\kappa^\ep \, : \, z \in \mathcal{Z}_\kappa \right\}$ (the small rectangles on the picture).\label{fig:partition}}
\end{figure}

\subsubsection{Mesoscopic scale and partition of unity}

We fix $\ep > 0$ and introduce the following notation and definitions:

\begin{itemize}
\item \textit{Mesoscopic scale.} We let $\gamma \in (0,1)$ be an exponent so that $\ep^\gamma$ is the size of a mesoscopic scale used through the argument (one should have in mind $\gamma \simeq 1/(30dr)$ for the argument). We then let
\begin{equation*} 
    \kappa := \ep^{\gamma} ~~\mbox{and}~~ L := \frac{\kappa}{\ep} = \ep^{\gamma - 1}.
\end{equation*} 
\item \textit{Mesoscopic boxes and cylinders.} We then introduce the box, time interval and parabolic cylinder
\begin{equation*}
    \Lambda_{2\kappa}^\ep := \ep \Lambda_{2L} \subseteq \mathbb{T}^\ep, ~~ I_{2\kappa} := (- (2\kappa)^2 , 0]  \subseteq \R ~~\mbox{and}~~ Q_{2\kappa}^\ep := I_{2\kappa} \times \Lambda_{2\kappa}^\ep \subseteq \R \times \mathbb{T}^\ep.
\end{equation*}
\item \textit{Covering of $(0 , 1) \times \mathbb{T}^\ep$ by mesoscopic cylinders.} For practical purposes, we assume that $\kappa$ is the inverse of an integer. We then introduce the set
\begin{equation*}
    \mathcal{Z}_\kappa := \left(\kappa^2 \N_* \times \kappa \Zd \right) \cap \left(  (0 , 1] \times \mathbb{T}^\ep \right).
\end{equation*}
Note that the collection $\left\{ z + Q_{2\kappa}^\ep \ : \, z \in \mathcal{Z}_\kappa \right\}$ is a covering of $(0 , 1) \times \mathbb{T}^\ep$ and that any vertex $z' \in (0 , 1) \times \mathbb{T}^\ep$ belongs to at most $C(d) < \infty$ parabolic cylinders of the set $\left\{ z + Q_{2\kappa}^\ep \ : \, z \in \mathcal{Z}_\kappa \right\}$. In the rest of this section, we make use of the shorthand notation $\sum_{z}$ to refer to the sum $\sum_{z \in \mathcal{Z}_\kappa}.$
\item \textit{Partition of unity.} We next consider a smooth partition of unity $(\chi_z)_{z \in \mathcal{Z}_\kappa} : (0 , 1) \times \mathbb{T}^\ep \to \R$ satisfying the following properties: 
\begin{equation} \label{prop.partofunity2sc}
    0 \leq \chi_z \leq \indc_{\{z + Q_{2 \kappa}^\ep\}}, \hspace{3mm} \sum_{z } \chi_z = 1 \hspace{3mm} \mbox{in} ~(0 , 1) \times \mathbb{T}^\ep,
\end{equation}
and for any $l \in \{0 , 1\}$ and $k \in \{0 , 1 , 2\}$,
\begin{equation} \label{prop.partofunity2scbis}
     \kappa^{2l + k} \| \partial_t^l \nabla^{\ep, k} \chi_z \|_{L^\infty((0 , \infty) \times \mathbb{T}^\ep)}\leq C.
\end{equation}
\end{itemize}

\subsubsection{Definition of the two-scale expansion}

We let $\bar u^\ep$ be the solution of the equation~\eqref{def.discequep} and introduce the following notation:

\begin{itemize}
\item \textit{Slopes.} Given a point $z \in \mathcal{Z}_{\kappa}$, we denote by $p_z$ the average value of the gradient $\nabla^\ep \bar u^\ep$ over the parabolic cylinder $z + Q_{2\kappa}^\ep$, i.e.,
\begin{equation*}
   p_z := \left( \nabla^\ep  \bar u^\ep \right)_{z + Q_{2\kappa}^\ep}.
\end{equation*}
\item \textit{Corrector.} We will use the notation introduced in Remark~\ref{rem:remark6.3}, and, for $y \in \Zd$ and $p \in \Rd$, we let $\varphi_{y , 10L} \left( \cdot , \cdot ; p \right) : \R \times (y + \Lambda_{10L}) \to \R$ be the stationary Langevin dynamics with periodic boundary conditions in the box $(y + \Lambda_{10L})$. For $z := (s , y) \in \mathcal{Z}_{\kappa}$ and $p \in \Rd$, we denote by 
\begin{equation*}
    \varphi_{z} \left( \cdot , \cdot ; p \right) := \varphi_{y/\ep , 10 L} \left( \cdot , \cdot ; p \right) + \frac{\sqrt{2}}{\left| \Lambda_{10 L} \right|} \sum_{x \in y + \Lambda_{10L}} B_t(x).
\end{equation*}
(N.B. The average value of the Brownian motion is added here to take into account that the average value of the Brownian motions on the torus $\mathbb{T}_{10L}$ is assumed to be equal to $0$ in Definition~\ref{Prop:Langevin}, while this property is not satisfied by the Brownian motions on the right hand side of~\eqref{eq:introSDErescaled} (in the box $(y + \Lambda_\kappa^\ep$)). This term only plays a minor role in the rest of the analysis and is (much) smaller than the other error terms.)
\item \textit{Corrected plane.} For $z \in \mathcal{Z}_{\kappa}$, we denote by
\begin{equation*} 
    v_{z}(t , x) := p_z \cdot x + \ep  \varphi_{z} \left( \frac{t}{\ep^2} , \frac{x}{\ep} ;  p_z \right).
\end{equation*}
\item \textit{Two-scale expansion.} We define the two-scale expansion according to the identity, for $(t , x) \in (0 , \infty) \times \mathbb{T}^\ep$,
\begin{equation} \label{def.wL}
    w^\ep(t , x) := \bar u^\ep (t , x) + \ep \sum_{z} \chi_z(t , x) \varphi_{z} \left( \frac{t}{\ep^2} , \frac{x}{\ep} ; p_z\right).
\end{equation}
\end{itemize}

\subsubsection{Estimates for the Langevin dynamic and the two-scale expansion} \label{sec:sec743}

In this section, we collect some estimates pertaining to the two-scale expansion which are used in the argument below (N.B. these estimates are not necessarily the strongest provable and are stated in a convenient form for the argument). The proof of these results is postponed to Appendix~\ref{sec:appendixB}.

\begin{itemize}
\item \textit{Difference between $w^\ep$ and $\bar u^\ep$.} There exist two constants $C , c > 0$ and an exponent $\theta > 0$ (N.B. $\theta$ depends only on the exponents $\gamma$ and $r$ and is strictly positive is $\gamma$ is small enough) such that
\begin{equation} \label{ineq:12171608}
    \left\| \bar u^\ep  - w^\ep \right\|_{L^2((0,1) \times \mathbb{T}^\ep)} \leq \mathcal{O}_{\Psi , c}(C \ep^{2 \theta})
\end{equation}
and 
\begin{equation} \label{ineq:1217160888}
    \left\| \bar u^\ep(0,\cdot)  - w^\ep(0,\cdot) \right\|_{L^2(\mathbb{T}^\ep)} \leq \mathcal{O}_{\Psi , c}(C \ep^{2 \theta}).
\end{equation}
\item \textit{Upper bound for the gradient of the dynamic.} The $L^r$-norm of the Langevin dynamic is controlled as follows
\begin{equation} \label{eq:boundnablauep}
    \left\|  u^\ep \right\|_{L^r ( (0,1) \times \mathbb{T}^\ep )} \leq \mathcal{O}_{\Psi , c} \left( C \right).
\end{equation}
\item \textit{Upper bound for the two-scale expansion.} We have the upper bound
\begin{equation} \label{est:Linftynormw}
    \left\|  w^\ep \right\|_{L^r ( (0,1) \times \mathbb{T}^\ep )} \leq \mathcal{O}_{\Psi , c} \left( C \right).
\end{equation}
\end{itemize}

\subsubsection{Definition of the error terms} \label{sec:deferrorterms}

In this section, we introduce the four main error terms which appear in the proof of Theorem~\ref{main.thm}. They are proved to be small in suitable norms in Appendix~\ref{sec:appendixB} (see~\eqref{eq:estiamteerroreterms} and~\eqref{eq:estiamteerroretermsavecrage} for a summary of the results proved in this appendix)
\begin{equation} \label{def.allthearrorterms}
    \left\{ \begin{aligned}
        \mathcal{E}_1 & := \sum_{z} \partial_t \chi_{z} \left( \ep \varphi_{z} \left( \frac{\cdot}{\ep^2} , \frac{\cdot}{\ep} ; p_{z} \right) - \sqrt{2} B_\cdot^\ep  \right), \\
        \vec{\mathcal{E}}_2 & := D_p V \left( \nabla^\ep w \right)
        - \sum_{z}  \chi_{z} D_p V \left( \nabla^\ep v_{z} \right), \\
        \vec{\mathcal{E}}_3 & := \sum_{z} \chi_{z} \left( D_p \bar \sigma \left( p_{z} \right) - D_p \bar \sigma \left( \nabla^\ep \bar u^\ep \right) \right), \\
        \mathcal{E}_4 & := \sum_{z} \nabla^{\ep} \chi_{z} \cdot \left( D_p V \left( \nabla^{\ep} v_{z} \right) - D_p \bar \sigma \left( p_{z} \right) \right).
    \end{aligned} \right.
\end{equation}
We note that all these terms are functions defined on the set $(0 , 1) \times \mathbb{T}^\ep$, there are real-valued for the terms $\mathcal{E}_1$ and $\mathcal{E}_4$, and vector-valued (specifically, valued in $\R^d$), for the terms $\vec{\mathcal{E}}_2$ and $\vec{\mathcal{E}}_3$ (N.B. For the term $\mathcal{E}_4$ is a scalar product between two vector-valued functions). We then combine these error terms and define
\begin{equation} \label{eq:defEandvexE}
     \vec{\mathcal{E}}= \vec{\mathcal{E}}_2 + \vec{\mathcal{E}}_3 ~~\mbox{and}~~ \mathcal{E} = \mathcal{E}_1 + \mathcal{E}_4.
\end{equation}
These  error terms are proved to be small (in suitable norms) in Appendix~\ref{sec:appendixB}. Specifically, the following estimates are proved: there exist two constants $C , c > 0$ and an exponent $\theta > 0$ (N.B. $\theta$ depends only on the exponent $\gamma$ encoding the size of the mesoscopic scale and is strictly positive is $\gamma$ is small enough) such that
\begin{equation} \label{eq:estiamteerroreterms}
    \| \vec{\mathcal{E}} \|_{L^r((0 , 1) \times \mathbb{T}^\ep)} + \ep \left\| \mathcal{E} \right\|_{L^2((0 , 1) \times \mathbb{T}^\ep)} + \left\| \mathcal{E} \right\|_{\underline{W}^{-1,r}_{\mathrm{par}}\left( (0 , 1) \times \mathbb{T}^\ep \right)} \leq \mathcal{O}_{\Psi , c} \left( C \ep^{2\theta} \right),
\end{equation}
together with the estimate on the average value of the error term $\mathcal{E}$: for any $t \in (0,1),$
\begin{equation} \label{eq:estiamteerroretermsavecrage}
    \left| \int_0^t \left( \mathcal{E}(s , \cdot ) \right)_{\mathbb{T}^\ep} \, ds \right| \leq \mathcal{O}_{\Psi , c}(C \ep^{2 \theta}).
\end{equation}

\subsection{Two-scale expansion and proof of Theorem~\ref{main.thm}} \label{secTh.quantitativehydr}

This section is devoted to the proof of Theorem~\ref{main.thm}. The main objective of the proof is to show that the two-scale expansion $w^\ep$ is almost a solution of the Langevin dynamic. Specifically, we will prove the identity
\begin{equation} \label{eq:16012107}
    \partial_t \left(  w^\ep - \sqrt{2} B_{\cdot}^\ep \right)- \nabla^\ep \cdot D_p V(\nabla^\ep w^\ep)  = \nabla^\ep \cdot \vec{\mathcal{E}} + \mathcal{E} ,
\end{equation}
The identity~\eqref{eq:16012107} is established in Sections~\ref{sec:sec751} and~\ref{sec:sec752}. Once equipped with the identity~\eqref{eq:16012107} and the inequalities~\eqref{eq:estiamteerroreterms} and~\eqref{eq:estiamteerroretermsavecrage}, we show that the $L^{r'}$-norm of the gradient of the difference $u^\ep - w^\ep$ is small and combine this result with the inequality~\eqref{ineq:12171608} to complete the proof of Theorem~\ref{main.thm}. This is the subject of Section~\ref{section:eestimatingthediffernce}.

In the rest of this section, we fix the value of the exponent $\gamma$ encoding the size of the mesoscopic scale and chose it small enough so that the inequalities hold with a strictly positive exponent $\theta > 0$ (e.g., choosing $\gamma \simeq 1/(30dr)$ gives a value $\theta \simeq 1/(100dr))$). This exponent is thus not allowed to change from line to line while the constants $C$ and $c$ are (and they shall only depend on the parameters $d$ and $r$).

\subsubsection{Computing the time derivative of the two-scale expansion $w^\ep$} \label{sec:sec751}

To prove the formula~\eqref{eq:16012107}, we first compute the time derivative of the map $w^\ep -  B^\ep$. The computation is based on the definition~\eqref{def.wL} of the two-scale expansion $w^\ep$ and is straightforward (applying the chain rule to compute the derivative of a product). We obtain the identity
\begin{align} \label{eq:16002107.sec4}
\lefteqn{
\partial_t \left(  w^\ep -  \sqrt{2} B_\cdot^\ep \right)
} \qquad & \notag \\ & 
= \partial_t \bar u^\ep  +  \partial_t \left( \varphi^\ep -  \sqrt{2} B_\cdot^\ep \right) \notag \\
 & = \partial_t  \bar u^\ep   +   \sum_{z} \chi_{z} \partial_t \left( \ep \varphi_{z} \left( \frac{\cdot}{\ep^2} , \frac{\cdot}{\ep} ; p_{z} \right) - \sqrt{2} B_\cdot^\ep \right) +   \sum_{z} \partial_t \chi_{z} \left( \ep \varphi_{z} \left( \frac{\cdot}{\ep^2} , \frac{\cdot}{\ep} ; p_{z} \right) - \sqrt{2} B_\cdot^\ep  \right).
\end{align}
The last term on the right-hand side is exactly the error term $\mathcal{E}_1$. We thus have the identity
\begin{equation*}
    \partial_t \left(  w^\ep -  \sqrt{2} B_\cdot^\ep \right) =  \partial_t  \bar u^\ep   +   \sum_{z} \chi_{z} \partial_t \left( \ep \varphi_{z} \left( \frac{\cdot}{\ep^2} , \frac{\cdot}{\ep} ; p_{z} \right) - \sqrt{2} B_\cdot^\ep \right) + \mathcal{E}_1.
\end{equation*}

\subsubsection{Computing the value of $\nabla^\ep \cdot D_p V \left( \nabla^\ep w^\ep \right)$} \label{sec:sec752}
In this section, we establish the identity
\begin{equation} \label{eq:11370705}
    \nabla^\ep \cdot D_p V \left( \nabla^\ep w^\ep \right) = \sum_{z}  \chi_{z} \nabla^\ep \cdot D_p V ( \nabla^\ep v_{z} )  +  \nabla^\ep \cdot D_p \bar \sigma ( \nabla^\ep \bar u^\ep ) + \mathcal{E}_4 + \nabla^\ep \cdot \vec{\mathcal{E}}.
\end{equation}
The proof of~\eqref{eq:11370705} is decomposed into three steps:
\begin{itemize}
\item In the first step, we prove the identity 
\begin{align} \label{identity6.6}
    \nabla^\ep \cdot D_p V \left( \nabla^\ep w \right)
    & = \nabla^\ep \cdot \left( \sum_{z}  \chi_{z} D_p V \left( \nabla^\ep v_{z} \right) \right)  + \nabla^\ep \cdot \vec{\mathcal{E}}_2.
\end{align}
\item In the second step, we prove the identity
\begin{equation} \label{identity6.67}
    \nabla^\ep \cdot D_p \bar \sigma \left( \nabla^\ep \bar u^\ep \right) = \nabla^\ep \cdot \biggl( \ \sum_{z}\chi_{z} D_p\bar \sigma \left( p_{z} \right) \biggr) - \nabla^\ep \cdot \vec{\mathcal{E}_3}.
\end{equation}
\item In the third step, we prove the identity
\begin{equation} \label{identity6.678}
    \nabla^\ep \cdot \biggl( \sum_{z}  \chi_{z} \left( D_p V \left( \nabla^\ep v_{z} \right) -  D_p\bar \sigma \left( p_{z} \right)\right) \biggr)=  \sum_{z}  \chi_{z} \nabla^\ep \cdot D_p V \left( \nabla^\ep v_{z} \right)  + \mathcal{E}_4.
\end{equation}
\end{itemize}
The identity~\eqref{eq:11370705} is obtained by combining~\eqref{identity6.6},~\eqref{identity6.67} and~\eqref{identity6.678} together with the definition~\eqref{eq:defEandvexE}.

\medskip

\textit{Step 1. The identity~\eqref{identity6.6}.} This identity follows immediately from the definition of the term $\vec{\mathcal{E}}_2$ in~\eqref{def.allthearrorterms} (by applying the discrete divergence to both sides of the definition of $\vec{\mathcal{E}}_2$).

\medskip

\textit{Step 2. The identity~\eqref{identity6.67}.} Using the definition~\eqref{def.allthearrorterms} of the error term $\vec{\mathcal{E}}_3$ and the identity $\sum_{z} \chi_{z}=1$, we have
\begin{equation*}
    \vec{\mathcal{E}}_3  := \sum_{z}\chi_{z} \left( D_p \bar \sigma \left( p_{z} \right) - D_p \bar \sigma \left( \nabla^\ep \bar u ^\ep \right)\right)  = \sum_{z}\chi_{z} D_p \bar \sigma \left( p_{z} \right) - D_p \bar \sigma (\nabla^\ep \bar u^\ep ).
\end{equation*}
The identity~\eqref{identity6.67} follows by applying the discrete divergence on both sides of this identity.

\medskip

\textit{Step 3. The identity~\eqref{identity6.678}.} Expanding the discrete divergence, we may write,
\begin{align} \label{eq:11111608}
     \nabla^\ep \cdot \biggl(  \sum_{z}  \chi_{z} \left( D_p V \left( \nabla v_{z} \right) - D_p \bar \sigma \left( p_{z} \right)\right) \biggr) & =   \sum_{z}  \chi_{z}(t , x) \nabla^\ep \cdot \left( D_p V \left( \nabla v_{z} \right) - D_p  \bar \sigma \left( p_z \right) \right) \\ & \qquad + \sum_{z}  \nabla^{\ep} \chi_{z} \cdot \left( D_p V \left( \nabla^{\ep} v_{z} \right) - D_p \bar \sigma \left( p_z \right) \right). \notag
\end{align}
The previous display can be simplified using that the terms $ D_p \bar \sigma \left( p_z \right)$ are constant. We obtain
\begin{equation*}
    \nabla^\ep \cdot \left( D_p V \left( \nabla^\ep v_{z} \right) - D_p \bar \sigma \left( p_z \right) \right) = \nabla \cdot  D_p V \left( \nabla^\ep v_{z} \right).
\end{equation*}
The second term on the right-hand side of~\eqref{eq:11111608} is exactly equal to the error term $\mathcal{E}_4$. We deduce that
\begin{equation*}
\nabla^\ep \cdot \biggl( \sum_{z}  \chi_{z} \left( D_p V \left( \nabla^\ep v_{z} \right) -  D_p\bar \sigma \left( p_{z} \right)\right) \biggr)=  \sum_{z}  \chi_{z} \nabla^\ep \cdot D_p V \left( \nabla^\ep v_{z} \right)  + \mathcal{E}_4.
\end{equation*}
This is~\eqref{identity6.678}.

\subsubsection{Estimating the norm of the difference $u^\ep - w^\ep$} \label{section:eestimatingthediffernce}

Taking the difference between the equation~\eqref{eq:introSDErescaled} and the equation~\eqref{eq:16012107}, we obtain that the function $v := u^\ep - w^\ep$ solves the parabolic equation
\begin{equation} \label{eq:v2scexp}
    \left\{ \begin{aligned}
        \partial_t  v  - \nabla^\ep \cdot \a \nabla^\ep v & = \nabla^\ep \cdot \vec{\mathcal{E}} + \mathcal{E} & ~~\mbox{in} &~~ (0 , 1) \times \mathbb{T}^\ep, \\
        v(0 , \cdot) & = w^\ep(0, \cdot) -  u^\ep(0, \cdot) &~~ \mbox{in} &~~ \mathbb{T}^\ep,
    \end{aligned} \right.
\end{equation}
where the environment $\a$ is given by the formula
\begin{equation} \label{def:defevtAtwoscale}
    \a(t , x) := \int_0^1 D_p^2 V(s \nabla^\ep u^\ep (t ,x) + (1-s)\nabla^\ep w^\ep (t , x) ) ds.
\end{equation}
Using the linearity of the equation~\eqref{eq:v2scexp}, we may decompose the map $v$ into three terms according to the formula $v = v_0 + v_1 + v_2$, where the functions $v_0$, $v_1$ and $v_2$ are the solutions of the parabolic equations
\begin{equation*}
    \left\{ \begin{aligned}
    \partial_t v_0 - \nabla^\ep \cdot \a \nabla^\ep v_0 & = 0 &~\mbox{in}~&(0 , 1) \times \mathbb{T}^\ep, \\
    v_0(0 , \cdot) & = w^\ep(0, \cdot) - \bar u^\ep(0, \cdot) &~\mbox{in}~& \mathbb{T}^\ep,
    \end{aligned} \right.
\end{equation*}
and
\begin{equation} \label{eq:v22scexp}
    \left\{ \begin{aligned}
    \partial_t v_1 - \nabla^\ep \cdot \a \nabla^\ep v_1 & = \nabla^\ep \cdot \vec{\mathcal{E}} &\mbox{in}~~&(0 , 1) \times \mathbb{T}^\ep, \\
    v_1(0 , \cdot) & = 0 &\mbox{in}~~& \mathbb{T}^\ep,
    \end{aligned} \right.
\end{equation}
and
\begin{equation} \label{eq:v3scexp}
    \left\{ \begin{aligned}
    \partial_t v_2 - \nabla^\ep \cdot \a \nabla^\ep v_2 & = \mathcal{E} &\mbox{in}~~&(0 , 1) \times \mathbb{T}^\ep, \\
    v_2(0 , \cdot) & = 0 &\mbox{in}~~&\mathbb{T}^\ep.
    \end{aligned} \right.
\end{equation}
We then decompose the argument into five steps: the first one collects some important properties of the environment $\a$, Steps 2, 3 and 4 show that the functions $v_0, v_1, v_2$ are small (in suitable norms), Step 5 is the conclusion of the argument.

\medskip

\textit{Step 1. Properties of the environment $\mathbf{A}$.}

\smallskip

In this step, we introduce three quantities related to the environment $\mathbf{A}$ and state two of their properties whose proof can be found in Appendix~\ref{sec:sectionappendixC}. Specifically, introduce:
\begin{itemize}
\item \textit{The maximal and minimal eigenvalues.} We denote by $\mathbf{\Lambda}_+$ and $\mathbf{\Lambda}_-$ to denote the largest and smallest eigenvalue of $\a$, i.e.,
    \begin{equation} \label{def.lambda+-ineq6.44}
    \mathbf{\Lambda}_+(t ,x) := \sup_{\substack{\xi \in \Rd \\ | \xi | = 1}} \xi \cdot \a(s , x) \xi \hspace{5mm}  \mbox{and} \hspace{5mm} \mathbf{\Lambda}_-(t,x) :=  \inf_{\substack{\xi \in \Rd \\ | \xi | = 1}}  \xi \cdot \a(t , x) \xi.
\end{equation}
\item \textit{The moderated environment.} We define the moderated environment associated with the environment $\a$: for any $(t,x) \in [0 , 1] \times \mathbb{T}^\ep$,
\begin{equation} \label{def.modeeratedevt}
     \mathbf{m}(t , x) := 
     \left\{ \begin{aligned}
     \ep^{-2} \int_t^{1} k_{\ep^{-2}(s-t)} \frac{\mathbf{\Lambda}_- (s, x )  \wedge 1 }{( s-t)^{-1} \sum_{y \sim x}\int_t^s \left( 1+ \mathbf{\Lambda}_+ \left( s' , x \right)  \right) \, ds'} \, ds & ~~\mbox{if} ~~ t \in \left[0 , \frac{1}{2}\right], \\
     \ep^{-2} \int_0^{t} k_{\ep^{-2}(s-t)} \frac{\mathbf{\Lambda}_- (s, x )  \wedge 1 }{( s-t)^{-1} \sum_{y \sim x}\int_t^s \left( 1+ \mathbf{\Lambda}_+ \left( s' , x \right)  \right) \, ds'} \, ds & ~~\mbox{if} ~~ t \in \left(\frac{1}{2} , 1 \right].
     \end{aligned} \right.
\end{equation}
We consider a different moderated environment than in Section~\ref{sec:section3moderated} because we only want to use time values between $0$ and $1$ (rather than between $0$ and $\infty$). This is a minor difference compared to Section~\ref{sec:section3moderated} but has the following consequence: the moderated environment can take the value $0$ with positive (although very small) probability (see~\eqref{eq:22572106} below).
\item \textit{The maximal function.} We define the maximal function
\begin{equation*}
    \mathbf{M}_+(t , x) :=
    \left\{ \begin{aligned}
        \sup_{ s \in (t,1)} \frac{1}{(s-t)} \int_t^s 1 + \mathbf{\Lambda}_+(s' ,x) \, ds' & ~~\mbox{if} ~~ t \in \left[0 , \frac{1}{2}\right] ,\\
        \sup_{ s \in (0,t)} \frac{1}{(t-s)} \int_s^t 1 + \mathbf{\Lambda}_+(s' ,x) \, ds' & ~~\mbox{if} ~~ t \in \left(\frac{1}{2} , 1 \right] .
    \end{aligned} \right.
\end{equation*}
\end{itemize}

The important properties about these quantities which will be used in the argument below is that the maximal function cannot be too large values, that the product $\mathbf{m} \times \mathbf{M}_+$ cannot be too small, and that the environment $\mathbf{m}$ can be used to moderate solutions of parabolic equations. Specifically, recalling the definition of the exponent~$\theta$ introduced in~\eqref{eq:estiamteerroreterms}, we prove in Appendix~\ref{sec:sectionappendixC} that the following inequalities hold:
\begin{itemize}
\item \textit{Upper bound for the largest eigenvalue and the maximal function.} We have the inequality (N.B. this inequality is slightly redundant since $\mathbf{\Lambda}_+ \leq \mathbf{M}_+$)
\begin{equation} \label{eq:realD20}
    \left\| \mathbf{\Lambda}_+ \right\|_{L^{\frac{r}{r-2}}((0,1) \times \mathbb{T}^\ep)} + \left\| \mathbf{M}_+ \right\|_{L^{\frac{r}{r-2}}((0,1) \times \mathbb{T}^\ep)} \leq \mathcal{O}_{\Psi , c} (C).
\end{equation}
\item \textit{Lower bound for the product $\mathbf{m} \times \mathbf{M}_+$.} We have the inequality
\begin{equation} \label{eq:22572106}
    \mathbb{P} \left[  \inf_{(t , x) \in (0,1) \times \mathbb{T}^\ep} \mathbf{m}(t , x)  \times \mathbf{M}_+(t , x)  \leq \ep^\theta \right] \leq C \exp \left( - c \left| \ln \ep \right|^{\frac{r}{r-2}}  \right).
\end{equation}
\item \textit{Moderation.} For any function $F : (0 , 1) \times \mathbb{T}^\ep \to \R$, every $t \in (0 , 1)$ and every solution $u : (0 , 1) \times \mathbb{T}^\ep \to \R$ of the parabolic equation
\begin{equation*}
    \partial_t u - \nabla^\ep \cdot \a \nabla^\ep u = F ~~~\mbox{in} ~~~ (0 , 1) \times \mathbb{T}^\ep,
\end{equation*}
one has the inequality
\begin{multline} \label{eq:11140912sec7}
    \int_0^1 \sum_{x \in \mathbb{T}^\ep} \mathbf{m}(t , x) \left| \nabla^\ep u(t , x) \right|^2 \, dt
    \leq C \int_0^1 \sum_{x \in \mathbb{T}^\ep} \nabla^\ep u(t, x) \cdot \a(t , x ) \nabla^\ep u(t , x) \, dt \\ + C \ep^2  \int_0^1 \sum_{x \in \mathbb{T}^\ep} |F(t,x)|^2 \, dt.
\end{multline}
\end{itemize}

\medskip

\textit{Step 2. Estimating the term $v_0$.}

\medskip

This term is the simplest to estimate. An energy estimate for the parabolic problem yields the upper bound
\begin{equation*}
    \sup_{t \in (0,1)} \left\| v_0(t , \cdot) \right\|^2_{L^2(\mathbb{T}^\ep)} + \int_0^1 \ep^d \sum_{x \in \mathbb{T}^\ep} \nabla v_0(s , y) \cdot \a(s , y) \nabla v_0(s , y) \, ds \leq  \left\|  w^\ep(0, \cdot) - \bar u^\ep(0, \cdot) \right\|^2_{L^2(\mathbb{T}^\ep)}.
\end{equation*}
Noting that the second term on the left-hand side is nonnegative and applying the inequality~\eqref{ineq:1217160888}, we obtain
\begin{equation} \label{eq:estiamtev0}
    \left\| v_0 \right\|_{L^2((0,1) \times \mathbb{T}^\ep)} \leq \sup_{t \in (0,1)} \left\| v_0(t , \cdot) \right\|_{L^2(\mathbb{T}^\ep)}  \leq \left\|  w^\ep(0, \cdot) - \bar u^\ep(0, \cdot) \right\|_{L^2(\mathbb{T}^\ep)} \leq \mathcal{O}_{\Psi , c} \left(C \ep^{2 \theta} \right).
\end{equation}

\medskip

\textit{Step 3. Estimating the term $v_1$.}

\medskip

The objective of this step is to prove that the $L^2$-norm of the function $v_1$ over the parabolic cylinder $(0 ,1 ) \times \mathbb{T}^\ep$ is small. This is achieve in two substeps: we first prove that the average value of the function $v_1$ is equal to $0$, and then prove that the $L^2$-norm of its discrete gradient is small. The conclusion then follows from the Poincar\'e inequality.

\medskip

\textit{Substep 3.1. Estimating the average value of the term $v_1$.}

\medskip

We first note that the average value of the map $v_1$ is always equal to $0$, i.e., for any $t \geq 0$,
\begin{equation} \label{eq:13152207}
    \sum_{x \in \mathbb{T}^\ep} v_1(t , x) = 0.
\end{equation}
This result is obtained by summing both sides of the first line of~\eqref{eq:v22scexp} over all the vertices $x \in \mathbb{T}^\ep$ and performing discrete integrations by parts to treat the two terms involving a discrete divergence.

\medskip

\textit{Substep 3.2. Estimating the gradient of the term $v_1$.}

\medskip

We then estimate the $L^2$-norm of the gradient of the map $v_1$. Using the inequality~\eqref{eq:11140912sec7}, we first write
\begin{equation*}
    \int_0^{1} \sum_{x \in \mathbb{T}^\ep}  \mathbf{m}(t , x) \left| \nabla^\ep v_1 (t , x) \right|^2 \, dt \leq C \int_0^{1} \sum_{x \in \mathbb{T}^\ep} \nabla^\ep v_1(t , x) \cdot \a(t , x) \nabla^\ep v_1(t , x) \, dt + C \ep^2 \int_0^{1} \sum_{x \in \mathbb{T}^\ep} | \nabla^\ep \cdot \vec{\mathcal{E}} (t , x) |^2 \, dt.
\end{equation*}
We then use the inequality $ | \nabla^\ep \cdot \vec{\mathcal{E}} (s , x) |^2 \leq C \ep^{-2} \sum_{y \sim x}| \vec{\mathcal{E}} (s , y) |^2$ (i.e., we forget the discrete divergence and pay a factor $\ep^{-2}$) so as to obtain
\begin{equation*}
    \int_0^{1} \sum_{x \in \mathbb{T}^\ep}  \mathbf{m}(t , x) \left| \nabla^\ep v_1 (t , x) \right|^2 \, dt \leq C \int_0^{1} \sum_{x \in \mathbb{T}^\ep} \nabla^\ep v_1(t , x) \cdot \a(t , x) \nabla^\ep v_1(t , x) \, dt + C \int_0^{1} \sum_{x \in \mathbb{T}^\ep} | \vec{\mathcal{E}} (t , x) |^2 \, dt.
\end{equation*}
The previous inequality can be combined with an energy estimate which reads
\begin{equation*}
     \sum_{x \in \mathbb{T}^\ep} \left| v_1(t , x) \right|^2 + \int_0^1 \sum_{x \in \mathbb{T}^\ep} \nabla^\ep v_1(t , x) \cdot \a(t , x) \nabla^\ep v_1(t , x) \, dt \leq \int_0^1 \sum_{x \in \mathbb{T}^\ep}  \vec{\mathcal{E}} (t , x) \cdot \nabla^\ep v_1(t , x) \, dt,
\end{equation*}
so as to obtain the estimates
\begin{align} \label{eq:17041205}
    \int_0^1 \sum_{x \in \mathbb{T}^\ep} \mathbf{m}(t , x) \left| \nabla^\ep v_1 (t , x) \right|^2 \, dt
     & \leq  C \int_0^{1}  \nabla^\ep v_1(t , y) \cdot \a(t , y) \nabla^\ep v_1(t , y) \, dt + C \int_0^{1} \sum_{x \in \mathbb{T}^\ep} | \vec{\mathcal{E}} (t , x) |^2 \, dt \\
     & \leq C \int_0^1 \sum_{x \in \mathbb{T}^\ep}  \vec{\mathcal{E}} (t , x) \cdot \nabla^\ep v_1(t , x) \, dt + C \int_0^{1} \sum_{x \in \mathbb{T}^\ep} | \vec{\mathcal{E}} (t , x) |^2 \, dt. \notag
\end{align}
By applying the H\"{o}lder and Jensen inequalities (and using $r > 2$), the inequality~\eqref{eq:17041205} can be rewritten as follows
\begin{align} \label{eq:12.142608}
    \int_0^1 \ep^d \sum_{x \in \mathbb{T}^\ep} \mathbf{m}(t , x) \left| \nabla^\ep v_1 (t , x) \right|^2 \, dt & \leq C \| \vec{\mathcal{E}} \|_{L^r((0,1) \times \mathbb{T}^\ep)} \left\| \nabla^\ep v_1 \right\|_{L^{r'}((0,1) \times \mathbb{T}^\ep)} + C \| \vec{\mathcal{E}} \|_{L^2((0,1) \times \mathbb{T}^\ep)}^2 \\
    & \leq  C \| \vec{\mathcal{E}} \|_{L^r((0,1) \times \mathbb{T}^\ep)} \left\| \nabla^\ep v_1 \right\|_{L^{r'}((0,1) \times \mathbb{T}^\ep)} + C \| \vec{\mathcal{E}} \|_{L^r((0,1) \times \mathbb{T}^\ep)}^2 . \notag
\end{align}
We then lower bound the term on the left-hand side using H\"{o}lder's inequality (and recalling the identity $r' = r/(r-1)$)
\begin{align} \label{eq:11522608}
    \int_0^1 \sum_{x \in \mathbb{T}^\ep} \left| \nabla^\ep v_1 (t , x) \right|^{r'} \, dt & \leq \left(  \int_0^1 \sum_{x \in \mathbb{T}^\ep} \mathbf{m}(t , x) \left| \nabla^\ep v_1 (t , x) \right|^2 \, dt \right)^{\frac{r'}{2}} \left( \int_0^1 \sum_{x \in \mathbb{T}^\ep} \mathbf{m}(t,x)^{-\frac{r}{r-2}} \, dt \right)^{\frac{r-2}{2r-2}}.
\end{align}
The inequality~\eqref{eq:11522608} can be rewritten using more compact notation as follows
\begin{equation*}
    \left\|\nabla^\ep v_1 \right\|_{L^{r'} ((0,1) \times \mathbb{T}^\ep)}^2 \left\| \mathbf{m}^{-1} \right\|_{L^{\frac{r}{r-2}}((0,1) \times \mathbb{T}^\ep)}^{-1} \leq   \int_0^1 \ep^d \sum_{x \in \mathbb{T}^\ep} \mathbf{m}(t , x) \left| \nabla^\ep v_1 (t , x) \right|^2 \, dt .
\end{equation*}
Combining the previous inequality with~\eqref{eq:12.142608}, we deduce that
\begin{equation*}
    \left\|\nabla^\ep v_1 \right\|_{L^{r'} ((0,1) \times \mathbb{T}^\ep)}^2 \left\| \mathbf{m}^{-1} \right\|_{L^{\frac{r}{r-2}}((0,1) \times \mathbb{T}^\ep)}^{-1}
    \leq C \| \vec{\mathcal{E}} \|_{L^r((0,1) \times \mathbb{T}^\ep)} \left\| \nabla^\ep v_1 \right\|_{L^{r'}((0,1) \times \mathbb{T}^\ep)} + C \| \vec{\mathcal{E}} \|_{L^r((0,1) \times \mathbb{T}^\ep)}^2.
\end{equation*}
This inequality further implies that
\begin{equation} \label{eq:14432608}
    \left\| \nabla^\ep v_1 \right\|_{L^{r'}((0,1) \times \mathbb{T}^\ep)} \leq C \left\| \mathbf{m}^{-1} \right\|_{L^{\frac{r}{r-2}}((0,1) \times \mathbb{T}^\ep)} \| \vec{\mathcal{E}} \|_{L^r((0,1) \times \mathbb{T}^\ep)},
\end{equation}
The term on the right-hand side can be estimated on the (high probability) event $\left\{  \inf \mathbf{m} \times \mathbf{M}_+  > \ep^{\theta/2} \right\}$ by using~\eqref{eq:realD20},
\begin{equation} \label{eq:12342708}
    \left\| \mathbf{m}^{-1} \right\|_{L^{\frac{r}{r-2}}((0,1) \times \mathbb{T}^\ep)} \indc_{\{ \inf \mathbf{m} \times \mathbf{M}_+ > \ep^{\theta} \} }  \leq \ep^{-\theta}\left\| \mathbf{M} \right\|_{L^{\frac{r}{r-2}}((0,1) \times \mathbb{T}^\ep)} \leq \mathcal{O}_{\Psi , c} \left(C \ep^{- \theta} \right).
\end{equation}
Combining the previous inequality with~\eqref{eq:14432608}, using the estimate~\eqref{eq:estiamteerroreterms} on the error terms and Proposition~\ref{prop:prop2.20} ``Product", we deduce that
\begin{align*}
    \left\| \nabla^\ep v_1 \right\|_{L^{r'}((0,1) \times \mathbb{T}^\ep)} \indc_{\{ \inf \mathbf{m} \times \mathbf{M}_+ > \ep^{\theta} \} } \leq \mathcal{O}_{\Psi , c} \left( C \ep^{-\theta} \ep^{2 \theta}\right) \leq \mathcal{O}_{\Psi , c} \left( C \ep^{\theta}\right).
\end{align*}
Combining this inequality with~\eqref{eq:13152207} and the Poincar\'e inequality (for discrete periodic functions), we obtain that
\begin{equation} \label{eq:estiamtev1sansgrad}
    \left\| v_1 \right\|_{L^{r'}((0,1) \times \mathbb{T}^\ep)} \indc_{\{ \inf \mathbf{m} \times \mathbf{M}_+ > \ep^{\theta} \}} \leq \mathcal{O}_{\Psi , c} \left( C \ep^{\theta}\right).
\end{equation}

\medskip

\textit{Step 4. Estimating the term $v_2$.}

\medskip

We show in this step is the $L^2$-norm of the function $v_2$ over the parabolic cylinder $(0 ,1 ) \times \mathbb{T}^\ep$ is small following the same strategy as in Step 3 above. The argument is more involved, especially regarding the average value of the function $v_2$ which is not equal to $0$ and thus has to be properly estimated. 

\medskip

\textit{Substep 4.1. Estimating the average value of the term $v_2$.}

\medskip

We first estimate the $L^{r'}$-norm of the average value of the map $v_2$, i.e., the quantity
\begin{equation*}
    \left\| \left( v_2 \right)_{\mathbb{T}^\ep} \right\|_{L^{r'}((0,1))} :=  \left( \int_0^1 \left| \left( v_2(t , \cdot) \right)_{\mathbb{T}^\ep} \right|^{r'} \, dt \right)^{1/r'}.
\end{equation*}
Specifically, we will prove the estimate
\begin{equation} \label{eq:r'normaveragev2}
    \left\| \left( v_2 \right)_{\mathbb{T}^\ep} \right\|_{L^{r'}((0,1))} \leq \mathcal{O}_{\Psi , c} \left(  C \ep^{2\theta} \right).
\end{equation}
To prove~\eqref{eq:r'normaveragev2}, we first sum the first line of~\eqref{eq:v3scexp} over all the vertices of $x \in \mathbb{T}^\ep$ (and perform an integration by parts) to obtain that
\begin{equation*}
    \partial_t (v_2(t , \cdot))_{\mathbb{T}^\ep} =  \left( \mathcal{E}(t , \cdot ) \right)_{\mathbb{T}^\ep},
\end{equation*}
which, after an integration over the times, yields
\begin{equation*}
    (v_2(t , \cdot))_{\mathbb{T}^\ep} =  \int_0^t \left( \mathcal{E}(s , \cdot ) \right)_{\mathbb{T}^\ep} \, ds.
\end{equation*}
Applying the inequality~\eqref{eq:estiamteerroretermsavecrage}, we obtain that, for any $t \in (0 , 1)$,
\begin{equation*}
    \left| (v_2(t , \cdot))_{\mathbb{T}^\ep} \right| \leq \mathcal{O}_{\Psi , c} \left(  C \ep^{2\theta} \right).
\end{equation*}
The inequality~\eqref{eq:r'normaveragev2} follows from the previous inequality by applying Proposition~\ref{prop:prop2.20} ``Integration".

\medskip

\textit{Substep 4.2. Estimating the $L^2$-norm of the gradient of the term $v_2$.}

\medskip

We then estimate the $L^2$-norm of the gradient of $v_2$. To this end, we first use the same arguments as in Substep 3.2 (which rely on the inequality~\eqref{eq:11140912sec7}) and obtain the upper bound
\begin{equation*}
    \int_0^{1} \sum_{x \in \mathbb{T}^\ep}  \mathbf{m}(t , x) \left| \nabla^\ep v_2 (t , x) \right|^2 \, dt \leq C \int_0^{1}  \sum_{x \in \mathbb{T}^\ep} \nabla^\ep v_2(t , x) \cdot \a(t , x) \nabla^\ep v_2(t , x) \, dt + C \ep^2 \int_0^{1} \sum_{x \in \mathbb{T}^\ep} | \mathcal{E} (t,x)|^2 \, dt.
\end{equation*}
Applying H\"{o}lder's inequality as in~\eqref{eq:11522608}, the previous inequality implies 
\begin{align} \label{eq:estaftermoderation}
    \left\|\nabla^\ep v_2 \right\|_{L^{r'} ((0,1) \times \mathbb{T}^\ep)}^2 \left\| \mathbf{m}^{-1} \right\|_{L^{\frac{r}{r-2}}((0,1) \times \mathbb{T}^\ep)}^{-1}
    & \leq \int_0^{1} \ep^d \sum_{x \in \mathbb{T}^\ep}  \mathbf{m}(t , x) \left| \nabla^\ep v_2 (t , x) \right|^2 dt \\
    & \leq C \int_0^{1} \ep^d \sum_{x \in \mathbb{T}^\ep} \nabla^\ep v_2(t , x) \cdot \a(t , x) \nabla^\ep v_2(t , x) \, dt \notag \\
    & \qquad + C \ep^2 \| \mathcal{E} \|_{L^2 \left((0,1) \times \mathbb{T}^\ep \right)}^2 . \notag
\end{align}
We next apply Lemma~\ref{lem.idenH-1par}, we obtain the existence of a pair of functions~$(h, h^*) \in L^r((0,1) ; W^{1,r}(\mathbb{T}^\ep)) \times L^{r}((0,1) ; W^{-1, r}(\mathbb{T}^\ep))$ such that
\begin{equation} \label{eq:decompE}
    \mathcal{E} = \partial_t h + h^*.
\end{equation}
Additionally, the map $h$ is continuous in the time variable $t$, satisfies $h(0 , \cdot) = h(1 , \cdot) = 0$ and the pair $(h , h^*)$ satisfies
\begin{equation} \label{eq:uppboundww*}
    \left\| h \right\|_{L^r((0,1) ; W^{1,r}(\mathbb{T}^\ep))} + \left\| h^* \right\|_{L^{r}((0,1) ; W^{-1,r}(\mathbb{T}^\ep))}  \leq C \left\| \mathcal{E} \right\|_{W^{-1,r}_{\mathrm{par}}((0,1)  \times \mathbb{T}^\ep)}.
\end{equation}
Combining the equations \eqref{eq:v3scexp} and~\eqref{eq:decompE}, we deduce that
\begin{equation} \label{eq:eqforv2}
    \partial_t v_2 -  \nabla^\ep \cdot \a \nabla^\ep v_2 = \partial_t h + h^* ~\mbox{in}~ (0,1) \times \mathbb{T}^\ep.
\end{equation}
Multiplying the equation by $v_2$, integrating over the parabolic cylinder $(0 , 1) \times \mathbb{T}^\ep$, and using that the map $h$ is equal to $0$ at times $t=0$ and $t = 1$, we obtain the identity
\begin{multline} \label{eq:iden.2scexp001}
     \sum_{x \in \mathbb{T}^\ep} \left| v_2 (1 , x) \right|^2 +  \int_0^{1} \sum_{x \in \mathbb{T}^\ep} \nabla^\ep v_2(t , x) \cdot \a(t , x) \nabla^\ep v_2 (t , x) \, dt = \\
     - \int_0^{1} \sum_{x \in \mathbb{T}^\ep} h(t , x) \partial_t v_2(t , x) \, dt + \int_0^{1} \sum_{x \in \mathbb{T}^\ep} h^*(t , x) v_2(t , x) \, dt.
\end{multline}
Since the first term on the left-hand side of~\eqref{eq:iden.2scexp001} is nonnegative, we deduce from the previous identity that
\begin{align} \label{eq:10122708}
     \lefteqn{\int_0^{1} \ep^d \sum_{x \in \mathbb{T}^\ep} \nabla^\ep v_2(t , x) \cdot \a(t , x) \nabla^\ep v_2 (t , x) \, dt } \qquad & \\ &
     \leq - \int_0^{1} \ep^d \sum_{x \in \mathbb{T}^\ep} h(t , x) \partial_t v_2(t , x) \, dt + \int_0^{1}  \ep^d \sum_{x \in \mathbb{T}^\ep} h^*(t , x) v_2(t , x) \, dt \notag \\
     & \leq  - \int_0^{1} \ep^d \sum_{x \in \mathbb{T}^\ep} h(t , x) \partial_t v_2(t , x) \, dt + C \left\| h^* \right\|_{L^{r}((0,1) ; W^{-1,r}(\mathbb{T}^\ep))} \left\| v_2 \right\|_{L^{r'}((0,1) ; W^{1,r'}(\mathbb{T}^\ep))},\notag \\
     & \leq - \int_0^{1} \ep^d \sum_{x \in \mathbb{T}^\ep} h(t , x) \partial_t v_2(t , x) \, dt +  \left\| \mathcal{E} \right\|_{W^{-1,r}_{\mathrm{par}}((0,1)  \times \mathbb{T}^\ep)} \left\| v_2 \right\|_{L^{r'}((0,1) ; W^{1,r'}(\mathbb{T}^\ep))}. \notag
\end{align}
We next focus on the first term on the right-hand side, perform an integration by part in time (using that $h(0, \cdot) = h(1, \cdot) = 0$) and use the identity~\eqref{eq:eqforv2} to write
\begin{align*}
    \int_0^{1} \sum_{x \in \mathbb{T}^\ep}  h(t , x) \partial_t v_2(t , x) \, dt
    & = - \int_0^{1} \sum_{x \in \mathbb{T}^\ep}  h(t , x) \left( \nabla^\ep \cdot \a(t,x) \nabla^\ep v_2(t,x)+ \partial_t h(t,x) + h^*(t,x) \right)\, dt \\
    & = \int_0^{1} \sum_{x \in \mathbb{T}^\ep}  \nabla^\ep h(t , x) \cdot \a(t,x) \nabla^\ep v_2(t,x) - \frac12 \partial_t h^2(t , x) - h^*(t,x) h(t , x) \, dt \\
    & = \int_0^{1} \sum_{x \in \mathbb{T}^\ep}  \nabla^\ep h(t , x) \cdot \a(t,x) \nabla^\ep v_2(t,x) - h^*(t,x) h(t , x) \, dt.
\end{align*}
Using the inequality~\eqref{eq:uppboundww*} together with the observation that $r > r'$ (since $r> 2$), we further obtain that
\begin{align*}
    \int_0^{1} \sum_{x \in \mathbb{T}^\ep}  h(t , x) \partial_t v_2(t , x) \, dt & \leq \int_0^{1} \sum_{x \in \mathbb{T}^\ep}  \nabla^\ep h(t , x) \cdot \a(t,x) \nabla^\ep v_2(t,x) \, dt \\
    & \qquad + \left\| h \right\|_{L^r((0,1) ; W^{1,r}(\mathbb{T}^\ep))} \left\| h^* \right\|_{L^{r'}((0,1) ; W^{-1,r'}(\mathbb{T}^\ep))} \\
    & \leq \int_0^{1} \sum_{x \in \mathbb{T}^\ep}  \nabla^\ep h(t , x) \cdot \a(t,x) \nabla^\ep v_2(t,x) \, dt \\
    & \qquad + \left\| h \right\|_{L^r((0,1) ; W^{1,r}(\mathbb{T}^\ep))} \left\| h^* \right\|_{L^{r}((0,1) ; W^{-1,r}(\mathbb{T}^\ep))} \\
    & \leq \int_0^{1} \sum_{x \in \mathbb{T}^\ep}  \nabla^\ep h(t , x) \cdot \a(t,x) \nabla^\ep v_2(t,x) \, dt  + C \left\| \mathcal{E} \right\|_{W^{-1,r}_{\mathrm{par}}((0,1)  \times \mathbb{T}^\ep)}^2.
\end{align*}
Using the Cauchy-Schwarz inequality and the same computation as in~\eqref{eq:10122708}, we deduce that
\begin{multline*}
    \int_0^{1} \ep^d \sum_{x \in \mathbb{T}^\ep}  h(t , x) \partial_t v_2(t , x) \, dt 
    \\
    \leq \left( \int_0^{1} \ep^d \sum_{x \in \mathbb{T}^\ep}  \nabla^\ep h \cdot \a \nabla^\ep h  \right)^{\frac 12} \left( \int_0^{1} \ep^d \sum_{x \in \mathbb{T}^\ep}  \nabla^\ep v_2 \cdot \a \nabla^\ep v_2  \right)^{\frac 12}  
    + C \left\| \mathcal{E} \right\|_{W^{-1,r}_{\mathrm{par}}((0,1)  \times \mathbb{T}^\ep)}^2. 
\end{multline*}
Combining the previous inequality with~\eqref{eq:10122708}, we obtain
\begin{multline} \label{eq:11512708}
    \int_0^{1} \ep^d \sum_{x \in \mathbb{T}^\ep} \nabla^\ep v_2(t , x) \cdot \a(t , x) \nabla^\ep v_2 (t , x) \, dt \\
    \leq  C \int_0^{1} \ep^d \sum_{x \in \mathbb{T}^\ep}  \nabla^\ep h(t ,x) \cdot \a(t,x) \nabla^\ep h(t,x) \, dt 
    \\ + C \left\| \mathcal{E} \right\|_{W^{-1,r'}_{\mathrm{par}}((0,1)  \times \mathbb{T}^\ep)}^2 +  \left\| \mathcal{E} \right\|_{W^{-1,r}_{\mathrm{par}}((0,1)  \times \mathbb{T}^\ep)} \left\| v_2 \right\|_{L^{r'}((0,1) ; W^{1,r'}(\mathbb{T}^\ep))}.
\end{multline}
The first term on the right-hand side is then estimated using Jensen's inequality. We obtain
\begin{align*}
    \int_0^{1} \ep^d \sum_{x \in \mathbb{T}^\ep}  \nabla^\ep h(t ,x) \cdot \a(t,x) \nabla^\ep h(t,x) \, dt  & \leq \int_0^{1} \ep^d \sum_{x \in \mathbb{T}^\ep} \mathbf{\Lambda}_+(t , x) \left| \nabla^\ep h(t ,x) \right|^2  \, dt \\
    & \leq \left\| \mathbf{\Lambda}_+ \right\|_{L^{\frac{r}{r-2}} ((0,1) \times \mathbb{T}^\ep)} \left\| \nabla^\ep h \right\|_{L^r ((0,1) \times \mathbb{T}^\ep)}^2.
\end{align*}
Combining the previous inequality with~\eqref{eq:estaftermoderation} and~\eqref{eq:11512708} (and noting that $\mathbf{m} \leq 1$ by definition), we obtain
\begin{align*}
    \left\|\nabla^\ep v_2 \right\|_{L^{r'} ((0,1) \times \mathbb{T}^\ep)}^2  &
    \leq C \left\| \mathbf{m}^{-1} \right\|_{L^{\frac{r}{r-2}}((0,1) \times \mathbb{T}^\ep)} \left( 1 + \left\| \mathbf{\Lambda}_+ \right\|_{L^{\frac{r}{r-2}} ((0,1) \times \mathbb{T}^\ep)} \right) \left\| \mathcal{E} \right\|_{W^{-1,r}_{\mathrm{par}}((0,1)  \times \mathbb{T}^\ep)}^2 \\
    & \qquad  + C \left\| \mathbf{m}^{-1} \right\|_{L^{\frac{r}{r-2}}((0,1) \times \mathbb{T}^\ep)} \left\| \mathcal{E} \right\|_{W^{-1,r}_{\mathrm{par}}((0,1)  \times \mathbb{T}^\ep)} \left\| v_2 \right\|_{L^{r'}((0,1) ; W^{1,r'}(\mathbb{T}^\ep))} \\
    & \qquad +   C \ep^2 \| \mathcal{E} \|_{L^2 \left((0,1) \times \mathbb{T}^\ep \right)}^2.
\end{align*}
We then use the Poincar\'e inequality (on the torus with respect to the spatial variable) to write
\begin{equation*}
    \left\| v_2 \right\|_{L^{r'}((0,1) ; W^{1,r'}(\mathbb{T}^\ep))} \leq C \left\| \nabla^\ep v_2 \right\|_{L^{r'}((0,1) \times \mathbb{T}^\ep)} + C \left\| \left( v_2 \right)_{\mathbb{T}^\ep} \right\|_{L^{r'}((0,1))}.
\end{equation*}
Combining the two previous inequalities, we obtain the upper bound
\begin{align*}
    \lefteqn{ \left\|\nabla^\ep v_2 \right\|_{L^{r'} ((0,1) \times \mathbb{T}^\ep)} } \qquad & \\ &
    \leq C \left\| \mathbf{m}^{-1} \right\|_{L^{\frac{r}{r-2}}((0,1) \times \mathbb{T}^\ep)} \left( 1 + \left\| \mathbf{\Lambda}_+ \right\|_{L^{\frac{r}{r-2}} ((0,1) \times \mathbb{T}^\ep)} \right) \left\| \mathcal{E} \right\|_{W^{-1,r}_{\mathrm{par}}((0,1)  \times \mathbb{T}^\ep)} \\
    & \qquad + C \left\| \mathbf{m}^{-1} \right\|_{L^{\frac{r}{r-2}}((0,1) \times \mathbb{T}^\ep)}^{1/2} \left\| \left( v_2 \right)_{\mathbb{T}^\ep} \right\|_{L^{r'}((0,1))}^{1/2} \left\| \mathcal{E} \right\|_{W^{-1,r}_{\mathrm{par}}((0,1)  \times \mathbb{T}^\ep)}^{1/2} \\
    & \qquad + C \ep \| \mathcal{E} \|_{L^2 \left((0,1) \times \mathbb{T}^\ep \right)}.
\end{align*}
On the high-probability event event $\left\{  \inf \mathbf{m} \times \mathbf{M}_+  > \ep^{\theta} \right\}$, the term on the left-hand side can be estimated using the inequalities~\eqref{eq:realD20},~\eqref{eq:12342708}, the upper bound for the average value of $v_2$ stated in~\eqref{eq:r'normaveragev2}, the upper bound on the error terms~\eqref{eq:estiamteerroreterms} and Proposition~\ref{prop:prop2.20} ``Product". We obtain
\begin{equation*}
    \left\|\nabla^\ep v_2 \right\|_{L^{r'} ((0,1) \times \mathbb{T}^\ep)} \indc_{ \left\{  \inf \mathbf{m} \times \mathbf{M}_+  > \ep^{\theta} \right\} } \leq \mathcal{O}_{\Psi , c} \left( \ep^\theta \right).
\end{equation*}
Combining the previous inequality with the Poincar\'e inequality (on the torus with respect to the spatial variable) and the inequality~\eqref{eq:r'normaveragev2}, we deduce that
\begin{align} \label{eq:13540609}
    \left\| v_2 \right\|_{L^{r'} ((0,1) \times \mathbb{T}^\ep)} \indc_{ \left\{  \inf \mathbf{m} \times \mathbf{M}_+  > \ep^{\theta} \right\} } & \leq \left( \left\|\nabla^\ep v_2 \right\|_{L^{r'} ((0,1) \times \mathbb{T}^\ep)}  +  \left\| \left( v_2 \right)_{\mathbb{T}^\ep} \right\|_{L^{r'}((0,1))} \right)\indc_{ \left\{  \inf \mathbf{m} \times \mathbf{M}_+  > \ep^{\theta} \right\} } \\
    & \leq \mathcal{O}_{\Psi , c} \left( \ep^\theta \right). \notag
\end{align}

\medskip

\textit{Step 6. The conclusion.}

\medskip

Combining~\eqref{eq:estiamtev0} of Step 2,~\eqref{eq:estiamtev1sansgrad} of Step 3 and~\eqref{eq:13540609} of Step 4 with Jensen's inequality (using that $r' < 2$), we obtain
\begin{align} \label{eq:19080609}
    \lefteqn{ \left\|  u^\ep - w^\ep \right\|_{L^{r'}((0,1) \times \mathbb{T}^\ep)} \indc_{ \left\{  \inf \mathbf{m} \times \mathbf{M}_+  > \ep^{\theta} \right\} } } \qquad & \\ & \leq \left( \left\|  v_0 \right\|_{L^2((0,1) \times \mathbb{T}^\ep)} + \left\|  v_1 \right\|_{L^{r'}((0,1) \times \mathbb{T}^\ep)} + \left\|  v_2 \right\|_{L^{r'}((0,1) \times \mathbb{T}^\ep)} \right) \indc_{ \left\{  \inf \mathbf{m} \times \mathbf{M}_+  > \ep^{\theta} \right\} }  \notag
    \\
    & \leq \mathcal{O}_{\Psi, c} \left( C \ep^{\theta} \right). \notag
\end{align}
To complete the proof of  Theorem~\ref{main.thm}, we need to remove the indicator of the event $\left\{  \inf \mathbf{m} \times \mathbf{M}_+  > \ep^{\theta} \right\}$ on the left-hand side. This is achieved by first noting that the inequality~\eqref{eq:22572106} implies
\begin{equation*}
       \indc_{ \left\{  \inf \mathbf{m} \times \mathbf{M}_+  < \ep^{\theta} \right\}} \leq \mathcal{O}_{\Psi , c} \left( C \ep \right).
\end{equation*}
Combining the previous inequality with~\eqref{eq:boundnablauep} and~\eqref{est:Linftynormw}, using Jensen's inequality (together with $r' < r$) and Proposition~\ref{prop:prop2.20} ``Product", we obtain
\begin{align*}
    \left\|  u^\ep - w^\ep \right\|_{L^{r'}((0,1) \times \mathbb{T}^\ep)} \indc_{ \left\{  \inf \mathbf{m} \times \mathbf{M}_+  \leq \ep^{\theta} \right\} } & \leq \left( \left\|  u^\ep \right\|_{L^{r}((0,1) \times \mathbb{T}^\ep)}  + \left\|  w^\ep \right\|_{L^{r}((0,1) \times \mathbb{T}^\ep)} \right) \indc_{ \left\{  \inf \mathbf{m} \times \mathbf{M}_+   \leq \ep^{\theta} \right\} } \\
    & \leq \mathcal{O}_{\Psi , c} \left( C \ep \right) \\
    & \leq \mathcal{O}_{\Psi , c} \left( C \ep^\theta \right). 
\end{align*}
Combining the previous inequality with~\eqref{eq:19080609}, we deduce that
\begin{equation*}
    \left\|  u^\ep - w^\ep \right\|_{L^{r'}((0,1) \times \mathbb{T}^\ep)} \leq \mathcal{O}_{\Psi , c} \left( C \ep^\theta \right).
\end{equation*}
The result can finally be improves from the $L^{r'}$ to the $L^2$-norm by interpolating the space $L^2$ between the spaces $L^{r'}$ and $L^r$ and using Proposition~\ref{prop:prop2.20} ``Product" and ``Powers". Specifically, using the identity $1/2 = / (2r') + 1/(2r)$, we may write
\begin{align*}
    \left\|  u^\ep - w^\ep \right\|_{L^{2}((0,1) \times \mathbb{T}^\ep)} & \leq \left\|  u^\ep - w^\ep \right\|_{L^{r'}((0,1) \times \mathbb{T}^\ep)}^{1/2} \left\|  u^\ep - w^\ep \right\|_{L^{r}((0,1) \times \mathbb{T}^\ep)}^{1/2} \\
    & \leq  \left\|  u^\ep - w^\ep \right\|_{L^{r'}((0,1) \times \mathbb{T}^\ep)}^{1/2} \left( \left\|  u^\ep \right\|_{L^{r}((0,1) \times \mathbb{T}^\ep)} + \left\|  w^\ep \right\|_{L^{r}((0,1) \times \mathbb{T}^\ep)} \right)^{1/2} \\
    & \leq \mathcal{O}_{\Psi , c} (C \ep^{\theta/2}).
\end{align*}
Redefining the value of the exponent $\theta$ completes the proof of Theorem~\ref{main.thm}.

\newpage

\appendix

\section{Properties of the Langevin dynamic} \label{App.B}

In this section, we give an outline of the proof of the results stated in Section~\ref{sec:introdLangevindyn}. The proof of these results is greatly simplified when the potential $V$ is assumed to satisfy a strict convexity assumption of the form:
\begin{equation*}
    \exists c > 0, \, \forall x \in \Rd, \, \mathrm{Hess} \, V (x) \geq c I_n.
\end{equation*}
We will thus make this assumption to simplify the proofs (only) in the rest of this section, and explain how the results can be extended to the potentials satisfying Assumption~\eqref{AssPot}. We will also allow, only in this section, all the constants to depend on the sidelength $L$ of the torus (N.B. This is allowed because all the properties proved in this section are qualitative).

\begin{proof}[Proof of the result stated in Definition~\ref{Prop:Langevin}]
    We fix a slope $p \in \Rd$ and an integer $L \in \N$ and recall the notation introduced in Section~\ref{sectionBM} (in particular, the fact that the Brownian motions are defined for $t \in \R$). We let $\varphi$ be a random variable distributed according to the Gibbs measure $\mu_{L,p}$ which is independent of the Brownian motions. 
    
    For a fixed negative integer $K \in \Z \setminus \N$, we let $\varphi^K : (K , \infty) \times \mathbb{T}_L \to \R$ be the solution of the system of stochastic differential equations (with periodic boundary condition)
    \begin{equation} \label{eq:introSDEappB}
   \left\{ \begin{aligned}
    d \varphi^K(t , x;p) & = \nabla \cdot D_p V(\nabla \varphi^K(t , x;p)) dt + \sqrt{2} d \tilde B_t(x) & \mbox{for} &~ (t , x) \in (K , \infty) \times \mathbb{T}_L, \\
    \varphi^K(K , x;p) & = \varphi & \mbox{for} &~  x \in \mathbb{T}_L.
    \end{aligned}
    \right.
\end{equation}
(N.B. Since we are working in finite-volume, the existence and uniqueness of the solution follows standard arguments; one can even prove, using a Picard iteration scheme, that for \emph{any} realization of the trajectories of the Brownian motions and \emph{any} realization of the initial condition $\varphi$, there exists a unique solution to~\eqref{eq:introSDEappB}).

Let us note that, for any $K \in \Z \setminus \N$, the dynamic~\eqref{eq:introSDEappB} is stationary and ergodic. In particular, for any $t \in (K , \infty)$, the random variable $\varphi^K(t , \cdot;p)$ is distributed according to the Gibbs measure $\mu_{L,p}$. 

We then consider the difference $w := \varphi^{K-1}(\cdot , \cdot ; p ) - \varphi^K (\cdot , \cdot ; p )$. Using the Definition~\eqref{eq:introSDEappB}, we see that the map $w$ solves the parabolic equation
\begin{equation} \label{eq:introSDEappB22}
   \left\{ \begin{aligned}
    \partial_t w (t , x) & = \nabla \cdot \a^K \nabla w(t , x) & \mbox{for} &~ (t , x) \in (0 , \infty) \times \mathbb{T}_L, \\
     w(K , \cdot) & = \varphi^{K-1}(K , x ;p) - \varphi^{K}(K , x ;p) & \mbox{for} &~  x \in \mathbb{T}_L.
    \end{aligned}
    \right.
\end{equation}
with
\begin{equation} \label{eq:17421406}
    \a^K(t , x ; p) := \int_0^1 D_p^2 V (p + s \nabla \varphi^{K-1}(t , x) + (1-s) \nabla \varphi^{K}(t , x)) \, ds \geq c I_d.
\end{equation}
The parabolic equation~\eqref{eq:introSDEappB22} can be solved by appealing to the heat kernel $P_\a$ and we have
\begin{equation*}
    w (t , x) = \sum_{y \in \mathbb{T}_L} P_{\a^K(\cdot ; p)}(t , x ; K , y) (\varphi^{K-1}(K,y ;p) - \varphi^{K}(K , y ;p)).
\end{equation*}
Using the Definition~\eqref{eq:17421406} (and the lower bound $\a^K(t , x ; p) \geq c I_d$), the following deterministic (and suboptimal) upper bound on the heat kernel can be established
\begin{equation} \label{uppboundheatkernel}
   \forall (t , x) \in (T , \infty) \times \mathbb{T}_L, \, \left|  P_{\a^K (\cdot ; p)}(t , x ; K , y) \right| \leq \exp \left( - \frac{t - K}{CL^2} \right).
\end{equation}
A combination of the two previous displays then yields the bound (denoting by $C_L$ a constant which is allowed to depend on $L$): for any $(t , x) \in (K , \infty),$
\begin{align*}
    \mathbb{E} \left[ \sup_{t' \geq t} \left| w (t' , x) \right| \right] & \leq \sum_{y \in \mathbb{T}_L} \exp \left( - \frac{t - K}{C_L} \right) \mathbb{E} \left[ \left| \varphi^{K-1}(K,y ;p) - \varphi^{K}(K , y ;p) \right| \right] \\
    & \leq C_L \exp \left( - \frac{t - K}{C_L} \right).
\end{align*}
We thus deduce that, for any $(t , x) \in \R \times \mathbb{T}_L$
\begin{equation*}
    \E \left[ \sum_{\substack{K \in \mathbb{Z} \\ K \leq t}} \sup_{t' \geq t} \left| \varphi^{K-1}(t ,x ;p) - \varphi^{K}(t , x ;p) \right| \right] \leq C_L  \sum_{\substack{K \in \mathbb{Z} \\ K \leq t}} \exp \left( - \frac{t - K}{C_L} \right) < \infty.
\end{equation*}
This implies that, for almost every realization of the initial condition $\varphi$ and almost every realization of the Brownian motions, the random function $(t , x) \mapsto \varphi^{K}(t , x)$ converges (locally uniformly) as $K \to - \infty$ to a limit which we denote by $(t , x) \mapsto \varphi_L(t , x ; p)$.

One can then verify that the function $\varphi_L(\cdot , \cdot ; p)$ satisfies the desired properties: the properties ``(i) Average value" and ``(iii) Stochastic differential equations" of Definition~\ref{Prop:Langevin}, and the properties ``Distribution", ``Stationarity" and ``Ergodicity" are inherited from the dynamics $\varphi^{K-1}$. The condition ``(ii) Growth" is obtained by using the stationarity property together with an application of the Borel-Cantelli Lemma, and the uniqueness part of the statement is obtained by using the same computation as in~\eqref{eq:introSDEappB22} (and making use of the upper bound~\eqref{uppboundheatkernel} on the heat kernel).

For the differentiability with respect to the slope, we consider the dynamic started from $0$: for $K \in \Z \setminus \N$,
\begin{equation} \label{eq:introSDEappB0init}
   \left\{ \begin{aligned}
    d \tilde \varphi^K(t , x;p) & = \nabla \cdot D_p V(p + \nabla \tilde \varphi^K(t , x;p)) dt + \sqrt{2} d \tilde B_t(x) & \mbox{for} &~ (t , x) \in (K , \infty) \times \mathbb{T}_L, \\
    \tilde \varphi^K(K , x;p) & = 0 & \mbox{for} &~  x \in \mathbb{T}_L.
    \end{aligned}
    \right.
\end{equation}
The same arguments as in~\eqref{eq:introSDEappB22} shows that, for almost every realization of the Brownian motions, the random function $(t , x) \mapsto \tilde \varphi^K(t , x ; p)$ converges (locally uniformly) to the dynamic $(t , x) \mapsto \tilde \varphi_L(t , x ; p)$.

Using the results of~\cite[Chapter 5, Theorem 3.1]{hartman2002ordinary}, the dynamic~\eqref{eq:introSDEappB0init} is differentiable with respect to the slope $p \in \Rd$, and its derivative at the slope $p \in \Rd$ in the direction $\lambda \in \Rd$ is the solution of the parabolic equation
\begin{equation*}
   \left\{ \begin{aligned}
   \partial_t \tilde w_{L , p , \lambda}^K - \nabla \cdot \tilde \a^K (\cdot ; p )( \lambda +  \nabla \tilde w_{L , p , \lambda}^K ) & = 0   & \hspace{5mm} \mbox{in} \hspace{3mm}  \R \times \mathbb{T}_L,\\
   \tilde w_{L , p , \lambda}^K(K , \cdot) & = 0  & \hspace{5mm} \mbox{in} \hspace{3mm} \mathbb{T}_L,
    \end{aligned}
    \right.
\end{equation*}
where
\begin{equation*}
    \tilde \a^K (t , x ; p ) := D_p^2 V (p +  \nabla \tilde \varphi^{K}(t , x)). 
\end{equation*}
One can then show that, as $K \to -\infty$, the function $\tilde w_{L , p , \lambda}^K$ converges (locally uniformly over the space, time and slope variables) to a function $(t , x) \mapsto w_{L , p , \lambda}$ which is solution to the equation~\eqref{eqstatpardifferentiated} (and that under some moment condition, this solution is unique, this can be established using the same technique as in the computation~\eqref{eq:introSDEappB22}). We deduce from these observations (and the almost sure uniform convergence) that the map $\varphi(t , x ; p)$ is differentiable with respect to $p \in \Rd$ and that its derivative in the direction $\lambda \in \Rd$ is given by the function $w_{L , \lambda , p}.$ 

The last step of the proof is the differentiablity with respect to the Brownian motions. To this end, we consider the dynamic~\eqref{eq:introSDEappB0init}. Let us fix a large negative integer $K \in \Z \setminus \N$ and let $t , s \in \R$ with $s < t$ and $x \in \mathbb{T}_L$. We then fix a realization of the Brownian motions $\{ \tilde B_t(x) \, : \, t \in \R, \, x \in \mathbb{T}_L \}$ (N.B. this can be done since the dynamic can be solved for every realization of the Brownian motions) and, for $\xi \in \R$, denote by (N.B. Note that $\tilde \varphi^K_0 = \tilde \varphi^K$)
\begin{equation*}
    \tilde \varphi^K_\xi (t' , y;p) := \tilde \varphi^K_\xi (t' , y;p) \left( \{ \tilde B_t(x) + \xi \delta_y(x) g_{s,t}(t) \, : \, t \in \R, \, x \in \mathbb{T}_L \} \right).
\end{equation*}
Let us note that dynamic $\tilde \varphi^K_\xi (t , x;p)$ is a solution to the equation
\begin{equation*}
   \left\{ \begin{aligned}
    d \tilde \varphi^K_\xi(t' , x;p) & = \nabla \cdot D_p V(p + \nabla \tilde \varphi^K_\xi(t' , y;p)) dt + \sqrt{2} d \tilde B_{t'}(x) + \sqrt{2} \xi \delta_x(y) \indc_{(s , t)}(t') & \mbox{in} &~  (K , \infty) \times \mathbb{T}_L, \\
    \tilde \varphi^K_\xi(K , x;p) & = 0 & \mbox{in} &~   \mathbb{T}_L.
    \end{aligned}
    \right.
\end{equation*}
Differentiating each term in the equation above at $\xi = 0$ (following the definition~\eqref{eq:diffwithrespecttoBM}) and noting that the Brownian motions do not depend on $\xi$, we obtain that the derivative $w^K := \partial \tilde \varphi_{\xi}^K (\cdot , \cdot ; p)/\partial X_{t , s} (x)$ solves the parabolic equation
\begin{equation*}
     \left\{ \begin{aligned}
    \partial_t w^K (t' , y) &= \nabla \cdot \a^K \nabla w^K (t' , y) + \frac{\sqrt{2}}{t - s}  \delta_{y}(x)  \indc_{(s , t)}(t') &~\mbox{for}~& (t' , y) \in [0 , \infty] \times \mathbb{T}_L, \\
    w (K , y) &= 0 &~\mbox{for}~& x \in \mathbb{T}_L,
    \end{aligned} \right.
\end{equation*}
with the environment $\a^K(t' , y) := D_p^2 V( p + \tilde \varphi^K(t' , y;p) )$.
Applying Duhamel's principle, we obtain the identity
\begin{equation*}
    w(t' , y) = \frac{\sqrt{2}}{t - s} \int_{t}^{s} P_{\a^K} (t' , x ; s' , y)  \, ds'. 
\end{equation*}
The result is then deduced for the limiting dynamic $(t , x) \mapsto \varphi(t , x)$ by taking the limit $K \to - \infty$ in the identities above (and by verifying that a limit and a derivative can be exchanged).
\end{proof}

\section{Parabolic multiscale Poincar\'e inequality} \label{App.multiscale}

This section is devoted to the proof of the parabolic version of the multiscale Poincar\'e inequality and the identification of the dual parabolic space $\hat{W}^{-1,q}_{\mathrm{par}}((0,1) \times \mathbb{T}^\ep)$. We will prove the results for a general exponent $q \in (1 , \infty)$ (but only use it when $q = r$), and we recall that we denote by $q' = q/(q-1)$ the conjugate exponent of $q$.

\subsection{Parabolic multiscale Poincar\'e inequality}

\begin{proposition}[Parabolic multiscale Poincar\'e inequality] 
For any $q \in (1 , \infty)$, there exists a constant $C := C(d , q) < \infty$ such that, for any integer $n \in \N$ and any function $f : Q_{3^n} \to \R$,
\begin{equation*}
    \left\| f \right\|_{\underline{\hat{W}}^{-1,q}_{\mathrm{par}} (Q_{3^n})} \leq C  \left\| f \right\|_{\underline{L}^{q} (Q_{3^n})} + C \sum_{m = 0}^n 3^m \left( \left| \mathcal{Z}_{m,n} \right|^{-1}  \sum_{z \in \mathcal{Z}_{m,n}} \left| \left( f \right)_{z + Q_{3^m}}  \right|^{q} \right)^{\frac 1{q}}.
\end{equation*}
\end{proposition}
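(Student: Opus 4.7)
The plan is to prove the inequality by duality: given $f : Q_{3^n} \to \R$, fix an arbitrary test function $v$ with $\|v\|_{\underline{W}^{1,q'}_{\mathrm{par}}(Q_{3^n})} \leq 1$ and bound the pairing $\frac{1}{|Q_{3^n}|}\int_{Q_{3^n}} fv$ by the right-hand side of the stated inequality. The approach follows the strategy of \cite[Proposition 3.6]{ABM} (itself adapted from \cite[Proposition 6.1]{AKM1}): decompose $v$ through successive triadic-scale averaging, so that the pairing with $f$ at each scale produces exactly the averages $(f)_{z + Q_{3^m}}$ that appear in the bound.

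Concretely, for each $m \in \{0, 1, \ldots, n\}$ introduce the piecewise constant approximation $[v]_m$ equal to $(v)_{z + Q_{3^m}}$ on each cylinder $z + Q_{3^m}$, $z \in \mathcal{Z}_{m,n}$ (so $[v]_n \equiv (v)_{Q_{3^n}}$), and write the telescoping decomposition
\begin{equation*}
v = (v - [v]_0) + \sum_{m = 0}^{n-1}([v]_m - [v]_{m+1}) + [v]_n.
\end{equation*}
Pairing with $f$ produces three contributions. The first, $\frac{1}{|Q_{3^n}|}\int_{Q_{3^n}} f (v - [v]_0)$, is estimated by H\"older's inequality together with the parabolic Poincar\'e inequality on unit cylinders: $\|v - [v]_0\|_{\underline{L}^{q'}(Q_{3^n})} \leq C \|v\|_{\underline{W}^{1,q'}_{\mathrm{par}}(Q_{3^n})} \leq C$, which yields the term $C \|f\|_{\underline{L}^q(Q_{3^n})}$. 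The last contribution, $(f)_{Q_{3^n}}(v)_{Q_{3^n}}$, is bounded by $C \cdot 3^n |(f)_{Q_{3^n}}|$ using $|(v)_{Q_{3^n}}| \leq \|v\|_{\underline{L}^{q'}(Q_{3^n})} \leq 3^n \|v\|_{\underline{W}^{1,q'}_{\mathrm{par}}(Q_{3^n})}$, which matches the $m = n$ term of the sum on the right-hand side.

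The main work is the scale-$m$ contribution. Since $[v]_m - [v]_{m+1}$ is piecewise constant on the partition $(z + Q_{3^m})_{z \in \mathcal{Z}_{m,n}}$, H\"older's inequality factors it as
\begin{equation*}
\left(\frac{1}{|\mathcal{Z}_{m,n}|}\sum_{z \in \mathcal Z_{m,n}}|(f)_{z+Q_{3^m}}|^q\right)^{\frac 1q} \left(\frac{1}{|\mathcal{Z}_{m,n}|}\sum_{z \in \mathcal Z_{m,n}}|(v)_{z+Q_{3^m}} - (v)_{z^*(z)+Q_{3^{m+1}}}|^{q'}\right)^{\frac 1{q'}},
\end{equation*}
where $z^*(z) \in \mathcal{Z}_{m+1,n}$ denotes the unique parent cylinder containing $z + Q_{3^m}$. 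The plan is to bound the second factor by $C \cdot 3^m$ via Jensen's inequality (which replaces the difference of averages by the $L^{q'}$-oscillation of $v$ on $z+Q_{3^m}$ around its parent mean), a reorganization of the sum according to the parent cylinders $z^*$, and the parabolic Poincar\'e inequality at scale $3^{m+1}$
\begin{equation*}
\|v - (v)_{z^* + Q_{3^{m+1}}}\|_{\underline{L}^{q'}(z^* + Q_{3^{m+1}})} \leq C \cdot 3^{m+1} \|v\|_{\underline{W}^{1,q'}_{\mathrm{par}}(z^* + Q_{3^{m+1}})}.
\end{equation*}
Raising to the power $q'$, summing over $z^* \in \mathcal{Z}_{m+1,n}$, and using the normalization $\|v\|_{\underline{W}^{1,q'}_{\mathrm{par}}(Q_{3^n})} \leq 1$ then yields the desired $C \cdot 3^m$ bound. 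Summing the scale-$m$ contributions over $m \in \{0, \ldots, n-1\}$ and combining with the first and last terms completes the argument.

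The main obstacle is controlling this telescoping factor with the correct normalizations: once the discrete parabolic Poincar\'e inequality at scale $3^{m+1}$ is in hand (which is a standard adaptation of its continuum analogue in \cite[Proposition 3.6]{ABM}, essentially verbatim in the discrete-in-space, continuous-in-time setting used here), the reorganization of sums between child cylinders $z \in \mathcal{Z}_{m,n}$ and their parents $z^* \in \mathcal{Z}_{m+1,n}$ has to respect the identity $|\mathcal{Z}_{m,n}| = 3^{d+2}|\mathcal{Z}_{m+1,n}|$ so that the normalized averages telescope correctly into the right-hand side of the stated inequality; the remainder of the proof is bookkeeping of these normalizations.
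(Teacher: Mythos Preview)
Your overall strategy matches the paper's: duality against a test function $v$ with $\|v\|_{\underline{W}^{1,q'}_{\mathrm{par}}(Q_{3^n})}\le 1$, a telescoping decomposition through the triadic-scale averages $[v]_m$, H\"older at each scale, and a parabolic Poincar\'e inequality on subcylinders. The paper organizes the same ingredients into three steps (local parabolic Poincar\'e, summation over subcylinders, telescoping), and your write-up captures Steps~1 and~3 correctly.

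There is, however, a genuine gap at the point you label ``bookkeeping of these normalizations.'' After applying the parabolic Poincar\'e inequality on each $z^*+Q_{3^{m+1}}$, you need
\[
|\mathcal{Z}_{m+1,n}|^{-1}\sum_{z^*=(s,y)\in\mathcal{Z}_{m+1,n}}
\left\|\partial_t v\right\|_{\underline{L}^{q'}\left(s+(-3^{2(m+1)},0),\,\underline{W}^{-1,q'}(y+\Lambda_{3^{m+1}})\right)}^{q'}
\;\le\; C.
\]
Unlike the gradient term (which sums trivially to $\|\nabla v\|_{\underline{L}^{q'}(Q_{3^n})}^{q'}$), this does \emph{not} follow from the identity $|\mathcal{Z}_{m,n}|=3^{d+2}|\mathcal{Z}_{m+1,n}|$ or any direct additivity: the local $\underline{W}^{-1,q'}$ norm on a small box is defined by duality against test functions supported in that box, and there is no monotonicity relating it to the global $\underline{W}^{-1,q'}(\Lambda_{3^n})$ norm. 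The paper's Step~2 handles this by a short duality argument: for each $z$ one selects an (almost) optimizing test function $h_z$ supported in $y+\Lambda_{3^m}$, forms the single test function $\sum_z a_z^{q'-1}h_z$ (with $a_z$ the local norm), uses disjointness of supports to control its global $\underline{L}^{q}\underline{W}^{1,q}$ norm, and then tests $\partial_t v$ against it once on $Q_{3^n}$. This is the main technical content of the proof beyond the local Poincar\'e inequality, and your proposal omits it. Once you insert this argument, the rest of your outline goes through exactly as you describe.
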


\begin{proof}
We fix an exponent $q \in (1 , \infty)$ and, following the argument of~\cite[Proposition 3.6]{ABM}, we decompose the proof into 3 Steps.

\medskip

\textit{Step 1.} We first prove the following inequality: for any integer $m \in \N$, and any function $g \in W^{1,q}_{\mathrm{par}}(Q_{3^m})$,
\begin{equation} \label{eq:18260209}
    \left\| g - \left( g \right)_{Q_{3^m}} \right\|_{\underline{L}^{q} \left( Q_{3^m} \right)} \leq C 3^m \left\| \nabla g \right\|_{\underline{L}^{q} \left( Q_{3^m} \right)} + C 3^m \left\| \partial_t g \right\|_{\underline{L}^{q} \left( (- 3^{2m}, 0) \, , \, \underline{W}^{-1,q}(\Lambda_{3^m}) \right)}.
\end{equation}
We first estimate the term on the left-hand side by introducing the spatial average of the function $g$ and by writing
\begin{align} \label{eq:15300209}
    \left\| g - \left( g \right)_{Q_{3^m}} \right\|_{\underline{L}^{q} \left( Q_{3^m} \right)}^{q} & = 3^{-2m} \int_{(- 3^{2m}, 0)} \left| \Lambda_{3^m} \right|^{-1} \sum_{x \in \Lambda_{3^m}} \left| g(t,x) - \left( g \right)_{Q_{3^m}} \right|^{q} \, dt \\
    & \leq C 3^{-(d+2)m} \int_{(- 3^{2m}, 0)} \sum_{x \in \Lambda_{3^m}} \left| g(t,x) - \left( g(t , \cdot) \right)_{\Lambda_{3^m}} \right|^{q} \, dt \notag  \\
    & \qquad + C 3^{-2m} \int_{(- 3^{2m}, 0)} \left| \left( g(t , \cdot) \right)_{\Lambda_{3^m}} - \left( g \right)_{Q_{3^m}} \right|^{q} \, dt. \notag
\end{align}
The first term on the right-hand side can be estimated using the Poincar\'e inequality
\begin{equation*}
    3^{-(d+2)m} \int_{(- 3^{2m}, 0)} \sum_{x \in \Lambda_{3^m}} \left| g(t,x) - \left( g(t , \cdot) \right)_{\Lambda_{3^m}} \right|^{q} \, dt \leq C  3^{qm} \left\| \nabla g \right\|_{\underline{L}^{q} \left( Q_{3^m} \right)}^q.
\end{equation*}
It is thus sufficient, in order to prove~\eqref{eq:18260209}, to show the inequality
\begin{multline} \label{eq:18320209}
    3^{-2m} \int_{(- 3^{2m}, 0)} \left| \left( g(t , \cdot) \right)_{\Lambda_{3^m}} - \left( g \right)_{Q_{3^m}} \right|^{q} \, dt \\
    \leq C 3^{qm} \left\| \nabla g \right\|_{\underline{L}^{q} \left( z + Q_{3^m} \right)}^{q} + C  3^{qm} \left\| \partial_t g \right\|_{\underline{L}^{q} \left( (- 3^{2m}, 0) , \underline{W}^{-1,q}(\Lambda_{3^m}) \right)}^{q}.
\end{multline}
We consider a non-negative function $\chi : \Lambda_{3^m} \to \R$ supported in $\Lambda_{3^m/2}$ satisfying
\begin{equation*}
    \frac{1}{\left| \Lambda_{3^m} \right|} \sum_{x \in \Lambda_{3^m}} \chi(x) = 1 ~~ \mbox{and}~~ \forall x \in  \Lambda_{3^m}, ~ \left| \nabla \chi (x) \right| \leq C 3^{-m}.
\end{equation*}
We then let $\psi$ be the solution of the discrete Neumann problem (N.B. the first assumption on the function $\chi$ ensures that this function is well-defined)
\begin{equation*}
    \left\{ \begin{aligned}
    - \Delta \psi & = 1 - \chi & ~\mbox{in}~ \Lambda_{3^m}, \\
    \mathbf{n} \cdot \nabla \psi & = 0 & ~\mbox{on}~ \partial \Lambda_{3^m}.
    \end{aligned} \right.
\end{equation*}
The discrete version of the Calder\'on-Zygmund estimates implies that
\begin{equation} \label{eq:16550209}
    \left\| \nabla \psi \right\|_{\underline{L}^{q'}(\Lambda_{3^m})} \leq C 3^{m}.
\end{equation}
Equipped with these functions, we may estimate the second term on the right-hand side of~\eqref{eq:15300209} as follows
\begin{align} \label{eq:17320209}
    3^{-2m} \int_{(- 3^{2m}, 0)} \left| \left( g(t , \cdot) \right)_{\Lambda_{3^m}} - \left( g \right)_{Q_{3^m}} \right|^{q} \, dt & \leq C 3^{-2m} \int_{(- 3^{2m}, 0)} \left| \left( g(t , \cdot) (1 - \chi)\right)_{\Lambda_{3^m}} \right|^{q} \, dt \\\
    & \qquad + C 3^{-2m} \int_{(- 3^{2m}, 0)} \left| \left( g(t , \cdot) \chi \right)_{\Lambda_{3^m}} - \left( g \chi \right)_{Q_{3^m}} \right|^{q} \, dt \notag \\
    & \qquad + C \left| \left( g (1- \chi) \right)_{Q_{3^m}}\right|^{q}. \notag
\end{align}
The first term can be estimated by using the definition of the function $\psi$, by performing a discrete integration by parts and by using H\"{o}lder's inequality as follows
\begin{align} \label{eq:17320209bis}
   3^{-2m} \int_{(- 3^{2m}, 0)} \left| \left( g(t , \cdot) (1 - \chi)\right)_{\Lambda_{3^m}} \right|^{q} \, dt & = 3^{-2m} \int_{(- 3^{2m}, 0)} \left| \left( \nabla g(t , \cdot) \cdot \nabla \psi \right)_{\Lambda_{3^m}} \right|^{q} \, dt \\
   & \leq \left\| \nabla g \right\|_{\underline{L}^{q}(Q_{3^m})}^{q} \left\| \nabla \psi \right\|_{\underline{L}^{q'}(\Lambda_{3^m})}^{q} \notag \\
   & = C 3^{qm} \left\| \nabla g \right\|_{\underline{L}^{q}(Q_{3^m})}^{q}. \notag
\end{align}
The third term on the right-hand side of~\eqref{eq:17320209} can be estimated similarly and we have
\begin{equation} \label{eq:17320209ter}
   \left| \left( g (1- \chi) \right)_{Q_{3^m}}\right| \leq  C 3^{m} \left\| \nabla g \right\|_{\underline{L}^{q}(Q_{3^m})}.
\end{equation}
For the second term on the right-hand side of~\eqref{eq:17320209}, we first note that
\begin{equation*}
    3^{-2m} \int_{(- 3^{2m}, 0)}  \left( g(t , \cdot) \chi \right)_{\Lambda_{3^m}} \, dt =  \left( g \chi \right)_{Q_{3^m}}.
\end{equation*}
Applying the Poincar\'e inequality with respect to the time variable together with the estimate~\eqref{eq:16550209}, we obtain
\begin{align*}
    \lefteqn{3^{-2m} \int_{(- 3^{2m}, 0)} \left| \left( g(t , \cdot) \chi \right)_{\Lambda_{3^m}} - \left( g \chi \right)_{Q_{3^m}} \right|^{q} \, dt } \qquad & \\ & \leq 3^{2qm} 3^{-2m} \int_{(- 3^{2m}, 0)}  \left| \left( \partial_t g(t , \cdot) \chi \right)_{\Lambda_{3^m}} \right|^{q} \, dt  \\
    & \leq 3^{2qm}3^{-2m} \int_{(- 3^{2m}, 0)}  \left\| \partial_t g(t , \cdot) \right\|_{\underline{W}^{-1,q}(\Lambda_{3^m})}^{q} \left\| \chi \right\|_{\underline{W}^{1,q'}(\Lambda_{3^m})}^{q} \, dt  \\
    & \leq 3^{qm} \left\| \partial_t g \right\|_{\underline{L}^{q} \left( (- 3^{2m}, 0) , \underline{W}^{-1,q}(\Lambda_{3^m}) \right)}^{q}.
\end{align*}
Combining the previous inequality with~\eqref{eq:17320209bis} and~\eqref{eq:17320209ter} completes the proof of~\eqref{eq:18320209}.

\bigskip

\textit{Step 2.} In this step, we fix a function $g \in W^{1,q'}_{\mathrm{par}}(Q_{3^n})$ such that
    \begin{equation} \label{eqassumptiongmultscale}
        \left\| g \right\|_{\underline{W}_{\mathrm{par}}^{1,q'}(Q_{3^n})} \leq 1
    \end{equation}
and prove the inequality
\begin{equation} \label{eqassumptiongmultscaleresult}
    \left| \mathcal{Z}_{m,n} \right|^{-1} \sum_{z \in \mathcal{Z}_{m,n}} \left\| g - \left( g \right)_{z + Q_{3^m}} \right\|_{\underline{L}^{q'} \left( z + Q_{3^m} \right)}^{q'} \leq C 3^{q'm} ,
\end{equation}
To this end, we first translate the inequality~\eqref{eq:18260209} and rewrite it as follows (with the exponent $q'$ instead of $q$): for any $m \in \{0 , \ldots, n\}$ and any $z = (s , y) \in \mathcal{Z}_{m,n},$
\begin{equation*}
\left\| g - \left( g \right)_{z + Q_{3^m}} \right\|_{\underline{L}^{q'} \left( z + Q_{3^m} \right)} \leq C 3^m \left\| \nabla g \right\|_{\underline{L}^{q'} \left( z + Q_{3^m} \right)} + C 3^m \left\| \partial_t g \right\|_{\underline{L}^{q'} \left( s + (- 3^{2m}, 0) , \underline{W}^{-1,q'}(y + \Lambda_{3^m}) \right)}.
\end{equation*}
From this inequality, we deduce that
\begin{multline} \label{eq:09190309}
    \left| \mathcal{Z}_{m,n} \right|^{-1} \sum_{z \in \mathcal{Z}_{m,n}} \left\| g - \left( g \right)_{z + Q_{3^m}} \right\|_{\underline{L}^{q'} \left( z + Q_{3^m} \right)}^{q'} \\ \leq C 3^{q'm} \left| \mathcal{Z}_{m,n} \right|^{-1} \sum_{z \in \mathcal{Z}_{m,n}}  \left\| \nabla g \right\|_{\underline{L}^{q'} \left( z + Q_{3^m} \right)}^{q'} 
    + C 3^{q'm} \left| \mathcal{Z}_{m,n} \right|^{-1} \sum_{z \in \mathcal{Z}_{m,n}} \left\| \partial_t g \right\|_{\underline{L}^{q'} \left( s + (- 3^{2m}, 0) , \underline{W}^{-1,r'}(y + \Lambda_{3^m}) \right)}^{q'}.
\end{multline}
The first term on the right-hand side is easily estimated
\begin{equation} \label{eq:eq:09190309}
    \left| \mathcal{Z}_{m,n} \right|^{-1} \sum_{z \in \mathcal{Z}_{m,n}}  \left\| \nabla g \right\|_{\underline{L}^{q'} \left( z + Q_{3^m} \right)}^{q'} = \left\| \nabla g \right\|_{\underline{L}^{q'} \left( Q_{3^n} \right)}^{q'} \leq \left\| \nabla g \right\|_{\underline{L}^{q'} \left( Q_{3^n} \right)}^{q'} \leq 1.
\end{equation}
For the second term on the right-hand side of~\eqref{eq:09190309}, for any $z = (s , y) \in \mathcal{Z}_{m,n}$, we select a function $h_z : (z + Q_{3^m}) \to \R$ satisfying the three following properties
\begin{equation*}
    \left\{ \begin{aligned}
        h_z = 0 ~\mbox{on}~ (s + (- 3^{2m} , 0)) \times  \partial^+ & (y  + \Lambda_{3^m}), \\ 
         \left\| h_z \right\|_{\underline{L}^{q} \left( s + (- 3^{2m}, 0) , \underline{W}^{1,q}(y + \Lambda_{3^m}) \right)} & \leq 1, \\
         \left\| \partial_t g \right\|_{\underline{L}^{q'} \left( s + (- 3^{2m}, 0) , \underline{W}^{-1,q'}(y + \Lambda_{3^m}) \right)}  & = \left( \partial_t g h_z \right)_{z + Q_{3^m}}.
         \end{aligned}
         \right.
\end{equation*}
We extend the function $h_z$ by $0$ outside the parabolic cylinder $(z + Q_{3^m})$ and introduce the shorthand notation 
$$a_z := \left\| \partial_t g \right\|_{\underline{L}^{q'} \left( s + (- 3^{2m}, 0) , \underline{W}^{-1,q'}(y + \Lambda_{3^m}) \right)} .$$
Equipped with this collection of functions, we may write
\begin{align*}
    \left| \mathcal{Z}_{m,n} \right|^{-1} \sum_{z \in \mathcal{Z}_{m,n}} \left\| \partial_t g \right\|_{\underline{L}^{q'} \left( s + (- 3^{2m}, 0) , \underline{W}^{-1,q'}(y + \Lambda_{3^m}) \right)}^{q'}
    & = \left| \mathcal{Z}_{m,n} \right|^{-1} \sum_{z \in \mathcal{Z}_{m,n}}   \left( \partial_t g (a_z^{q'-1} h_z) \right)_{z + Q_{3^m}} \\
    & = \left( \partial_t g  \left(  \sum_{z \in \mathcal{Z}_{m,n}} a_z^{q'-1} h_z \right) \right)_{Q_{3^n}}.
\end{align*}
The term on the right-hand can be estimated using the assumption~\eqref{eqassumptiongmultscale}
\begin{align*}
    \left( \partial_t g  \left(  \sum_{z \in \mathcal{Z}_{m,n}} a_z^{q'-1} h_z \right) \right)_{Q_{3^n}} & \leq \left\| \partial_t g \right\|_{\underline{L}^{q'} \left(  (- 3^{2n}, 0) , \underline{W}^{-1,q'}( \Lambda_{3^n}) \right)} \left\| \sum_{z \in \mathcal{Z}_{m,n}} a_z^{q'-1} h_z \right\|_{\underline{L}^{q} \left((- 3^{2n}, 0) , \underline{W}^{1,q}(\Lambda_{3^n}) \right)} \\
    & \leq \left\| \sum_{z \in \mathcal{Z}_{m,n}} a_z^{q'-1} h_z \right\|_{\underline{L}^{q} \left((- 3^{2n}, 0) , \underline{W}^{1,q}(\Lambda_{3^n}) \right)}.
\end{align*}
Since the functions $(h_z)_{z \in \mathcal{Z}_{m,n}}$ have disjoint supports, we have
\begin{align*}
    \left\| \sum_{z \in \mathcal{Z}_{m,n}} a_z^{q'-1} h_z \right\|_{\underline{L}^{q} \left((- 3^{2n}, 0) , \underline{W}^{1,q}(\Lambda_{3^n}) \right)}^q & = \left| \mathcal{Z}_{m,n} \right|^{-1} \sum_{z \in \mathcal{Z}_{m,n}} a_z^{q(q'-1)} \left\| h_z \right\|_{\underline{L}^{q} \left( s + (- 3^{2m}, 0) , \underline{W}^{1,q}(y + \Lambda_{3^m}) \right)}^q \\
    & \leq \left| \mathcal{Z}_{m,n} \right|^{-1} \sum_{z \in \mathcal{Z}_{m,n}} a_z^{q(q'-1)}. 
\end{align*}
Combining the three previous displays and noting that $q(q'-1) = q'$, we deduce that
\begin{equation*}
    \left| \mathcal{Z}_{m,n} \right|^{-1} \sum_{z \in \mathcal{Z}_{m,n}} a_z^{q'} \leq \left(  \left| \mathcal{Z}_{m,n} \right|^{-1} \sum_{z \in \mathcal{Z}_{m,n}} a_z^{q'} \right)^{\frac{1}{q}}.
\end{equation*}
From this inequality, we further deduce that
\begin{equation*}
    \left| \mathcal{Z}_{m,n} \right|^{-1} \sum_{z \in \mathcal{Z}_{m,n}} \left\| \partial_t g \right\|_{\underline{L}^{q'} \left( s + (- 3^{2m}, 0) , \underline{W}^{-1,q'}(y + \Lambda_{3^m}) \right)}^{q'} = \left| \mathcal{Z}_{m,n} \right|^{-1} \sum_{z \in \mathcal{Z}_{m,n}} a_z^{q'} \leq 1.
\end{equation*}
Combining the previous inequality with~\eqref{eq:09190309} and~\eqref{eq:eq:09190309} completes the proof of~\eqref{eqassumptiongmultscaleresult}.

\medskip

\textit{Step 3.} As in the previous step, we fix a function $g \in W^{1,q'}_{\mathrm{par}}(Q_{3^n})$ such that
    \begin{equation} \label{eqassumptiongmultscalebis}
        \left\| g \right\|_{\underline{W}_{\mathrm{par}}^{1,q'}(Q_{3^n})} \leq 1.
    \end{equation}
and aim to prove
\begin{multline} \label{eqassumptiongmultscaleter}
    \left| Q_{3^n} \right|^{-1} \int_{(-3^{2n}, 0)} \sum_{x \in \Lambda_{3^n}} f(t,x) g(t , x) \, dt \\ \leq  C  \left\| f \right\|_{\underline{L}^{q} (Q_{3^n})} + C \sum_{m = 0}^n 3^m \left( \left| \mathcal{Z}_{m,n} \right|^{-1}  \sum_{z \in \mathcal{Z}_{m,n}} \left| \left( f \right)_{z + Q_{3^m}}  \right|^{q} \right)^{\frac 1{q}}.
\end{multline}
The proposition follows by taking the supremum over all the functions $g$ satisfying~\eqref{eqassumptiongmultscalebis}. To prove~\eqref{eqassumptiongmultscaleter}, we use the decomposition
\begin{align*}
    \left| Q_{3^n} \right|^{-1} \int_{(-3^{2m}, 0)} \sum_{x \in \Lambda_{3^m}} f(t,x) g(t , x) \, dt & = 
    \left| Q_{3^n} \right|^{-1} \sum_{z \in \mathcal{Z}_{0,n}}  \int_{(-1, 0)} \sum_{x \in z + \Lambda_{1}} f(t,x) ( g(t , x) - (g)_{z + Q_1})  \, dt \\
    & \qquad + \sum_{m = 0}^{n-1} \left| \mathcal{Z}_{m,n} \right|^{-1} \sum_{y \in \mathcal{Z}_{m,n}} \left( \left( g \right)_{y + Q_{3^m}} -  \left( g \right)_{z_y + Q_{3^{m+1}}}\right) \left( f \right)_{y + Q_{3^m}} \\
    & \qquad + \left( g \right)_{Q_{3^n}} \left( f \right)_{Q_{3^n}},
\end{align*}
where, for $y \in \mathcal{Z}_{m,n}$, we denote by $z_y$ the unique element of $\mathcal{Z}_{m+1,n}$ such that $y \in (z_y + Q_{3^{m+1}}).$ Applying H\"{o}lder's inequality, we deduce that
\begin{align*}
    \lefteqn{\left| Q_{3^n} \right|^{-1} \int_{(-3^{2n}, 0)} \sum_{x \in \Lambda_{3^n}} f(t,x) g(t , x) \, dt } \qquad & \\ 
    & \leq \left\| f \right\|_{\underline{L}^q(Q_{3^n})} \left( \left| Q_{3^n} \right|^{-1} \sum_{z \in \mathcal{Z}_{0,n}} \left\| g - \left( g \right)_{z + Q_{1}} \right\|_{\underline{L}^{q'} \left( z + Q_{1} \right)}^{q'} \right)^{1/q'} \\
    & + \sum_{m = 0}^{n-1}  \left(  \left| \mathcal{Z}_{m,n} \right|^{-1} \sum_{y \in \mathcal{Z}_{m,n}} \left| \left( g \right)_{y + Q_{3^m}} -  \left( g \right)_{z_y + Q_{3^{m+1}}}\right|^{q'} \right)^{1/q'} \left(   \left| \mathcal{Z}_{m,n} \right|^{-1} \sum_{y \in \mathcal{Z}_{m,n}} \left|  \left( f \right)_{y + Q_{3^m}} \right|^{q} \right)^{1/q} \\
    & + \left| \left( g \right)_{Q_{3^n}} \right| \left| \left( f \right)_{Q_{3^n}} \right|.
\end{align*}
Using the inequality~\eqref{eqassumptiongmultscaleresult} and the assumption~\eqref{eqassumptiongmultscalebis}, we obtain
\begin{equation*}
     \left| Q_{3^n} \right|^{-1} \sum_{z \in \mathcal{Z}_{0,n}} \left\| g - \left( g \right)_{z + Q_{1}} \right\|_{\underline{L}^{q'} \left( z + Q_{1} \right)}^{q'}   \leq C,
\end{equation*}
as well as
\begin{align*}
    \left| \mathcal{Z}_{m,n} \right|^{-1} \sum_{y \in \mathcal{Z}_{m,n}} \left| \left( g \right)_{y + Q_{3^m}} -  \left( g \right)_{z_y + Q_{3^{m+1}}}\right|^{q'} & \leq C \left| \mathcal{Z}_{m,n} \right|^{-1} \sum_{z \in \mathcal{Z}_{m+1,n}} \left\| g - \left( g \right)_{z + Q_{3^{m+1}}} \right\|_{\underline{L}^{q'} \left( z + Q_{3^{m+1}} \right)}^{q'}  \\
    & \leq C  3^{q'm} 
\end{align*}
and
\begin{equation*}
    \left| \left( g \right)_{Q_{3^n}} \right|  \leq C 3^n.
\end{equation*}
Combining the four previous displays completes the proof of~\eqref{eqassumptiongmultscaleter}.
\end{proof}

\subsection{Identification of the dual parabolic space}

\begin{lemma}[Identification of $\hat{W}^{-1,q}_{\mathrm{par}}((0,1) \times \mathbb{T}^\ep)$] \label{lem.idenH-1parapp}
For any exponent $q \in (1 , \infty)$, there exists a constant $C := C(d,q) < \infty$ such that, for any $f \in L^{q}((0,1) \times \mathbb{T}^\ep) $, there exist a continuous function $h : (0,1) \times \mathbb{T}^\ep \to \R$ and a function $h^* : (0,1) \times \mathbb{T}^\ep \to \R$ such that
\begin{equation*}
    \left\{ \begin{aligned}
     \partial_t h  + h^* & = f \\
    \left\| h \right\|_{L^q((0 , 1) ,W^{1,q}(\mathbb{T}^\ep))} & \leq C \left\| f \right\|_{\hat{W}^{-1,q}_{\mathrm{par}}((0 ,1) \times \mathbb{T}^\ep)}, \\
    \left\| h^* \right\|_{L^{q}((0 , 1) , W^{-1,q}(\mathbb{T}^\ep))} & \leq C \left\| f \right\|_{\hat{W}^{-1,q}_{\mathrm{par}}((0 ,1) \times \mathbb{T}^\ep)},
    \\
    h(0, \cdot)  = h(1 , \cdot) & = 0.
    \end{aligned} \right.
\end{equation*}
\end{lemma}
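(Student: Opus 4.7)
The plan is to obtain the decomposition through a Hahn-Banach extension combined with a duality argument on an appropriate product space. Introduce the Banach spaces $X_1 := L^q((0,1), W^{1,q}(\mathbb{T}^\ep))$ and $X_2 := L^q((0,1), W^{-1,q}(\mathbb{T}^\ep))$, whose duals under the natural $L^2$-pairing are $X_1^* = L^{q'}((0,1), W^{-1,q'}(\mathbb{T}^\ep))$ and $X_2^* = L^{q'}((0,1), W^{1,q'}(\mathbb{T}^\ep))$. By the very definition of the $W^{1,q'}_{\mathrm{par}}$-norm, the map $T v := (v, \partial_t v)$ embeds $W^{1,q'}_{\mathrm{par}}((0,1) \times \mathbb{T}^\ep)$ isomorphically, and with $\ep$-independent constants, onto a subspace of $X_2^* \times X_1^*$.

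Given $f \in L^q((0,1) \times \mathbb{T}^\ep)$, the linear functional $\Phi : Tv \mapsto \int_0^1 \int_{\mathbb{T}^\ep} f v$ defined on the image of $T$ has operator norm at most $C \|f\|_{\hat{W}^{-1,q}_{\mathrm{par}}}$. Extend $\Phi$ to all of $X_2^* \times X_1^*$ by Hahn-Banach preserving this norm, and represent the extension through an element $(h^*, -h) \in X_2 \times X_1$ via the duality $(X_2^* \times X_1^*)^* \simeq X_2 \times X_1$. The resulting $h \in L^q((0,1), W^{1,q}(\mathbb{T}^\ep))$ and $h^* \in L^q((0,1), W^{-1,q}(\mathbb{T}^\ep))$ obey the norm bounds claimed in the lemma, and by restriction back to the image of $T$,
\begin{equation*}
\int_0^1 \int_{\mathbb{T}^\ep} f v \, dt = \int_0^1 \langle h^*, v \rangle \, dt - \int_0^1 \langle h, \partial_t v \rangle \, dt \qquad \text{for every } v \in W^{1,q'}_{\mathrm{par}}.
\end{equation*}

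Testing this identity against $v$ vanishing in a neighbourhood of $t = 0$ and $t = 1$ yields $\partial_t h + h^* = f$ in the sense of distributions, so in particular $\partial_t h = f - h^* \in L^q((0,1), W^{-1,q}(\mathbb{T}^\ep))$. Since $\mathbb{T}^\ep$ is a finite set, the standard identity $\tfrac{d}{dt} \|h(t,\cdot)\|_{L^2(\mathbb{T}^\ep)}^2 = 2 \langle \partial_t h(t,\cdot), h(t,\cdot)\rangle$ combined with H\"{o}lder in time gives an $\ep$-uniform estimate $\|h\|_{C([0,1], L^2(\mathbb{T}^\ep))} \leq C(d,q) (\|h\|_{X_1} + \|\partial_t h\|_{X_2})$, so the traces $h(0,\cdot)$ and $h(1,\cdot)$ exist as $L^2$-functions on $\mathbb{T}^\ep$. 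For a general $v \in W^{1,q'}_{\mathrm{par}}$, integrating by parts in time produces
\begin{equation*}
-\int_0^1 \langle h, \partial_t v \rangle \, dt = \int_0^1 \langle \partial_t h, v\rangle \, dt - \langle h(1,\cdot), v(1,\cdot)\rangle + \langle h(0,\cdot), v(0,\cdot)\rangle,
\end{equation*}
which, combined with the two previous displays, yields $\langle h(0,\cdot), v(0,\cdot)\rangle = \langle h(1,\cdot), v(1,\cdot)\rangle$ for every $v \in W^{1,q'}_{\mathrm{par}}$. Choosing $v(t,x) := (1-t) g_0(x) + t g_1(x)$ for arbitrary $g_0, g_1 : \mathbb{T}^\ep \to \R$ then forces $h(0,\cdot) = h(1,\cdot) = 0$.

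The main subtlety, and the only genuinely nontrivial ingredient, is to ensure that the trace inequality above has a constant depending only on $d$ and $q$ and not on $\ep$ or the cardinality of $\mathbb{T}^\ep$. This is guaranteed because the ODE identity uses only the natural $W^{1,q}$/$W^{-1,q'}$ pairing and the $L^2$ norm, without invoking any Poincar\'e-type inequality on the spatial variable. Everything else in the argument is an $\ep$-uniform application of Hahn-Banach together with Banach-space duality.
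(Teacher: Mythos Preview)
Your proof is correct and follows essentially the same Hahn--Banach/duality route as the paper: the paper embeds $W^{1,q'}_{\mathrm{par}}$ into the three-factor product $L^{q'}\times (L^{q'})^d \times L^{q'}$ via $u\mapsto (u,\nabla^\ep u,\partial_t u)$, whereas you pre-package the first two coordinates into $L^{q'}W^{1,q'}$ and work with the two-factor product $X_2^*\times X_1^*$, but the extension, representation, and boundary-trace arguments are identical. The only cosmetic difference is that the paper obtains a triplet $(u^*,v^*,w^*)$ and sets $h=w^*$, $h^*=u^*+\nabla^\ep\cdot v^*$, while your formulation delivers $(h^*,h)$ directly.
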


\begin{proof}
We denote by $W^{1,q'}_{\mathrm{par}}((0,1) \times \mathbb{T}^\ep)$ the set of functions $u : (0,1) \times \mathbb{T}^\ep \to \R$ whose $W^{1,q'}_{\mathrm{par}}((0,1) \times \mathbb{T}^\ep)$-norm is finite (N.B. since the discretized torus $\mathbb{T}^\ep$ contains finitely many vertices, this space is in fact equal to the space of functions $u \in L^{q'}((0,1)\times \mathbb{T}^\ep)$ such that $\partial_t u \in L^{q'}((0,1)\times \mathbb{T}^\ep)$; in particular these functions are continuous in time).

\medskip

We consider the Banach space $\mathfrak{B} := L^{q'}((0,1) \times \mathbb{T}^\ep) \times L^{q'}((0,1) \times \mathbb{T}^\ep)^d \times L^{q'}((0,1) \times \mathbb{T}^\ep)$ equipped with the norm: for $(u , v , w) \in \mathfrak{B}$,
\begin{equation*}
    \left\| (u , v , w) \right\|_{\mathfrak{B}} := \left\| u \right\|_{L^{q'}((0 , 1) \times \mathbb{T}^\ep)} + \left\| v \right\|_{L^{q'}((0 , 1) \times \mathbb{T}^\ep)} + \left\| w \right\|_{L^{q'}((0 , 1), W^{-1,q'}(\mathbb{T}^\ep))},
\end{equation*}
and consider the injection $i : W^{1,q}_{\mathrm{par}}((0,1) \times \mathbb{T}^\ep) \to \mathfrak{B}$ given by
\begin{equation*}
    i : u \mapsto (u , \nabla^\ep u , \partial_t u) \in \mathcal{B}.
\end{equation*}
Note that this injection preserves the norms: for any $u \in W^{1,q'}_{\mathrm{par}}((0,1) \times \mathbb{T}^\ep)$,
\begin{equation*}
    \left\| i(u) \right\|_{\mathcal{B}} = \left\| u \right\|_{W^{1,q}_{\mathrm{par}}((0,1) \times \mathbb{T}^\ep)}.
\end{equation*}
Now fix $f \in L^{q}((0,1) \times \mathbb{T}^\ep)$ and consider the linear form $u \mapsto \int_{(0,1)} \sum_{x \in \mathbb{T}^\ep} f(t , x) u(t , x)$. By the Hahn-Banach extension theorem, we may extend it from $W^{1,q'}_{\mathrm{par}}((0,1) \times \mathbb{T}^\ep)$ to $\mathcal{B}$, i.e., there exists a continuous linear form $F : \mathcal{B} \to \R$ such that
\begin{equation*}
    \forall (u , v , w) \in \mathcal{B}, ~ \left| F((u , v , w)) \right| \leq  \left\| f \right\|_{\hat{W}^{-1,q}_{\mathrm{par}}((0 ,1) \times \mathbb{T}^\ep)} \left\| (u , v , w) \right\|_{\mathcal{B}}
\end{equation*}
and
\begin{equation*}
    \forall u \in W^{1,q'}_{\mathrm{par}}((0,1) \times \mathbb{T}^\ep), ~ F(i(u)) = \int_{(0,1)} \sum_{x \in \mathbb{T}^\ep} f(t , x) u(t , x).
\end{equation*}
Since the dual of the space $\mathcal{B}$ is the space $\mathcal{B}^* := L^{q}((0,1) \times \mathbb{T}^\ep) \times L^{q}((0,1) \times \mathbb{T}^\ep)^d \times L^{q}((0,1) \times \mathbb{T}^\ep)$ equipped with the norm
\begin{equation*}
    \left\| (u^* , v^* , w^*) \right\|_{\mathfrak{B}^*} := \max \left(  \left\| u^* \right\|_{L^{q}((0 , 1) \times \mathbb{T}^\ep)} , \left\| v^* \right\|_{L^{q}((0 , 1) \times \mathbb{T}^\ep)} , \left\| w^* \right\|_{L^{q}((0 , 1), W^{1,q}(\mathbb{T}^\ep))} \right),
\end{equation*}
we obtain that there exists a triplet $(u^* , v^* , w^*) \in \mathcal{B}^*$ such that
\begin{multline} \label{eq:11520409}
    \forall u \in W^{1,q}_{\mathrm{par}}((0,1) \times \mathbb{T}^\ep), ~  \int_{(0,1)} \sum_{x \in \mathbb{T}^\ep} f(t , x) u(t , x) \\
    = \int_{(0,1)} \sum_{x \in \mathbb{T}^\ep} \left( u^*(t , x) u(t , x) + v^*(t , x) \cdot \nabla^\ep u(t , x) + w^*(t , x) \partial_t u(t , x)  \right) \, dt
\end{multline}
and
\begin{equation*}
    \left\| u^* \right\|_{L^{q}((0 , 1) \times \mathbb{T}^\ep)} + \left\| v^* \right\|_{L^{q}((0 , 1) \times \mathbb{T}^\ep)} + \left\| w^* \right\|_{L^{q}((0 , 1), W^{1,q}(\mathbb{T}^\ep))} \leq C \left\| f \right\|_{\hat{W}^{-1,q}_{\mathrm{par}}((0 ,1) \times \mathbb{T}^\ep)}.
\end{equation*}
The result of the lemma is then obtained by setting $h = w^*$ and $h^* = u^* + \nabla^\ep \cdot v^*$ (N.B. the continuity in time of $h$ follows from the observation that $\partial_t h = f - h^* \in L^{q}((0,1) \times \mathbb{T}^\ep)$, the identity $h(0 , \cdot) = h(1, \cdot) = 0$ is obtained by performing an integration by part with respect to the time variable in~\eqref{eq:11520409}).
\end{proof}

\section{Estimating the error terms in the two-scale expansion} \label{sec:appendixB}

This section of the appendix contains the estimates of the four error terms which appear when performing the two scale expansion (see Sections~\ref{sec:deferrorterms} and~\ref{secTh.quantitativehydr}). All the results proved here are summarized by the inequalities~\eqref{eq:estiamteerroreterms} and~\eqref{eq:estiamteerroretermsavecrage}. We divide this section into four subsections, each one is dedicated to one of the four error terms. We recall the notation used in all the section (having in mind that $\gamma \simeq 1/(30rd)$)
\begin{equation*}
    \kappa = \ep^\gamma ~~\mbox{and}~~ L = \kappa/\ep = \ep^{\gamma - 1}.
\end{equation*}

\subsection{Preliminary estimates on the Langevin dynamic and the two-scale expansion.}

In this section, we prove some estimates on the solution of the Langevin dynamic $u^\ep$ and the two-scale expansion $w^\ep$ which are used in Section~\ref{sec:section6}, in this appendix and in Appendix~\ref{sec:sectionappendixC}. They are stated in the following list:
\begin{itemize}
\item \textit{Difference between $w^\ep$ and $\bar u^\ep$.} For any $t \in (0,1)$, one has the estimate
\begin{equation} \label{eq:AppC11}
    \left\| \bar u^\ep(t,\cdot)  - w^\ep(t,\cdot) \right\|_{L^2(\mathbb{T}^\ep)} \leq \mathcal{O}_{\Psi , c}(C \ep^{\frac{1 + 7\gamma}{8}}).
\end{equation}
\item \textit{Upper bound for the dynamic.} The $L^r$-norm of the Langevin dynamic is controlled as follows
\begin{equation} \label{eq:AppC12}
    \left\|  u^\ep \right\|_{L^r ( (0,1) \times \mathbb{T}^\ep )} +   \left\|  \nabla u^\ep \right\|_{L^r ( (0,1) \times \mathbb{T}^\ep )} \leq \mathcal{O}_{\Psi , c} \left( C \right).
\end{equation}
\item \textit{Upper bound for the two-scale expansion.} We have the upper bound
\begin{equation} \label{eq:AppC13}
    \left\|  w^\ep \right\|_{L^\infty ( (0,1) \times \mathbb{T}^\ep )} \leq \mathcal{O}_{\Psi , c} \left( C \right).
\end{equation}
Together with the upper bounds on the gradient of the two-scale expansion
\begin{equation} \label{eq:AppC14}
    \forall (t , x) \in (0,1) \times \mathbb{T}^\ep, \left| \nabla^{\ep} w^\ep(t , x) \right| \leq \mathcal{O}_{\Psi , c} \left( C \right) ~~\mbox{and}~~ \left\| \nabla^\ep  w^\ep \right\|_{L^\infty ( (0,1) \times \mathbb{T}^\ep )} \leq \mathcal{O}_{\Psi , c} \left( C \left| \ln \ep \right|\right).
\end{equation}
The first inequality implies (by using Proposition~\ref{prop:prop2.20}) that $\left\| \nabla^\ep  w^\ep \right\|_{L^r ( (0,1) \times \mathbb{T}^\ep )} \leq \mathcal{O}_{\Psi , c} \left( C\right)$.
\item \textit{Upper bound for the corrected plane.} We have the upper bound
\begin{equation}  \label{eq:AppC15}
    \sup_{z \in \mathcal{Z}_\kappa} \left\| \nabla^\ep  v_z \right\|_{L^\infty ( z + Q^\ep_{2 \kappa})} \leq \mathcal{O}_{\Psi , c} \left( C \left| \ln \ep \right|\right).
\end{equation}
\end{itemize}
Note that these estimates imply all the results stated in Section~\ref{sec:sec743}. The rest of this section is divided into three subsections corresponding to three items of the list above.

\subsubsection{Estimating the difference between $w^\ep$ and $\bar u^\ep$}

In this section, we show the inequality~\eqref{eq:AppC11}. From the definition of the two-scale expansion $w^\ep$, we have the identity: for any $(t,x) \in (0,1) \times \mathbb{T}^\ep$,
\begin{equation} \label{eq:14460909}
    \left| \bar u^\ep(t,x)  - w^\ep(t,x) \right| \leq \ep \sum_{z} \chi_z(t , x) \left| \varphi_{z} \left( \frac{t}{\ep^2} , \frac{x}{\ep} ; p_z\right) \right|.
\end{equation}
Using Proposition~\ref{prop:sublincorr} together with the observation that, for any $t \in (0 , \ep^{-2}),$ (N.B. we use here that the sum of independent Brownian motions is a multiple of a Brownian motion)
\begin{equation*}
    \frac{1}{\left| \Lambda_{10L} \right|} \sum_{x \in \Lambda_{10L}} B_t(x) \leq \mathcal{O}_2 \left( C \ep^{-1} L^{- d/2} \right) \overset{(d \geq 2)}{\leq} \mathcal{O}_2(C \ep^{-1} L^{-1}),
\end{equation*}
we obtain 
\begin{equation*}
    \left| \varphi_{z} \left( \frac{t}{\ep^2} , \frac{x}{\ep} ; p_z\right) \right| \leq \mathcal{O}_1 \left( C L^{7/8} + C \ep^{-1} L^{-1} \right).
\end{equation*}
Combining the previous inequality with~\eqref{eq:14460909}, we obtain (choosing $\gamma$ sufficiently small for the second inequality)
\begin{align*}
    \left| \bar u^\ep(t,x)  - w^\ep(t,x) \right| & \leq \mathcal{O}_1 \left( C \ep L^{7/8} + C L^{-1} \right) \\
    & \leq \mathcal{O}_{\Psi , c} \left( C \ep^{\frac{1 + 7\gamma}{8}} \right).
\end{align*}

\subsubsection{Upper bound for the dynamic and its gradient} In this section, we show the inequality~\eqref{eq:AppC12}. It essentially follows from an energy estimate on the equation~\eqref{eq:introSDErescaled} defining the function $u^\ep$ (by making use of the growth Assumption~\eqref{AssPot} on the potential $V$). To be more specific, we let $\varphi : \R \times \mathbb{T}_{\ep} \to \mathbb{R}$ the stationary Langevin dynamic with slope $p = 0$ on the torus $\mathbb{T}_{\ep}$  and denote by $\varphi^\ep := \ep \varphi \left( \frac{\cdot}{\ep^2} , \frac{\cdot}{\ep} \right)$. We then consider the difference $v^\ep := u^\ep - \varphi^\ep$ and observe that this solution solves the parabolic equation
\begin{equation*}
   \left\{ \begin{aligned}
    \partial_t v^\ep & = \nabla^\ep \cdot \a \nabla^\ep v & \mbox{in}~ (0 , 1) \times \mathbb{T}^\ep, \\
    v^\ep(0 , \cdot) & = f -  \varphi^\ep \left( 0 , \cdot \right) & \mbox{on}~  \mathbb{T}^\ep,
    \end{aligned}
    \right.
\end{equation*}
with
\begin{equation*}
     \a (t , x) := \int_0^1 D_p^2 V \left(s \nabla^\ep  u^\ep + (1-s) \nabla^\ep \varphi^\ep \right) \, ds.
\end{equation*}
An energy estimate thus yields the upper bound
\begin{equation*}
    \sum_{x \in \mathbb{T}^\ep} \left| v^\ep(1 , x) \right|^2 + \int_0^1 \sum_{x \in \mathbb{T}^\ep} \nabla^\ep v^\ep (t , x) \cdot \a(t , x) \nabla^\ep v^\ep (t , x) \, dt \leq \sum_{x \in \mathbb{T}^\ep} \left| f(x) - \varphi^\ep \left( 0 , x \right) \right|^2.
\end{equation*}
Using the Assumption~\eqref{AssPot} on the growth of the Hessian of $V$, we have the upper bound
\begin{multline*}
    \int_0^1 \ep^d \sum_{x \in \mathbb{T}^\ep} \left| \nabla^\ep u^\ep (t , x) \right|^r \, dt \\ \leq C \left(  1+ \int_0^1 \ep^d \sum_{x \in \mathbb{T}^\ep} \nabla^\ep v^\ep (t , x) \cdot \a(t , x) \nabla^\ep v^\ep (t , x) \, dt + \int_0^1 \ep^d \sum_{x \in \mathbb{T}^\ep} \left|\nabla^\ep \varphi^\ep \left( t , x \right) \right|^r \, dt \right).
\end{multline*}
Combining the two previous inequalities with the properties of the stationary Langevin dynamics established in Proposition~\ref{prop.prop2.3} and Proposition~\ref{prop:sublincorr}, we obtain
\begin{equation*}
     \left\|  \nabla^\ep u^\ep \right\|_{L^r ( (0,1) \times \mathbb{T}^\ep )} \leq \mathcal{O}_1 (C) \leq \mathcal{O}_{\Psi , c}(C).
\end{equation*}
To establish the bound on the $L^r$-norm of $u^\ep$, we note that the average value of $u^\ep$ is given by the identity (which is obtained by summing the first line of~\eqref{eq:introSDErescaled} over all the vertices $x \in \mathbb{T}^\ep$)
\begin{equation*}
    \left( u^\ep (t , \cdot) \right)_{\mathbb{T}^\ep} = \left( f \right)_{\mathbb{T}^\ep} + \int_0^t \ep^d \sum_{x \in \mathbb{T}^\ep} \sqrt{2} B_t^\ep(x) \, dt.
\end{equation*}
This term can be estimated using that the second term on the right-hand side is a Gaussian random variable whose variance can be explicitly computed, and we obtain, for any $t \in (0,1)$,
\begin{equation*}
    \left|  \left( u^\ep (t , \cdot) \right)_{\mathbb{T}^\ep}  \right| \leq \mathcal{O}_2(C).
\end{equation*}
A combination of the two previous inequalities with the Poincar\'e inequality completes the proof of~\eqref{eq:AppC12}.

\subsubsection{Upper bound for the two-scale expansion and corrected plane}

We only show the upper bound~\eqref{eq:AppC14} as the inequality~\eqref{eq:AppC13} is essentially a consequence of~\eqref{eq:14460909} and the proof of the inequality~\eqref{eq:AppC15} is essentially a consequence of Proposition~\ref{propositionsubdynamic} (and a union bound). 

Applying the discrete gradient to the definition of the two-scale expansion stated in~\eqref{def.wL}, we obtain the upper bound
\begin{equation*}
 \left| \nabla^\ep w^\ep(t , x) \right| \leq \left| \nabla^\ep \bar u^\ep (t , x) \right| + \ep \sum_{z} \left| \nabla^\ep \chi_z(t , x) \right| \left| \varphi_{z} \left( \frac{t}{\ep^2} , \frac{x}{\ep} ; p_z\right) \right| + \ep \sum_{z} \chi_z(t , x) \left| \nabla^\ep \varphi_{z} \left( \frac{t}{\ep^2} , \frac{x}{\ep} ; p_z\right) \right|.
\end{equation*}
Using the properties of the stationary Langevin dynamics stated in Proposition~\ref{prop.prop2.3} and Proposition~\ref{prop:sublincorr}, we obtain, for any $(t, x) \in (0,1) \times \mathbb{T}^\ep$,
\begin{equation*}
    \left| \nabla^\ep w^\ep(t , x) \right| \leq \mathcal{O}_1 \left(  C \right) \leq \mathcal{O}_{\Psi , c}(C).
\end{equation*}
The upper bound on the $L^\infty$-norm of the gradient of $w^\ep$ can be obtained with a similar argument, using this time Proposition~\ref{propositionsubdynamic} on the supremum of the gradient of the stationary dynamics.

\subsection{Estimating the error term $\mathcal{E}_1$}

In this section, we prove that the error term $\mathcal{E}_1$ is small. Specifically, we prove the inequality, for any $(t , x) \in (0 , \infty) \times \mathbb{T}^\ep$,
\begin{equation} \label{eq:estiamteerrortermE1}
       \left| \mathcal{E}_1(t , x) \right|  \leq \mathcal{O}_{\Psi , c} \left(C \ep^{\frac{1- 9\gamma}{8}} \right).
\end{equation}
This estimate is stronger than all the estimates needed on this term. In particular, by applying Proposition~\ref{prop:prop2.20} ``Summation" and ``Integration", we obtain
\begin{equation*}
 \ep \left\| \mathcal{E}_1 \right\|_{L^2((0 , 1) \times \mathbb{T}^\ep)} + \left\| \mathcal{E}_1 \right\|_{\underline{W}^{-1,r}_{\mathrm{par}}\left( (0 , 1) \times \mathbb{T}^\ep \right)} \leq C \left\| \mathcal{E}_1 \right\|_{L^r((0 , 1) \times \mathbb{T}^\ep)} \leq \mathcal{O}_{\Psi , c} \left( C \ep^{\frac{1- 9\gamma}{8}} \right).
\end{equation*}
Similarly, we deduce the estimate on the average value of the error term $\mathcal{E}_1$: for any $t \in (0,1),$
\begin{equation*}
    \left| \int_0^t \left( \mathcal{E}_1(s , \cdot ) \right)_{\mathbb{T}^\ep} \, ds \right| \leq \mathcal{O}_{\Psi , c}(C \ep^{\frac{1- 9\gamma}{8}}).
\end{equation*}
Note that the exponent is strictly positive is $\gamma$ is chosen small enough. To prove~\eqref{eq:estiamteerrortermE1}, we first simplify the term $\mathcal{E}_1$ by noting that the Brownian motions do not contribute to the term. Specifically, we have the identity, for any $(t , x) \in (0 , \infty) \times \mathbb{T}^\ep$,
\begin{equation*}
            \sum_{z} \partial_t \chi_{z}(t , x)  \sqrt{2} B_t^\ep(x) = \left( \sqrt{2} B_t^\ep(x) \right) \sum_{z} \partial_t \chi_{z}(t , x)  = 0,
\end{equation*}
where the first identity is obtained by noting that the term involving the Brownian motions does not depend on the parameter $z$, and the second one is obtained by differentiating both sides of the identity in~\eqref{prop.partofunity2sc} with respect to the time. From the previous computation, we deduce that, for any $(t , x) \in (0 , \infty) \times \mathbb{T}^\ep$,
\begin{equation*}
     \mathcal{E}_1(t , x) =  \ep \sum_{z} \partial_t \chi_{z}(t , x) \varphi_{z} \left( \frac{t}{\ep^2} , \frac{x}{\ep} ; p_{z} \right).
\end{equation*}
We then estimate the $L^2$-norm of this term using the bound~\eqref{prop.partofunity2scbis} on the time derivative of the functions~$(\chi_z)_{z \in \mathcal{Z}_\kappa}$ together with Proposition~\ref{prop:sublincorr} and the observation that, for any fixed pair $(t , x) \in (0,1) \times \mathbb{T}^\ep$, there are only $C := C(d)$ terms which are not equal to $0$ in the collection $(\chi_z(t  ,x))_{z \in \mathcal{Z}_\kappa}$. We obtain
\begin{equation*}
    \left| \mathcal{E}_1(t , x) \right| \leq \mathcal{O}_1 \left( C \ep \kappa^{-2} L^{7/8} \right).
\end{equation*}
Using the identities $L = \ep^{\gamma - 1}$ and $\kappa = \ep^{\gamma}$ and Proposition~\ref{prop:prop2.20} ``Comparison", we further obtain
\begin{equation*}
    \left| \mathcal{E}_1(t , x) \right| \leq \mathcal{O}_{\Psi , c} \left( C \ep^{\frac{1- 9\gamma}{8}} \right).
\end{equation*}

\subsection{Estimating the error term $\vec{\mathcal{E}}_2$}

The objective of this section is to establish the following inequality
\begin{equation} \label{eq:15602107}
    \| \vec{\mathcal{E}}_2 \|_{L^r((0 , 1) \times \mathbb{T}^\ep)} \leq \mathcal{O}_{\Psi , c} \left( C \ep^{\frac{1 - \gamma}{20 r} \wedge \frac{\gamma}{2 r}} \right). 
\end{equation}
To prove the inequality~\eqref{eq:15602107}, we decompose the term $\vec{\mathcal{E}}_2$ in two terms as follows
\begin{equation} \label{eq:12001208}
    \vec{\mathcal{E}}_2
     = \underset{\eqref{eq:12001208}-(i)}{\underbrace{D_p V \left( \nabla^\ep w^\ep \right) -  D_p V \left( \sum_{z} \chi_{z} \nabla^\ep v_{z} \right)}}
     +  \underset{\eqref{eq:12001208}-(ii)}{\underbrace{D_p V \left( \sum_{z} \chi_{z} \nabla^\ep v_{z} \right)  - \sum_{z} \chi_{z} D_p V \left( \nabla^\ep v_{z} \right)}}
\end{equation}
and estimate the two terms~$\eqref{eq:12001208}-(i)$ and~$\eqref{eq:12001208}-(ii)$ in two distinct substeps (N.B. In the steps below, we first estimate the $L^2$-norm of these two terms and upgrade the result from an $L^2$-norm to the $L^r$-norm at the end by using an interpolation argument).

\medskip

\textit{Substep 1.1. Estimating the term~\eqref{eq:12001208}-(i).} 

\medskip

We first show that the discrete gradient of the map $w^\ep$ is close to the function $\sum_{z} \chi_{z} \nabla^\ep v_{z}$. Specifically, we prove the inequality
\begin{equation} \label{eq:09222504}
    \left\| \nabla^\ep w^\ep - \sum_{z} \chi_{z} \nabla^\ep v_{z} \right\|_{L^2((0, 1) \times \mathbb{T}^\ep)} \leq\mathcal{O}_{\Psi , c} (C \ep^{\frac{1-\gamma}{8} \wedge \gamma}).
\end{equation}
We next compute the gradient of the map $w^\ep$. Using the definition~\eqref{def.wL}, we have the identity,
\begin{equation} \label{formulawL.2sc}
   \nabla^\ep w^\ep  = \sum_{z} \chi_{z} \nabla^\ep v_{z} + \vec{\mathcal{E}}_1 ,
\end{equation}
where we introduce a vector-valued error term $\vec{\mathcal{E}}_1$ defined by the identity
\begin{equation*}
            \vec{\mathcal{E}}_1 := \ep \sum_{z} \nabla^\ep \chi_{z} \varphi_{z} \left( \frac{\cdot}{\ep^2} , \frac{\cdot}{\ep} ; p_z\right) + (\nabla \bar u^\ep - p_z).
\end{equation*}
The first term can be estimated using the sublinearity of the Langevin dynamic (Proposition~\ref{prop:sublincorr}) and the bound~\eqref{prop.partofunity2scbis}, the second term can be estimated using the regularity estimate on the gradient of the solution $\bar u^\ep$ (Proposition~\ref{prop:prop7.5}). We obtain
\begin{equation} \label{eq:11471208}
    \| \vec{\mathcal{E}}_1 \|_{L^2(Q^\ep)} \leq \mathcal{O}_1 (C L^{-1/8}) + C \ep L \leq \mathcal{O}_{\Psi , c} (C \ep^{\frac{1-\gamma}{8}}) + C \ep^\gamma.
\end{equation}
This inequality implies~\eqref{eq:09222504} (by using that $C \ep^\gamma \leq \mathcal{O}_{\Psi , c}(C \ep^\gamma)$).
We next combine the identity~\eqref{formulawL.2sc} with the growth assumption on the potential $D_p V$ to obtain
\begin{equation*}
    \left| D_p V \left( \nabla^\ep w^\ep \right) -  D_p V \left( \sum_{z} \chi_{z} \nabla^\ep v_{y} \right) \right| \leq C \left( \left| \nabla^\ep w^\ep  \right|^{r-2}_+ +  \bigg| \sum_{z} \chi_{z} \nabla^\ep v_{y} \bigg|^{r-2}_+   \right) | \vec{\mathcal{E}}_1 |.
\end{equation*}
Applying the estimate~\eqref{eq:11471208} together with the estimates~\eqref{eq:AppC14} and~\eqref{eq:AppC15}, we obtain
\begin{equation} \label{eq:15161508}
    \left\| \eqref{eq:12001208}-(i) \right\|_{L^2((0, 1) \times \mathbb{T}^\ep)} \leq \mathcal{O}_{\Psi , c} \left( C |\ln \ep| \ep^{\frac{1-\gamma}{8} \wedge \gamma}  \right) \leq  \mathcal{O}_{\Psi , c} \left( C \ep^{\frac{1-\gamma}{16} \wedge \frac{\gamma}{2}}  \right).
\end{equation}

\medskip

\textit{Substep 1.2. Estimating the term~\eqref{eq:12001208}-(ii).}

\medskip

To analyse this term, we recall the notation
\begin{equation*}
    Q_{\kappa}^\ep := (-\kappa^2 , 0) \times \ep \Lambda_{L},
\end{equation*}
and note that the collection $\left\{ z + Q_\kappa^\ep \, : \, z \in \mathcal{Z}_\kappa \right\}$ is a partition of $(0,1) \times \mathbb{T}^\ep$ (see Figure~\ref{fig:partition}). We will make use of the following notation: for any fixed $z \in \mathcal{Z}_\kappa$, we write $\sum_{z' \sim z}$ to refer to the sum over the vertices $z' \in \mathcal{Z}_\kappa$ such that $(z + Q_{ \kappa}) \cap (z' + Q_{2 \kappa}) \neq \emptyset$ (Note that this set contains at most $C := C(d) < \infty$ elements). This notation is useful for the following reason: for any $z \in \mathcal{Z}_\kappa$ and any $(t , x) \in (z + Q_\kappa)$, we have the identities
\begin{equation*}
    \sum_{z'} \chi_{z'}(t , x) \nabla^\ep v_{z'} (t , x) = \sum_{z' \sim z} \chi_{z'}(t , x) \nabla^\ep v_{z'} (t , x)
\end{equation*}
and
\begin{equation*}
    \sum_{z'} \chi_{z'}(t , x) D_p V \left( \nabla^\ep v_{z'} \right)(t , x) = \sum_{z'\sim z} \chi_{z'}(t , x) D_p V \left( \nabla^\ep v_{z'} \right)(t , x).
\end{equation*}
We first use the growth assumption on the map $D_p V$ to write, for any $z\in \mathcal{Z}_\kappa$,
\begin{align*}
    \left|  D_p V \biggl( \sum_{z' \sim z} \chi_{z' } \nabla v_{z'} \biggr)  - D_p V \left( \nabla v_z \right)  \right|  & \leq C \left( \bigg|\sum_{z' \sim z} \chi_{z'} \nabla v_{z'} \bigg|_+^{r-2} + \left| \nabla v_z \right|_+^{r-2} \right)
    \left|  \sum_{z'\sim z} \chi_{z'} \nabla v_{z'} - \nabla v_{z}  \right|.
\end{align*}
We next estimate the two terms on the right-hand side. For the first one, we use the $L^\infty$ estimate on the gradient of the function $v_z$ stated in~\eqref{eq:AppC15} to obtain
\begin{equation*}
     \left( \bigg|\sum_{z' \sim z} \chi_{z'} \nabla v_{z'} \bigg|_+^{r-2} + \left| \nabla v_z \right|_+^{r-2} \right) \leq \mathcal{O}_{\Psi , c} \left(  C \left| \ln \ep \right|^{r-2} \right).
\end{equation*}
For the second term, we first state the following inequality proved in Lemma~\ref{lemmavzvzprime} (with the exponent $\alpha = \gamma/4$): for any $z , z' \in \mathcal{Z}_\kappa$ such that $(z + Q_{ \kappa}) \cap (z' + Q_{2 \kappa}) \neq \emptyset$ (N.B. note that this implies that the map $v_{z'}$ is defined on the cylinder $z + Q^\ep_\kappa$)
\begin{equation*}
    \left\| \nabla v_{z'} - \nabla v_{z}  \right\|_{\underline{L}^2(z + Q_\kappa^\ep)} \leq \mathcal{O}_{\Psi , c} \left( C L^{\frac{\gamma}{4} - \frac{1}{16}} + C  L^{\frac{\gamma}{4}}   |p_z - p_{z'}|  \right) . 
\end{equation*}
This inequality asserts that the difference of the gradient between the two maps $v_z$ and $v_{z'}$ is small, and this is quantified in terms of the size of the mesoscopic scale and the distance between the two slopes $p_z$ and $p_{z'}$. Based on this inequality, we deduce the upper bound
\begin{align*}
    \left\|  \sum_{z'\sim z} \chi_{z'} \nabla v_{z'} - \nabla v_{z}  \right\|_{\underline{L}^2(z + Q_\kappa^\ep)} & \leq \sum_{z' \sim z } \left\|  \nabla v_{z'} - \nabla v_{z} \right\|_{{\underline{L}^2(z + Q_\kappa^\ep)}}  \\
    & \leq \mathcal{O}_{\Psi , c} \left( C  L^{\frac{\gamma}{4} - \frac{1}{16}}  + C  L^{\frac{\gamma}{4}} \sum_{z'\sim z}  |p_z - p_{z'}| \right).
\end{align*}
A combination of the two previous displays yields the bound
\begin{equation*}
    \left\|  D_p V \biggl( \sum_{z' \sim z} \chi_{z' } \nabla v_{z'} \biggr)  - D_p V \left( \nabla v_z \right)  \right\|_{\underline{L}^2(z + Q_\kappa^\ep)} \leq  \mathcal{O}_{\Psi , c} \left( \frac{C L^{\gamma/4} \left| \ln \ep \right|}{L^{1/16}} + C \left| \ln \ep \right| L^{\gamma/4}  \sum_{z'\sim z}  |p_z - p_{z'}| \right).
\end{equation*}
Building upon the previous inequality, we obtain (the first identity uses that the collection $(\chi_z)_{z \in \mathcal{Z}_\kappa}$ is a partition of unity, the second one uses that, for any fixed pair $(t , x)$, there are only $C := C(d) < \infty$ terms which are not equal to $0$ in the collection $(\chi_z(t  ,x))_{z \in \mathcal{Z}_\kappa}$)
\begin{align*}
    \lefteqn{\left\| D_p V \left( \sum_{z} \chi_{z} \nabla^\ep v_{z} \right)  - \sum_{z} \chi_{z} D_p V \left( \nabla^\ep v_{z} \right) \right\|_{L^2 \left( (0,1) \times \mathbb{T}^\ep \right)}^2} \qquad & 
    \\ & =  \left\|  \sum_{z} \chi_z \left( D_p V \left( \sum_{z} \chi_{z} \nabla^\ep v_{z} \right)  - D_p V \left( \nabla^\ep v_{z} \right) \right) \right\|_{L^2 \left( (0,1) \times \mathbb{T}^\ep \right)}^2
    \\ & \leq \frac{1}{\left| \mathcal{Z}_\kappa \right|} \sum_{z \in \mathcal{Z}_\kappa} \left\|  D_p V \biggl( \sum_{z' \sim z} \chi_{z' } \nabla v_{z'} \biggr)  - D_p V \left( \nabla v_z \right)  \right\|_{\underline{L}^2(z + Q_\kappa^\ep)}^2 \\
    & 
    \leq \mathcal{O}_{\Psi , c} \left( C\left| \ln \ep \right|^2L^{\frac{\gamma}{4} - \frac{1}{8}}  + C \left| \ln \ep \right|^2 L^{\frac{\gamma}{4}} \sum_{z \in \mathcal{Z}_\kappa} \sum_{z'\sim z}  |p_z - p_{z'}|^2 \right).
\end{align*}
The definition of the terms $(p_z)_{z \in \mathcal{Z}_\kappa}$ as the average value of the gradient of the map $\bar u^\ep$ together with the $H^2$-regularity estimate stated in Proposition~\ref{prop:prop7.5} yields the inequality
\begin{equation*}
    \sum_{z \in \mathcal{Z}_\kappa} \sum_{z'\sim z}  |p_z - p_{z'}|^2 \leq C \kappa^2 \left\| \nabla^{2,\ep} \bar u^\ep  \right\|_{L^2((0,1) \times \mathbb{T})}^2 \leq C \ep^{2 \gamma}.
\end{equation*}
We may then combine the two previous displays with the identity $L = \ep^{\gamma - 1}$ (and having in mind that $\gamma \simeq 1/(30dr)$), we obtain
\begin{align*}
    \left\| D_p V \left( \sum_{z} \chi_{z} \nabla^\ep v_{z} \right)  - \sum_{z} \chi_{z} D_p V \left( \nabla^\ep v_{z} \right) \right\|_{L^2 \left( (0,1) \times \mathbb{T}^\ep \right)} & \leq \mathcal{O}_{\Psi , c} \left( C \ep^{\frac{1 - \gamma}{20}} + C \ep^{\frac{\gamma}{2}} \right) \\
    & \leq \mathcal{O}_{\Psi , c} \left( C \ep^{\frac{1 - \gamma}{20} \wedge \frac{\gamma}{2}} \right).
\end{align*}
We have thus obtained
\begin{equation} \label{eq:15171508}
    \left\|  \eqref{eq:12001208}-(ii) \right\|_{L^2( (0,1) \times \mathbb{T}^\ep )} \leq  \mathcal{O}_{\Psi , c} \left( C \ep^{\frac{1 - \gamma}{20} \wedge \frac{\gamma}{2}} \right).
\end{equation}
Combining the inequalities~\eqref{eq:15161508} and~\eqref{eq:15171508} yields the upper bound
\begin{equation*}
    \| \vec{\mathcal{E}}_2 \|_{L^2((0 , 1) \times \mathbb{T}^\ep)} \leq \mathcal{O}_{\Psi , c} \left( C \ep^{\frac{1 - \gamma}{20} \wedge \frac{\gamma}{2}} \right).
\end{equation*}
We then upgrade the $L^2$-norm into an $L^r$-norm by interpolating the space $L^r$ between the spaces $L^2$ and $L^\infty$ and writing
\begin{align*}
    \| \vec{\mathcal{E}}_2 \|_{L^r((0 , 1) \times \mathbb{T}^\ep)} & \leq \| \vec{\mathcal{E}}_2 \|_{L^2((0 , 1) \times \mathbb{T}^\ep)}^{\frac{2}{r}} \| \vec{\mathcal{E}}_2 \|_{L^\infty((0 , 1) \times \mathbb{T}^\ep)}^{\frac{r-2}{r}} \\
    & \leq \mathcal{O}_{\Psi , c} \left( C \ep^{\frac{1 - \gamma}{20 r} \wedge \frac{\gamma}{2r}} \right),
\end{align*}
where the $L^\infty$-norm of the term $ \vec{\mathcal{E}}_2 $ is estimated using the inequalities~\eqref{eq:AppC14} and~\eqref{eq:AppC15} (and yields a logarithmic term in $\ep$ which can be absorbed by reducing the value of the power of $\ep$).

\subsection{Estimating the error term $\vec{\mathcal{E}}_3$}

The objective of this section is to prove the inequality (N.B. contrary to the other error terms, this one is deterministic)
\begin{equation*}
   \| \vec{\mathcal{E}}_3 \|_{L^r((0,1) \times \mathbb{T}^\ep)} \leq C \ep^{2\gamma/r}.
\end{equation*}
This estimate is the simplest among the four error terms. We first use the strict convexity of the surface tension stated in Proposition~\ref{prop:strictconvsurftens} (and specifically the upper bound in the inequality~\eqref{ineq:strictconvexity}), to write
\begin{equation*}
    \left| D_p \bar \sigma \left( \nabla^\ep \bar u^\ep \right) -  D_p \bar \sigma \left( p_{z} \right) \right| \leq C \left( \left| \nabla^\ep \bar u^\ep \right|^{r-2}_+ + \left| p_z \right|^{r-2}_+  \right) \left| \nabla^\ep \bar u^\ep - p_z \right|.
\end{equation*}
Using that the first term on the right-hand side is bounded together with the definition of the term $p_z$ (as the average value of the gradient of $\nabla^\ep \bar u^\ep$ over a mesoscopic box) and the $H^2$-regularity estimate stated in Proposition~\ref{prop.reghomogenizedsolution}, we obtain
\begin{align*}
        \left\|  D_p \bar \sigma \left( \nabla^\ep \bar u^\ep \right) -  D_p \bar \sigma \left( p_{z} \right) \right\|_{\underline{L}^2 \left( z + Q_\kappa^\ep \right)} & \leq C \left\|  \nabla^\ep \bar u^\ep -  p_{z} \right\|_{\underline{L}^2 \left( z + Q_\kappa^\ep \right)} \\
        & \leq C \kappa \left\|  \nabla^{2,\ep} \bar u^\ep \right\|_{\underline{L}^2 \left( z + Q_\kappa^\ep \right)}.
\end{align*}
From the previous estimate, we deduce the inequality (using once again that, for any fixed pair $(t , x)$, there are only $C := C(d) < \infty$ terms which are not equal to $0$ in the collection $(\chi_z(t  ,x))_{z \in \mathcal{Z}_\kappa}$)
\begin{align*}
   \| \vec{\mathcal{E}}_3 \|_{L^2((0,1) \times \mathbb{T}^\ep)}^2 & \leq  \frac{1}{\left| \mathcal{Z}_\kappa \right|} \sum_{z} \left\|  D_p \bar \sigma \left( \nabla^\ep \bar u^\ep \right) -  D_p \bar \sigma \left( p_{z} \right) \right\|_{\underline{L}^2 \left( z + Q_\kappa^\ep \right)}^2 \\
   & \leq C \kappa^2 \frac{1}{\left| \mathcal{Z}_\kappa \right|} \sum_{z}  \left\|  \nabla^{2, \ep} \bar u^\ep \right\|_{\underline{L}^2 \left( z + Q_\kappa^\ep \right)}^2 \\
   & \leq C \kappa^2.
\end{align*}
The identity $\kappa = \ep^\gamma$ thus implies
\begin{equation*}
    \| \vec{\mathcal{E}}_3 \|_{L^2((0,1) \times \mathbb{T}^\ep)} \leq  C \ep^{\gamma}.
\end{equation*}
Interpolating the space $L^r$ between the spaces $L^2$ and $L^\infty$, we obtain
\begin{align*}
    \| \vec{\mathcal{E}}_3 \|_{L^r((0,1) \times \mathbb{T}^\ep)} & \leq  \| \vec{\mathcal{E}}_3 \|_{L^2((0,1) \times \mathbb{T}^\ep)}^{\frac{2}{r}}  \| \vec{\mathcal{E}}_3 \|_{L^\infty((0,1) \times \mathbb{T}^\ep)}^{\frac{r-2}{r}} \\
    & \leq C \ep^{2\gamma / r}.
\end{align*}

\subsection{Estimating the error term $\mathcal{E}_4$}

In this section, we will prove three inequalities on the error term $\mathcal{E}_4$:
\begin{itemize}
    \item The upper bound on the spatial average of $\mathcal{E}_4$
        \begin{equation*}
            \left| \int_0^t \left( \mathcal{E}_4(s , \cdot) \right)_{\mathbb{T}^\ep} \, ds \right| \leq \mathcal{O}_{\Psi , c} \left( C \ep^{\frac{1-9\gamma}{8}} \right).
        \end{equation*}
        \item The pointwise upper bound on $\mathcal{E}_4$: for any $(t , x) \in (0,1) \times \mathbb{T}^\ep$,
        \begin{equation*}
            \left| \mathcal{E}_4 (t , x) \right| \leq \mathcal{O}_{\Psi , c} \left( C \ep^{- \gamma} \right).
        \end{equation*}
    Note that, contrary to the other terms, the right-hand side is large, but it implies (using Proposition~\ref{prop:prop2.20} ``Integration", ``Summation")
        \begin{equation*}
            \ep \left\| \mathcal{E}_4 \right\|_{L^2\left( (0,1) \times \mathbb{T}^\ep \right)} \leq \mathcal{O}_{\Psi , c} \left( C \ep^{1-\gamma} \right).
        \end{equation*}
    \item The upper bound on the $\underline{W}^{-1,r}_{\mathrm{par}}((0,1) \times \mathbb{T}^\ep)$-norm of $\mathcal{E}_4$
        \begin{equation} \label{eq:appestimateE}
            \left\| \mathcal{E}_4 \right\|_{\underline{W}^{-1,r}_{\mathrm{par}}((0,1) \times \mathbb{T}^\ep)}  \leq \mathcal{O}_{\Psi , c} \left( C \ep^{\frac{1 - 17 \gamma}{16} - (d+2) \gamma} \right).
        \end{equation}
\end{itemize}

We decompose the rest of this section into three steps.

\medskip

\textit{Step 1. \textit{Upper bound on the spatial average of $\mathcal{E}_4$.}}

\medskip

We first recall the identity~\eqref{identity6.678}
\begin{equation} \label{eq:13482207}
\nabla^\ep \cdot \biggl( \sum_{z}  \chi_{z} \left( D_p V \left( \nabla^\ep v_{z} \right) -  D_p\bar \sigma \left( p_{z} \right)\right) \biggr)=  \sum_{z}  \chi_{z} \nabla^\ep \cdot D_p V \left( \nabla^\ep v_{z} \right)  + \mathcal{E}_4.
\end{equation}
Summing the previous identity over all the vertices of the torus and performing a discrete integration by parts in space (which cancel the term on the left-hand side of~\eqref{eq:13482207} since it is the divergence of a vector field) and an integration by parts in time, we obtain
\begin{align*}
     \int_0^t  \left( \mathcal{E}_4(s , \cdot) \right)_{\mathbb{T}^\ep} \, ds  = \int_0^t \ep^d \sum_{x \in \mathbb{T}^\ep} \mathcal{E}_4(s , x) \, ds & =  \int_0^t \ep^d \sum_{x \in \mathbb{T}^\ep} \sum_{z}  \chi_{z}(s , x) \nabla^\ep \cdot D_p V \left( \nabla^\ep v_{z} \right)(s , x) \, ds \\
     & =  \sum_{z} \int_0^t \ep^d \sum_{x \in \mathbb{T}^\ep}  \chi_{z}(s , x) \partial_t \left( \ep \varphi_z \left( \frac{s}{\ep^2} , \frac{x}{\ep} ; p_z\right) - B_s^\ep(x) \right) \, ds \notag \\
     & = - \sum_{z}  \int_0^t \ep^d \sum_{x \in \mathbb{T}^\ep}  \partial_t \chi_{z}(s, x) \left( \ep \varphi_z \left( \frac{s}{\ep^2} , \frac{x}{\ep} ; p_z\right) - B_s^\ep(x) \right) \, ds \notag \\
     & \qquad +  \sum_{z} \ep^d \sum_{x \in \mathbb{T}^\ep}  \chi_{z}(t , x) \left( \ep \varphi_z \left( \frac{t}{\ep^2} , \frac{x}{\ep} ; p_z\right) - B_t^\ep(x) \right) \notag
     \\
     & \qquad -  \sum_{z} \ep^d \sum_{x \in \mathbb{T}^\ep}  \chi_{z}(0 , x) \left( \ep \varphi_z \left( 0 , \frac{x}{\ep} ; p_z\right) \right) .\notag
\end{align*}
We then prove that the three terms on the right-hand side are small. For the first one, we use that $\sum_{z}  \partial_t \chi_{z}(t , x) = 0$ (which follows by differentiating in time the equality $\sum_{z}  \chi_{z} = 1$) together with the upper bound $\left| \partial_t \chi_z \right| \leq C \kappa^{-2}$ and Proposition~\ref{prop:sublincorr}. We obtain
\begin{align*}
     \left| \sum_{z}  \int_0^t \ep^d \sum_{x \in \mathbb{T}^\ep}  \partial_t \chi_{z}(t , x) \left( \ep \varphi_z \left( \frac{s}{\ep^2} , \frac{x}{\ep} ; p_z\right) - B_s^\ep(x) \right) \, ds \right| & = \left| \sum_{z}  \int_0^t \ep^d \sum_{x \in \mathbb{T}^\ep}  \partial_t \chi_{z}(t , x) \ep \varphi_z \left( \frac{s}{\ep^2} , \frac{x}{\ep} ; p_z\right) \, ds \right| \\
     & \leq \mathcal{O}_1 \left( C \ep \kappa^{-2} L^{7/8} \right) \\
     & \leq  \mathcal{O}_{\Psi , c} \left( C \ep^{\frac{1-9\gamma}{8}} \right).
\end{align*}
For the second term, we use that $\sum_{z}  \chi_{z} = 1$ together with the observation that, for any time $t \in (0,1)$, the sum $\ep^d \sum_{x \in \mathbb{T}^\ep} B_t^\ep(x)$ is distributed according to a normal distribution whose variance is equal to $ \ep^{-d}$ which implies
\begin{equation*}
    \forall t \in (0,1), ~ \left| \ep^d \sum_{x \in \mathbb{T}^\ep} B_t^\ep(x) \right| \leq \mathcal{O}_2 \left( C  \ep^{-d/2} \right).
\end{equation*}
We thus obtain, for any $t \in (0,1)$,
\begin{align*}
    \left|  \sum_{z} \ep^d \sum_{x \in \mathbb{T}^\ep}  \chi_{z}(t , x) \left( \varphi_z \left( \frac{t}{\ep^2} , \frac{x}{\ep} ; p_z\right) - B_t^\ep(x) \right) \right| & = \left| \sum_{z} \ep^d \sum_{x \in \mathbb{T}^\ep}  \chi_{z}(t , x) \varphi_z \left( \frac{t}{\ep^2} , \frac{x}{\ep} ; p_z\right) \right| + \left| \ep^d \sum_{x \in \mathbb{T}^\ep} B_t^\ep(x) \right| \\
    & \leq \mathcal{O}_1 \left( C \ep L^{7/8} + C \ep^{-d/2}\right) \\
    & \leq  \mathcal{O}_{\Psi , c} \left( C \ep^{\frac{1+7\gamma}{8}} \right).
\end{align*}
A combination of the previous displays yields the upper bound
\begin{equation*}
    \left| \int_0^t \left( \mathcal{E}_4(s , \cdot) \right)_{\mathbb{T}^\ep} \, ds \right| \leq \mathcal{O}_{\Psi , c} \left( C \ep^{\frac{1-9\gamma}{8}} \right).
\end{equation*}

\medskip

\textit{Step 2. \textit{The pointwise upper bound on $\mathcal{E}_4$.}}

\medskip

The proof is relatively straightforward: using the bound~\eqref{prop.partofunity2scbis} on the partition of unity, and the bound stated in Proposition~\ref{prop.prop2.3} on the gradient of the Langevin dynamic. We obtain, for any $(t , x) \in (0,1) \times \mathbb{T}^\ep$,
 \begin{align*}
    \left| \mathcal{E}_4 (t , x ) \right| & \leq \mathcal{O}_{\frac{r}{r-1}} \left( C \kappa^{-1} \right) \\
    & \leq  \mathcal{O}_{\Psi , c} \left( C \ep^{-\gamma} \right).
 \end{align*}

\medskip

\textit{Step 3. The upper bound on the $\underline{W}^{-1,r}_{\mathrm{par}}((0,1) \times \mathbb{T}^\ep)$-norm of $\mathcal{E}_4$.}

\medskip

This step is the most intricate. We first define a collection of suitable Sobolev spaces for the argument (which are similar to the ones introduced in Section~\ref{sec:7.1preliminareis} but are defined on a parabolic cylinder rather than on the set $(0,1) \times \mathbb{T}^\ep$). We denote by $\left| \Lambda_{2\kappa}^\ep \right|$ the cardinality of the box $\Lambda_{2\kappa}^\ep$ (which is of order $\kappa^d/\ep^d = \ep^{d(\gamma-1)}$), let $z = (s , y) \in \mathcal{Z}_\kappa$ be fixed and let $q \in (1 , \infty)$ be an exponent (with $q' = q/(q-1)$ its conjugate). We introduce the following norms:
\begin{itemize}
    \item \emph{$L^{q}$-norm:}
    $
    \left\| u \right\|_{\underline{L}^{q} \left( y + \Lambda_{2\kappa}^\ep \right)}^{q} := \left| \Lambda_{2\kappa}^\ep \right|^{-1} \sum_{x \in y + \Lambda_{2\kappa}^\ep} \left| u(x)\right|^{q},
    $
    \item \emph{$W^{1,q}$-norm:}
    $
        \left\| u \right\|_{\underline{W}^{1,q}(y + \Lambda_{2\kappa}^\ep)} := \kappa^{-1} \left\| u \right\|_{\underline{L}^{q} (y + \Lambda_{2\kappa}^\ep)} + \left\| \nabla^\ep u \right\|_{\underline{L}^{q} (y + \Lambda_{2\kappa}^\ep)},
    $
    \item \emph{$W^{-1,q}$-norm:}
        $$
        \left\| u \right\|_{\underline{W}^{-1,q}(y + \Lambda_{2\kappa}^\ep)} := \sup \left\{\left| \Lambda_{2\kappa}^\ep \right|^{-1} \sum_{x \in y + \Lambda_{2\kappa}^\ep} u(x) v(x) \, : \,  v = 0 \mbox{ on } \partial^+ (y + \Lambda_{2\kappa}^\ep), \,  \left\| v \right\|_{\underline{W}^{1,q'}(y + \Lambda_{2\kappa}^\ep)} \leq 1 \right\},
        $$
    \item \emph{Parabolic $L^{q}$-norm:}
    $
    \left\| u \right\|_{\underline{L}^{q} \left( z + Q_{2\kappa}^\ep \right)}^{q} :=  (2\kappa)^{-2} \int_{s + I_{2\kappa}}\left\| u(t , \cdot)\right\|^{q}_{\underline{L}^{q}(y + \Lambda_{2\kappa}^\ep)} \, dt,
    $
    \item \emph{$L^{q} W^{1,q}$-norm:}
    $
        \left\| u \right\|_{\underline{L}^q(s + I_{2\kappa}, \underline{W}^{1,q}(y + \Lambda_{2\kappa}^\ep))}^q :=  \kappa^{-2} \int_{s + I_{2\kappa}}\left\| u(t , \cdot)\right\|^q_{\underline{W}^{1,q}(y + \Lambda_{2\kappa}^\ep)} \, dt,
    $
    \item \emph{$L^{q}W^{-1,q}$-norm:}
         $
        \left\| u \right\|_{\underline{L}^{q}(s + I_{2\kappa} , W^{-1,q}(y + \Lambda_{2\kappa}^\ep))}^{q} := \kappa^{-2} \int_{s + I_{2\kappa}}\left\| u(t , \cdot)\right\|^{q}_{\underline{W}^{-1,q}(y + \Lambda_{2\kappa}^\ep)} \, dt,
        $
    \item  \emph{$W^{1,q}_{\mathrm{par}}$-norm:}
    $
    \left\| u \right\|_{\underline{W}^{1,q}_{\mathrm{par}}(z + Q_{2\kappa}^\ep)} := \kappa^{-1} \left\| u \right\|_{\underline{L}^q \left( z + Q_{2\kappa}^\ep \right)} + \left\| \nabla^\ep u \right\|_{\underline{L}^q \left(z + Q_{2\kappa}^\ep  \right)} + \left\| \partial_t u \right\|_{\underline{L}^{q}((0 , 1) , W^{-1,q}(z + Q_{2\kappa}^\ep))},
    $
    \item  \emph{$\hat{W}^{-1,q}_{\mathrm{par}}$-norm:}
    $$
    \left\| u \right\|_{\hat{\underline{W}}^{-1,q}_{\mathrm{par}}(z + Q_{2\kappa}^\ep)} := \sup \left\{ \kappa^{-2} \int_{s + I_{2\kappa}}\left| \Lambda_{2\kappa}^\ep \right|^{-1} \sum_{x \in y + \Lambda_{2\kappa}^\ep} u(t , x) v(t , x) \, dt : \,  \left\| v \right\|_{\underline{W}^{1,q'}_{\mathrm{par}}(z + Q_{2\kappa}^\ep)} \leq 1 \right\}.
    $$
\end{itemize}
    This notation is related to the ones introduced in Section~\ref{subsecfunctions} through scaling identities. In particular we will use the following one: for any function $u : Q_{L} \to \R$,
    \begin{equation} \label{eq:rescalingnegativenorm}
    \left\| u \left( \frac{\cdot}{\ep^2} , \frac{\cdot}{\ep} \right) \right\|_{\hat{\underline{W}}^{-1,q}_{\mathrm{par}}\left(Q^{\ep}_{\kappa} \right)} = \ep \left\| u \right\|_{\hat{\underline{W}}^{-1,q}_{\mathrm{par}}\left(Q_{L} \right)}.
    \end{equation}
For the rest of the argument, we select a function $f \in W^{1,r'}_{\mathrm{par}} \left( (0,1) \times \mathbb{T}^\ep \right)$ such that $ \left\| f \right\|_{\underline{W}^{1 , r'}_{\mathrm{par}} ((0,1) \times \mathbb{T}^\ep)} \leq 1$ and estimate the term
\begin{equation*}
    \int_0^1 \ep^d \sum_{x \in \mathbb{T}^\ep} \mathcal{E}_4 (t , x) f(t , x) \, dt.
\end{equation*}
We first use the definition of the error term $\mathcal{E}_4$ and write
\begin{equation*}
    \int_0^1 \ep^d \sum_{x \in \mathbb{T}^\ep} \mathcal{E}_4 (t , x) f(t , x) \, dt = \sum_{z} \int_0^1 \ep^d \sum_{x \in \mathbb{T}^\ep}  \nabla^{\ep} \chi_{z}(t , x) \cdot \left( D_p V \left( \nabla^{\ep} v_{z} (t , x) \right) - D_p \bar \sigma \left( p_{z} \right) \right) f(t , x) \, dt.
\end{equation*}
The strategy is then to regroup the partition of unity $\chi_z$ with the test function $f$ as follows: for any $z \in \mathcal{Z}_\kappa$,
\begin{align*}
    \lefteqn{\int_0^1 \ep^d \sum_{x \in \mathbb{T}^\ep}  \nabla^{\ep} \chi_{z}(t , x) \cdot \left( D_p V \left( \nabla^{\ep} v_{z} (t , x) \right) - D_p \bar \sigma \left( p_{z} \right) \right) f(t , x) \, dt } \qquad & \\ & 
    =  \int_0^1 \ep^d \sum_{x \in \mathbb{T}^\ep} \left( D_p V \left( \nabla^{\ep} v_{z} (t , x) \right) - D_p \bar \sigma \left( p_{z} \right) \right) \cdot  \left( \nabla^{\ep} \chi_{z}(t , x) f(t , x) \right) \, dt \\
    & \leq \left\| D_p V \left( \nabla^{\ep} v_{z}  \right) - D_p \bar \sigma \left( p_{z} \right) \right\|_{\underline{W}^{-1, r}_\mathrm{par}(z + Q_{2\kappa}^\ep)} \left\| (\nabla^{\ep} \chi_{z}) f\right\|_{\underline{W}^{1, r'}_\mathrm{par}(z + Q_{2\kappa}^\ep)}.
\end{align*}
Combining the two previous displays and applying the H\"{o}lder inequality, we obtain the upper bound
\begin{multline} \label{eq:14162604}
    \int_0^1 \ep^d \sum_{x \in \mathbb{T}^\ep} \mathcal{E}_4 (t , x) f(t , x) \, dt \leq \left( \sum_{z} \left\| D_p V \left( \nabla^{\ep} v_{z} \right) - D_p \bar \sigma \left( p_{z} \right) \right\|_{\underline{W}^{-1, r}_\mathrm{par}(z + Q_{2\kappa}^\ep)}^r \right)^{1/r} \\
    \times \left( \sum_{z} \left\|(\nabla^{\ep} \chi_{z}) f \right\|_{\underline{W}^{1,r'}_\mathrm{par}(z + Q_{2\kappa}^\ep)}^{r'} \right)^{1/r'} .
\end{multline}
The first term on the right-hand side can be estimated using Proposition~\ref{prop:sublinearityflux} which (after suitable rescaling as in~\eqref{eq:rescalingnegativenorm}) reads as follows: for any $z \in \mathcal{Z}_\kappa$,
\begin{align*}
    \left\| D_p V \left( \nabla^{\ep} v_{z} \right) - D_p \bar \sigma \left( p_{z} \right) \right\|_{\underline{W}^{-1, r}_\mathrm{par}(z + Q_{2\kappa}^\ep)} & \leq \mathcal{O}_{1/2} \left( C \ep L^{15/16} \right) \\
    & \leq \mathcal{O}_{\Psi , c} \left( C \ep^{\frac{1+15\gamma}{16}} \right).
\end{align*}
From the previous inequality, we deduce that
\begin{equation*}
     \left( \frac{1}{\left| \mathcal{Z}_\kappa \right|} \sum_{z} \left\| D_p V \left( \nabla^{\ep} v_{z} (t , x) \right) - D_p \bar \sigma \left( p_{z} \right) \right\|_{\underline{W}^{-1, r}_\mathrm{par}(z + Q_{2\kappa}^\ep)}^r \right)^{1/r} \leq \mathcal{O}_{\Psi,c} \left( C \ep^{\frac{1+15\gamma}{16}} \right).
\end{equation*}
We next estimate the (deterministic) second term on the right-hand side of~\eqref{eq:14162604}. Specifically, we will prove the inequality
\begin{equation} \label{eq:14512604}
    \frac{1}{\left| \mathcal{Z}_\kappa \right|} \sum_{z} \left\| (\nabla^{\ep} \chi_{z}) f \right\|_{\underline{W}^{1, r'}_\mathrm{par}(z + Q_{2\kappa}^\ep)}^{r'} \leq \frac{C}{\kappa^{2r'}}. 
\end{equation}
A combination of the two previous displays with~\eqref{eq:14162604} (taking the supremum over all the functions $f$ satisfying $\left\| f \right\|_{\underline{W}^{1,r'}_\mathrm{par}((0 , 1) \times \mathbb{T}^\ep)} \leq 1$) yields
\begin{equation*}
       \left\| \mathcal{E}_4 \right\|_{\underline{W}^{-1,r}_{\mathrm{par}}\left( (0 , 1) \times \mathbb{T}^\ep \right)}  \leq \mathcal{O}_{\Psi , c} \left( C \kappa^{-2} \left| \mathcal{Z}_\kappa \right|\ep^{\frac{1+15\gamma}{16}} \right).
\end{equation*}
Using the identity $\kappa = \ep^\gamma$ and noting that the cardinality of the set $\mathcal{Z}_\kappa$ is of order $\kappa^{-(d+2)}$, we obtain~\eqref{eq:appestimateE}.

There remains to prove the inequality~\eqref{eq:14512604}, we first write, for any fixed $z = (s , y) \in \mathcal{Z}_\kappa$,
\begin{align} \label{eq:17082604}
    \left\| (\nabla^{\ep} \chi_{z}) f \right\|_{\underline{W}^{1,r'}_\mathrm{par}(z + Q_{2\kappa}^\ep)} & = \frac{1}{\kappa}  \left\|  (\nabla^{\ep} \chi_{z}) f  \right\|_{\underline{L}^{r'}(z + Q_{2\kappa}^\ep)} + \left\| \nabla^\ep \left( (\nabla^{\ep} \chi_{z}) f \right) \right\|_{\underline{L}^{r'}(z + Q_{2\kappa}^\ep)} \\
    & \qquad + \left\| \partial_t \left( (\nabla^{\ep} \chi_{z}) f \right) \right\|_{\underline{L}^{r'}(s + I_{2\kappa} , \underline{W}^{-1,r'} \left( y + \Lambda_{2\kappa}^\ep \right) )} \notag
\end{align}
and estimate the three terms on the right-hand side. The first one is the easiest to estimate: using the upper bound on the gradient of the partition of unity stated in~\eqref{prop.partofunity2scbis}, we obtain
\begin{equation*}
     \left\| (\nabla^{\ep} \chi_{z}) f  \right\|_{\underline{L}^{r'}(z + Q_{2\kappa}^\ep)} \leq \frac{C}{\kappa} \left\|  f  \right\|_{\underline{L}^{r'}(z + Q_{2\kappa}^\ep)}.
\end{equation*}
Summing over $z \in \mathcal{Z}_\kappa$, we obtain
\begin{equation} \label{eq:11570305}
    \frac{1}{\left| \mathcal{Z}_\kappa \right|} \sum_z \left\|  (\nabla^{\ep} \chi_{z}) f  \right\|_{\underline{L}^{r'}(z + Q_{2\kappa}^\ep)}^{r'} \leq \frac{C}{\kappa^{r'}} \left\|  f  \right\|_{L^{r'}((0 , 1) \times \mathbb{T}^\ep)}^{r'} \leq \frac{C}{\kappa^{r'}}.
\end{equation}
For the second term on the right-hand side of~\eqref{eq:17082604}, we expand the discrete derivative and use~\eqref{prop.partofunity2scbis} a second time to obtain
\begin{align*}
    \left\| \nabla^\ep \left( (\nabla^{\ep} \chi_{z}) f \right) \right\|_{\underline{L}^{r'}(z + Q_{2\kappa}^\ep)} & \leq C \left\| (\nabla^{\ep, 2} \chi_{z}) f  \right\|_{\underline{L}^{r'}(z + Q_{2\kappa}^\ep)} + C\left\|(\nabla^{\ep} \chi_{z}) \nabla^\ep f \right\|_{\underline{L}^{r'}(z + Q_{2\kappa}^\ep)} \\
    & \leq \frac{C}{\kappa^{2}} \left\| f  \right\|_{\underline{L}^{r'}(z + Q_{2\kappa}^\ep)} + \frac{C}{\kappa} \left\| \nabla^\ep f \right\|_{\underline{L}^{r'}(z + Q_\kappa^\ep)}.
\end{align*}
Summing over $z \in \mathcal{Z}_\kappa$, we obtain
\begin{align} \label{eq:12000305}
    \frac{1}{\left| \mathcal{Z}_\kappa \right|} \sum_z \left\|\nabla^\ep \left( (\nabla^{\ep} \chi_{z}) f \right)\right\|_{\underline{L}^{r'}(z + Q_{2\kappa}^\ep)}^{r'} & \leq \frac{C}{\kappa^{2r'}} \left\| f  \right\|_{L^{r'}((0 , 1) \times \mathbb{T}^\ep)}^{r'} + \frac{C}{\kappa^{r'}}  \left\| \nabla^\ep f \right\|_{L^{r'}((0 , 1) \times \mathbb{T}^\ep)}^{r'} \\
    & \leq \frac{C}{\kappa^{2r'}}. \notag
\end{align}
The third term on the right-hand side of~\eqref{eq:17082604} is the most intricate to study. We first expand it as follows
\begin{multline} \label{eq:17472604}
    \left\| \partial_t \left( (\nabla^{\ep} \chi_{z}) f \right) \right\|_{\underline{L}^{r'}(s + I_{2\kappa} , \underline{W}^{-1,r'} \left( \Lambda_{2\kappa}^\ep \right) )}^{r'} \\
    \leq C \left\| \left( \partial_t  \nabla^{\ep} \chi_{z} \right) f  \right\|_{\underline{L}^{r'}(s + I_{2\kappa} , \underline{W}^{-1,r'} \left( y + \Lambda_{2\kappa}^\ep \right) )}^{r'}  + C \left\| (\nabla^{\ep} \chi_{z})\partial_t  f  \right\|_{\underline{L}^{r'}(s + I_{2\kappa} , \underline{W}^{-1,r'} \left( y + \Lambda_{2\kappa}^\ep \right) )}^{r'}.
\end{multline}
The first term on the right-hand side can be upper bounded using that the $W^{-1,r'}$-norm is smaller than the $L^{r'}$-norm (with suitable rescaling) and the upper bound~\eqref{prop.partofunity2scbis} on the time derivative of the gradient of the partition of unity. We obtain
\begin{align*}
    \left\| \left( \partial_t  \nabla^{\ep} \chi_{z} \right) f  \right\|_{\underline{L}^{r'}(s + I_{2\kappa} , \underline{W}^{-1,r'} \left( y + \Lambda_{2\kappa}^\ep \right) )} & \leq C \kappa \left\| \left( \partial_t  \nabla^{\ep} \chi_{z} \right) f  \right\|_{\underline{L}^{r'}(z +  Q_{2\kappa}^\ep )} \\
    & \leq \frac{C}{\kappa^{2}} \left\| f  \right\|_{\underline{L}^{r'}(z +  Q_{2\kappa}^\ep )}.
\end{align*}
To estimate the second term on the right-hand side of~\eqref{eq:17472604}, we introduce a test function $g \in L^r(s + I_{2\kappa}, W^{1,r} \left( y +  \Lambda_{2\kappa}^\ep \right))$ and write
\begin{equation*}
    \int_0^1 \ep^d \sum_{x \in \mathbb{T}^\ep} \nabla^{\ep} \chi_{z}(t , x) \partial_t  f(t , x) g(t , x) dt \leq \left\| \partial_t  f \right\|_{L^{r'}((0,1) , W^{-1,r'}(\mathbb{T}^\ep))}  \left\| (\nabla^{\ep} \chi_{z}) g \right\|_{L^r((0,1) , W^{1,r}( \mathbb{T}^\ep))}.
\end{equation*}
The second term on the right-hand side is finally estimated using~\eqref{prop.partofunity2scbis} and we obtain
\begin{equation*}
     \left\| (\nabla^{\ep} \chi_{z}) g \right\|_{L^r((0,1) , W^{1,r}( \mathbb{T}^\ep))} \leq \frac{C}{\kappa} \left\| g \right\|_{ \underline{L}^r(s + I_{2\kappa}, \underline{W}^{1,r} \left( y +  \Lambda_{2\kappa}^\ep \right))}.
\end{equation*}
A combination of the two previous inequalities yields the upper bound
\begin{equation*}
        \left\| (\nabla^{\ep} \chi_{z}) \partial_t  f  \right\|_{\underline{L}^{r'}(s + I_{2\kappa} , \underline{W}^{-1,r'} \left( y + \Lambda_{2\kappa}^\ep \right) )} \leq \frac{C}{\kappa} \left\| \partial_t  f \right\|_{L^{r'}((0,1) , W^{-1,r'}(\mathbb{T}^\ep))}
\end{equation*}
and thus
\begin{equation*} 
    \left\| \partial_t \left( (\nabla^{\ep} \chi_{z}) f \right) \right\|_{\underline{L}^{r'}(s + I_{2\kappa} , \underline{W}^{-1,r'} \left( y+\Lambda_{2\kappa}^\ep \right) )}^{r'} \leq \frac{C}{\kappa^{2r'}} \left\| f  \right\|_{\underline{L}^{r'}(z +  Q_{2\kappa}^\ep )}^{r'} + \frac{C}{\kappa^{r'}} \left\| \partial_t  f \right\|_{L^{r'}((0,1) , W^{-1,r'}(\mathbb{T}^\ep))}^{r'}.
\end{equation*}
Summing over $z \in \mathcal{Z}_\kappa$, we obtain
\begin{align} \label{eq:14100305}
    \frac{1}{\left| \mathcal{Z}_\kappa \right|}\sum_z  \left\| \partial_t \left((\nabla^{\ep,*} \chi_{z}) f \right) \right\|_{\underline{L}^{r'}(s + I_{2\kappa} , \underline{W}^{-1,r'} \left( \Lambda_{2\kappa}^\ep \right) )}^{r'} & \leq \frac{C}{\kappa^{2r'}} \left\| f  \right\|_{L^{r'}((0, 1) \times \mathbb{T}^\ep)}^{r'} + \frac{C}{\kappa^{r'}} \left\| \partial_t  f \right\|_{L^{r'}((0,1) , W^{-1,r'}(\mathbb{T}^\ep))}^{r'} \notag \\
    & \leq \frac{C}{\kappa^{2r'}}. 
\end{align}
Combining~\eqref{eq:17082604},~\eqref{eq:11570305},~\eqref{eq:12000305} and~\eqref{eq:14100305}, we obtain the inequality~\eqref{eq:14512604}.

\section{Moderating the two-scale expansion} \label{sec:sectionappendixC}

In this section, we show the three inequalities~\eqref{eq:realD20},~\eqref{eq:22572106} and~\eqref{eq:11140912sec7} stated in Step 1 of Section~\ref{section:eestimatingthediffernce}. We will make use of the notation introduced there, and in particular, we recall the definitions:
\begin{itemize}
    \item We let $\a$ be the environment defined in~\eqref{def:defevtAtwoscale};
    \item We let $\mathbf{\Lambda}_+$ and $\mathbf{\Lambda}_-$ be the maximal and minimal eigenvalues of the environment $\a$ as defined in~\eqref{def.lambda+-ineq6.44};
    \item We let $\mathbf{m}$ be the moderated environment defined in~\eqref{def.modeeratedevt}.
\end{itemize}

\subsection{Upper bound for the largest eigenvalue and the maximal function.}

This section is devoted to the proof of the inequality~\eqref{eq:realD20} restated below:
\begin{equation} \label{eq:D201159}
    \left\| \mathbf{\Lambda}_+ \right\|_{L^{\frac{r}{r-2}}((0,1) \times \mathbb{T}^\ep)} + \left\| \mathbf{M}_+ \right\|_{L^{\frac{r}{r-2}}((0,1) \times \mathbb{T}^\ep)} \leq \mathcal{O}_{\Psi , c} (C).
\end{equation}
We will first prove the inequality
\begin{equation} \label{eq:11250909}
    \left\| \mathbf{\Lambda}_+ \right\|_{L^{\frac{r}{r-2}}((0,1) \times \mathbb{T}^\ep)} \leq \mathcal{O}_{\Psi , c} (C).
\end{equation}
To prove the inequality~\eqref{eq:11250909}, we first use the definition of the environment $\a$ stated in~\eqref{def:defevtAtwoscale} together with the Assumption~\eqref{AssPot} to obtain the upper bound: for any $(t , x) \in (0,1) \times \mathbb{T}^\ep$,
\begin{equation*}
    \left| \mathbf{\Lambda}_+(t , x) \right| \leq C \left( \left| \nabla w^\ep (t , x) \right|^{r-2}_+ + \left| \nabla u^\ep (t , x) \right|^{r-2}_+  \right).
\end{equation*}
We thus have
\begin{equation*}
    \left\| \mathbf{\Lambda}_+ \right\|_{L^{\frac{r}{r-2}}((0,1) \times \mathbb{T}^\ep)} \leq C \left\| \nabla w^\ep \right\|_{L^{r}((0,1) \times \mathbb{T}^\ep)}^{r-2} + \left\|  \nabla u^\ep \right\|_{L^{r}((0,1) \times \mathbb{T}^\ep)}^{r-2}.
\end{equation*}
We may then apply the inequalities~\eqref{eq:AppC12} and~\eqref{eq:AppC13} to obtain
\begin{equation*}
    \left\| \mathbf{\Lambda}_+ \right\|_{L^{\frac{r}{r-2}}((0,1) \times \mathbb{T}^\ep)} \leq \mathcal{O}_{\Psi , c} (C).
\end{equation*}
By the Hardy-Littlewood maximal inequality (with respect to the time variable), we have, for any $x \in \mathbb{T}^\ep$,
\begin{equation*}
    \int_0^1 \left| \mathbf{M}_+ (t , x) \right|^{\frac{r}{r-2}} \, dt \leq C  \int_0^1  1 + \left|\mathbf{\Lambda}_+ (t , x) \right|^{\frac{r}{r-2}} \, dt.
\end{equation*}
Summing over the vertices $x \in \mathbb{T}^\ep$, we obtain
\begin{equation*}
    \left\| \mathbf{M}_+ \right\|_{L^{\frac{r}{r-2}}((0,1) \times \mathbb{T}^\ep)} \leq C \left\| \mathbf{\Lambda}_+ \right\|_{L^{\frac{r}{r-2}}((0,1) \times \mathbb{T}^\ep)} \leq \mathcal{O}_{\Psi , c} (C).
\end{equation*}
From the previous inequality, we deduce~\eqref{eq:D201159}.

\subsection{Stochastic integrability estimates for the moderated environment} \label{sec:sectionD2}

The objective of this section is to prove the inequality~\eqref{eq:22572106} restated below

\begin{equation} \label{eq:22572106App}
    \mathbb{P} \left[  \inf_{(t , x) \in (0,1) \times \mathbb{T}^\ep} \mathbf{m}(t , x)  \times \mathbf{M}_+(t , x)  \leq \ep^\theta \right] \leq C \exp \left( - c \left| \ln \ep \right|^{\frac{r}{r-2}}  \right).
\end{equation}

The proof is decomposed into two subsections and follows the same strategy as in Section~\ref{sec:section3moderated} (with essentially minor adaptations to prove the fluctuations of the gradient of the two-scale expansion $\nabla w^\ep$ instead of the gradient of the corrector).

\subsubsection{A fluctuation estimate for the two-scale expansion}

As in Section~\ref{sec:section3moderated}, the main step of the arguments is to show that the probability for the gradient of the two-scale expansion to remain in a bounded set for a long time is small (see Proposition~\ref{prop3.4}). This result is stated in the following lemma.

\begin{lemma}[Fluctuation for the two-scale expansion] \label{prop3.42scD2}
There exist two constants $C := C(d , V,f) < \infty$ and $c := c(d , V,f) >0$ such that, for any $T \geq 1$ and any vertex $x \in \mathbb{T}^\ep$, 
\begin{equation} \label{upperboundfluctuation2sc}
    \mathbb{P}\left[ \, \forall t \in [0 , \ep^{2 - \theta/(d+5)}], \, \left| \nabla^\ep w^\ep (t , x )  \right| \leq R_1 \, \right] \leq C \exp \left( - c \left| \ln \ep \right|^{\frac{r}{r-2}} \right).
\end{equation}
\end{lemma}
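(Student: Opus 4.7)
The plan is to adapt the proof of Proposition~\ref{prop3.4} to the two-scale expansion $w^\ep$. After rescaling time by $t \mapsto t/\ep^2$, the interval $[0, \ep^{2-\theta/(d+5)}]$ becomes the microscopic interval $[0, T]$ with $T := \ep^{-\theta/(d+5)}$, so that $|\ln T| = \frac{\theta}{d+5} |\ln \ep|$ and the target bound takes the standard form $C \exp(-c |\ln T|^{r/(r-2)})$ of Proposition~\ref{prop3.4}. For $\theta$ small relative to $\gamma$, the interval $[0, \ep^{2-\theta/(d+5)}]$ is much shorter than the mesoscopic time scale $\kappa^2 = \ep^{2\gamma}$, hence the point $(t, x)$ lies in at most $C(d)$ of the cylinders $z + Q_{2\kappa}^\ep$ throughout the interval; write $z_1, \ldots, z_k$, $k \leq C(d)$, for these indices.

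Following Step 2 of the proof of Proposition~\ref{prop3.4}, I decompose the driving microscopic Brownian motions into independent Gaussian increments $X_j(y)$ of variance $1/N$ with $N := |\ln T|/R_1^2$, together with independent Brownian bridges. For each $j$, I perturb the single increment $X_j(x/\ep)$ and compute $\partial \nabla^\ep w^\ep(t_j, x)/\partial X_j(x/\ep)$ at the rescaled time $t_j := \ep^2 (j+1)/N$. The deterministic contribution $\nabla^\ep \bar u^\ep$ has zero derivative; the lower-order term $\ep \sum_i \nabla^\ep \chi_{z_i} \varphi_{z_i}$ contributes at most $O(\ep \kappa^{-1})$ by the heat-kernel bound~\eqref{eq:13462811} and is negligible; and the main term $\sum_i \chi_{z_i}(t_j, x) \nabla \varphi_{z_i}(t_j/\ep^2, x/\ep; p_{z_i})$ has derivative represented, via Proposition~\ref{prop:propsLangevin}, as a sum of spatial gradients of heat kernels $P_{\a(\cdot; p_{z_i})}$.

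Mirroring Step 4 of the proof of Proposition~\ref{prop3.4}, I define a good event $G_T$ enlarged to encompass the $C(d)$ dynamics $\varphi_{z_i}$ simultaneously (uniform control on each $\nabla \varphi_{z_i}$ and on the corresponding $D_p V$); Proposition~\ref{propositionsubdynamic} together with a union bound yields $\mathbb{P}[G_T^c] \leq C \exp(-c |\ln T|^{r/(r-2)})$. On $G_T$, the heat-kernel calculation of Step 5 of Proposition~\ref{prop3.4} gives, for each $i$, $\partial \nabla_1 \varphi_{z_i}((j+1)/N, x/\ep; p_{z_i})/\partial X_j(x/\ep) \leq -3/4$, and taking the convex combination with the nonnegative weights $\chi_{z_i}(t_j, x)$ summing to $1$ yields $|\partial \nabla_1^\ep w^\ep(t_j, x)/\partial X_j(x/\ep)| \geq 3/4$. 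The iteration argument of Step 3 of Proposition~\ref{prop3.4} then applies unchanged, and the probability of the event appearing in~\eqref{upperboundfluctuation2sc} intersected with $G_T$ is bounded by $(1 - T^{-9/10})^{\lfloor NT \rfloor + 1} \leq \exp(-T^{1/10})$, which is much smaller than the target.

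The main obstacle to overcome is that the same microscopic increment $X_j(x/\ep)$ enters the centered driving processes $\tilde B^{y_i/\ep}$ of several distinct dynamics $\varphi_{z_i}$ whose boxes $y_i/\ep + \Lambda_{10L}$ overlap. The centering correction is however of order $|\Lambda_{10L}|^{-1} = L^{-d}$ and thus negligible at the level of the derivative, so the contributions of distinct $\nabla \varphi_{z_i}$'s add constructively rather than cancel, which justifies the lower bound on $\partial \nabla_1^\ep w^\ep/\partial X_j(x/\ep)$ claimed above.
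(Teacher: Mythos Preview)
Your proposal is correct and follows essentially the same route as the paper's proof: rescale to microscopic time, decompose the Brownian motions into increments of variance $1/N$ with $N \sim |\ln T|$, represent the derivative of $\nabla w$ with respect to $X_j(x/\ep)$ through the heat kernels of the finitely many active dynamics $\varphi_{z_i}$, control these simultaneously on a good event, obtain the monotonicity bound $\partial \nabla_1 w/\partial X_j \leq -3/4$ via the convex combination $\sum_i \chi_{z_i}=1$, and conclude by the conditioning iteration of Proposition~\ref{prop3.4}. Your explicit treatment of the lower-order term $\ep\sum_i(\nabla^\ep\chi_{z_i})\varphi_{z_i}$ and of the $|\Lambda_{10L}|^{-1}$ centering correction are points the paper's sketch glosses over, but the strategy is the same.
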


\begin{proof}
    The arguments are essentially identical to the ones of Section~\ref{sec:section3moderated} (and specifically of Proposition~\ref{prop3.4}), but are more technical due to the more involved definition of the two-scale expansion $w^\ep$. We only provide a detailed sketch of the proof. 
    
    We first rescale the problem and define the function
    \begin{equation*}
     w(t , x) := \ep^{-1} \bar u^\ep (\ep^2 t , \ep x) + \sum_{z \in \mathcal{Z}_\kappa} \chi_z( \ep^2 t , \ep x) \varphi_{z} \left( t , x ; p_z\right),
    \end{equation*}
    as well as the environment and maximal eigenvalue
    \begin{equation*}
        \a_{z} (t , x) := D_p^2 V ( p_z + \nabla \varphi_{z} (t , x ; p_z)) \hspace{3mm} \mbox{and} \hspace{3mm} \mathbf{\Lambda}_{+,z}(t , x) := \sup_{\substack{\xi \in \Rd \\ \left| \xi \right| = 1 }} \xi \cdot  \a_{z} (t , x) \xi.
     \end{equation*}
    With this definition, the inequality~\eqref{upperboundfluctuation2sc} is equivalent to the following result: for any $x \in \mathbb{T}_L$,
    \begin{equation*} 
    \mathbb{P}\left[ \, \forall t \in [0 , \ep^{ - \theta/(d+5)}], \, \left| \nabla w (t , x )  \right| \leq R_1 \, \right] \leq C \exp \left( - c \left| \ln \ep \right|^{\frac{r}{r-2}} \right).
\end{equation*}
    From now on and for the rest of the proof, we fix a vertex $x \in \mathbb{T}_L$.
    We next note that, for any $z \in \mathcal{Z}_\kappa$,
    \begin{equation*}
        \left| \nabla \varphi_{z} \left( t , x ; p_z\right) \right| \leq \mathcal{O}_r(C).
    \end{equation*}
    This result is a direct consequence of Proposition~\ref{prop.prop2.3}. 
    
    We then set set $N := \theta \left| \ln \ep \right| / ((d+5) R_1^2)$ and recall the notation introduced in the proof of Proposition~\ref{prop3.4} (and specifically the ones related to the Brownian bridges~\eqref{notation:Brownianbridges} and increments~\eqref{notation:Increments}). As in the proof of Proposition~\ref{prop3.4}, For any~$l \in \N$, we introduce the following random subset of $\R$ (depending on the collection $\mathcal{R}_{l}$),
\begin{equation*}
    \mathcal{A}_l (\mathcal{R}_{l}) := \left\{  X \in \R \, : \, \left| \nabla w \left(\frac{l+1}{N} , x ;p \right) \left( X, \mathcal{R}_{l} \right) \right| \leq R_1  \right\} \subseteq \R,
\end{equation*}
We then introduce the event $A_l \subseteq \Omega$ defined as follows
\begin{equation*}
    A_l := \left\{ \mathcal{R} := (X_l(x),  \mathcal{R}_{l}) \in \Omega \, : \,  X_l(x) \in \mathcal{A}_l (\mathcal{R}_{l}) ~~\mbox{and}~~ \frac{1}{\sqrt{2 \pi N}}\int_{\mathcal{A}_l (\mathcal{R}_{l})} e^{- \frac{x^2}{2N}} \, dx \leq 1- \ep^{9\theta/(10(d+5))} \right\}.
\end{equation*}
The same proof as in Proposition~\ref{prop3.4} shows the stretched exponential decay estimate
\begin{equation} \label{eq:11492811App}
    \mathbb{P}\left[ \bigcap_{l = 0}^{ \lfloor N \ep^{ - \theta/(d+5)} \rfloor} A_l \right] \leq \exp \left( - \ep^{-\theta/(10(d+5)} \right) \leq C \exp \left( - c \left| \ln \ep \right|^{\frac{r}{r-2}} \right).
\end{equation}
We next note that there exist two constants $\ep_G := \ep_G(d , V,f) < \infty$ and $C_G := C_G(d , V,f) < \infty$ such that the following implication holds: for any $\ep \leq \ep_G $, and any $z  \in \mathcal{Z}_\kappa$,
\begin{equation*}
    \left| p_z + \nabla \varphi_{z} (t , x ; p_z) \right| \leq \frac{ \left|\ln \ep \right|^{\frac{1}{r-2}}}{C_G} \implies \mathbf{\Lambda}_{+, z} (t , x) \leq \frac{N}{8d}.
\end{equation*}
We then define the interval
\begin{equation*}
    I_\ep := \left[ - \frac{\left| \ln \ep \right|^{\frac{1}{r-2}}}{(\sqrt{2}(4d)^2)C_G} , \frac{\left| \ln \ep \right|^{\frac{1}{r-2}}}{(\sqrt{2}(4d)^2)C_G}  \right],
\end{equation*}
as well as the bad event
\begin{multline*}
    G_\ep^c := \Bigg\{ \mathcal{R} \in \Omega \, : ~~ \exists z = (s , y) \in \mathcal{Z}_\kappa, ~ \ep^{-1} y \in x + \Lambda_L ~~ \mbox{and}  \\ \sup_{t \in [0,\ep^{ - \theta/(d+5)}]} \sum_{x' \sim x} \left| p_z + \nabla \varphi_{y , L} (t , x' ; p_z)(\mathcal{R}) \right| \geq \frac{\left| \ln \ep \right|^{\frac{1}{r-2}}}{2 C_G} \Bigg\} 
    \bigcup \bigcup_{k=0}^{\lfloor N \ep^{ - \gamma/(d+5)} \rfloor} \left\{  X_k(x) \notin I_\ep \right\}.
\end{multline*}
The same computation as in~\eqref{eq:18092811} (with an additional union bound giving a polynomial term in $\ep$ which can then be absorbed in the super-polynomial right-hand side of~\eqref{eq:19032805App}) yields the upper bound
\begin{equation} \label{eq:19032805App}
    \mathbb{P} \left( G_\ep^c \right) \leq C \exp \left( - c \left| \ln \ep \right|^{\frac{r}{r-2}} \right).
\end{equation}
Following the proof of Proposition~\ref{prop3.4}, we next show the inclusion of events
\begin{equation} \label{eq:18082811App}
    \left\{ \mathcal{R} \in \Omega \, : \, \forall t \in [0 , \ep^{ - \theta/(d+5)}], \, \left| \nabla w (t , x )  \right| \leq R_1 \right\} \subseteq \bigcap_{l = 0}^{ \lfloor  N \ep^{ - \theta/(d+5)}  \rfloor} A_l \cup G_T^c.
\end{equation}
Lemma~\ref{prop3.42scD2} is then obtained by combining~\eqref{eq:11492811App},~\eqref{eq:19032805App},~\eqref{eq:18082811App} and a union bound.

The proof of~\eqref{eq:18082811App} first relies on the observation that the derivative of the function $w$ with respect to the increment $X_l(x)$ is given by Proposition~\ref{prop:propsLangevin}, we know that, for $y \in \mathbb{T}_L$, the derivative of the $\nabla \varphi_{L} (t , y; p)$ with respect to the increment $X_l(x)$ is given by the following identity: for any $y \in \mathbb{T}_L$,
\begin{equation*}
    \frac{\partial w (t , y)}{\partial X_l(x)} =  \sqrt{2} N \int_{\frac{l}{N}}^{\frac{l+1}{N}}  \sum_{z} \chi_z( \ep^2 t , \ep x) P_{\a_{z}} (t , y ; s , x) \, ds.
\end{equation*}
Applying the discrete gradient on both sides of the identity, we obtain that
\begin{equation*}
    \frac{\partial \nabla w (t , y)}{\partial X_l(x)} =  \sqrt{2} N \int_{\frac{l}{N}}^{\frac{l+1}{N}}  \sum_{z} \nabla \left( \chi_z( \ep^2 t , \ep x) P_{\a_{z}} (t , y ; s , x) \right) \, ds.
\end{equation*}
The rest of the proof is then essentially identical to the proof of Proposition~\ref{prop3.4}: using the same arguments we may prove the monotonicity property, for any $\mathcal{R} :=  \left( X_l(x), \mathcal{R}_l \right) \in G_\ep$,
\begin{equation*}
    X \mapsto - \nabla_1 w \left( \frac{l+1}{N} , x \right)(X , \mathcal{R}_{l}) - \frac{3}{4} X ~~\mbox{is increasing on the interval}~I_\ep.
\end{equation*}
This property then implies the identity, for any $l \in \{ 1 , \ldots, N \ep^{- \gamma/(d+5)} \}$,
\begin{equation*}
          G_\ep \cap A_l = G_\ep \cap \left\{ \mathcal{R} \in \Omega \, : \,  \left|\nabla w \left(\frac{l+1}{N} , x \right) \left(\mathcal{R} \right) \right| \leq R_1 \right\}.
\end{equation*}
Taking the intersection over the integers $l \in \{ 1 , \ldots, N \ep^{- \theta/(d+5)} \}$ completes the proof of~\eqref{eq:18082811App}.
\end{proof}

\subsubsection{Stochastic integrability for the moderated environment}

\begin{proof}[Proof of the inequality~\eqref{eq:22572106App}]
We split the proof into two steps.

\medskip

\textit{Step 1. Proof of the inequality~\eqref{eq:probmoderatedevt1037}.} 

\medskip

This step is devoted to the proof of the following inequality: for any $\ep \in (0,1)$, any $t \in (0 , 1- \ep^{2})$ and any $x \in \mathbb{T}^\ep$,
\begin{equation} \label{eq:probmoderatedevt1037}
    \mathbb{P} \left[ \inf_{s \in [0 , \ep^2]} \mathbf{m}(t + s , x) \times \mathbf{M}_+(t +s , x) \leq \ep^\theta \right] \leq C \exp \left( - c \left| \ln \ep \right|^{\frac{r}{r-2}}  \right).
\end{equation}
To simplify the notation, we only prove the result when $t = 0$. For technical reasons, we will in fact prove the following estimate: there exists a constant $c_1 := c_1(d , V , f) > 0$ such that for any $\ep \in (0,1)$,
\begin{equation} \label{eq:probmoderatedevt10378}
    \mathbb{P} \left[ \inf_{s \in [0 , \ep^2]} \mathbf{m}( s , x) \times \mathbf{M}_+(s , x) \leq c_1 \ep^\theta \right] \leq C \exp \left( - c \left| \ln \ep \right|^{\frac{r}{r-2}}  \right).
\end{equation}
The inequality~\eqref{eq:probmoderatedevt1037} can be easily deduced by applying~\eqref{eq:probmoderatedevt10378} with $c_1^{-1/\theta} \ep$ instead of $\ep$ (and by increasing the value of $C$ and reducing the value of $c$).
To prove the inequality~\eqref{eq:probmoderatedevt10378}, we first note that, by the definition of the moderated environment and of the maximal function, we have the inequality: for any $(s , x) \in (0,1/2) \times \mathbb{T}^\ep$,
\begin{equation*}
     \mathbf{m}(s , x) \times \mathbf{M}_+(s , x) \geq 
     \ep^{-2} \int_s^{1} k_{\ep^{-2}(s'-t)} \left( \mathbf{\Lambda}_- (s', x )  \wedge 1 \right) \, ds'.  
\end{equation*}
It is thus sufficient to estimate the probability of the right-hand side to be small. We proceed as in the proof of Proposition~\ref{prop3.4} and first show the inclusion of events: for any $\ep \in (0,1)$ (N.B. this is the same constant $c_1$ as in~\eqref{eq:probmoderatedevt10378}, and all the events on the right-hand side have small probability)
\begin{align} \label{eq:11030712app}
    \left\{ \inf_{s \in (0 , \ep^2)} \mathbf{m}(s , x) \times \mathbf{M}_+(s , x) \leq c_1 \ep^\theta \right\} & \subseteq \left\{ \sup_{t \in [0,\ep^{2 - \theta/6}]} \left| \nabla^\ep w^\ep (t , x)  \right| \leq  R_1 \right\} \\
    & \quad \bigcup \left\{ \sup_{t \in [0,\ep^{2 - \theta/6}]} \sum_{x' \sim x} \sum_{z} \chi_z(t , x') \left| \nabla^\ep \cdot D_p V \left(  \nabla^\ep v_z \right) \left( t ,x' \right) \right| \geq \ep^{-1 - \theta/6}  \right\} \notag \\
    & \quad \bigcup \left\{ \sup_{\substack{t , t' \in [0,\ep^{2-\theta/6}]\\ |t - t'|  \leq  \ep^{2+\theta/6}}}  \sum_{x' \sim x}  \left| \nabla^\ep B_{t'}^\ep \left( x\right) - \nabla^\ep B_{t}^\ep \left( x \right) \right| \geq \frac{1}{2} \right\} \notag \\
    & \quad \bigcup \left\{ \sup_{t \in [0,\ep^{2 - \theta/6}]} \sum_{x' \sim x} \sum_{z} \left|\partial_t \chi_{z}(t , x') \right| \ep \left| \varphi_{z} \left( \frac{t}{\ep^2} , \frac{x'}{\ep} ; p_{z} \right) \right| \geq \ep^{-1 - \theta/6} \right\}. \notag
\end{align}
The inclusion~\eqref{eq:11030712app} is equivalent to the following implication
\begin{equation} \label{eq:17260712app}
    \left. \begin{aligned}
        \sup_{t \in [0,\ep^{2 - \theta/6}]} \left| \nabla^\ep w^\ep (t , x)  \right| & \geq  R_1 \\ 
        \sup_{t \in [0,\ep^{2 - \theta/6}]}   \sum_{x' \sim x} \sum_{z} \chi_z(t , x') \left| \nabla^\ep \cdot D_p V \left(  \nabla^\ep v_z \right) \left( t , x' \right) \right| 
        & \leq \ep^{-1 - \theta/6} \\
        \sup_{\substack{t , t' \in [0,\ep^{2-\theta/6}]\\ |t - t'|  \leq  \ep^{2+\theta/6}}} \sum_{x' \sim x} \left| \nabla^\ep B_{t'}^\ep \left( x'\right) - \nabla^\ep B_{t}^\ep \left( x' \right) \right| & \leq \frac{1}{2} \\
        \sup_{t \in [0,\ep^{2 - \theta/6}]} \sum_{x' \sim x} \sum_{z} \left|\partial_t \chi_{z}(t , x') \right| \ep \left| \varphi_{z} \left( \frac{t}{\ep^2} , \frac{x'}{\ep} ; p_{z} \right) \right| & \leq \ep^{-1 - \theta/6} 
    \end{aligned} \right\} 
    \implies \inf_{s \in (0 , \ep^2)} \mathbf{m}(s , x) \geq c_1 \ep^\theta.
\end{equation}
Let us assume that all the conditions on the left-hand side of~\eqref{eq:17260712app} are satisfied and recall the identity~\eqref{eq:16002107.sec4}
\begin{align*} 
\lefteqn{
\partial_t \left(  w^\ep -  \sqrt{2} B_\cdot^\ep \right)
} \qquad & \notag \\
 & = \partial_t  \bar u^\ep   +   \sum_{z} \chi_{z} \partial_t \left( \ep \varphi_{z} \left( \frac{\cdot}{\ep^2} , \frac{\cdot}{\ep} ; p_{z} \right) - \sqrt{2} B_\cdot^\ep \right) +   \sum_{z} \partial_t \chi_{z} \left( \ep \varphi_{z} \left( \frac{\cdot}{\ep^2} , \frac{\cdot}{\ep} ; p_{z} \right) - \sqrt{2} B_\cdot^\ep  \right) \\
 & = \partial_t  \bar u^\ep   +  \sum_{z} \chi_z \nabla^\ep \cdot D_p V \left(  \nabla^\ep v_z \right)  +   \sum_{z} \left(\partial_t \chi_{z} \right) \ep \varphi_{z} \left( \frac{\cdot}{\ep^2} , \frac{\cdot}{\ep} ; p_{z} \right).
 \end{align*}
Using the assumptions of~\eqref{eq:17260712app}, we deduce that, for any $t \in  [0 , \ep^{2 - \theta/6}],$
\begin{equation*}
   \sum_{x' \sim x} \left| \partial_t \left(  w^\ep -  \sqrt{2} B_\cdot^\ep \right)(t ,x') \right| \leq C \ep^{-1 - \theta/6}.
\end{equation*}
The previous inequality implies the upper bound, for any $t \in  [0 , \ep^{2 - \theta/6}],$
\begin{equation} \label{eq:17311605}
    \left| \partial_t \left( \nabla^\ep w^\ep -  \sqrt{2} \nabla^\ep B_\cdot^\ep \right)(t , x) \right| \leq C \ep^{-2 - \theta/6}.
\end{equation}
We then set $c_0 := (2C)^{-1}$ where $C$ is the constant on the right-hand side of the previous inequality. 
We next fix a time $t \in [0 , \ep^{2 - \theta/6}]$ such that $ \left| \nabla^\ep w^\ep (t , x)  \right|  \geq  R_1$. By~\eqref{eq:17311605}, for any time $s \geq 0 $ with $|s - t| \leq c_0 \ep^{2+\theta/6}$,
\begin{align*}
    \left| \nabla^\ep w^\ep(s , x) -  \nabla^\ep w^\ep(t , x) \right| &
    \leq c_0 \ep^{2 + \theta} C \ep^{-2 - \theta/6} + \left| \nabla^\ep B_{s}^\ep \left( x\right) - \nabla^\ep B_{t}^\ep \left( x \right) \right| \\
    & \leq \frac12 + \frac 12 \\
    & \leq 1.
\end{align*}
Using the assumption $R_1 \geq 2$, we deduce that, for any time $s \geq 0 $ with $|s - t| \leq c_0 \ep^{2+\theta/6}$
\begin{equation*}
    \left| \nabla^\ep w^\ep (s , x ) \right| \geq \frac{R_1}{2}.
\end{equation*} 
From the definition of $R_1$ in Lemma~\ref{lemma.upperandlowerboundLambda} and the environment $\a$ in~\eqref{def:defevtAtwoscale}, we obtain the lower bound, for any  $s \in \R$ with $|s - t| \leq c_1 \ep^{2+\theta/6}$,
\begin{equation*}
    \mathbf{\Lambda}_-(s ,x ; p) \geq 1.
\end{equation*}
Combining the two previous displays with the definitions of the moderated environment $\mathbf{m}$, the maximal function $\mathbf{M}_+$, and the definition of the function~$k$, we deduce that
\begin{align*}
    \inf_{s \in (0 , \ep^2)} \mathbf{m}(s, x ) \times \mathbf{M}_+(s , x) & \geq \ep^{-2} \int_0^{1} k_{\ep^{-2} s} \left( \mathbf{\Lambda}_- (s, 0)  \wedge 1 \right) \, ds \\
    & \geq \ep^{-2} \int_{t - c_0 \ep^{2+\theta/6}}^{t +c_0 \ep^{2+\theta/6}} k_{\ep^{-2} s} \left( \mathbf{\Lambda}_- (s, 0)  \wedge 1 \right) \, ds  \\
    & \geq \ep^{\theta/6} \ep^{-2}  \int_{t - c_0 \ep^{2+\theta/6}}^{t +c_0 \ep^{2+\theta/6}} k_{\ep^{-2} s} \, ds \\
    & \geq  \ep^{-2+\theta/6} (2 c_0 \ep^{2+\theta/6}) k_{\ep^{-\theta/6} +c_0 \ep^{\theta/6}}.
\end{align*}
Using the definition of the function $k$ (and in particular that it decays asymptotically like the function $t \mapsto t^{-4}$), we obtain
\begin{equation*}
    \inf_{s \in (0 , \ep^2)} \mathbf{m}(s, x ) \times \mathbf{M}_+(s , x) \geq c_1 \ep^{\theta}.
\end{equation*}
The proof of~\eqref{eq:17260712app}, and thus of~\eqref{eq:11030712app} is complete. 

The last step of the proof consists in showing that all the events on the right-hand side of~\eqref{eq:11030712app} have small probability. The proof is essentially identical to the one of Proposition~\ref{prop:stochintmoderated} (we only need to use Lemma~\ref{prop3.42scD2} instead of Proposition~\ref{prop3.4}) and thus omit the technical details.

\medskip

\textit{Step 2. Proof of the inequality~\eqref{eq:22572106App}.}

\medskip

The inequality~\eqref{eq:22572106App} is then deduced from~\eqref{eq:probmoderatedevt1037} and a union bound as follows
\begin{align*}
   \mathbb{P} \left[ \inf \mathbf{m} \times \mathbf{M}_+ \leq \ep^\theta \right]
    & = \mathbb{P} \left[ \exists x \in \mathbb{T}^\ep, \, \exists k \in \{ 0, \ldots, \lfloor \ep^{-2} \rfloor \}, ~\inf_{s \in (0 , \ep^2)} \mathbf{m}(k \ep^2 + s , x) \times \mathbf{M}_+(k \ep^2 + s , x) \leq \ep^\theta \right] \\
    & \leq \sum_{x \in \mathbb{T}^\ep} \sum_{k = 0}^{\lfloor \ep^{-2} \rfloor} \mathbb{P} \left[ \inf_{s \in (0 , \ep^2)} \mathbf{m}(k \ep^2 + s , x) \times \mathbf{M}_+(k \ep^2 + s , x) \leq \ep^\theta\right].
\end{align*}
We next use the observation that the cardinality of $\mathbb{T}^\ep$ is equal to $\ep^{-d}$ and combine it with the inequality~\eqref{eq:probmoderatedevt1037} to obtain
\begin{align*}
    \mathbb{P} \left[ \inf \mathbf{m} \times \mathbf{M}_+ \leq \ep^\theta \right] & \leq \sum_{x \in \mathbb{T}^\ep} \sum_{k = 0}^{\lfloor \ep^{-2} \rfloor} C \exp \left( - c \left| \ln \ep \right|^{\frac{r}{r-2}}  \right) \\
    & \leq C \ep^{-d-2} \exp \left( - c \left| \ln \ep \right|^{\frac{r}{r-2}}  \right) \\
    & \leq C \exp \left( - c \left| \ln \ep \right|^{\frac{r}{r-2}}  \right),
\end{align*}
where in the last line we increased the constant $C$ and reduced the exponent $c$ to absorb the polynomial factor $\ep^{-d-2}$. The proof of the inequality~\eqref{eq:22572106App} is complete.
\end{proof}

\subsection{Moderation in finite time with nonzero right-hand side}

The proof of Proposition~\ref{propD1} below is essentially an adaptation of the one of Proposition~\ref{prop4.3} with two differences: the result is written with a nonzero right-hand side (the result is used with $F = \nabla^\ep \cdot \vec{\mathcal{E}}$ or $F = \mathcal{E}$ in Section~\ref{section:eestimatingthediffernce} and is essentially identical to Lemma~2.11 of the article of Biskup and Rodriguez~\cite{biskup2018limit}), and the integral on the right-hand side stops at time $1$ instead of infinity (this is the reason why the moderated environment introduced in~\eqref{def.modeeratedevt} is different from the one introduced in Definition~\ref{def:moderatedLangevin}).

\begin{proposition}[Moderation with nonzero right-hand side] \label{propD1}
Let $F : (0 , 1) \times \mathbb{T}^\ep \to \R$ be a continuous function (with respect to the time variable). There exists a constant $C := C(d) < \infty$ such that, for every solution $u : (0 , 1) \times \mathbb{T}^\ep \to \R$ of the parabolic equation\begin{equation*}
    \partial_t u - \nabla^\ep \cdot \a \nabla^\ep u = F ~~~\mbox{in} ~~~ (0 , 1) \times \mathbb{T}^\ep,
\end{equation*}
one has the inequality
\begin{multline*}
    \int_0^1 \sum_{x \in \mathbb{T}^\ep} \mathbf{m}(t , x) \left| \nabla^\ep u(t , x) \right|^2 \, dt
    \leq C \int_0^1 \sum_{x \in \mathbb{T}^\ep} \nabla^\ep u(t, x) \cdot \a(t , x ) \nabla^\ep u(t , x) \, dt \\ + C \ep^2  \int_0^1 \sum_{x \in \mathbb{T}^\ep} |F(t,x)|^2 \, dt.
\end{multline*}
\end{proposition}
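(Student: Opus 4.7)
The proof will adapt that of Proposition~\ref{prop4.3} to three new features of the present setting: the gradient has mesh $\ep$, the time integration is over the finite interval $(0,1)$ rather than $(0,\infty)$, and there is a nonzero source term $F$. I will work through the case $t \in [0, 1/2]$, where $\mathbf{m}(t,x)$ involves an integral over $s \in (t,1)$; the case $t \in (1/2, 1]$ is identical up to time reversal.

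The starting point is the elementary pointwise splitting $|\nabla^\ep u(t,x)|^2 \leq 2|\nabla^\ep u(s,x)|^2 + 2|\nabla^\ep u(s,x) - \nabla^\ep u(t,x)|^2$, valid for any $s \geq t$. Multiplying by the integrand $\ep^{-2} k_{\ep^{-2}(s-t)} \frac{\Lambda_-(s,x)\wedge 1}{\cdots}$ of $\mathbf{m}(t,x)$ and integrating over $s \in (t,1)$, the first piece is immediately controlled by $C \ep^{-2} \int_t^1 k_{\ep^{-2}(s-t)} \nabla^\ep u(s,x) \cdot \a(s,x) \nabla^\ep u(s,x)\, ds$, using that the denominator in $\mathbf{m}$ is bounded below by a positive constant and that $\Lambda_-(s,x)\wedge 1$ is smaller than the smallest eigenvalue of $\a(s,x)$.

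For the difference term I will use the PDE to write $u(s,y) - u(t,y) = \int_t^s [\nabla^\ep \cdot \a \nabla^\ep u(s',y) + F(s',y)] \, ds'$, split the square into a divergence contribution and an $F$ contribution, and apply Cauchy--Schwarz exactly as in Proposition~\ref{prop4.3}. Expanding the discrete divergence produces a factor $\ep^{-1}$ which, combined with the $\ep^{-1}$ in $|\nabla^\ep u(s,x) - \nabla^\ep u(t,x)|^2 \leq C \ep^{-2} \sum_{y \sim x} |u(s,y) - u(t,y)|^2$, yields total weight $\ep^{-4}(s-t)$ on each contribution. Using the ratio bound $\frac{\Lambda_-(s,x)\wedge 1}{\cdots} \cdot \sum_{y' \sim_2 x} \int_t^s \Lambda_+(s',y')\,ds' \leq C(s-t)$ for the $\a$-energy piece, and the trivial bound $\frac{\Lambda_-(s,x)\wedge 1}{\cdots} \leq 1$ for the $F$ piece, these become weights $\ep^{-2}\cdot \ep^{-2}(s-t) k_{\ep^{-2}(s-t)}$ respectively on a double-neighbour $\a$-energy of $u$ and on $|F|^2$.

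Finally I will integrate over $x$ and $t \in (0, 1/2)$, swap the order of the two time integrals, and perform the change of variables $\tau = \ep^{-2}(s-t)$. Using $k_\tau + \int_\tau^\infty \sigma k_\sigma \, d\sigma = K_\tau$ and the normalisation $\int_0^\infty K_\tau\, d\tau \leq 1$ from~\eqref{K*KsmallerthanK}, the inner time integral produces exactly an $\ep^2$ factor, which cancels one copy of $\ep^{-2}$ in the $\a$-energy term (giving an order-one constant) and leaves an $\ep^2$ prefactor on the $F^2$ term, as required. The case $t \in (1/2, 1]$ follows from the same calculation applied to the interval $(0,t)$. The main technical obstacle is the careful bookkeeping of $\ep$-powers through the two Cauchy--Schwarz steps and the final kernel swap: an error of one power of $\ep$ would either give a blow-up of the constant or the wrong prefactor on the $F^2$ term.
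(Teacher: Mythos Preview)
Your approach is the same as the paper's: split $|\nabla^\ep u(t,x)|^2$, use the PDE for the difference term, apply the ratio bound for the energy piece and the trivial bound $R\leq C$ for the $F$ piece, then integrate and swap. The structure and the final conclusion (order-one constant on the $\a$-energy, $\ep^2$ on $\int |F|^2$) are correct.

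However, the intermediate $\ep$-bookkeeping you describe does not add up. The $\a$-energy and $F$ contributions do \emph{not} carry the same power of $\ep$: the divergence expansion $|\nabla^\ep\!\cdot(\a\nabla^\ep u)|\le C\ep^{-1}\sum_{y'\sim y}|\a\nabla^\ep u|$ contributes an extra $\ep^{-2}$ only to the energy term, not to $F$. Concretely, after the splitting and Cauchy--Schwarz, but before any kernel integration, one has
\[
R(s,x)\,|\nabla^\ep u(s,x)-\nabla^\ep u(t,x)|^2
\;\le\; C\ep^{-4}(s-t)\!\!\sum_{y\sim_2 x}\!\int_t^s\!\nabla^\ep u\cdot\a\nabla^\ep u
\;+\;C\ep^{-2}(s-t)\!\!\sum_{y\sim x}\!\int_t^s|F|^2,
\]
so including the $\ep^{-2}k_{\ep^{-2}(s-t)}$ from $\mathbf m$ gives weights $\ep^{-6}(s-t)k$ and $\ep^{-4}(s-t)k$ respectively, not the common $\ep^{-4}(s-t)k$ you state. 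There are then \emph{two} kernel integrations, not one: the first (swap $s,s'$) uses $\int_{s'}^\infty \sigma k_{\ep^{-2}\sigma}\,d\sigma\le \ep^{4}K_{\ep^{-2}(s'-t)}$ and reduces the powers to $\ep^{-2}K$ and $K$; the second (swap $t,s'$) uses $\int K_{\ep^{-2}\tau}\,d\tau\le \ep^{2}$ and delivers the final $1$ and $\ep^{2}$. Your single ``$\ep^2$ factor'' and the claim ``$\ep^{-4}(s-t)$ on each'' are mutually inconsistent with the correct endpoint you then assert. Since you yourself flag the power-counting as the main obstacle, be sure to separate these two steps cleanly when you write it out --- this is exactly how the paper organises it (its pointwise inequality~\eqref{ineq:moderationv1} is the result of the first swap).
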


\begin{remark}
    As mentioned above, this result has already been proved and used in the article of Biskup and Rodriguez~\cite[Lemma 2.11]{biskup2018limit}. The proof is added to this article for completeness.
\end{remark}

\begin{proof}
In order to prove Proposition~\ref{propD1}, it is enough to prove the two inequalities
 \begin{multline} \label{eq:17221305}
     \int_0^{1/2} \sum_{x \in \mathbb{T}^\ep}  \mathbf{m}(t , x) \left| \nabla^\ep u (t , x) \right|^2 \, dt 
     \leq C \int_0^{1} \sum_{x \in \mathbb{T}^\ep} \nabla^\ep u(t , x) \cdot \a(t , x) \nabla^\ep u(t , x) \, dt \\ + C \ep^2 \int_0^{1} \sum_{x \in \mathbb{T}^\ep} | F (t , x) |^2 \, dt.
\end{multline}
and
\begin{multline} \label{eq:17231305}
     \int_{1/2}^{1} \sum_{x \in \mathbb{T}^\ep}  \mathbf{m}(t , x) \left| \nabla^\ep u (t ,x) \right|^2 \, dt
     \leq C \int_0^{1} \sum_{x \in \mathbb{T}^\ep} \nabla^\ep u(t , x) \cdot \a(t , x) \nabla^\ep u(t , x) \, dt \\
     + C \ep^2 \int_0^{1} \sum_{x \in \mathbb{T}^\ep} | F (t , x) |^2 \, dt.
\end{multline}
The proof of the inequalities~\eqref{eq:17221305} and~\eqref{eq:17231305} are almost identical, we thus only present the proof of~\eqref{eq:17221305} and split the argument into two steps.

\medskip

\textit{Step 1. Proof of the inequality~\eqref{ineq:moderationv1}.} In this step, we fix a time $t \in (0 , 1/2)$, a vertex $x \in \mathbb{T}^\ep$ and prove the inequality
    \begin{align} \label{ineq:moderationv1}
    \mathbf{m}(t , x) \left| \nabla^\ep u (t , x) \right|^2 & \leq C \ep^{-2} \sum_{y \sim x}  \int_t^{1} K_{\ep^{-2}(s - t)} \nabla^\ep u(s , y) \cdot \a(s , y) \nabla^\ep u(s , y) \, ds \\
    & \qquad + C \sum_{y \sim x}  \int_t^{1} K_{\ep^{-2}(s - t)}  | F (s , x) |^2  \, ds. \notag
\end{align}
To prove~\eqref{ineq:moderationv1}, we first use the definition of the moderated environment
\begin{align} \label{eq:moderatedwithepandrighthand}
     \lefteqn{\mathbf{m}(t , x) \left| \nabla^\ep u(t , x) \right|^2} \qquad &  \\ & = \ep^{-2} \int_t^{1} k_{\ep^{-2}(s-t)} \frac{\mathbf{\Lambda}_- (s, x )  \wedge 1 }{( s-t)^{-1} \sum_{y \sim x}\int_t^s \left( 1+ \mathbf{\Lambda}_+ \left( s' , x \right)  \right) \, ds'} \left| \nabla^\ep u(t , x) \right|^2 \, ds \notag \\
     & = 2 \ep^{-2} \int_t^{1} k_{\ep^{-2}(s-t)} \frac{\mathbf{\Lambda}_- (s, x )  \wedge 1 }{( s-t)^{-1} \sum_{y \sim x}\int_t^s \left( 1+ \mathbf{\Lambda}_+ \left( s' , x \right)  \right) \, ds'} \left| \nabla^\ep u(s , x) \right|^2 \, ds \notag \\
     & \qquad + 2 \ep^{-2} \int_t^{1} k_{\ep^{-2}(s-t)} \frac{\mathbf{\Lambda}_- (s, x )  \wedge 1 }{( s-t)^{-1} \sum_{y \sim x}\int_t^s \left( 1+ \mathbf{\Lambda}_+ \left( s' , x \right)  \right) \, ds'} \left| \nabla^\ep u(s , x) - \nabla^\ep u(t , x)\right|^2 \, ds. \notag
\end{align}
For the first term on the right-hand side, we have the upper bound
\begin{multline*}
\ep^{-2} \int_t^{1} k_{\ep^{-2}(s-t)} \frac{\mathbf{\Lambda}_- (s, x )  \wedge 1 }{( s-t)^{-1} \sum_{y \sim x}\int_t^s \left( 1+ \mathbf{\Lambda}_+ \left( s' , x \right)  \right) \, ds'} \left| \nabla^\ep u(s , x) \right|^2 \, ds \\ \leq \ep^{-2} \int_t^{1} k_{\ep^{-2}(s-t)} \nabla^\ep u(s , y) \cdot \a(s , y ) \nabla^\ep u(s , y) \, ds.
\end{multline*}
For the second term on the right-hand side of~\eqref{eq:moderatedwithepandrighthand}, we use the identity $\partial_t u = \nabla^\ep \cdot \a \nabla^\ep u + F$ to write
\begin{align*}
    (\nabla^\ep u(s , x) - \nabla^\ep u(t , x) )^2 & \leq C \ep^{-2} \sum_{y \sim x} ( u(s , y) - u(t , y) )^2   \\
    & \leq C \ep^{-2} \sum_{y \sim x} \left(  \int_t^s \nabla^\ep \cdot \a \nabla^\ep u(s' , x) + F(s',x)\, ds' \right)^2 \\
    & \leq C \ep^{-2} \sum_{y \sim x} \left(  \int_t^s \nabla^\ep \cdot \a \nabla^\ep u(s' , y) \, ds' \right)^2 + C \ep^{-2} \sum_{y \sim x} \left(  \int_t^s F(s',y)\, ds' \right)^2.
\end{align*}
The first two terms are estimated as in the proof of Proposition~\ref{prop4.3} (taking into account that the gradients have been scaled by a factor $\ep^{-1}$, this accounts for an additional multiplicative factor $\ep^{-2}$ on the right-hand side below) and the second one can be upper bounded by using the Cauchy-Schwarz inequality. We obtain
\begin{multline*}
    \frac{\mathbf{\Lambda}_- (s, x )  \wedge 1 }{( s-t)^{-1} \sum_{y \sim x}\int_t^s \left( 1+ \mathbf{\Lambda}_+ \left( s' , x \right)  \right) \, ds'} (\nabla^\ep u(s , x) - \nabla^\ep u(t , x) )^2 \\
    \leq C  (s-t)\sum_{y \sim x} \int_t^s \left( \ep^{-4} \nabla^\ep u(s' , y) \cdot \a(s', y) \nabla^\ep u(s' , y)  + \ep^{-2} |F(s',y)|^2 \right) \, ds'.
\end{multline*}
From the previous inequality, we deduce that
\begin{align*}
   \lefteqn{\ep^{-2} \int_t^{1} k_{\ep^{-2}(s-t)} \frac{\mathbf{\Lambda}_- (s, x )  \wedge 1 }{( s-t)^{-1} \sum_{y \sim x}\int_t^s \left( 1+ \mathbf{\Lambda}_+ \left( s' , x \right)  \right) \, ds'} (\nabla^\ep u(s , x) - \nabla^\ep u(t , x) )^2 \, dt } \qquad & \\ &
   \leq  C \ep^{-2} \sum_{y \sim x} \int_t^{1} k_{\ep^{-2}(s-t)} (s-t) \int_t^s \left( \ep^{-4} \nabla^\ep u(s' , y) \cdot \a(s', y) \nabla^\ep u(s' , y)  + \ep^{-2} |F(s',y)|^2 \right) \, ds' \\
   & \leq C \sum_{y \sim x} \ep^{-2} \int_t^1  K_{\ep^{-2}(s-t)} \nabla^\ep u(s , y) \cdot \a(s, y) \nabla^\ep u(s , y) \, ds +  C \sum_{y \sim x}  \int_t^1  K_{\ep^{-2}(s-t)} |F(s,y)|^2 \, ds.
\end{align*}
The proof of the inequality~\eqref{ineq:moderationv1} is complete.

\medskip

\textit{Step 2. Proof of~\eqref{eq:17221305}.}

\medskip

Summing both sides of the inequality~\eqref{ineq:moderationv1} over the vertices of $\mathbb{T}^\ep$ and integrating over the times $t \in (0 , 1/2)$ gives the inequality
\begin{align*}
    \int_0^{1/2} \sum_{x \in \mathbb{T}^\ep} \mathbf{m}(t , x) \left| \nabla^\ep u (t , x) \right|^2 \, dt & \leq C \ep^{-2} \int_0^{1/2} \int_t^{1} \sum_{x \in \mathbb{T}^\ep}   K_{\ep^{-2}(s - t)} \nabla^\ep u(s , y) \cdot \a(s , y) \nabla^\ep u(s , y) \, ds \, dt \\
    & \qquad + C \int_0^{1/2} \int_t^{1} \sum_{x \in \mathbb{T}^\ep} K_{\ep^{-2}(s - t)}  | F (s , x) |^2   \, ds \, dt.
\end{align*}
Applying Fubini's theorem together with the inequality
\begin{equation*}
    \ep^{-2} \int_0^{s } K_{\ep^{-2}(s-t)} \, dt \leq \ep^{-2} \int_0^{1} K_{\ep^{-2}t} \, dt
     \leq \int_0^{\infty} K_{t} \, dt \leq C,
\end{equation*}
we obtain the inequality~\eqref{eq:17221305}.
\end{proof}

\subsection{Moderation for the error term $\vec{\mathcal{E}}_2$}

In this section, we prove the following technical inequality which is used in the estimate of the error term $\mathcal{E}_2$ and whose proof relies on the techniques developed in this appendix.

\begin{lemma} \label{lemmavzvzprime}
    For any exponent $\alpha > 0$ and any pair $z , z' \in \mathcal{Z}_\kappa$ such that $(z + Q_{\kappa}^\ep) \cap (z' + Q_{2\kappa}^\ep) \neq \emptyset$, there exist two constants $C := C(d , V , \alpha) < \infty$ and $c := c(d , V , \alpha) > 0$ such that
\begin{equation*}
    \left\| \nabla^\ep v_{z'} - \nabla^\ep v_{z}  \right\|_{\underline{L}^2(z + Q_\kappa^\ep)} \leq \mathcal{O}_{\Psi , c} \left( C L^{\alpha-\frac{1}{16}} + C  L^\alpha  |p_z - p_{z'}|  \right) . 
\end{equation*}

\begin{proof}
We fix an exponent $\alpha > 0$ and allow all the constants in this proof to depend on the parameter~$\alpha$. In order to simplify the argument,  and will prove the following statement. Let $\Lambda_1 , \Lambda_2$ be two boxes of $\Zd$ such that $\Lambda_{2L} \subseteq \Lambda_1 \cap \Lambda_2$ and $\Lambda_1 \cup \Lambda_2 \subseteq \Lambda_{8L}.$ Let $\varphi_1(\cdot , \cdot ; p_z)$ be the stationary Langevin dynamic on the cylinder $\R \times \Lambda_1$ with slope $p_z$, let $\varphi_2(\cdot , \cdot ; p_{z'})$ be the stationary Langevin dynamic on the cylinder $\R \times \Lambda_2$ with slope $p_{z'}$ and set
\begin{equation*}
    v_1 (t , x) := p_z \cdot x + \varphi_1(t , x ; p_z) ~~\mbox{and}~~  v_2 (t , x) := p_{z'} \cdot x + \varphi_2(t , x ; p_{z'}).
\end{equation*}
Then we have 
\begin{equation} \label{eq:23071557}
     \left\| \nabla v_{1} - \nabla v_{2}  \right\|_{\underline{L}^2(Q_L)} \leq \mathcal{O}_{\Psi , c} \left( C L^{\alpha-\frac{1}{16}} + C  L^\alpha    |p_z - p_{z'}|  \right) .
\end{equation}
The lemma can then be deduced from the previous inequality by a rescaling argument.

To prove~\eqref{eq:23071557}, we first note that the map $w := v_{1} - v_{2}$ solves the following parabolic equation
\begin{equation*}
        \partial_t w + \nabla \cdot \a \nabla w = 0 ~~\mbox{in}~~ Q_L,
\end{equation*}
with
\begin{equation*}
    \a(t , x) := \int_0^1 D^2_p V \left( s \nabla v_1 + (1  - s) \nabla v_{2}  \right) \, ds. 
\end{equation*}
We also denote by $\mathbf{\Lambda}^+$ and $\mathbf{\Lambda}^-$  the maximal eigenvalue of $\a$, i.e.,
\begin{equation*}
    \mathbf{\Lambda}^+(t , x) := \sup_{\substack{\xi \in \Rd \\ |\xi| = 1}} \xi \cdot \a(t , x) \xi ~~\mbox{and}~~ \mathbf{\Lambda}^-(t , x) := \inf_{\substack{\xi \in \Rd \\ |\xi| = 1}} \xi \cdot \a(t , x) \xi,
\end{equation*}
and note that we have the stochastic integrability estimate: for $(t , x) \in Q_L$,
\begin{equation} \label{eq:15132307}
    \mathbf{\Lambda}^+(t , x) \leq \mathcal{O}_{r/(r-2)}(C).
\end{equation}
By the Caccioppoli inequality, we have
\begin{equation*}
    \int_{I_L} \sum_{x \in \Lambda_L } \nabla w (t , x) \cdot \a(t , x) \nabla w(t , x) \, dt \leq \frac{C}{L^2} \int_{I_{2L}} \sum_{x \in \Lambda_{2L} } \mathbf{\Lambda}^+(t , x) \left| w(t , x) \right|^2 \, dt.
\end{equation*}
The right-hand side is then estimated as follows: by using the identity $w = v_{1} - v_{2}$, we may write
\begin{multline*}
    \int_{I_L} \sum_{x \in \Lambda_L } \nabla w (t , x) \cdot \a(t , x) \nabla w(t , x) \, dt  \leq C |p_z - p_{z'}|^2 \int_{ I_{2L}} \sum_{x \in  \Lambda_{2L} }  \mathbf{\Lambda}^+(t , x) \, dx  \\
    + \frac{C}{L^2}  \int_{I_{2L}} \sum_{x \in \Lambda_{2L} }  \mathbf{\Lambda}^+(t , x) \left( \left| \varphi_{1} \left( t,x ;  p_z \right) \right|^2 + \left|  \varphi_{2} \left(t , x;  p_{z'} \right) \right|^2 \right) \, dx.
\end{multline*}
The first terms on the right-hand side can be estimated using~\eqref{eq:15132307} and Proposition~\ref{prop:prop2.20} ``Summation" and ``Integration". The second term can be estimated using ~\eqref{eq:15132307}, Proposition~\ref{prop:sublincorr} as well as Proposition~\ref{prop:prop2.20} ``Product", ``Summation" and ``Integration" (and recalling the identity $L = \kappa/\ep$). We obtain
\begin{equation} \label{eq:17012307}
     \frac{1}{|I_{2L}|}\int_{I_{2L}} \frac{1}{\left| \Lambda_{2L} \right|} \sum_{x \in \Lambda_{2L}} \nabla w (t , x) \cdot \a(t , x) \nabla w(t , x) \, dt \leq \mathcal{O}_{\Psi , c} \left( C L^{-1/4} + C  |p_z - p_{z'}|^2 \right).
\end{equation}
This estimate is already quite close to the conclusion of the Lemma, but we need to take into account that the environment $\a$ can be degenerate in order to conclude. This is done by using a moderation argument. Since the argument is very similar to the ones presented in this section (and is in fact simpler), we only present a detailed sketch of the proof.

We first define the following moderated environment
    \begin{equation*}
        \mathbf{m}(t , x) := 
     \left\{ \begin{aligned}
     \int_t^{0} k_{(s-t)} \frac{\mathbf{\Lambda}^- (s, x )  \wedge 1 }{( s-t)^{-1} \sum_{y \sim x}\int_t^s \left( 1+ \mathbf{\Lambda}_+ \left( s' , x \right)  \right) \, ds'} \, ds & ~~\mbox{if} ~~ t \in \left[- L^2, - \frac{L^2}{2}\right], \\
     \int_{-L^2}^{t} k_{(s-t)} \frac{\mathbf{\Lambda}^- (s, x )  \wedge 1 }{( s-t)^{-1} \sum_{y \sim x}\int_t^s \left( 1+ \mathbf{\Lambda}^+ \left( s' , x \right)  \right) \, ds'} \, ds & ~~\mbox{if} ~~ t \in \left( - \frac{L^2}{2}, 0\right],
     \end{aligned} \right.
    \end{equation*}
    as well as its infimum over the parabolic cylinder $ Q_L$
    \begin{equation*}
        \mathbf{m}_{-} := \inf_{(t , x) \in Q_L} \mathbf{m}(t , x)
    \end{equation*}
and collect the following properties and estimates:
\begin{itemize}
    \item Tail estimate: the probability for the moderated environment $\mathbf{m}_{-}$ to be small is controlled by the following inequality: for any exponent $\alpha > 0$, there exist two constants $C := C(d , V , \alpha,f) < \infty$ and $c := c(d , V , \alpha,f) > 0$ such that
    \begin{equation} \label{eq:estmz-}
       \mathbb{P} \left[  \mathbf{m}_{-} \leq L^{-\alpha} \right] \leq C \exp \left( - c \left( \ln L \right)^{\frac{r}{r-2}} \right). 
    \end{equation}
    (N.B. The proof is essentially the same as the one presented in Section~\ref{sec:sectionD2}, we note that the maximal function does not need to be incorporated because, in the setting of this section, we have the stochastic integrability estimate~\eqref{eq:15132307} which provides very strong control over the probability of the environment $\a$ to have a large eigenvalue.)
    \item Moderation: one has the following inequality
    \begin{equation*}
        \int_{I_L} \sum_{x \in \Lambda_L} \mathbf{m}(t , x) \left| \nabla w (t , x) \right|^2 \, dt \leq C \int_{I_L} \sum_{x \in \Lambda_L} \nabla w (t , x) \cdot \a(t , x) \nabla w(t , x) \, dt.
    \end{equation*}
\end{itemize}
Combining the previous inequality with~\eqref{eq:17012307}, we obtain that
\begin{align*}
    \mathbf{m}_{-} \left\|  \nabla w  \right\|_{\underline{L}^2(Q_L)}^2 & \leq  \frac{1}{|I_{L}|}\int_{I_{L}} \frac{1}{\left| \Lambda_{L} \right|}  \sum_{x \in \Lambda_L} \mathbf{m}(t , x) \left| \nabla w (t , x) \right|^2 \, dt \\
    & \leq  C  \frac{1}{|I_{L}|}\int_{I_{L}} \frac{1}{\left| \Lambda_{L} \right|}  \sum_{x \in \Lambda_L} \nabla w (t , x) \cdot \a(t , x) \nabla w(t , x) \, dt.
\end{align*}
We thus have
\begin{align*}
    \left\|  \nabla w  \right\|_{\underline{L}^2(Q_L)}^2 \indc_{\{  \mathbf{m}_{-} \geq L^{-\alpha} \}} & \leq  C L^\alpha  \frac{1}{|I_{L}|}\int_{I_{L}} \frac{1}{\left| \Lambda_{L} \right|} \sum_{x \in \Lambda_L} \nabla w (t , x) \cdot \a(t , x) \nabla w(t , x) \, dt \\
    & \leq \mathcal{O}_{\Psi , c} \left( C L^{\alpha-\frac14} + C L^\alpha  |p_z - p_{z'}|^2 \right) .
\end{align*}
Using the inequality~\eqref{eq:estmz-} and Proposition~\ref{prop.prop2.3} (which gives super-exponential stochastic integrability estimates on the gradients of $v_1$ and $v_2$), one can verify that the following upper bound holds (N.B. the factor $C/L$ is somewhat arbitrary and could be replaced by the inverse of a polynomial of higher degree)
\begin{equation*}
    \left\|  \nabla w  \right\|_{\underline{L}^2(Q_L)} \indc_{\{  \mathbf{m}_{-} \leq L^{-\alpha} \}} \leq \mathcal{O}_{\Psi , c} \left( \frac{C}{L} \right).
\end{equation*}
Combining the two previous inequalities completes the proof of Lemma~\ref{lemmavzvzprime}.
\end{proof}
\end{lemma}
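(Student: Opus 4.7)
The plan is to reduce the statement to an analogous assertion about differences of stationary Langevin dynamics on standard boxes of scale $L$, by rescaling in the fashion $v_z(t,x) = p_z\cdot x + \ep\,\varphi_z(t/\ep^2, x/\ep;p_z)$. After the change of variables $(t,x)\mapsto(t/\ep^2, x/\ep)$, the cylinder $z+Q_\kappa^\ep$ is mapped to $Q_L$, the gradient acquires the correct $\ep^{-1}$ scaling, and the lemma becomes: for two stationary Langevin dynamics $\varphi_1(\cdot,\cdot;p_z)$ and $\varphi_2(\cdot,\cdot;p_{z'})$ defined on overlapping boxes $\Lambda_1,\Lambda_2\supseteq\Lambda_{2L}$ (driven by the shared Brownian motions, up to the mean-zero projection), the corrected planes $v_i = p\cdot x+\varphi_i$ differ in gradient by $\mathcal{O}_{\Psi,c}(C L^{\alpha-1/16}+C L^\alpha|p_z-p_{z'}|)$ in the averaged $L^2$ norm over $Q_L$.

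Next, set $w := v_1-v_2$. Subtracting the two Langevin SDEs, the Brownian-motion terms differ only by a spatially constant function of $t$ (a difference of two box-averages of $B_t$), so after the trivial modification $\tilde w := w - \int_0^\cdot (\text{spatial const.}) \, ds$ (which has the same gradient) one obtains the homogeneous parabolic equation $\partial_t\tilde w - \nabla\cdot\a\nabla\tilde w = 0$ on $Q_{2L}$ with linearized environment $\a(t,x):=\int_0^1 D_p^2V(s\nabla v_1+(1-s)\nabla v_2)\,ds$. Apply the parabolic Caccioppoli inequality (Proposition~\ref{prop:paraboliccacciop}) on this cylinder to get
\begin{equation*}
\int_{I_L}\!\sum_{x\in\Lambda_L}\nabla w\cdot\a\nabla w\,dt\leq \frac{C}{L^2}\int_{I_{2L}}\!\sum_{x\in\Lambda_{2L}}\mathbf{\Lambda}^+|\tilde w|^2\,dt.
\end{equation*}
Decompose $|\tilde w|^2\leq C|p_z-p_{z'}|^2|x|^2 + C(|\varphi_1|^2+|\varphi_2|^2+\text{small Brownian term})$ and combine the $L^r$ moment bound $\mathbf{\Lambda}^+\leq\mathcal{O}_{r/(r-2)}(C)$ (from Proposition~\ref{prop.prop2.3} and Assumption~\eqref{AssPot}) with the sublinearity estimate $\|\varphi_i\|_{L^\infty}\leq\mathcal{O}_1(L^{7/8})$ (Proposition~\ref{prop:sublincorr}) and Proposition~\ref{prop:prop2.20} (``Summation'', ``Integration'', ``Product''). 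After averaging this yields the key energy bound
\begin{equation*}
\frac{1}{|Q_L|}\int_{I_L}\!\sum_{x\in\Lambda_L}\nabla w\cdot\a\nabla w\,dt\leq\mathcal{O}_{\Psi,c}\bigl(CL^{-1/4}+C|p_z-p_{z'}|^2\bigr).
\end{equation*}

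The main obstacle is to upgrade this degenerate energy estimate to a genuine $L^2$ bound on $\nabla w$, since $\a$ is only assumed strictly positive pointwise and can take arbitrarily small eigenvalues. I would handle this via a moderation argument parallel to Section~\ref{sec:section3moderated} and Section~\ref{section:eestimatingthediffernce}. Introduce a moderated environment $\mathbf{m}(t,x)$ on $Q_L$ of the type
\begin{equation*}
\mathbf{m}(t,x):=\int_{t}^{0}k_{s-t}\,\frac{\mathbf{\Lambda}^-(s,x)\wedge 1}{(s-t)^{-1}\sum_{y\sim x}\int_t^{s}(1+\mathbf{\Lambda}^+(s',y))\,ds'}\,ds
\end{equation*}
(with the analogous definition when $t$ is close to $0$). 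A computation modeled on Proposition~\ref{prop4.3} gives the moderation inequality $\int_{Q_L}\mathbf{m}|\nabla w|^2\leq C\int_{Q_L}\nabla w\cdot\a\nabla w$. It then remains to prove the tail bound $\mathbb{P}[\mathbf{m}_-\leq L^{-\alpha}]\leq C\exp(-c(\ln L)^{r/(r-2)})$ for the infimum $\mathbf{m}_-$ of $\mathbf{m}$ over $Q_L$; this is the technical heart of the argument and follows by adapting Proposition~\ref{prop:stochintmoderated} and Section~\ref{sec:sectionD2}: establish (via the sensitivity-to-Brownian-increments strategy of Section~\ref{subsection1.2.3}) a fluctuation estimate analogous to Proposition~\ref{prop3.4} for $\nabla v_1$ and $\nabla v_2$, then use a union bound over the $O(L^{d+2})$ points of $Q_L$. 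On the high-probability event $\{\mathbf{m}_-\geq L^{-\alpha}\}$ one obtains $\|\nabla w\|_{\underline{L}^2(Q_L)}^2\leq\mathcal{O}_{\Psi,c}(CL^{\alpha}(L^{-1/4}+|p_z-p_{z'}|^2))$, which after a square root (and renaming $\alpha$) gives the desired bound. On the complementary super-polynomially small event, a crude deterministic estimate using Proposition~\ref{prop.prop2.3} on $\nabla\varphi_1,\nabla\varphi_2$ (say, bounding by $C/L$ times a polynomial in the $L^2$ norms of the gradients) is absorbed into the $\mathcal{O}_{\Psi,c}$ notation via Proposition~\ref{prop:prop2.20}.
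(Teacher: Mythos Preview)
Your proposal is correct and follows essentially the same route as the paper: rescale to $Q_L$, subtract the two Langevin dynamics to obtain a homogeneous parabolic equation for $w=v_1-v_2$ with linearized coefficient $\a$, apply Caccioppoli together with Proposition~\ref{prop:sublincorr} and the moment bound on $\mathbf{\Lambda}^+$ to get the energy estimate $\mathcal{O}_{\Psi,c}(CL^{-1/4}+C|p_z-p_{z'}|^2)$, then upgrade to an $L^2$ gradient bound via a moderated environment with tail estimate $\mathbb{P}[\mathbf{m}_-\le L^{-\alpha}]\le C\exp(-c(\ln L)^{r/(r-2)})$ and handle the small-probability complement crudely via Proposition~\ref{prop.prop2.3}. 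Your treatment of the spatially constant Brownian-average discrepancy (passing to $\tilde w$) is in fact slightly more careful than the paper's write-up, which silently absorbs this point.
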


{\small
\bibliographystyle{abbrv}
\bibliography{degeneratehydrodynamical.bib}

\newcommand{\noop}[1]{} \def\cprime{$'$}
\begin{thebibliography}{10}

\bibitem{adams2019cauchy}
S.~Adams, S.~Buchholz, R.~Koteck{\'y}, and S.~M{\"u}ller.
\newblock Cauchy-{B}orn rule from microscopic models with non-convex potentials.
\newblock {\em arXiv preprint arXiv:1910.13564}, 2019.

\bibitem{adams2023hessian}
S.~Adams and A.~Koller.
\newblock The {H}essian of surface tension characterises scaling limit of gradient models with non-convex energy.
\newblock {\em arXiv preprint arXiv:2306.12226}, 2023.

\bibitem{AKM16}
S.~Adams, R.~Koteck{\'y}, and S.~M{\"u}ller.
\newblock Strict convexity of the surface tension for non-convex potentials.
\newblock {\em arXiv preprint arXiv:1606.09541}, 2016.

\bibitem{AT21}
S.~Andres and P.~A. Taylor.
\newblock Local limit theorems for the random conductance model and applications to the {G}inzburg-{L}andau {$\nabla\phi$} interface model.
\newblock {\em J. Stat. Phys.}, 182(2):Paper No. 35, 35, 2021.

\bibitem{ABM}
S.~Armstrong, A.~Bordas, and J.-C. Mourrat.
\newblock Quantitative stochastic homogenization and regularity theory of parabolic equations.
\newblock {\em Anal. PDE}, 11(8):1945--2014, 2018.

\bibitem{armstrong2022quantitative}
S.~Armstrong and P.~Dario.
\newblock Quantitative hydrodynamic limits of the {L}angevin dynamics for gradient interface models.
\newblock {\em Electronic Journal of Probability}, 29:1--93, 2024.

\bibitem{AFK2}
S.~Armstrong, S.~J. Ferguson, and T.~Kuusi.
\newblock Higher-order linearization and regularity in nonlinear homogenization.
\newblock {\em Arch. Ration. Mech. Anal.}, 237(2):631--741, 2020.

\bibitem{AFK1}
S.~Armstrong, S.~J. Ferguson, and T.~Kuusi.
\newblock Homogenization, linearization, and large-scale regularity for nonlinear elliptic equations.
\newblock {\em Comm. Pure Appl. Math.}, 74(2):286--365, 2021.

\bibitem{AKM1}
S.~Armstrong, T.~Kuusi, and J.-C. Mourrat.
\newblock Mesoscopic higher regularity and subadditivity in elliptic homogenization.
\newblock {\em Comm. Math. Phys.}, 347(2):315--361, 2016.

\bibitem{AKMbook}
S.~Armstrong, T.~Kuusi, and J.-C. Mourrat.
\newblock {\em Quantitative stochastic homogenization and large-scale regularity}, volume 352 of {\em Grundlehren der Mathematischen Wissenschaften}.
\newblock Springer-Nature, 2019.

\bibitem{AS}
S.~Armstrong and C.~Smart.
\newblock Quantitative stochastic homogenization of convex integral functionals.
\newblock {\em Ann. Sci. \'Ec. Norm. Sup\'er. (4)}, 49(2):423--481, 2016.

\bibitem{AW}
S.~Armstrong and W.~Wu.
\newblock {$C^2$} regularity of the surface tension for the {$\nabla\phi$} interface model.
\newblock {\em Comm. Pure Appl. Math.}, 75(2):349--421, 2022.

\bibitem{AW23}
S.~Armstrong and W.~Wu.
\newblock The scaling limit of the continuous solid-on-solid model.
\newblock {\em arXiv preprint arXiv:2310.13630}, 2023.

\bibitem{bauerschmidt2022discreteI}
R.~Bauerschmidt, J.~Park, and P.-F. Rodriguez.
\newblock {The Discrete Gaussian model, I. Renormalisation group flow at high temperature}.
\newblock {\em The Annals of Probability}, 52(4):1253 -- 1359, 2024.

\bibitem{bauerschmidt2022discreteII}
R.~Bauerschmidt, J.~Park, and P.-F. Rodriguez.
\newblock {The discrete Gaussian model, II. Infinite-volume scaling limit at high temperature}.
\newblock {\em The Annals of Probability}, 52(4):1360 -- 1398, 2024.

\bibitem{BWu2020}
D.~Belius and W.~Wu.
\newblock {Maximum of the Ginzburg–Landau fields}.
\newblock {\em The Annals of Probability}, 48(6):2647 -- 2679, 2020.

\bibitem{biskupsurvey}
M.~Biskup.
\newblock Recent progress on the random conductance model.
\newblock {\em Probab. Surv.}, 8:294--373, 2011.

\bibitem{BK07}
M.~Biskup and R.~Koteck\'{y}.
\newblock Phase coexistence of gradient {G}ibbs states.
\newblock {\em Probab. Theory Related Fields}, 139(1-2):1--39, 2007.

\bibitem{biskup2018limit}
M.~Biskup and P.-F. Rodriguez.
\newblock Limit theory for random walks in degenerate time-dependent random environments.
\newblock {\em J. Funct. Anal.}, 274(4):985--1046, 2018.

\bibitem{BS11}
M.~Biskup and H.~Spohn.
\newblock Scaling limit for a class of gradient fields with nonconvex potentials.
\newblock {\em Ann. Probab.}, 39(1):224--251, 2011.

\bibitem{BLL75}
H.~J. Brascamp, E.~H. Lieb, and J.~L. Lebowitz.
\newblock The statistical mechanics of anharmonic lattices.
\newblock {\em Bull. Inst. Internat. Statist.}, 46(1):393--404 (1976), 1975.

\bibitem{BY}
D.~Brydges and H.-T. Yau.
\newblock Grad {$\phi$} perturbations of massless {G}aussian fields.
\newblock {\em Comm. Math. Phys.}, 129(2):351--392, 1990.

\bibitem{CG21}
N.~Clozeau and A.~Gloria.
\newblock Quantitative nonlinear homogenization: control of oscillations.
\newblock {\em Archive for Rational Mechanics and Analysis}, 247(4):67, 2023.

\bibitem{CD12}
C.~Cotar and J.-D. Deuschel.
\newblock Decay of covariances, uniqueness of ergodic component and scaling limit for a class of {$\nabla\phi$} systems with non-convex potential.
\newblock {\em Ann. Inst. Henri Poincar\'{e} Probab. Stat.}, 48(3):819--853, 2012.

\bibitem{CDM09}
C.~Cotar, J.-D. Deuschel, and S.~M\"{u}ller.
\newblock Strict convexity of the free energy for a class of non-convex gradient models.
\newblock {\em Comm. Math. Phys.}, 286(1):359--376, 2009.

\bibitem{DM1}
G.~Dal~Maso and L.~Modica.
\newblock Nonlinear stochastic homogenization.
\newblock {\em Ann. Mat. Pura Appl. (4)}, 144:347--389, 1986.

\bibitem{DM2}
G.~Dal~Maso and L.~Modica.
\newblock Nonlinear stochastic homogenization and ergodic theory.
\newblock {\em J. Reine Angew. Math.}, 368:28--42, 1986.

\bibitem{D23U}
P.~Dario.
\newblock Upper bounds on the fluctuations for a class of degenerate convex {$\nabla \phi$}-interface models.
\newblock {\em ALEA, Lat. Am. J. Probab. Math. Stat.}, 21:385–430, 2024.

\bibitem{DD05}
T.~Delmotte and J.-D. Deuschel.
\newblock On estimating the derivatives of symmetric diffusions in stationary random environment, with applications to {$\nabla\phi$} interface model.
\newblock {\em Probab. Theory Related Fields}, 133(3):358--390, 2005.

\bibitem{DGI00}
J.-D. Deuschel, G.~Giacomin, and D.~Ioffe.
\newblock Large deviations and concentration properties for {$\nabla\phi$} interface models.
\newblock {\em Probab. Theory Related Fields}, 117(1):49--111, 2000.

\bibitem{DNV19}
J.-D. Deuschel, T.~Nishikawa, and Y.~Vignaud.
\newblock Hydrodynamic limit for the {G}inzburg-{L}andau {$\nabla\phi$} interface model with non-convex potential.
\newblock {\em Stoch. Processes Appl.}, 129(3):924--953, 2019.

\bibitem{deuschel2022isomorphism}
J.-D. Deuschel and P.-F. Rodriguez.
\newblock {A Ray–Knight theorem for $\nabla \varphi$-interface models and scaling limits}.
\newblock {\em Probab. Theory and Related Fields}, 189:447–499, 2024.

\bibitem{dizdar2018quantitative}
D.~Dizdar, G.~Menz, F.~Otto, and T.~Wu.
\newblock {The quantitative hydrodynamic limit of the Kawasaki dynamics}.
\newblock {\em arXiv preprint arXiv:1807.09850}, 2018.

\bibitem{Efron1965}
B.~Efron.
\newblock Increasing properties of {P}olya frequency function.
\newblock {\em The Annals of Mathematical Statistics}, pages 272--279, 1965.

\bibitem{FN19}
J.~Fischer and S.~Neukamm.
\newblock Optimal homogenization rates in stochastic homogenization of nonlinear uniformly elliptic equations and systems.
\newblock {\em Arch. Ration. Mech. Anal.}, 242(1):343--452, 2021.

\bibitem{FriedmanDiBenedetto1984}
A.~Friedman and E.~DiBenedetto.
\newblock Regularity of solutions of nonlinear degenerate parabolic systems.
\newblock {\em Journal für die reine und angewandte Mathematik}, 1984(349):83--128, 1984.

\bibitem{Friedman1985}
A.~Friedman and E.~DiBenedetto.
\newblock Hölder estimates for nonlinear degenerate parabolic sytems.
\newblock {\em Journal für die reine und angewandte Mathematik}, 357:1--22, 1985.

\bibitem{F05}
T.~Funaki.
\newblock Stochastic interface models.
\newblock In {\em Lectures on {P}robability {T}heory and {S}tatistics}, volume 1869 of {\em Lecture Notes in Math.}, pages 103--274. Springer, Berlin, 2005.

\bibitem{F2012}
T.~Funaki.
\newblock {Hydrodynamic Limit for the $\nabla \varphi$ Interface Model via Two-Scale Approach}.
\newblock In {\em {Probability in Complex Physical Systems: In Honour of Erwin Bolthausen and J{\"u}rgen G{\"a}rtner}}, pages 463--490. Springer, 2012.

\bibitem{funaki2024quantitative}
T.~Funaki, C.~Gu, and H.~Wang.
\newblock Quantitative homogenization and hydrodynamic limit of non-gradient exclusion process.
\newblock {\em arXiv preprint arXiv:2404.12234}, 2024.

\bibitem{FS}
T.~Funaki and H.~Spohn.
\newblock Motion by mean curvature from the {G}inzburg-{L}andau {$\nabla \phi$} interface model.
\newblock {\em Comm. Math. Phys.}, 185(1):1--36, 1997.

\bibitem{GOS}
G.~Giacomin, S.~Olla, and H.~Spohn.
\newblock Equilibrium fluctuations for {$\nabla\phi$} interface model.
\newblock {\em Ann. Probab.}, 29(3):1138--1172, 2001.

\bibitem{giunti2022quantitative}
A.~Giunti, C.~Gu, and J.-C. Mourrat.
\newblock Quantitative homogenization of interacting particle systems.
\newblock {\em The Annals of Probability}, 50(5):1885--1946, 2022.

\bibitem{grunewald2009two}
N.~Grunewald, F.~Otto, C.~Villani, and M.~G. Westdickenberg.
\newblock {A two-scale approach to logarithmic Sobolev inequalities and the hydrodynamic limit}.
\newblock In {\em Annales de l'IHP Probabilit{\'e}s et statistiques}, volume~45, pages 302--351, 2009.

\bibitem{gu2024quantitative}
C.~Gu, J.-C. Mourrat, and M.~Nitzschner.
\newblock Quantitative equilibrium fluctuations for interacting particle systems.
\newblock {\em arXiv preprint arXiv:2401.10080}, 2024.

\bibitem{guo1988nonlinear}
M.~Z. Guo, G.~C. Papanicolaou, and S.~R.~S. Varadhan.
\newblock Nonlinear diffusion limit for a system with nearest neighbor interactions.
\newblock {\em Comm. Math. Phys.}, 118(1):31--59, 1988.

\bibitem{hartman2002ordinary}
P.~Hartman.
\newblock {\em Ordinary differential equations}.
\newblock SIAM, 2002.

\bibitem{HS}
B.~Helffer and J.~Sj\"ostrand.
\newblock On the correlation for {K}ac-like models in the convex case.
\newblock {\em J. Statist. Phys.}, 74(1-2):349--409, 1994.

\bibitem{hilger2016scaling}
S.~Hilger.
\newblock Scaling limit and convergence of smoothed covariance for gradient models with non-convex potential.
\newblock {\em arXiv preprint arXiv:1603.04703}, 2016.

\bibitem{hilger2020decay}
S.~Hilger.
\newblock Decay of covariance for gradient models with non-convex potential.
\newblock {\em arXiv preprint arXiv:2007.10869}, 2020.

\bibitem{hilger2020scaling}
S.~Hilger.
\newblock Scaling limit and strict convexity of free energy for gradient models with non-convex potential.
\newblock {\em arXiv preprint arXiv:2005.12973}, 2020.

\bibitem{AK81}
U.~Krengel and M.~Akcoglu.
\newblock Ergodic theorems for superadditive processes.
\newblock {\em Journal für die reine und angewandte Mathematik}, 323:53--67, 1981.

\bibitem{leindler1972certain}
L.~Leindler.
\newblock On a certain converse of {H}{\"o}lder’s inequality.
\newblock In {\em Linear Operators and Approximation/Lineare Operatoren und Approximation}, pages 182--184. Springer, 1972.

\bibitem{lions1969quelques}
J.-L. Lions.
\newblock {Quelques M{\'e}thodes de R{\'e}solution des Probl{\`e}mes aux Limites Non-Lin{\'e}aires}.
\newblock {\em Dunod}, 1969.

\bibitem{magazinov2020concentration}
A.~Magazinov and R.~Peled.
\newblock {Concentration inequalities for log-concave distributions with applications to random surface fluctuations}.
\newblock {\em Ann. Probab.}, 50(2):735 -- 770, 2022.

\bibitem{Mi}
J.~Miller.
\newblock Fluctuations for the {G}inzburg-{L}andau {$\nabla\phi$} interface model on a bounded domain.
\newblock {\em Comm. Math. Phys.}, 308(3):591--639, 2011.

\bibitem{MO16}
J.-C. Mourrat and F.~Otto.
\newblock Anchored {N}ash inequalities and heat kernel bounds for static and dynamic degenerate environments.
\newblock {\em J. Funct. Anal.}, 270(1):201--228, 2016.

\bibitem{NS}
A.~Naddaf and T.~Spencer.
\newblock On homogenization and scaling limit of some gradient perturbations of a massless free field.
\newblock {\em Comm. Math. Phys.}, 183(1):55--84, 1997.

\bibitem{nishikawa2003hydrodynamic}
T.~Nishikawa.
\newblock Hydrodynamic limit for the {G}inzburg-{L}andau {$\nabla\phi$} interface model with boundary conditions.
\newblock {\em Probab. Theory Related Fields}, 127(2):205--227, 2003.

\bibitem{prekopa1971logarithmic}
A.~Pr{\'e}kopa.
\newblock Logarithmic concave measures with applications to stochastic programming.
\newblock {\em Acta scientiarum mathematicarum}, 32:301--316, 1971.

\bibitem{prekopa1973logarithmic}
A.~Pr{\'e}kopa.
\newblock On logarithmic concave measures and functions.
\newblock {\em Acta Scientiarum Mathematicarum}, 34:335--343, 1973.

\bibitem{schweiger2024tightness}
F.~Schweiger, W.~Wu, and O.~Zeitouni.
\newblock {Tightness of the maximum of Ginzburg-Landau fields}.
\newblock {\em arXiv preprint arXiv:2403.11500}, 2024.

\bibitem{Sel24}
M.~Sellke.
\newblock {Localization of Random Surfaces with Monotone Potentials and an FKG-Gaussian Correlation Inequality}.
\newblock {\em arXiv preprint arXiv:2402.18737}, 2024.

\bibitem{Sh}
S.~Sheffield.
\newblock Random surfaces.
\newblock {\em Ast\'{e}risque}, 304:vi+175, 2005.

\bibitem{taylor2006measure}
M.~E. Taylor.
\newblock {\em Measure theory and integration}.
\newblock American Mathematical Soc., 2006.

\bibitem{V06}
Y.~Velenik.
\newblock Localization and delocalization of random interfaces.
\newblock {\em Probab. Surv.}, 3:112--169, 2006.

\bibitem{wu2022local}
W.~Wu.
\newblock Local central limit theorem for gradient field models.
\newblock {\em arXiv preprint arXiv:2202.13578}, 2022.

\bibitem{wu2019subsequential}
W.~Wu and O.~Zeitouni.
\newblock {Subsequential tightness of the maximum of two dimensional Ginzburg-Landau fields}.
\newblock {\em Electron. Commun. Probab}, 24:22, 2019.

\bibitem{yau1991relative}
H.-T. Yau.
\newblock {Relative entropy and hydrodynamics of Ginzburg-Landau models}.
\newblock {\em Letters in Mathematical Physics}, 22(1):63--80, 1991.

\end{thebibliography}
}

\end{document}